\newcommand{\A}{\mathbf{A}}
\renewcommand{\P}{\mathbf{P}}
\newcommand{\Z}{\mathbb{Z}}
\newcommand{\sA}{\mathcal{A}}
\newcommand{\sC}{\mathcal{C}}
\newcommand{\sD}{\mathcal{D}}
\newcommand{\sU}{\mathcal{U}}
\newcommand{\sV}{\mathcal{V}}
\newcommand{\sW}{\mathcal{W}}
\newcommand{\sX}{\mathcal{X}}
\newcommand{\Cor}{\operatorname{\mathbf{Cor}}}
\newcommand{\ul}[1]{{\underline{#1}}}
\newcommand{\DM}{\operatorname{\mathbf{DM}}}
\newcommand{\ulMDM}{\operatorname{\mathbf{\underline{M}DM}}}
\newcommand{\MDM}{\operatorname{\mathbf{MDM}}}
\newcommand{\Wei}{{\operatorname{Wei}}}
\newcommand{\Car}{{\operatorname{Car}}}
\newcommand{\Ker}{\operatorname{Ker}}
\newcommand{\Spec}{\operatorname{Spec}}
\newcommand{\Sm}{\operatorname{\mathbf{Sm}}}
\newcommand{\Sch}{\operatorname{\mathbf{Sch}}}
\newcommand{\Ab}{\operatorname{\mathbf{Ab}}}
\newcommand{\by}{\xrightarrow}
\newcommand{\iso}{\by{\sim}}
\newcommand{\pro}[1]{\text{\rm pro}_{#1}\text{\rm--}}
\newcommand{\tr}{{\operatorname{tr}}}
\newcommand{\mor}{{\operatorname{mor}}}
\newcommand{\eff}{{\operatorname{eff}}}
\newcommand{\fin}{{\operatorname{fin}}}
\renewcommand{\o}{{\operatorname{o}}}
\newcommand{\op}{{\operatorname{op}}}
\newcommand{\red}{{\operatorname{red}}}
\newcommand{\Nis}{{\operatorname{Nis}}}
\newcommand{\et}{{\operatorname{\acute{e}t}}}
\newcommand{\tto}{\dashrightarrow}
\newcommand{\inj}{\hookrightarrow}
\newcommand{\Tot}{\operatorname{Tot}}
\renewcommand{\lim}{\operatornamewithlimits{\varprojlim}}
\newcommand{\colim}{\operatornamewithlimits{\varinjlim}}
\newcommand{\ol}{\overline}
\renewcommand{\phi}{\varphi}
\renewcommand{\epsilon}{\varepsilon}
\newcommand{\gm}{{\operatorname{gm}}}
\newcommand{\MNST}{\operatorname{\mathbf{MNST}}}
\newcommand{\MCor}{\operatorname{\mathbf{MCor}}}
\newcommand{\MP}{\operatorname{\mathbf{MSm}}}
\newcommand{\MPST}{\operatorname{\mathbf{MPST}}}
\newcommand{\Bl}{{\mathbf{Bl}}}
\newcommand{\Cat}{\operatorname{\mathbf{Cat}}}
\newcommand{\bcube}{{\ol{\square}}}
\newcommand{\Funct}{\operatorname{\mathbf{Funct}}}
\newcommand{\ulMP}{\operatorname{\mathbf{\underline{M}Sm}}}
\newcommand{\ulMPST}{\operatorname{\mathbf{\underline{M}PST}}}
\newcommand{\ulMNST}{\operatorname{\mathbf{\underline{M}NST}}}
\newcommand{\ulMCor}{\operatorname{\mathbf{\underline{M}Cor}}}
\newcommand{\ulomega}{\underline{\omega}}
\newcommand{\Comp}{\operatorname{\mathbf{Comp}}}
\newcommand{\Sq}{{\operatorname{\mathbf{Sq}}}}
\newcounter{spec}
\newenvironment{thlist}{\begin{list}{\rm{(\roman{spec})}}%
{\usecounter{spec}\labelwidth=20pt\itemindent=0pt\labelsep=10pt}}%
{\end{list}}%
\newtheorem{Th}{Theorem}
\newtheorem{lemma}{Lemma}[section]
\newtheorem{thm}[lemma]{Theorem}
\newtheorem{prop}[lemma]{Proposition}
\newtheorem{cor}[lemma]{Corollary}
\theoremstyle{definition}
\newtheorem{defn}[lemma]{Definition}
\newtheorem{definition}[lemma]{Definition}
\theoremstyle{remark}
\newtheorem{rk}[lemma]{Remark}
\newtheorem{remark}[lemma]{Remark}
\newtheorem{example}[lemma]{Example}
\newtheorem{claim}[lemma]{Claim}
\numberwithin{equation}{section}
\begin{document}

\title{Mayer-Vietoris triangles for motives with modulus}
\author{Bruno Kahn}
\address{IIMJ-PRG\\ Case 247\\4 place
Jussieu\\75252 Paris Cedex 05\\France}
\email{bruno.kahn@imj-prg.fr}
\author{Hiroyasu Miyazaki}
\address{IIMJ-PRG\\ Case 247\\4 place
Jussieu\\75252 Paris Cedex 05\\France}
\email{hiroyasu.miyazaki@imj-prg.fr}
\date{September 15, 2018}
\begin{abstract}
We construct ``MV squares'' in the category $\MCor$ of modulus pairs which was introduced in \cite{motmod}. They allow us to describe the category $\MDM_\gm^\eff$ of loc. cit. in a similar way as Voevodskys category $\DM_\gm^\eff$ in \cite{voetri}, thus sharpening the results of \cite{motmod}.
\end{abstract}
\maketitle

\tableofcontents

\section*{Introduction}

In \cite{voetri}, Voevodsky defines his triangulated category $\DM_\gm^\eff$ of geometric motives over a field $k$ ``by generators and relations'': generators are motives $M(X)$ of smooth $k$-varieties $X$, and relations are of two kinds: 
\begin{description}
\item[$\A^1$-invariance] $M(X\times \A^1)\iso M(X)$ for any $X$;
\item [Mayer-Vietoris  exact triangles] for any Zariski cover $X=U \cup V$, the sequence
\[M(U\cap V)\to M(U)\oplus M(V)\to M(X)\]
yields an exact triangle in $\DM_\gm^\eff$.
\end{description}

When $k$ is perfect, one gets more general exact triangles, associated to elementary  Nisnevich covers. This is a highly non-trivial theorem of Voevodsky, and it is more reasonable to refound his theory by imposing these latter relations even when the field $k$ is not perfect: this renders part of this theory more elementary \cite[\S 4]{2017.1}. 

This is the approach which is adopted in \cite{motmod} to construct a triangulated category $\MDM_\gm^\eff$ of ``motives with modulus''. Unfortunately, the situation is not so simple. Namely, one first constructs in \cite[\S 6.2]{motmod} a larger triangulated category $\ulMDM_\gm^\eff$ ``\`a la Voevodsky'': its generators are motives of modulus pairs whose total space is not necessarily proper, and relations are parallel to those of Voevodsky:
\begin{description}
\item[$\bcube$-invariance] $M(\sX\otimes \bcube)\iso M(\sX)$ for any modulus pair $\sX$;
\item[Mayer-Vietoris  exact triangles] for any elementary Nisnevich cover
\begin{equation}
\begin{CD}
\sW @>>> \sV\\
@VVV @VVV\\
\sU@>>> \sX
\end{CD}
\end{equation}
 the sequence
\[M(\sW)\to M(\sU)\oplus M(\sV)\to M(\sX)\]
yields an exact triangle in $\ulMDM_\gm^\eff$.
\end{description}

Here, $\bcube$ is the modulus pair $(\P^1,\infty)$ (completing $\A^1$), and ``elementary Nisnevich covers'' are defined in a suitable category of modulus pairs, in a way parallel to the classical case. The category $\MDM_\gm^\eff$ is then the full triangulated subcategory of $\ulMDM_\gm^\eff$ generated by motives of proper modulus pairs \cite[\S 6.9]{motmod}.

In $\MDM_\gm^\eff$, the $\bcube$-invariance relation makes sense, because the modulus pair $\bcube$ is proper. This is not true for the Mayer-Vietoris relation, because the use of elementary Nisnevich covers forces us to leave the world of proper varieties. Nevertheless, in \cite[\S 7.5]{motmod} we exhibit exact triangles in $\MDM_\gm^\eff$ ``of Mayer-Vietoris type'' in a certain sense.

Are there enough such triangles to present $\MDM_\gm^\eff$ in these terms? The main result of the present paper is a positive answer to this question. More precisely, write $\MCor$ for the category of (proper) modulus pairs, as in \cite[Def. 1.3.1]{motmod}, and $\MPST$ for the category of $\MCor$-modules as in \cite[Def. 2.1.1]{motmod}. 
Then we have:

\begin{Th}\label{th-presentation}  Let $CI$ be the collection of complexes of the form $\sX\otimes \bcube\to \sX$ for $\sX\in \MCor$, and let  $MV$ denote the collection of complexes of the form
\[N(00)\to N(10)\oplus N(01)\to N(11)\]
where $\ul{N}=\{N(ij)\mid i,j\in \{0,1\}\}$ is an MV square in $\MCor$ as in Definition \ref{d2.3} a) below. Then in the naturally commutative square
\[\begin{CD}
\left(K^b(\MCor)/\langle CI + MV\rangle\right)^\natural@>\alpha>> D(\MPST)/\langle \Z_\tr(CI) + \Z_\tr(MV)\rangle^{loc}\\
@V\gamma VV @V\delta VV\\
\MDM_\gm^\eff@>\beta>> \MDM^\eff
\end{CD}\]
both vertical functors are equivalences of categories. Here, $\langle\ \rangle^{loc}$ means ``localising subcategory generated by''.
\end{Th}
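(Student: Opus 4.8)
The plan is to establish the four-functor square by proving that the two vertical functors $\gamma$ and $\delta$ are equivalences, treating the right-hand (derived-category) side first and then transporting along $\alpha$. First I would recall that $\MDM^\eff$ is by construction a Bousfield localization of $D(\MPST)$: it is obtained by inverting $\bcube$-invariance and the Mayer–Vietoris relations coming from elementary Nisnevich covers. The content of the right-hand vertical equivalence $\delta$ is therefore that the localizing subcategory $\langle\Z_\tr(CI)+\Z_\tr(MV)\rangle^{loc}$ of $D(\MPST)$ that one gets by imposing only the relations from \emph{MV squares} (in the sense of Definition \ref{d2.3}) already contains — and hence equals — the larger-looking localizing subcategory used in \cite{motmod} to define $\MDM^\eff$. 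So the first key step is: every elementary Nisnevich cover of modulus pairs, after applying $\Z_\tr$ and passing to $D(\MPST)$, lies in the localizing subcategory generated by the $\Z_\tr$-images of MV squares. This should follow by a standard dévissage on elementary Nisnevich covers, reducing a general such cover to a composite/refinement built out of MV squares, exactly as in Voevodsky's classical argument where elementary Nisnevich squares are generated by Zariski squares together with the "standard" Nisnevich squares.

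Next I would handle the left-hand vertical functor $\gamma$. Here the point is that $\MDM_\gm^\eff$ was \emph{defined} in \cite[\S 6.9]{motmod} as the thick (idempotent-complete) triangulated subcategory of $\ulMDM_\gm^\eff$ generated by motives of proper modulus pairs, and $\ulMDM_\gm^\eff$ is itself a quotient of the homotopy category $K^b$ of bounded complexes over a correspondence category by the $\bcube$-invariance and Mayer–Vietoris relations. Thus $\gamma$ is essentially the canonical comparison functor from the pseudo-abelian envelope $\left(K^b(\MCor)/\langle CI+MV\rangle\right)^\natural$ — where now the relations are only the MV-square ones — into $\MDM_\gm^\eff$. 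To see it is an equivalence one checks (a) essential surjectivity, which is immediate since $\MCor$ consists of proper modulus pairs so its motives generate the target, and (b) full faithfulness, which amounts again to the statement that MV squares suffice to generate all the Mayer–Vietoris-type relations that hold in $\MDM_\gm^\eff$; this is the "sharpening of \cite{motmod}" advertised in the abstract, and it is proved by the same dévissage as in the previous paragraph, now one categorical level down, combined with the embedding theorem identifying $K^b(\MCor)/\langle\cdots\rangle^\natural$ with a full subcategory of the derived version.

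Finally I would assemble the square. The horizontal functor $\alpha$ is induced by the Yoneda/$\Z_\tr$ embedding $\MCor\to\MPST$, which sends $CI$ to $\Z_\tr(CI)$ and $MV$ to $\Z_\tr(MV)$, and $\beta$ is the defining inclusion $\MDM_\gm^\eff\hookrightarrow\MDM^\eff$; commutativity up to natural isomorphism is then formal from the compatibility of the Yoneda embedding with the motive functors on both models, which is already recorded in \cite{motmod}. Since $\gamma$ and $\delta$ are shown to be equivalences and the square commutes, nothing more is needed for the stated theorem (note that $\alpha$ and $\beta$ need not be equivalences — indeed $\beta$ is only a full embedding). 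The main obstacle, and the technical heart of the paper, is the dévissage asserted in the first step: showing that the relations attached to \emph{all} elementary Nisnevich covers of modulus pairs are already forced by the relations attached to the restricted class of MV squares of Definition \ref{d2.3}. This requires the explicit construction of enough MV squares — in particular verifying that the naive pullback squares one would like to use genuinely satisfy the admissibility/modulus conditions needed to be morphisms in $\MCor$ — which is precisely why one must build the "MV squares" by hand rather than simply pulling back along open immersions as in the classical setting.
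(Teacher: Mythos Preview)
Your overall architecture is close to the paper's in one respect: the main work is to show that $\delta$ is an equivalence, i.e.\ that the localizing subcategory generated by $\Z_\tr(MV)$ already coincides with the kernel of Nisnevich sheafification on $D(\MPST)$. But the mechanism you propose for this step is not correct, and the actual mechanism is the technical heart of the paper.

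You propose a ``d\'evissage on elementary Nisnevich covers, reducing a general such cover to a composite/refinement built out of MV squares, exactly as in Voevodsky's classical argument.'' This cannot work as stated. Elementary Nisnevich squares live in $\ulMCor$ (non-proper total spaces), whereas MV squares live in $\MCor$ (proper modulus pairs). There is no way to refine or decompose an object of $(\ulMP^\fin)^\Sq$ into objects of $\MCor^\Sq$ inside either category; the two classes sit in genuinely different categories, connected only by the inclusion $\tau:\MCor\hookrightarrow\ulMCor$. The paper's argument does not proceed by d\'evissage at all. Instead, it uses the pro-left adjoint $\tau^!$ of $\tau$ (equivalently, $\tau_!$ on presheaves) and its ``squared'' version: for $\ul{S}$ an elementary Nisnevich square and $C\in D(\MPST)$, one has
\[
\Tot\,\tau_!^\Sq C^\Sq(\ul{S})\;\cong\;\colim_{\ul{N}\in\Comp_1(\ul{S})}\Tot\,C^\Sq(\ul{N}),
\]
where $\Comp_1(\ul{S})$ is the cofiltered category of compactifications of $\ul{S}$ in $\MCor^\Sq$ (Proposition~\ref{p1}, Corollary~\ref{co1}). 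The crux is then Theorem~\ref{c1}: the full subcategory $\Comp_1^{MV}(\ul{S})\subset\Comp_1(\ul{S})$ of MV compactifications is \emph{cofinal}. Given this, if $C$ is right orthogonal to $\Z_\tr(MV)$ then each $\Tot\,C^\Sq(\ul{N})$ with $\ul{N}$ MV is acyclic, hence so is the colimit, hence $\tau_!C$ is right orthogonal to the elementary-Nisnevich relations in $\ulMPST$. So the key input is a cofinality statement in a category of compactifications, not a refinement of covers.

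A secondary point: you propose to prove full faithfulness of $\gamma$ by a separate d\'evissage ``one categorical level down.'' The paper does not do this. It quotes from \cite{motmod} that $\alpha$ is fully faithful, $\gamma$ is essentially surjective, and $\beta$ is a full embedding; once $\delta$ is shown to be an equivalence, the commutativity $\beta\gamma\simeq\delta\alpha$ forces $\gamma$ to be fully faithful as well. So no independent argument for $\gamma$ is needed. Your last paragraph correctly identifies that one must ``construct enough MV squares,'' but the precise content is cofinality of $\Comp_1^{MV}(\ul{S})$ in $\Comp_1(\ul{S})$, and the hard work (Sections~\ref{section-partial} and~\ref{section-MV}) is to produce, for any given compactification $\ul{N}_0$ of $\ul{S}$, an MV compactification dominating it.
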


The proof of Theorem \ref{th-presentation} can be sketched as follows. 
We first reduce it to a cofinality statement, Theorem \ref{c1} (see Prop. \ref{p2.2}). In \cite[\S 4.3]{motmod}, a strategy was implicitly suggested to prove Theorem \ref{c1}. We broadly follow this strategy, but the story turns out to be more complicated. Namely, we reduce in Proposition \ref{p2.1} the proof of Theorem \ref{c1} to two statements, Theorems \ref{th-partial} and \ref{existence-MV}. They are respectively Nisnevich generalizations of \cite[Lemma 4.3.5]{motmod} and \cite[Prop. 4.3.10]{motmod} (which only concern elementary Zariski squares), except that each one is much stronger. 

 Let us now give some ideas of the proofs of Theorems \ref{th-partial} and \ref{existence-MV}. 
Since their statements are quite technical, we offer here simplified (but weaker) statements.
This helps explain our strategy.

Theorem \ref{th-partial} may be simplified as follows: 
\begin{quote}
(A) Let $\sX \to \sX'$ be an open immersion of modulus pairs and assume that $\sX'$ is proper.
Then, any elementary Nisnevich cover of $\sX$ can be extended to an elementary Nisnevich cover of $\sX'$.
\end{quote}

On the other hand, Theorem \ref{existence-MV} may be simplified as follows:
\begin{quote} (B) Let $N(00)$ be a \textit{proper} modulus pair. Any elementary Nisnevich square
\begin{equation}\label{intro-sq1}
\begin{CD}
\sW @>>> \sV\\
@VVV @VVV\\
\sU@>>> N(11)
\end{CD}
\end{equation}
can be embedded into an MV-square
\begin{equation}\label{intro-sq2}
\begin{CD}
N(00) @>>> N(01)\\
@VVV @VVV\\
N(10)@>>> N(11).
\end{CD}
\end{equation}
\end{quote}

The proof of Theorem \ref{th-partial} will be done in \S \ref{section-partial}.
Its basic strategy is to follow the case of a Zariski cover \cite[Lemma 4.3.5]{motmod}.
But in the Nisnevich case, we need to control the fibers of the \'etale morphisms. 
This forced us to take care of (basically) set-theoretical problems, which made the proof much more technical and complicated.
However, the statement of the theorem itself is not so surprising and easy to understand, so we do not step further into the proof here.

The proof of Theorem \ref{existence-MV} will be done in \S \ref{section-MV}, which is heart of the proof of Theorem \ref{th-presentation}.
First, it is easy to compactify a square \eqref{intro-sq1} to a square of the form \eqref{intro-sq2} such that each corner of the square is a proper modulus pair.
However, in general, such a compactification is not MV.
So, in \S \ref{subsec-6.2}, we provide a recipe to produce an MV-square out of a square of the form \eqref{intro-sq2}.
The idea is quite simple: increase the multiplicity of the boundary of the north-east corner $N(01)$ (after enlarging the total space by a sequence of blowing-ups).
After this modification, we obtain a square, denoted by $\ul{N}_1$, which satisfies the properties in Proposition \ref{eq6.9}.
Then, our task is to prove that $\ul{N}_1$ is an MV-square (Theorem \ref{MV-curve}), which will be done in the rest of the \S \ref{section-MV}.
The key point is the exactness of the sequence
\begin{multline}
\ulMCor (M,N_1(00)) \to \ulMCor(M,N_1(10)) \oplus \ulMCor(M,N_1(01))\\ \to \ulMCor(M,N_1(11)),
\end{multline}
where $M$ is any modulus pair. Take any element  $(\alpha' ,\alpha)$ in the middle term which goes to zero in $\ulMCor(M,N_1(11))$, and write $\alpha = \sum_i m_i \alpha_i$, where $\alpha_i$ are irreducible components of the algebraic cycle $\alpha$.
The proof is relatively easy if we assume that the image of the components $\alpha_i$ in $\ulMCor(M,N_1(11))$ are \textit{distinct} (see \cite[Proof of Prop. 4.3.10]{motmod} and \S \ref{pf-mv-curve} for details).
An essentially new difficulty appears if they are not distinct: for example, suppose that $\alpha = \alpha_1 - \alpha_2$, and each $\alpha_i$ goes to the same cycle $\beta$.
Then, the image of $\alpha$ is equal to zero, which implies that $\alpha'=0$.
Therefore, we lose, a priori, a way to catch the information of the boundary of  $N_1(10)$, which makes it difficult to prove that $\alpha$ comes from $\ulMCor (M,N_1(00))$.  A special case of this situation was solved in \cite[Rk. 4.3.14]{motmod}; however, we don't use ideas from loc. cit.
Rather, we prove the following surprising result, which we call ``resurgence principle'':
\begin{quote}
The assumption that two distinct cycles $\alpha_i$'s go to the same cycle $\beta$ automatically implies that $\alpha_i$'s come from $\ulMCor (M,N_1(00))$.
\end{quote}

For the precise statement, see Proposition \ref{l-resurgence}, whose proof is given in \S \ref{subsec-resurgence}.


Throughout the proofs, we have repeatedly used rather similar constructions; we made no serious attempt to spell them out systematically, except for Lemma \ref{l4.2}, which is used several times in \S \ref{s4.4};  even in its case, variants of it are used in \S \ref{section-partial}, but we have not tried to work out a general statement. Similarly, we use several times the ``pull-back'' of a square along a morphism to its lower right corner, and the fact that such pull-backs preserve elementary Nisnevich squares. We hope that this looseness of exposition will not disturb the reader too much.

\section{Review of modulus pairs}

We denote by $\Sch$ the category of separated $k$-schemes of finite type, and by $\Sm$ the full subcategory of smooth $k$-schemes.

According to \cite[Def. 1.1.1]{motmod}, a \emph{modulus pair} is a pair $M=(\ol{M},M^\infty)$ where $\ol{M}\in \Sch$ (the \emph{total space}), $M^\infty\subset \ol{M}$ (the \emph{boundary}) is an effective Cartier divisor, and $M^\o:=\ol{M} -M^\infty$ (the \emph{interior}) is smooth. The modulus pair $M$ is \emph{proper} if $\ol{M}$ is proper. 

\begin{remark}\label{total-reduced} By \cite[Remark 1.1.2 (3)]{motmod}, 
the total space $\ol{M}$ is reduced and $M^\o$ is dense in $\ol{M}$.
\end{remark}

Let $N$ be another modulus pair, and consider an irreducible finite correspondence $\alpha\in \Cor(M^\o,N^\o)$ as in \cite[Lect. 1]{mvw}. We say that $\alpha$ is \emph{admissible} if the following condition holds:
\begin{equation}\label{eq1.3}
M^\infty|_{\ol{\alpha}^N}\ge N^\infty|_{\ol{\alpha}^N}
\end{equation}
where $\ol{\alpha}$ is the closure of $\alpha$ in $\ol{M}\times \ol{N}$, $\ol{\alpha}^N$ is its normalization and ${-}|_{\ol{\alpha}^N}$ means ``pull-back (of Cartier divisors) to $\ol{\alpha}^N$''. 

We say that $\alpha$ is \emph{minimal} if equality holds in \eqref{eq1.3}

We also say that $\alpha$ is \emph{left proper} if the projection of $\ol{\alpha}$ to $\ol{M}$ is proper. A general finite correspondence in $\Cor(M^\o,N^\o)$ is admissible (resp. left proper) if all its components are. One shows \cite[Prop. 1.2.3]{motmod} that left proper admissible correspondences can be composed, whence an additive category $\ulMCor$; its full subcategory consisting of proper modulus pairs is denoted by $\MCor$. There is a forgetful functor
\[\omega:\MCor\to \Cor, \quad M\mapsto M^\o\]
(which extends to $\ulMCor$). We have the following important result \cite[Th. 1.6.2 and Lemma 1.11.3 (2)]{motmod}:

\begin{thm} The full embedding $\tau:\MCor\to \ulMCor$ has a pro-left adjoint $\tau^!$, given by the formula 
\[\tau^! M = ``\lim\nolimits"_{M\in \Comp_1(N)} N\]
where, for $M\in \ulMCor$, $\Comp_1(M)$ is the category whose objects are arrows $M\by{\theta} \tau(N)$, with $N\in \MCor$, such that
\begin{itemize}
\item $\theta^\o:M^\o\to N^\o$ is the identity;
\item $\theta$ defines an open immersion on the total spaces;
\item $\theta$ is minimal.
\end{itemize}
Morphisms between two objects in $\Comp_1(M)$ are given by commutative triangles.
\end{thm}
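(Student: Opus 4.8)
The plan is to verify directly the universal property characterizing a pro-left adjoint: for $M\in\ulMCor$ and $N\in\MCor$, there should be a natural isomorphism
\[
\MCor(\tau^! M, N)=\colim_{M\xrightarrow{\theta}\tau(N')\ \in\ \Comp_1(M)^{\op}}\MCor(N',N)\ \cong\ \ulMCor(M,\tau(N)).
\]
First I would make sure $\Comp_1(M)$ is genuinely cofiltered (or at least that $\Comp_1(M)^{\op}$ is filtered), so that the right-hand colimit makes sense as a pro-object indexed by a nice category: given two compactifications $\theta_1:M\to\tau(N_1)$ and $\theta_2:M\to\tau(N_2)$, one takes the closure of the diagonal image of $M^\o$ inside $\ol{N_1}\times\ol{N_2}$, equips it with the sum (or the appropriate minimal) divisor, normalizes if needed to keep the interior smooth, and checks this dominates both $\theta_i$; equalizers of parallel arrows are handled by passing to a closed subscheme of the total space. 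This is where one uses that $\MCor$ consists of \emph{proper} modulus pairs in an essential way — properness of $\ol{N'}$ is needed for the closure constructions to land back in $\MCor$, and it also forces the comparison map $\ol{\alpha}\to\ol{M}$ to behave well.

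Next I would construct the comparison map in both directions. Given $\theta:M\to\tau(N')$ in $\Comp_1(M)$ and a morphism $N'\to N$ in $\MCor$, composing gives $M\to\tau(N')\to\tau(N)$ in $\ulMCor$, which is compatible with the transition maps of $\Comp_1(M)$, hence yields a natural map from the colimit to $\ulMCor(M,\tau(N))$. For the inverse: given an admissible left proper correspondence $\gamma\in\ulMCor(M,\tau(N))=\ulMCor(M,N)$, I would take the closure $\ol\gamma$ of (the components of) $\gamma$ in $\ol M\times\ol N$; left properness of $\gamma$ means $\ol\gamma\to\ol M$ is proper, so its image is a proper closed subset, and (after the usual normalization/modification steps to arrange smoothness of the interior and minimality) one extracts from $\ol\gamma$ together with its two projections an object $\theta_\gamma:M\to\tau(N')$ of $\Comp_1(M)$ through which $\gamma$ factors. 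One then checks the two constructions are mutually inverse up to the equivalence relation defining the colimit, and that everything is natural in $N$; this is essentially a bookkeeping of closures and pullbacks of Cartier divisors, using the admissibility inequality \eqref{eq1.3} repeatedly.

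The main obstacle I expect is the compatibility of divisors under these closure operations: when one forms the closure of a correspondence in a product of total spaces and pulls the boundary divisors back to the normalization, one must check that admissibility is preserved and, more delicately, that one can \emph{modify} the chosen compactification $N'$ (by blow-ups of the total space and adjustments of the boundary multiplicity) so that the relevant map becomes minimal without destroying the open-immersion condition on total spaces — this is exactly the kind of manipulation that the introduction warns is done repeatedly and not systematized. A secondary technical point is ensuring the indexing category $\Comp_1(M)$ is (essentially) small and cofiltered, so that $\tau^! M$ is a legitimate pro-object; here one bounds the compactifications that can occur, using that $\ol M$ is of finite type and that admissible minimal extensions are controlled by the pole order of $\gamma$ along the boundary. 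Granting the cited results \cite[Prop.\ 1.2.3]{motmod} on composability of left proper admissible correspondences, the remaining verifications are formal, so the heart of the argument is the geometric input about closures and normalizations of correspondences in products of proper modulus pairs.
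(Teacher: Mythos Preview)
The paper does not prove this theorem: it is quoted verbatim as ``the following important result \cite[Th.\ 1.6.2 and Lemma 1.11.3 (2)]{motmod}'', i.e.\ it is imported from the companion paper. So there is no proof in the present paper to compare your plan against; your outline is an attempt to reconstruct the argument of \cite{motmod}.

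That said, your plan has a genuine gap in the construction of the inverse map. Given $\gamma\in\ulMCor(M,N)$, you propose to take the closure $\ol\gamma\subset\ol M\times\ol N$ and ``extract from $\ol\gamma$ together with its two projections an object $\theta_\gamma:M\to\tau(N')$ of $\Comp_1(M)$''. But an object of $\Comp_1(M)$ is a \emph{compactification of $M$}: a proper modulus pair $N'$ with $N'^\o=M^\o$ and $\ol M\hookrightarrow\ol{N'}$ open and minimal. The closure $\ol\gamma$ is not such a thing --- it is a cycle finite and surjective over $M^\o$, so its image in $\ol M$ is all of $\ol M$, and there is no way to read off a compactifying total space $\ol{N'}\supset\ol M$ from $\ol\gamma$ alone. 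The correct move is orthogonal to what you wrote: first pick \emph{any} $N'_0\in\Comp_1(M)$ (Nagata gives a proper $\ol{N'_0}\supset\ol M$, and one puts the minimal divisor on it); then observe that $\gamma$, viewed as a cycle on $N'^\o_0\times N^\o=M^\o\times N^\o$, is automatically left proper over $\ol{N'_0}$ (because $\ol N$ is proper) but may fail admissibility for the pair $(N'_0,N)$; finally, enlarge the boundary $N'^\infty_0$ along $\ol{N'_0}\setminus\ol M$ by a sufficiently high multiple of the complement (a step of the kind you allude to later, and which the present paper packages as the ``modulus increasing lemma'', Lemma \ref{m-i-l}) to force the admissibility inequality. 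This produces $N'\in\Comp_1(M)$ dominating $N'_0$ together with $\gamma\in\MCor(N',N)$, and is the actual mechanism behind surjectivity. Your cofilteredness sketch and the forward map are fine.
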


(See \cite[I.8.11.5]{SGA4} and \cite[\S A.2]{motmod} for pro-left adjoints.)

We write $\MP\subset \MCor$ and $\ulMP\subset \ulMCor$ for the subcategories with the same objects, but morphisms  restricted to (graphs of) scheme-theoretic morphisms on the interiors. Let $M,N\in \ulMP$ and $f\in \ulMP(M,N)$. We write $f\in \ulMP^\fin(M,N)$ if the rational map $\ol{M}\to \ol{N}$ defined by $f$ is a morphism; this defines a subcategory $\ulMP^\fin$ of $\ulMP$, with the same objects \cite[Def. 1.10.1]{motmod}.

We have the notion of \emph{elementary Nisnevich square} in $\ulMP^\fin$: it is a cartesian square such that all edges are minimal morphisms and the induced square on the total spaces is upper distinguished in the sense of \cite[Def. 12.5]{mvw}; see \cite[Def. 3.5.6]{motmod} for a more spelled-out definition.

The categories of modules [additive presheaves of abelian groups] over $\MCor$ and $\ulMCor$ are respectively denoted by $\MPST$ and $\ulMPST$ \cite[Def. 2.1.1]{motmod}.
We denote by $\Z_\tr$ the Yoneda embeddings $\MCor \to \MPST$ and $\ulMCor \to \ulMPST$.

Finally, one defines a notion of Nisnevich sheaves in $\ulMPST$ and $\MPST$: they form respective full subcategories $\ulMNST$ and $\MNST$ \cite[Def. 3.5.1 and 3.7.1]{motmod}. The category $\ulMP^\fin$ plays a crucial r\^ole in these definitions and, as can be expected, an elementary Nisnevich square in $\ulMP^\fin$ plays the same r\^ole as in the classical case. The functor $\Z_\tr$ takes its values in $\ulMNST$ and $\MNST$ respectively \cite[Prop. 3.5.3 and 3.7.3]{motmod}.

\section{Reduction to a cofinality statement}\label{section-presentation} 

In this section, we reduce Theorem \ref{th-presentation} to Theorem \ref{c1} below (see \S \ref{s1.2}).

\subsection{Categories of diagrams}


Let $\sC,\Delta$ be two categories, with $\Delta$ small: we write as usual $\sC^\Delta$ for the category of functors from $\Delta$ to $\sC$. Clearly, a functor $u:\sC\to \sD$ induces a functor $u^\Delta:\sC^\Delta\to \sD^\Delta$.

In the sequel, we shall mainly consider the case where $\Delta=\Sq$: the category with 4 objects, morphisms being given by the scheme
\[\begin{CD}
00 @>>> 01\\
@VVV @VVV\\
10 @>>> 11.
\end{CD}\]

(Note that $\Sq$ is just the cartesian square of the category $[0]=\{0\to 1\}$.) 
Thus, in  \cite[\S 4]{motmod}, a square (4.1) is a certain object of $(\ulMP^\fin)^\Sq$, a square (4.3) is an object of $\MP^\Sq$, etc. 

\begin{defn}
Let $\mathcal{P}$ be a property of morphisms of the category $\ulMCor$.
Then,  a morphism $f : \ul{M} \to \ul{N}$ in the category $\ulMCor^\Sq$ has $\mathcal{P}$ if, for any $i,j \in \{0,1\}$, the morphism $f(ij) : M(ij) \to N(ij)$ has $\mathcal{P}$.
\end{defn}

\begin{example}
A morphism $f : \ul{M} \to \ul{N}$ in the category $\ulMCor^\Sq$ is minimal if all the morphisms $f(ij) : M(ij) \to N(ij)$ are minimal.
\end{example}

In the next definition, we use the comma notation $\downarrow$ of Mac Lane \cite[Ch. II, \S 6]{mcl}.

\begin{defn}\label{d1.1} For $\ul{M}\in \ulMCor^\Sq$, $\Comp_1(\ul{M})$ is the full subcategory of $\ul{M}\downarrow \MCor^\Sq$ consisting of those objects $\ul{M}\by{f} \tau^\Sq(\ul{N})$ such that $f(\delta)\in \Comp_1(M(\delta))$ for any $\delta\in Ob(\Sq)$.  
\end{defn}

We now apply the theory of Appendix \ref{sB} with $\sC=\MCor$, $\sD=\ulMCor$, $u=\tau$ and $\Delta=\Sq$.
From Lemma \ref{l2} and \cite[Lemmas 1.8.3 and 1.11.3]{motmod}, we get:

\begin{prop}\label{p1} The functor $\tau^\Sq:\MCor^\Sq\to \ulMCor^\Sq$ has a pro-left adjoint, which is represented by 
$\ul{M}\mapsto \Comp_1(\ul{M})$. 
\end{prop}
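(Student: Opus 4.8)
\textbf{Proof plan for Proposition \ref{p1}.}

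The plan is to derive this from the abstract machinery of Appendix \ref{sB}, specialized to the data $\sC=\MCor$, $\sD=\ulMCor$, $u=\tau$, $\Delta=\Sq$. The key input is the identification, recalled just after Theorem 1.4 of the excerpt, of $\tau^!$ on objects of $\ulMCor$ with the pro-object indexed by the category $\Comp_1(-)$. By the theory of pro-left adjoints, what we must check is that for a fixed $\ul{M}\in\ulMCor^\Sq$, the assignment $\ul{N}\mapsto \ulMCor^\Sq(\ul{M},\tau^\Sq\ul{N})$ is pro-representable by $\ul{N}'\mapsto \colim_{\ul{M}\to\tau^\Sq\ul{N}' \text{ in }\Comp_1(\ul{M})}\MCor^\Sq(\ul{N}',\ul{N})$, i.e.\ that there is a natural isomorphism
\[
\ulMCor^\Sq(\ul{M},\tau^\Sq\ul{N})\;\cong\;\colim_{\Comp_1(\ul{M})}\MCor^\Sq(-,\ul{N}).
\]
First I would invoke Lemma \ref{l2} of the Appendix: this should be the general statement that if $u:\sC\to\sD$ has a pro-left adjoint represented on $d\in\sD$ by a category $I(d)$ (here $I=\Comp_1$), and if moreover the $I(d)$ are cofiltered (or at least the relevant limits behave well), then $u^\Delta:\sC^\Delta\to\sD^\Delta$ has a pro-left adjoint represented by the ``diagram-wise'' category, whose objects are compatible families indexed by $\Delta$. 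For $\Delta=\Sq$ this diagram-wise category is exactly $\Comp_1(\ul{M})$ as defined in Definition \ref{d1.1}: an object is a map $\ul{M}\to\tau^\Sq(\ul{N})$ in $\MCor^\Sq$ whose value at each vertex $\delta\in\mathrm{Ob}(\Sq)$ lies in $\Comp_1(M(\delta))$.

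The point where Appendix \ref{sB} alone is not enough, and where one must feed in geometry from \cite{motmod}, is the verification of whatever hypotheses Lemma \ref{l2} requires on the functor $\tau$ and on the categories $\Comp_1(M)$. Concretely I expect to need two facts, cited as \cite[Lemmas 1.8.3 and 1.11.3]{motmod}: that $\Comp_1(M)$ is cofiltered (so that ``$\lim$'' over it genuinely defines a pro-object), and that the formation of $\Comp_1$ is suitably functorial/compatible with the morphisms of $\Sq$, so that a morphism $M(00)\to M(01)$ etc.\ in $\ulMCor$ induces the transition maps needed to glue the vertex-wise compactification categories into a single category $\Comp_1(\ul{M})$ indexed by $\Sq$. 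Lemma 1.11.3 of loc.\ cit.\ is presumably the statement that minimal compactifications can be chosen compatibly along a morphism (this is what lets an object of $\Comp_1(\ul M)$ exist at all, rather than just a family of unrelated choices at the four corners), and Lemma 1.8.3 handles the cofiltering/initial-object bookkeeping.

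The main obstacle, and essentially the only non-formal step, is therefore the cofinality/compatibility argument: given a diagram $\ul M$ over $\Sq$, one must show that the ``four independent compactifications'' category maps cofinally into the ``compatible compactification'' category $\Comp_1(\ul M)$, i.e.\ that any family of minimal open-immersion compactifications of the four corners can be refined (by enlarging the total spaces, typically via further blow-ups or by increasing divisor multiplicities as in \S\ref{s1.2}) to one through which all four structure morphisms of the square become genuine morphisms of modulus pairs. Once this cofinality is in hand, the isomorphism of functors above follows by interchanging the colimit over $\Comp_1(\ul M)$ with the four separate colimits over the $\Comp_1(M(\delta))$ — a routine manipulation of filtered colimits of $\Hom$-sets — and the naturality in $\ul N$ is automatic from the Yoneda-type description. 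Thus the proof reduces entirely to citing Lemma \ref{l2} together with \cite[Lemmas 1.8.3 and 1.11.3]{motmod}, which is exactly the structure announced before the statement.
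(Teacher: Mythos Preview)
Your overall framework is right—apply Lemma \ref{l2} of Appendix \ref{sB} with $\sC=\MCor$, $\sD=\ulMCor$, $u=\tau$, $\Delta=\Sq$, together with the cited lemmas from \cite{motmod}—but you have misidentified where the work lies and thereby introduced a spurious ``main obstacle''.

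The paper's proof is two lines: observe that $\Comp_1(M)$ is \emph{full} in $M\downarrow\tau$ (this is immediate from its definition, since the condition in Theorem 1.4 is placed on objects only). By Lemma \ref{l2} b), this fullness forces the abstract category $I(\ul{M})$ of the appendix to coincide with the concretely defined $\Comp_1(\ul{M})$ of Definition \ref{d1.1}. Then Lemma \ref{l2} c) says that $I(\ul{M})$ corepresents the pro-left adjoint of $\tau^\Sq$, and we are done. The citations to \cite[Lemmas 1.8.3 and 1.11.3]{motmod} serve only to guarantee that $\Comp_1(M)$ is an ordered cofiltering set representing $\tau^!$ in the first place, i.e.\ that the hypotheses of \S\ref{sB} are met at the vertex level.

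Your ``main obstacle''—refining four independent compactifications of the corners to a single compatible square, via blow-ups or increasing multiplicities—is not needed here. That refinement \emph{is} the content of Lemma \ref{l2} a) (the cofiltering of $I(\ul{d})$), but its proof in the appendix is purely categorical: it proceeds by induction on $\#Ob(\Delta)$, using only that each $I(d(\delta))$ is cofiltering and that the pro-adjoint $v$ is functorial. No geometry of modulus pairs, no blow-ups, and no divisor manipulations enter. The geometric arguments you have in mind appear much later (Sections \ref{section-partial} and \ref{section-MV}) and concern the far harder cofinality of \emph{MV} compactifications (Theorem \ref{c1}), not the mere existence of the pro-adjoint. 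Your proposed route of ``interchanging the colimit over $\Comp_1(\ul{M})$ with the four separate colimits over $\Comp_1(M(\delta))$'' is also unnecessary and somewhat ill-posed, since $\ulMCor^\Sq(\ul{M},\tau^\Sq\ul{N})$ is not simply a product over the vertices; Lemma \ref{l2} c) handles the compatibility directly.
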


\begin{proof} By definition, $\Comp_1(M)$ is full in $M\downarrow \tau$ for any $M\in \ulMCor$. By Lemma \ref{l2} b), $\Comp_1(\ul{M})$ coincides with the category $I(\ul{M})$ introduced there, and Proposition \ref{p1} follows from part c) of this lemma.
\end{proof}

For any additive category $\sA$, write $C(\sA)$ for the category of chain complexes on $\sA$. Let $C\in C(\MPST)$. Viewing $C$ as a functor $\MCor^\op\to C(\Ab)$, we get the functor $C^\Sq:(\MCor^\op)^\Sq\to C(\Ab)^\Sq$. For $\ul{M}\in \ulMCor^\Sq$, we therefore get the complex 
\[D=\Tot \tau_!^\Sq C^\Sq(\ul{M})\in C(\Ab).\]

\begin{cor} \label{co1} The natural map
\[\colim_{\ul{N}\in \Comp_1(\ul{M})} \Tot C^\Sq(\ul{N})\to D\]
is an isomorphism.
\end{cor}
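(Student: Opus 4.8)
The plan is to separate Corollary~\ref{co1} into a categorical input, which is already available, and a purely formal manipulation of total complexes. The categorical input is the identification, for $\ul{M}\in\ulMCor^\Sq$, of $\tau_!^\Sq C^\Sq(\ul{M})$ with the colimit $\colim_{\ul{N}\in\Comp_1(\ul{M})}C^\Sq(\ul{N})$ taken vertexwise in $C(\Ab)^\Sq$: this is exactly what Proposition~\ref{p1} provides, via the Appendix~\ref{sB} formalism that turns the pro-left adjoint $\tau^{!\Sq}$, represented by $\ul{M}\mapsto\Comp_1(\ul{M})$, into a genuine left adjoint on the presheaf categories computed by the expected colimit. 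Since $C$ is a complex of presheaves, $\tau_!^\Sq$ and all the colimits in sight are formed degreewise; and since $\Comp_1(\ul{M})$ is cofiltered and $C$ is contravariant, the colimit over $\ul{N}\in\Comp_1(\ul{M})$ is filtered.

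Granting this, I would conclude by observing that $\Tot\colon C(\Ab)^\Sq\to C(\Ab)$ commutes with all small colimits. Indeed, in each homological degree $\Tot$ of a $\Sq$-diagram of complexes is a \emph{finite} direct sum of shifts of the four complexes sitting at the vertices (finite because $\Sq$ has four objects); finite direct sums and shifts commute with small colimits, and the differential of a colimit of complexes is the colimit of the differentials, so $\Tot$ preserves small colimits. Combining this with the previous paragraph,
\begin{align*}
\colim_{\ul{N}\in\Comp_1(\ul{M})}\Tot C^\Sq(\ul{N})
&\isom\Tot\Bigl(\colim_{\ul{N}\in\Comp_1(\ul{M})}C^\Sq(\ul{N})\Bigr)\\
&=\Tot\tau_!^\Sq C^\Sq(\ul{M})=D,
\end{align*}
and one checks that this isomorphism is the canonical comparison map of the statement: the maps $\Tot C^\Sq(\ul{N})\to D$ are obtained by applying $\Tot$ to the structure morphisms $C^\Sq(\ul{N})\to\tau_!^\Sq C^\Sq(\ul{M})$ into the colimit.

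I do not expect any serious obstacle here: all the real work is concentrated in Proposition~\ref{p1} and the Appendix~\ref{sB} machinery behind the identification $\tau_!^\Sq C^\Sq(\ul{M})\simeq\colim_{\Comp_1(\ul{M})}C^\Sq(\ul{N})$, after which Corollary~\ref{co1} is formal. The only points that need (minor) attention are the sign and indexing conventions in the definition of $\Tot$, and the verification that the isomorphism produced above is indeed the natural map of the statement; both are routine bookkeeping.
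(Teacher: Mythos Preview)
Your proposal is correct and follows essentially the same approach as the paper: invoke Proposition~\ref{p1} to identify $\tau_!^\Sq C^\Sq(\ul{M})$ with the colimit over $\Comp_1(\ul{M})$, then use that $\Tot$ commutes with (filtering) colimits. The only minor difference is that you argue $\Tot$ preserves all small colimits, whereas the paper just states it commutes with filtering colimits; either suffices here.
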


\begin{proof} First a word on the ``natural map'':  For any $\ul{M}\to \tau^\Sq \ul{N}\in \Comp_1(\ul{M})$, applying the functor $\tau_!^\Sq C^\Sq:(\ulMCor^\op)^\Sq\to C(\Ab)^\Sq$ we get a composite map in $C(\Ab)^\Sq$
\[C^\Sq(\ul{N})\to\tau_!^\Sq C(\tau^\Sq(\ul{N}))\to \tau_!^\Sq C(\ul{M})\]
where the first map is given by Yoneda. By Proposition \ref{p1}, the induced map
\[\colim_{\ul{N}\in \Comp_1(\ul{M})} C^\Sq(\ul{N})\to \tau_!^\Sq C(\ul{M})\]
is an isomorphism, and the conclusion follows from the fact that $\Tot$ commutes with filtering colimits.
\end{proof}

For the sequel, we need the following definition and (trivial) lemma.

\begin{defn} \label{d1} A morphism $f:M\to M'$ in $\ulMCor$ is an \emph{open immersion} if $f\in \ulMP^\fin$ \cite[Def. 1.10.1]{motmod}, $f$ is minimal (ibid., Def. 1.10.2 a)) and $\ol{f}:\ol{M}\to \ol{M'}$ is an open immersion (cf. \cite[Def. 3.1.1]{motmod}).
\end{defn}

\begin{lemma}\label{l4} Let $f:\ul{M}\to \ul{N}$ be a morphism in $\ulMCor^\Sq$. Suppose that, for any $\delta\in \Sq$, $f(\delta)$ is a open immersion in the sense of Definition \ref{d1} and $\ulomega(f(\delta))=1_{M^\o(\delta)}$. (In other words, $f(\delta)$ verifies the conditions of \cite[Lemma 1.11.3 (1)]{motmod}, except for the properness of $N(\delta)$). Then $f$ induces a functor $f^*:\Comp_1(\ul{N})\to \Comp_1(\ul{M})$.\qed
\end{lemma}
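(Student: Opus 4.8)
The plan is to unwind the definitions and check that the obvious candidate construction actually lands in the right subcategory. Given $f\colon\ul M\to\ul N$ in $\ulMCor^\Sq$ as in the statement, I would define $f^*\colon\Comp_1(\ul N)\to\Comp_1(\ul M)$ on objects by sending $\ul N\by{g}\tau^\Sq(\ul P)\in\Comp_1(\ul N)$ to the composite $\ul M\by{f}\ul N\by{g}\tau^\Sq(\ul P)$, and on morphisms (commutative triangles under $\ul N$) by composing with $f$ on the left; functoriality of this assignment is formal once we know it is well-defined. So the entire content is to verify that for each $\delta\in\Sq$ the composite $g(\delta)\circ f(\delta)\colon M(\delta)\to\tau(P(\delta))$ belongs to $\Comp_1(M(\delta))$.

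For this I would check the three defining conditions of $\Comp_1$ (as recalled in the statement of the pro-left-adjoint theorem) at each vertex $\delta$, dropping $\delta$ from the notation. First, $(g\circ f)^\o = g^\o\circ f^\o$ must be the identity on $M^\o$: by hypothesis $\ulomega(f)=1_{M^\o}$, i.e. $f^\o=\id$, and $g\in\Comp_1(N)$ gives $g^\o=\id_{N^\o}=\id_{M^\o}$, so the composite is the identity. Second, $g\circ f$ must define an open immersion on total spaces: $\ol f\colon\ol M\to\ol N$ is an open immersion by Definition \ref{d1}, $\ol g\colon\ol N\to\ol P$ is an open immersion since $g\in\Comp_1(N)$, and a composite of open immersions of schemes is an open immersion. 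Third, $g\circ f$ must be minimal; here minimality of a composite is exactly \cite[Lemma 1.11.3 (1)]{motmod} — or the relevant composability statement for minimal morphisms — applied to $f$ minimal (part of Definition \ref{d1}) and $g$ minimal (as $g\in\Comp_1$). Note the hypothesis is phrased precisely so that $f$ satisfies the conditions of \cite[Lemma 1.11.3 (1)]{motmod} apart from properness of $N(\delta)$, and properness of the target plays no role in checking these three conditions, so the citation applies verbatim.

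The only mild subtlety — which I would flag as the place to be careful rather than a genuine obstacle — is that morphisms in the comma category $\ul N\downarrow\MCor^\Sq$ live in $\MCor^\Sq$, so when we precompose a triangle with $f$ we must make sure the resulting object is still indexed over $\MCor^\Sq$ and not merely over $\ulMCor^\Sq$; but $\ul P$ already lies in $\MCor^\Sq$ by definition of $\Comp_1(\ul N)$, and we are not changing $\ul P$, only replacing the structure map $g$ by $g\circ f$, so this is automatic. Likewise one should note that $g\circ f$ is a priori a morphism in $\ulMCor^\Sq$; but the three conditions just verified are exactly the conditions defining an object of $\Comp_1(M(\delta))$, and in particular force $g\circ f$ to lie in $\ulMP^\fin$ (as a composite of morphisms in $\ulMP^\fin$), so no category-membership issue arises. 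With all conditions verified vertexwise, Definition \ref{d1.1} is satisfied and $f^*$ is the desired functor; since everything is given by strict composition it is visibly a functor, and the proof is complete.
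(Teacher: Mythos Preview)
Your proposal is correct and is precisely the routine verification the paper leaves implicit: note that the paper marks this lemma with \qed\ immediately after the statement and explicitly calls it a ``(trivial) lemma'', so there is no proof to compare against beyond the unwinding of definitions you have carried out. One minor quibble: your citation of \cite[Lemma 1.11.3 (1)]{motmod} for the composability of minimal morphisms is slightly off, since that lemma is about the conditions defining $\Comp_1$ rather than about composition; but the fact that a composite of minimal morphisms (in $\ulMP^\fin$) is minimal is immediate from $M^\infty=\ol f^*N^\infty=\ol f^*\ol g^*P^\infty=(\ol g\circ\ol f)^*P^\infty$, so no real gap arises.
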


\subsection{Cofinality of MV-squares}\label{s1.2}

\begin{defn}\label{d2.3} a) A square $\ul{N}\in \MCor^\Sq$ is \emph{MV} if 
\begin{thlist}
\item $\omega^\Sq(\ul{N})\in \Cor^\Sq$ is an upper distinguished (=elementary Nisnevich) square; 
\item the complex $\Tot \Z_\tr^\Sq(\ul{N})$ \cite[(4.4)]{motmod} is exact in $\MNST$. 
\end{thlist}
b) Consider an elementary Nisnevich square
\begin{equation}\label{eq3a} 
\ul{S}\, =\quad
\begin{CD}
S(00)@>a>> S(01)\\
@VcVV @Vd VV\\
S(10)@>b>> S(11)
\end{CD}
\end{equation}
as in  \cite[(4.1)]{motmod}. An object $\ul{S}\to \tau^\Sq\ul{N}\in \Comp_1(\ul{S})$ is an \emph{MV completion} of $\ul{S}$ if $\ul{N}$ is MV. 
We write $\Comp_1^{MV}(\ul{S})$ for the full subcategory of $\Comp_1(\ul{S})$ consisting of MV completions.
\end{defn}

\begin{rk} In Definition \ref{d2.3} a), Condition (i) was put for philosophical reasons and will not be used in the proofs. This condition is automatic for MV completions in the sense of Definition \ref{d2.3} b), and all MV squares which will appear in this paper are of this form. 
\end{rk}


\begin{thm}\label{c1}  If $\ol{S}(11)$ is normal, the category $\Comp_1^{MV}(\ul{S})$ is cofinal in $\Comp_1(\ul{S})$.
\end{thm}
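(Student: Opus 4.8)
The plan is to show that the inclusion $\Comp_1^{MV}(\ul{S})\hookrightarrow \Comp_1(\ul{S})$ is cofinal in the sense of the pro-category formalism of Appendix~\ref{sB}, i.e.\ that for every object $\ul{S}\by{f}\tau^\Sq(\ul{N})$ of $\Comp_1(\ul{S})$ there is an object $\ul{S}\by{f'}\tau^\Sq(\ul{N'})$ of $\Comp_1^{MV}(\ul{S})$ and a morphism $\ul{N'}\to \ul{N}$ in $\MCor^\Sq$ over $\ul{S}$, and moreover that $\Comp_1^{MV}(\ul{S})$ is cofiltered (this last point being inherited from $\Comp_1(\ul{S})$ once we know it is nonempty and ``stable under the relevant constructions''). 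The first step is to reduce to the two key inputs announced in the introduction. Given $\ul{N}\in \Comp_1(\ul{S})$, I would first replace it by a compactification whose four corners are \emph{proper} modulus pairs: this is the ``easy compactification'' step, done componentwise by taking closures in a proper model of $\ol{S}(11)$ (which exists since $\ol{S}(11)$ is of finite type over $k$), using that $\ol{S}(11)$ is normal to keep control of the boundaries, and arranging the square to remain cartesian with minimal edges on the total spaces. At this stage I have an object of $\Comp_1(\ul{S})$ of the shape \eqref{intro-sq2} with all corners proper, but it is in general not MV.

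Next I would invoke Theorem~\ref{th-partial} (statement (A)) to extend the elementary Nisnevich cover structure: applied to the open immersions $S(\delta)\to N(\delta)$ with $N(11)$ proper, it guarantees that after possibly shrinking/modifying we may take $\ul{N}$ itself to be an elementary Nisnevich square in $\ulMP^\fin$ (equivalently $\omega^\Sq(\ul{N})$ upper distinguished), so condition (i) of Definition~\ref{d2.3} a) holds; here is where the normality of $\ol{S}(11)$ is genuinely used, as in the hypothesis of Theorem~\ref{c1}. Then, with this elementary Nisnevich square of proper modulus pairs in hand and $N(00)$ proper, I apply Theorem~\ref{existence-MV} (statement (B)): it embeds the square into an MV square $\ul{N}_1$ — concretely, one increases the multiplicity of the boundary divisor on the north-east corner after a sequence of blow-ups, producing $\ul{N}_1$ with the properties of Proposition~\ref{eq6.9}, and Theorem~\ref{MV-curve} certifies that $\ul{N}_1$ is MV, i.e.\ condition (ii) holds. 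Since the construction comes with a morphism $\ul{N}_1\to \ul{N}$ over $\ul{S}$ in $\MCor^\Sq$ with each component in $\Comp_1$, we obtain the required factorization $\ul{S}\to \tau^\Sq(\ul{N}_1)\to \tau^\Sq(\ul{N})$, establishing cofinality of objects.

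To finish, I would check that $\Comp_1^{MV}(\ul{S})$ is itself cofiltered, so that ``cofinal'' has its full force: given two MV completions, one forms a common refinement inside $\Comp_1(\ul{S})$ (which is cofiltered by Proposition~\ref{p1} / Lemma~\ref{l4}), and then runs the compactify–extend–modify procedure above on it to land back in $\Comp_1^{MV}(\ul{S})$ mapping to both; one handles parallel pairs of morphisms similarly by equalizing first in $\Comp_1(\ul{S})$ and then MV-completing. The main obstacle is the middle step: controlling that the blow-up/multiplicity-raising recipe of \S\ref{subsec-6.2} can be performed \emph{compatibly with the given open immersion} $\ul{S}\to \ul{N}$ and compatibly with a prescribed morphism $\ul{N}_1\to \ul{N}$, i.e.\ that the MV-completion construction is functorial enough to produce morphisms in $\MCor^\Sq$ over $\ul{S}$ and not merely an abstract MV square — this is exactly why one needs the strengthened Nisnevich forms of \cite[Lemma 4.3.5]{motmod} and \cite[Prop.~4.3.10]{motmod} rather than their Zariski predecessors, and why the bulk of the work is deferred to \S\ref{section-partial} and \S\ref{section-MV}. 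The verification of the MV property itself (condition (ii)), reducing to exactness of the Mayer--Vietoris sequence of $\ulMCor(M,-)$ and the ``resurgence principle'' Proposition~\ref{l-resurgence}, is the other substantial ingredient, but it is packaged cleanly by Theorem~\ref{MV-curve}.
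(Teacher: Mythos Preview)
Your broad skeleton---reduce to Theorem~\ref{th-partial} plus Theorem~\ref{existence-MV}, with Theorem~\ref{MV-curve} certifying the MV property---matches the paper (this is exactly Proposition~\ref{p2.1}). But you have misread what each ingredient does, and this makes your middle two paragraphs not actually go through.

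First, two confusions that make your ``first step'' and part of your ``second step'' empty. An object of $\Comp_1(\ul{S})$ already has $\ul{N}\in\MCor^\Sq$, so all four corners are \emph{already} proper modulus pairs; there is nothing to compactify. Moreover, since $N(\delta)^\o=S(\delta)^\o$ for every $\delta$, condition~(i) of Definition~\ref{d2.3}~a) is automatic: $\omega^\Sq(\ul{N})=\omega^\Sq(\ul{S})$ is the original elementary Nisnevich square. So Theorem~\ref{th-partial} is \emph{not} used to ``make $\ul{N}$ an elementary Nisnevich square''.

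The genuine gap is in how you invoke Theorem~\ref{existence-MV}. That theorem is Theorem~\ref{c1} under the extra hypothesis that $\ol{S}(11)$ is \emph{proper}; it says nothing about a given $\ul{N}$ with $N(00)$ proper. You cannot apply it to $\ul{S}$ directly, because $\ol{S}(11)$ is only assumed normal. The actual role of Theorem~\ref{th-partial} is to produce a \emph{new} elementary Nisnevich square $\ul{S}'$ with $\ol{S}'(11)$ proper (and normal), together with a strict extension $b:\ul{S}\to\ul{S}'$ and a minimal morphism $c:\ul{S}'\to\ul{N}_1$ to some $\ul{N}_1\in\Comp_1(\ul{S})$ dominating the given $\ul{N}_0$. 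One then works in $\Comp_1(\ul{S}')$, not $\Comp_1(\ul{S})$: choose any $\ul{N}_2\in\Comp_1(\ul{S}')$, form the scheme-theoretic images $\ul{N}_3$ in $\ul{N}_1\times\ul{N}_2$ to get an object of $\Comp_1(\ul{S}')$ dominating $\ul{N}_1$, and only now apply Theorem~\ref{existence-MV} to $\ul{S}'$ to find $\ul{N}_4\in\Comp_1^{MV}(\ul{S}')$ above $\ul{N}_3$. Finally, Lemma~\ref{l4} (applied to the extension $b$) transports $\ul{N}_4$ back to $\Comp_1^{MV}(\ul{S})$, sitting above $\ul{N}_0$. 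Your argument omits both the passage to $\ul{S}'$ and the descent via Lemma~\ref{l4}; without them, Theorem~\ref{existence-MV} simply does not apply. Your final paragraph about cofilteredness is unnecessary: $\Comp_1(\ul{S})$ is a cofiltering ordered set by Proposition~\ref{p1}, so cofinality amounts precisely to the domination statement you set out to prove.
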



\begin{prop}\label{p2.2} Theorem \ref{th-presentation} follows from Theorem \ref{c1}.
\end{prop}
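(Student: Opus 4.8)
The plan is to show that the four functors in the commutative square of Theorem \ref{th-presentation} behave as expected, reducing the statement that $\gamma$ and $\delta$ are equivalences to the cofinality statement of Theorem \ref{c1}. First I would recall the construction of the four categories and the functors between them. The functor $\delta$ is the restriction to the motivic localization of the Yoneda-type functor $D(\MPST)\to\MDM^\eff$; since $\MDM^\eff$ is by definition (see \cite[\S 6]{motmod}) the localization of $D(\MPST)$ with respect to the localizing subcategory generated by complexes $\Z_\tr(\sX\otimes\bcube)\to\Z_\tr(\sX)$, and since the complexes $\Z_\tr(MV)$ attached to MV squares are already acyclic in $\MDM^\eff$ (because condition (ii) of Definition \ref{d2.3} a) makes $\Tot\Z_\tr^\Sq(\ul N)$ exact in $\MNST$, hence zero in $D(\MNST)\hookrightarrow$ the relevant derived category, a fortiori in $\MDM^\eff$), the functor $\delta$ is the comparison functor between two successive localizations and is therefore an equivalence by the universal property of Verdier/Bousfield localization. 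So the bottom-right corner and $\delta$ pose no difficulty.

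Next I would treat $\gamma$. Here $K^b(\MCor)$ is the bounded homotopy category of the additive category $\MCor$, and $\MDM_\gm^\eff$ is, by \cite[\S 6.2, 6.9]{motmod}, a suitable idempotent-completed localization of $K^b(\MCor)$ (or of a thick subcategory thereof) with respect to $CI$ together with the Mayer--Vietoris relations for elementary Nisnevich covers of \emph{proper} modulus pairs. The point is that MV squares in the sense of Definition \ref{d2.3} a) give exactly such relations, and conversely the content of \cite[\S 7.5]{motmod} is that the elementary-Nisnevich Mayer--Vietoris triangles used to define $\MDM_\gm^\eff$ can be expressed through MV squares; thus $\langle CI+MV\rangle$ generates the same thick subcategory as the defining relations of $\MDM_\gm^\eff$, and the $\natural$ (idempotent completion) matches the pseudo-abelian envelope taken in \cite{motmod}. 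Hence $\gamma$ is an equivalence by the universal property of the Verdier localization followed by idempotent completion. As with $\delta$, this step is essentially a matter of unwinding definitions and invoking uniqueness of localizations.

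It then remains to see how Theorem \ref{c1} enters: it is needed precisely to know that $\langle CI+MV\rangle$ in $K^b(\MCor)$ is \emph{not too small}, i.e. that imposing MV-square relations already kills every elementary Nisnevich Mayer--Vietoris complex that becomes relevant after passing to $\ulMDM_\gm^\eff$ and restricting back to proper modulus pairs. Concretely, by Corollary \ref{co1} applied to the complexes $C=\Z_\tr^\Sq(\ul S)$ attached to elementary Nisnevich squares $\ul S$, the motive of such a square in $\ulMDM^\eff$ is computed as a filtered colimit over $\Comp_1(\ul S)$ of the motives of its completions; Theorem \ref{c1} says the MV completions are cofinal in this diagram (when $\ol S(11)$ is normal, which one reduces to), so the colimit is unchanged if one restricts to MV completions, whose contributions vanish in $\MDM^\eff$ by the very definition of MV. This shows the localizing/thick subcategories generated by the $CI$ and $MV$ relations on the upper row map to the full motivic relations on the lower row, giving the required identification of localizations; combined with the two universal-property arguments above, both vertical functors are equivalences.

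\textbf{Main obstacle.} The genuinely delicate point is not the formal localization bookkeeping but the bridge provided by Theorem \ref{c1}: one must be careful that the reduction to the case $\ol S(11)$ normal is legitimate (normalization is a pro-object manoeuvre in $\Comp_1$, compatible with the completions), and that ``cofinal'' in the sense used is strong enough to guarantee the colimit in Corollary \ref{co1} is computed by the MV subdiagram — i.e. that $\Comp_1^{MV}(\ul S)\hookrightarrow\Comp_1(\ul S)$ is cofinal as a functor of (filtered) index categories, not merely that MV completions exist. Handling this compatibility, and checking that the pseudo-abelianization $(-)^\natural$ commutes with all the localizations in play, is where the real work of the proof lies; everything else is the universal property of Verdier localization.
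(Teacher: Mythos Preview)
Your proposal has the logic inverted: you treat $\delta$ as the easy functor and locate the use of Theorem \ref{c1} in proving that $\gamma$ is an equivalence. In fact the paper does the opposite. The category $\MDM^\eff$ is \emph{not} the localization of $D(\MPST)$ by the $CI$ relations alone; it is obtained by first passing to Nisnevich sheaves (equivalently, localizing $D(\MPST)$ by $\Ker D(a_\Nis)$) and then inverting $CI$. Thus $\delta$ compares the localization of $D(\MPST)$ by $\langle \Z_\tr(CI)+\Z_\tr(MV)\rangle^{\mathrm{loc}}$ with the localization by $\langle \Z_\tr(CI)\rangle^{\mathrm{loc}}+\Ker D(a_\Nis)$. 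You correctly observe that $\Z_\tr(MV)\subset \Ker D(a_\Nis)$, but the converse inclusion---that $\Ker D(a_\Nis)$ is generated by $\Z_\tr(MV)$---is precisely the nontrivial content, and is where Theorem \ref{c1} is used. So your claim that $\delta$ is an equivalence ``by the universal property of localization'' is a genuine gap.

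The paper's route is: $\alpha$ is fully faithful and $\beta$ is a full embedding by results already in \cite{motmod}; $\gamma$ is essentially surjective also by \cite{motmod}. Hence it suffices to prove $\delta$ is an equivalence, from which the full faithfulness of $\gamma$ follows formally via the commutative square. To prove $\delta$ is an equivalence, one must show that any $C\in D(\MPST)$ right orthogonal to $\Z_\tr(MV)$ has $D(\tau_!)C$ right orthogonal to the elementary Nisnevich Mayer--Vietoris complexes in $\ulMPST$. This is done by writing $\Tot\,\tau_!^\Sq C^\Sq(\ul S)$ as a filtered colimit over $\Comp_1(\ul S)$ via Corollary \ref{co1}, reducing to $\ol S(11)$ normal (Lemma \ref{l-normal}), and then invoking Theorem \ref{c1} to restrict the colimit to MV completions, each of which gives an acyclic complex by hypothesis on $C$. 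Your sketch contains the right ingredients (Corollary \ref{co1}, cofinality, the normalization reduction) but applies them to the wrong vertical arrow.
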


\begin{proof} Consider the square in Theorem \ref{th-presentation}.
By \cite[Ex. A.11.6 and Th. A.11.9]{motmod}, $\alpha$ is fully faithful; by loc. cit., Prop. 6.9.1 (2), $\gamma$ is essentially surjective, and $\beta$ is a (full) embedding by definition (loc. cit., Def. 6.9.3); therefore it suffices to show that $\delta$ is an equivalence of categories. We argue similarly to the proof of \cite[Prop. 6.3.2]{motmod}.  First, $D(a_\Nis):D(\MPST)\to D(\MNST)$ is a localisation functor by the same argument (referring to prop. 3.7.3 rather than prop. 3.5.3). Next, $\Z_\tr(MV)$ is a set of compact objects of $D(\MPST)$, contained in $\Ker D(a_\Nis)$ by definition of $MV$ (see \cite[Lemmas 4.3.2, 4.3.3 and Def. 4.3.7]{motmod}). It suffices to show that $\Ker D(a_\Nis)$ is generated by $\Z_\tr(MV)$, or equivalently by \cite[Th. A.11.7]{motmod}, that the right orthogonal of these objects in $\Ker D(a_\Nis)$ is $0$. Consider the naturally commutative diagram
\[\begin{CD}
\Ker D(a_\Nis) @>>> \Ker D(\ul{a}_\Nis)\\
@VVV @VVV\\
D(\MPST)@>D(\tau_!)>> D(\ulMPST)\\
@VD(a_\Nis)VV @VD(\ul{a}_\Nis) VV\\
D(\MNST)@>D(\tau_\Nis)>> D(\ulMNST).
\end{CD}\]

Note that $D(\tau_!)$ sends $\Z_\tr(MV)$ to compact objects of $D(\ulMPST)$. Since this functor is fully faithful, it suffices to show:
\end{proof}

\begin{lemma} Let $C\in D(\MPST)$. If $C$ is right orthogonal to $\Z_\tr(MV)$, then $D(\tau_!) C$ is right orthogonal to $\Z_\tr(\ul{MV})$, where $\ul{MV}$ are the MV relations in $\ulMPST$ (denoted by (MV2) in \cite[\S 6.3]{motmod}).
\end{lemma}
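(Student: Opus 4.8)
The claim is an instance of the naturality of the adjunction $(\tau_!,\tau^*)$ together with the compatibility of the two sheafification functors $a_\Nis$ and $\ul a_\Nis$ along $\tau$. Concretely, let $C\in D(\MPST)$ be right orthogonal to $\Z_\tr(MV)$; we must show that for every MV square $\ul{N}'$ in $\ulMPST$ giving rise to an element of $\ul{MV}$, the group $\Hom_{D(\ulMPST)}(\Z_\tr(\ul{N}')[n],D(\tau_!)C)$ vanishes for all $n$. The first step is to rewrite this $\Hom$ using the adjunction $D(\tau_!)\dashv D(\tau^*)$: we have
\[\Hom_{D(\ulMPST)}(\Z_\tr(\ul{N}')[n],D(\tau_!)C)\cong \Hom_{D(\ulMPST)}(\Z_\tr(\ul{N}')[n],D(\tau_!)C),\]
and since $\tau_!$ is left adjoint to $\tau^*$ we instead observe that $D(\tau_!)$ is fully faithful (as recalled in the proof of Proposition \ref{p2.2}) and admits $D(\tau^*)$ as a right adjoint, so the vanishing can be tested after applying $D(\tau^*)$, i.e.\ it suffices to show $\Hom_{D(\MPST)}(D(\tau^*)\Z_\tr(\ul{N}')[n],C)=0$.

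\textbf{Step: identify $\tau^*$ of a $\ulMV$-complex.} The heart of the matter is to compute $D(\tau^*)$ applied to the complex $\Tot\Z_\tr^\Sq(\ul{N}')$ attached to an MV square $\ul{N}'$ in $\ulMCor^\Sq$, or rather to show it lies in the localising (better: in the right-orthogonal-detecting) subcategory generated by $\Z_\tr(MV)$. Here is where Corollary \ref{co1} enters. For $\ul{M}\in\ulMCor^\Sq$ and $C\in C(\MPST)$ viewed as a presheaf, $\Tot\tau_!^\Sq C^\Sq(\ul M)=\colim_{\ul N\in\Comp_1(\ul M)}\Tot C^\Sq(\ul N)$; dually, the presheaf $\tau^*$ of a representable on $\ulMCor$ is the ``restriction'', and for the Yoneda object $\Z_\tr(\ul N')$ of a proper square (which, being MV, has proper corners) one has $\tau^*\Z_\tr(\ul N')=\Z_\tr(\ul N')$ itself as an object of $\MPST$, because a proper modulus pair is already in $\MCor$ and $\tau$ is a full embedding. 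Thus for a \emph{proper} MV square $\ul N'$ the complex $D(\tau^*)\Tot\Z_\tr^\Sq(\ul N')$ is literally the complex $\Tot\Z_\tr^\Sq(\ul N')$ computed in $\MPST$; and because $\ul N'$ satisfies condition (ii) of Definition \ref{d2.3} a) — i.e.\ is an MV square in $\MCor$ — this complex represents an element of $MV$ (up to the localising subcategory; cf.\ \cite[Lemmas 4.3.2, 4.3.3 and Def.\ 4.3.7]{motmod}). Hence $\Hom_{D(\MPST)}(D(\tau^*)\Z_\tr(\ul N')[n],C)=0$ by the hypothesis on $C$.

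\textbf{Step: reduce the general $\ul{MV}$ relation to the proper case.} The relations (MV2) of \cite[\S 6.3]{motmod} are built from elementary Nisnevich squares in $\ulMCor^\Sq$ whose corners need not be proper. To handle such a square $\ul S'$, apply Theorem \ref{c1} (equivalently, Prop.\ \ref{p2.2}'s reduction already supplies $\Comp_1^{MV}(\ul S')$ cofinal in $\Comp_1(\ul S')$): write $\Z_\tr(\ul S')$, as an object of $\ulMPST$ after $\tau_!$, as the filtered colimit of $\Z_\tr(\ul N')$ over MV completions $\ul N'\in\Comp_1^{MV}(\ul S')$, using Corollary \ref{co1} in the form $\colim_{\ul N'\in\Comp_1(\ul S')}\Tot\Z_\tr^\Sq(\ul N')\iso \Tot\tau_!^\Sq\Z_\tr^\Sq(\ul S')$. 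Since the $\ul N'$ are proper MV squares, the previous step applies to each, and passing to the filtered colimit (compactness is not needed here — we only need that a filtered colimit of objects right-orthogonal to $C$ is again right orthogonal, which holds because $\Hom(-,C)$ turns filtered colimits into limits of trivial groups) gives that $D(\tau_!)\Z_\tr(\ul S')$ is right orthogonal to $C$ — equivalently that $\Z_\tr(\ul{MV})$ is right orthogonal to $D(\tau_!)C$.

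\textbf{Main obstacle.} The delicate point is the identification $\tau^*\circ D(\tau_!)=\mathrm{id}$ together with the precise bookkeeping in the last step: one must be careful that the ``complexes of the form $MV$'' are defined as in \cite[Def.\ 4.3.7]{motmod} up to a localising subcategory, so the colimit of proper MV complexes indexed by $\Comp_1^{MV}(\ul S')$ is what represents the class of $\Z_\tr(\ul{MV})$ after $D(\tau_!)$, and this is exactly the content that Theorem \ref{c1} (cofinality of MV completions) is designed to deliver. All other steps are formal manipulations of adjunctions and orthogonality. I would therefore spend the bulk of the write-up on making the cofinality/colimit step airtight, and treat the adjunction juggling as routine.
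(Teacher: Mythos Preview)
Your adjunction manipulation in the first step is wrong. You have $\tau_!\dashv \tau^*$, so $D(\tau^*)$ is the \emph{right} adjoint of $D(\tau_!)$; there is no isomorphism
\[
\Hom_{D(\ulMPST)}(A,D(\tau_!)C)\cong \Hom_{D(\MPST)}(D(\tau^*)A,C)
\]
in general. Full faithfulness of $D(\tau_!)$ gives $D(\tau^*)D(\tau_!)\cong\id$, which lets you test vanishing after applying $D(\tau^*)$ \emph{only when $A$ lies in the essential image of $D(\tau_!)$}. The complexes $\Z_\tr(\ul{MV})$ are built from elementary Nisnevich squares in $\ulMCor$ whose corners are typically \emph{not} proper, so they are not of the form $D(\tau_!)(-)$, and your reduction collapses. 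The subsequent expression ``$D(\tau_!)\Z_\tr(\ul S')$'' does not typecheck for the same reason ($\tau_!$ goes from $\MPST$ to $\ulMPST$), and the final ``equivalently'' is unjustified.

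The paper avoids all adjunction juggling. It uses instead that the $\ul{MV}$ complex $K$ attached to an elementary Nisnevich square $\ul S$ is a bounded complex of \emph{projectives} in $\ulMPST$ (the $\Z_\tr(S(ij))$ are representable, hence projective). Therefore
\[
\Hom_{D(\ulMPST)}(K[n],D(\tau_!)C)=H^{-n}\bigl(\Tot\,\tau_!^\Sq C^\Sq(\ul S)\bigr),
\]
a plain complex of abelian groups. Now Corollary~\ref{co1} writes this as $\colim_{\ul N\in\Comp_1(\ul S)}\Tot C^\Sq(\ul N)$, and Theorem~\ref{c1} (after normalising $\ol S(11)$, Lemma~\ref{l-normal}) lets you restrict the colimit to $\Comp_1^{MV}(\ul S)$. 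Each term $\Tot C^\Sq(\ul N)$ is acyclic by the hypothesis on $C$, and a filtered colimit of acyclic complexes is acyclic. Your instinct to invoke Theorem~\ref{c1} was right, but it should be applied to the \emph{values} of $\tau_! C$ on $\ul S$, not to an attempted rewriting of the $\ul{MV}$ complex itself.
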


\begin{proof} Consider an elementary Nisnevich square $\ul{S}$ \eqref{eq3a}. Since $\Z_\tr(S(00))$, \dots, $\Z_\tr(S(11))$ are projective objects of $\ulMCor$, we have to show that the complex of abelian groups
\begin{multline}\label{eq2}
\tau_! C(S(00))\to \tau_! C(S(01))\oplus \tau_! C(S(10))\to \tau_! C(S(11))\\ = \Tot \tau_!^\Sq C^\Sq(\ul{S})
\end{multline}
is acyclic. 

\begin{lemma}\label{l-normal} We may assume that $\ol{S}(11)$ is normal. 
\end{lemma}

\begin{proof} 
For any $i,j=0,1$, let $\pi_{ij} : \ol{S} (ij)^N \to \ol{S} (ij)$ be the normalization and let $S(ij)^N=(\ol{S} (ij)^N,\pi_{ij}^*S(ij)^\infty)$. 
Then, $\pi_{ij}$ defines an isomorphism of modulus pairs $S(ij)^N\iso S(ij)$ in $\ulMCor$ for each $i,j$ since it is minimal, proper  and induces an isomorphism $(S(ij)^N )^\circ \iso S(ij)^\circ$.
Since the maps $\ol{S}(ij) \to \ol{S}(11)$ are \'etale for all $i,j$, we have $\ol{S} (ij)^N = \ol{S} (ij) \times_{\ol{S}(11)} \ol{S}(11)^N$.
Therefore the $S(ij)^N$'s form an elementary Nisnevich square $\ul{S}^N$, and the natural morphism $\ul{S}^N \to \ul{S}$, induced by $\pi_{ij}$'s, is an isomorphism of squares.
Therefore, we may replace everything by the pullbacks along $\pi$.
\end{proof}

Assume $\ol{S}(11)$ normal.
Then Corollary \ref{co1} and Theorem \ref{c1} yield an isomorphism
\[\colim_{\ul{N}\in \Comp_1^{MV}(\ul{S})}  C^\Sq(\ul{N})\iso  \tau_!^\Sq C(\ul{S}).\]
The acyclicity of \eqref{eq2} follows, since $\Tot$ obviously commutes with colimits and the complex
\[\Tot C^\Sq(\ul{N}) =C(N(00))\to C(N(01))\oplus C(N(10))\to C(N(11))\]
is acyclic for any $\ul{N}\in \Comp_1^{MV}(\ul{S})$ by hypothesis.
\end{proof}

\begin{rk}\label{r2.1} if $\ol{S}(11)$ is normal, so is $\ol{S}(ij)$ for all $(i,j)$ since the morphisms $\ol{S}(ij)\to \ol{S}(11)$ are \'etale.
\end{rk}

\begin{rk} Theorem \ref{c1} says that there are ``enough'' MV squares in $\MCor^\Sq$.  A sufficient condition for a square to be MV is given in Theorem \ref{MV-curve}. This raises the question: can one describe all MV squares? \end{rk}

\section{Further reductions}

In this section, we reduce Theorem \ref{c1} to the following two statements. 
For the first one, we need a definition:

\begin{defn}\label{d2.1}\
\begin{enumerate}
\item
A morphism $\alpha : M \to N$ in $\ulMCor$ is called 
\begin{itemize}
\item \emph{entire} if $\alpha \in \Cor (M^\circ , N^\circ )$ is a morphism of schemes $M^\circ \to N^\circ$, and extends to a morphism of schemes $f_\alpha : \ol{M} \to \ol{N}$;
\item \emph{minimal} if $\alpha$ is entire and $M^\infty = f_\alpha^\ast N^\infty$;
\item an \emph{extension} if $\alpha$ is minimal, $f_\alpha$ is an open immersion, and induces an isomorphism $M^\circ \iso N^\circ$.
\end{itemize}
\item 
Let $\ul{S}\in \ulMCor^\Sq$.
An \emph{extension} of $\ul{S}$ is a morphism of squares $f:\ul{S}\to \ul{S}'$ such that $f(ij)$ is an extension for all $(i,j)$. An extension of $\ul{S}$ is \emph{strict} if,  for all $i,j\in \{0,1\}$, we have $\ol{S}(ij) = \ol{S}'(ij) \times_{\ol{S}'(11)} \ol{S}(11)$.
\end{enumerate}
\end{defn}

Note that, in particular, any object of $\Comp_1(\ul{S})$ defines an extension of $\ul{S}$.

\begin{thm}\label{th-partial}
Let $\ul{S}$ be an elementary Nisnevich square, with $\ol{S}(11)$ normal.
Let $\ul{S}\by{a} \ul{N}_0 \in \Comp_1 (\ul{S})$ be a compactification of $\ul{S}$.
Then, there exists a commutative diagram in $\ulMCor^\Sq$
\begin{equation}\label{eq2.1}
\begin{CD}
\ul{S}'@>c>> \ul{N}_1\\
@Ab AA @Vd VV\\
\ul{S} @>a>> \ul{N}_0
\end{CD}
\end{equation}
such that 
\begin{thlist}
\item $\ul{S}'$ is an elementary Nisnevich squanre;
\item $\ol{S'}(11)$ is proper and normal;
\item $b$ is a strict extension;
\item $cb:\ul{S}\to \ul{N}_1$ is an object of $\Comp_1(\ul{S})$;
\item $c$ is minimal.
\end{thlist}
\end{thm}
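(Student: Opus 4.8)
\textbf{Proof plan for Theorem \ref{th-partial}.}

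The plan is to construct $\ul N_1$ by a ``thickening the boundary of $\ul N_0$ along the complement of $\ul S$'' procedure, mimicking the Zariski argument of \cite[Lemma 4.3.5]{motmod} but being careful about the \'etale maps. Write $\ul N_0 = \{N_0(ij)\}$ with $N_0(ij) = (\ol{N_0}(ij), N_0(ij)^\infty)$; by hypothesis each $a(ij):S(ij)\to N_0(ij)$ is an object of $\Comp_1(S(ij))$, so $\ol S(ij)\hookrightarrow \ol{N_0}(ij)$ is an open immersion inducing $S(ij)^\circ\iso N_0(ij)^\circ$, and it is minimal. First I would replace $\ol{N_0}(11)$ by a suitable open subset and, if necessary, normalize and further blow up in the boundary so that $\ol{N_0}(11)$ becomes proper and normal while keeping $\ol S(11)$ as an open subset (here Remark \ref{r2.1} and the normality hypothesis on $\ol S(11)$ are used, together with a Nagata compactification); this produces the total space of $N_1(11)$. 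Then one defines, for $i,j\in\{0,1\}$, $\ol{S'}(ij) := \ol S(ij)\times_{\ol S(11)} \ol{N_1}(11)$ — wait, rather $\ol{S'}(11):=\ol{N_1}(11)$ and $\ol{S'}(ij):=\ol{S}(ij)\times_{\ol S(11)}\ol{S'}(11)$ using the \'etale maps — and similarly $\ol{N_1}(ij) := \ol{N_0}(ij)\times_{\ol{N_0}(11)}\ol{N_1}(11)$; the point of taking these fiber products is exactly that it preserves the ``upper distinguished'' property of the square on total spaces (this is the ``pull-back of a square along a morphism to its lower right corner'' remarked on in the introduction), giving (i), and that it makes $b$ strict by construction, giving (iii).

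The boundaries are then chosen as follows. On $N_1(11)$ one takes $N_1(11)^\infty$ to be (the Cartier divisor supported on) the closure in $\ol{N_1}(11)$ of $N_0(11)^\infty$ together with the boundary divisor coming from the compactification, arranged — by further blow-ups if needed so that the relevant closed subsets become divisors — to be an effective Cartier divisor whose complement is $N_0(11)^\circ = S(11)^\circ$; crucially one chooses the multiplicities large enough. Then $N_1(ij)^\infty$ is the pullback of $N_1(11)^\infty$ under the \'etale projection $\ol{N_1}(ij)\to\ol{N_1}(11)$, which makes $d(ij)\colon N_1(ij)\to ?$ — rather makes the edges of $\ul N_1$ minimal over $N_1(11)$ — and makes $c$ minimal, giving (v); and $S'(ij)^\infty$ is the restriction of $N_1(ij)^\infty$ to the open $\ol{S'}(ij)$. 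One checks $S'(ij)^\circ = S(ij)^\circ$ and $N_1(ij)^\circ = S(ij)^\circ$, so that $cb(ij)\in\ulMP^\fin$ induces the identity on interiors. To get (iv), namely that $cb\colon\ul S\to\ul N_1$ actually lies in $\Comp_1(\ul S)$, one must verify that $\ol{S}(ij)\hookrightarrow\ol{N_1}(ij)$ is still an open immersion (immediate, as it is a base change of $\ol S(11)\hookrightarrow\ol{N_1}(11)$) and that the composite is minimal, i.e. $S(ij)^\infty = (cb(ij))^\ast N_1(ij)^\infty$; this is where the large-multiplicity choice pays off, but one needs the admissibility bookkeeping of \eqref{eq1.3} to come out as an equality, using that $a(ij)$ was already minimal. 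For (ii) one just needs $\ol{N_1}(11) = \ol{S'}(11)$ proper and normal, which was arranged in the first step (normality being preserved by the base change defining the other corners, cf. Remark \ref{r2.1}, though only $\ol{S'}(11)$ is required).

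The main obstacle I expect is the control of fibers of the \'etale morphisms needed to keep the square on total spaces \emph{upper distinguished} after compactification: an arbitrary Nagata compactification of $\ol{N_0}(11)$ need not be dominated compatibly by compactifications of the $\ol{N_0}(ij)$ in a way that the maps stay \'etale and the square stays cartesian with the right isomorphism-away-from-a-closed-subset condition. The fix is to choose the compactification of $\ol{N_0}(11)$ first and \emph{then} define all other corners by fiber product — but one must then check that $\ol{N_1}(ij)$ so defined is a \emph{compactification} of $\ol S(ij)$, i.e. that the open immersion $\ol S(ij)\hookrightarrow\ol{N_1}(ij)$ has dense image with the right complement, and that blow-ups performed on $\ol{N_1}(11)$ to turn boundary loci into Cartier divisors pull back to admissible blow-ups on each $\ol{N_1}(ij)$ without destroying \'etaleness over the (now modified) base. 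This is precisely the ``(basically) set-theoretical'' fiber-counting that the introduction warns makes the argument technical; handling it carefully, rather than any single conceptual difficulty, is the real content of \S\ref{section-partial}. Everything else — minimality of $c$, strictness of $b$, the interior identities, property (ii) — is then formal from the definitions in \S\ref{section-presentation}.
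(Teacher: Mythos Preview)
Your plan has a genuine gap at its core: the formula
\[
\ol{S'}(ij):=\ol{S}(ij)\times_{\ol S(11)}\ol{S'}(11)
\]
does not make sense. The total space $\ol{S'}(11)$ is supposed to \emph{contain} $\ol{S}(11)$ as an open subset, so there is no morphism $\ol{S'}(11)\to\ol S(11)$ with which to form this fibre product. Extending an \'etale morphism $\ol S(01)\to\ol S(11)$ to an \'etale morphism over a strictly larger base $\ol{S'}(11)$ is not a base-change operation; it is a genuine extension problem, and solving it is essentially the entire content of the proof. You acknowledge this in your last paragraph, but your proposed ``fix'' --- define the other corners by fibre product over the chosen compactification of the $(11)$-corner --- is exactly the step that fails: the maps $\ol{N_0}(ij)\to\ol{N_0}(11)$ in a general object of $\Comp_1(\ul S)$ are not \'etale (only their restrictions to the interiors are), so $\ol{N_0}(ij)\times_{\ol{N_0}(11)}\ol{N_1}(11)$ has no reason to give an elementary Nisnevich square.

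The paper's actual argument is structurally different. One first Nagata-compactifies the composite $\ol S(01)\to\ol{S'}(11)$ to a proper map $p:T\to\ol{S'}(11)$; then performs two rounds of ``blow up the intersection of two disjoint-over-$\ol S(11)$ closed subsets, then delete one of the strict transforms'' to separate the good fibre $Z(00)$ from the extraneous fibres (this is the set-theoretic bookkeeping); then applies Raynaud--Gruson platification to make the resulting $q_V:V\to\ol{S'}(11)$ flat; and finally uses a flat-birational-implies-open-immersion lemma (Lemma \ref{strong-lemma}) to conclude that $q_V$ is an isomorphism over the closed complement $\ol{Z(10)}$, whence the \'etale locus of $q_V$ is the desired $\ol{S'}(01)$. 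After that, producing the map $c:\ul{S'}\to\ul{N_1}$ requires a \emph{second} application of platification (to graph closures of the rational maps $\ol{S'}(ij)\dashrightarrow\ol{N_0}(ij)$), followed by the modulus-thickening you describe. In particular $\ol{S'}(11)$ and $\ol{N_1}(11)$ are not identified; they are related by a nontrivial map coming from these constructions. None of the three nontrivial ingredients --- the fibre-separation by blow-up, platification, and Lemma \ref{strong-lemma} --- appears in your plan.
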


The second one is:

\begin{thm}\label{existence-MV}
Theorem \ref{c1} is true when $\ol{S}(11)$ is proper.
\end{thm}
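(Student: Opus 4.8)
The plan is to prove Theorem~\ref{existence-MV} by reducing to the simplified statement (B) from the introduction and then building an explicit MV completion. So assume $\ol{S}(11)$ is proper (and, by Lemma~\ref{l-normal}, normal). First I would observe that since $\ol{S}(11)$ is proper, any object $\ul{S}\by{a}\ul{N}_0\in \Comp_1(\ul{S})$ can itself be chosen with $\ol{N}_0(11)$ proper: indeed the open immersions $\ol{S}(ij)\to \ol{N}_0(ij)$ can be compactified and, using the étale structure over the $(11)$-corner together with Remark~\ref{r2.1}, one arranges a strict compactification with all four total spaces proper and normal — this is really the proper-case specialization of the mechanism behind Theorem~\ref{th-partial}. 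Thus it suffices to show that $\Comp_1^{MV}(\ul{S})$ is cofinal in the subcategory of $\Comp_1(\ul{S})$ consisting of objects with proper total spaces, i.e.\ that every such $\ul{N}_0$ admits a morphism $\ul{N}_0\to \ul{N}$ in $\Comp_1(\ul{S})$ with $\ul{N}$ an MV square.

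The core of the argument is then the embedding statement (B): given the square $\ul{N}_0$ with proper corners and $\omega^\Sq(\ul{N}_0)$ elementary Nisnevich, produce an MV square $\ul{N}$ receiving a minimal map from $\ul{N}_0$ that is the identity on interiors. Following \S\ref{subsec-6.2} (as sketched in the introduction), the recipe is: enlarge the total space of the north-east corner by a sequence of blow-ups along centers supported on the boundary, then increase the multiplicity of the boundary divisor $N_0(01)^\infty$; propagating this modification compatibly through the cartesian square (using the pull-back-of-a-square construction, which preserves elementary Nisnevich squares) yields a new square $\ul{N}_1$ satisfying the properties of Proposition~\ref{eq6.9}, together with a minimal morphism $\ul{N}_1\to \ul{N}_0$ over $\ul{S}$ which is an isomorphism on interiors — hence an object of $\Comp_1(\ul{S})$ again. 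It then remains to check that $\ul{N}_1$ is genuinely MV, i.e.\ that Condition~(ii) of Definition~\ref{d2.3}~a) holds: the complex $\Tot\,\Z_\tr^\Sq(\ul{N}_1)$ is exact in $\MNST$. Condition~(i) is automatic since $\omega^\Sq(\ul{N}_1)=\omega^\Sq(\ul{S})$ up to the strict extensions, which is elementary Nisnevich.

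The exactness statement is Theorem~\ref{MV-curve}, and it is the real content. Concretely one must show, for every modulus pair $M$, exactness of
\[
\ulMCor(M,N_1(00)) \to \ulMCor(M,N_1(10))\oplus \ulMCor(M,N_1(01)) \to \ulMCor(M,N_1(11)).
\]
Injectivity on the left and the ``obvious'' direction of exactness reduce, via $\omega$ and the known Nisnevich-sheaf property of $\Z_\tr$ in the classical setting, to the admissibility bookkeeping of the boundary divisors; the point of having bumped up the multiplicity on $N_1(01)$ is exactly to force a cycle admissible for $N_1(10)$ and $N_1(01)$ to be admissible for $N_1(00)$. Given $(\alpha',\alpha)$ mapping to $0$ in $\ulMCor(M,N_1(11))$, write $\alpha=\sum_i m_i\alpha_i$ over irreducible components. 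If the images $\beta_i$ of the $\alpha_i$ in $\ulMCor(M,N_1(11))$ are pairwise distinct, the argument of \cite[Proof of Prop.~4.3.10]{motmod} applies verbatim (carried out in \S\ref{pf-mv-curve}): each $\beta_i$ lies in the image of one of the two maps from the upper corners, one matches them up, and the increased multiplicity guarantees the resulting cycle is admissible for $N_1(00)$.

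The main obstacle — and where all the new work lies — is precisely the case where the $\beta_i$ are \emph{not} distinct, e.g.\ $\alpha=\alpha_1-\alpha_2$ with $\alpha_1,\alpha_2\mapsto\beta$: then $\alpha\mapsto 0$ forces $\alpha'=0$, so the ``witness'' on the $N_1(10)$-side disappears and one cannot directly read off admissibility of $\alpha$ for $N_1(00)$. I would handle this by invoking the resurgence principle, Proposition~\ref{l-resurgence}: the mere existence of two distinct components mapping to the same cycle $\beta$ already forces each $\alpha_i$ individually to factor through $\ulMCor(M,N_1(00))$. Granting that, $\alpha=\sum m_i\alpha_i$ automatically lifts, and the remaining compatibility with $\alpha'$ over the $(10)$-corner is checked on interiors where everything is the classical Nisnevich-square exactness for finite correspondences. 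Combining the distinct and non-distinct cases gives exactness of the complex, hence $\ul{N}_1\in\Comp_1^{MV}(\ul{S})$, which completes the cofinality argument and proves Theorem~\ref{existence-MV}. I expect the proof of the resurgence principle itself (deferred to \S\ref{subsec-resurgence}) to be the technically hardest ingredient, resting on a careful analysis of the normalizations $\ol{\alpha_i}^{\,N_1(11)}$ and the geometry introduced by the blow-ups in Proposition~\ref{eq6.9}.
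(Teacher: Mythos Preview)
Your overall architecture matches the paper's, but there is one genuine gap and one confusion worth flagging.

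The confusion: your first paragraph is unnecessary. Objects of $\Comp_1(\ul{S})$ are by definition morphisms $\ul{S}\to\tau^\Sq(\ul{N})$ with $\ul{N}\in\MCor^\Sq$, so every $\ol{N}_0(ij)$ is already proper. No appeal to Theorem~\ref{th-partial} or any compactification step is needed here; you should simply start from an arbitrary $\ul{N}_0\in\Comp_1(\ul{S})$.

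The gap: you omit the reduction to \emph{universally minimal} squares (Definition~\ref{defn-minimal-sq}) via Proposition~\ref{l-cofinality-minimal}, and this is not cosmetic. In the paper, before carrying out the construction of \S\ref{subsec-6.2}, one first replaces $\ul{N}_0$ by a dominating object in $\Comp_1^{\min}(\ul{S})$. This is used twice. First, the very existence of the morphism $\ul{N}_1\to\ul{N}_0$ at the $(00)$-corner requires $N_1(00)^\infty=\sup\{v_l^\ast N_0(10)^\infty,\,h_u^\ast N_1(01)^\infty\}\ge N_0(00)^\infty$; without knowing that $N_0(00)^\infty$ equals (universally) $\sup\{v_l^\ast N_0(10)^\infty,\,h_u^\ast N_0(01)^\infty\}$, there is no reason for this inequality to hold, and your ``minimal morphism $\ul{N}_1\to\ul{N}_0$'' may not exist. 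Second, the resurgence principle itself (Proposition~\ref{l-resurgence}) uses the universal minimality of $\ul{N}_1$ (Proposition~\ref{eq6.9}~(1)) in an essential way: at the end of the proof one combines the two inequalities \eqref{eq-assumption} and \eqref{eq6.17} into the single inequality \eqref{eq-alpha} for $N_1(00)^\infty$, and this step is exactly the universal-minimality identity. If you had instead tried to force domination of $\ul{N}_0$ by enlarging $N_1(00)^\infty$ beyond the sup, you would lose universal minimality of $\ul{N}_1$ and the resurgence argument would break. So the reduction via Proposition~\ref{l-cofinality-minimal} is a genuine ingredient, not a convenience, and should be inserted before your second paragraph.

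A minor point: the blow-ups in \S\ref{subsec-6.2} occur on both $\ol{N}_0(01)$ (to make $W$ Cartier) and $\ol{N}_0(00)$ (to make the relevant intersection of pulled-back divisors Cartier, so that the sup is Cartier via Lemma~\ref{sup-lemma-pre}); your description mentions only the north-east corner. Also, $\ul{N}_0$ itself is not an elementary Nisnevich square, so the parenthetical about pull-backs preserving that property is misplaced---what matters is that the modifications are isomorphisms over $\ol{S}(ij)$, so that $\ul{N}_1$ remains in $\Comp_1(\ul{S})$.
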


\begin{prop}\label{p2.1} Theorem  \ref{th-partial} and Theorem \ref{existence-MV} imply Theorem \ref{c1}.
\end{prop}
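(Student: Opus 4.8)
The plan is to combine the two theorems by a ``compactify, then MV-ify'' argument, working throughout in the category $\Comp_1$ of compactifications of an elementary Nisnevich square $\ul S$ with $\ol S(11)$ normal. We must show that every object of $\Comp_1(\ul S)$ is dominated by an MV completion, and that any two morphisms from an MV completion into a common target can be equalised by a further MV completion (the standard two conditions for cofinality of a full subcategory; since $\Comp_1^{MV}(\ul S)$ is full in $\Comp_1(\ul S)$, the second condition is actually automatic once the first holds, as $\Comp_1(\ul S)$ is cofiltered by \cite[Lemmas 1.8.3 and 1.11.3]{motmod} and Proposition \ref{p1}). So the heart of the matter is: given $\ul S \by{a} \ul N_0 \in \Comp_1(\ul S)$, produce an MV completion $\ul S \to \ul N$ together with a morphism $\ul N \to \ul N_0$ in $\Comp_1(\ul S)$ (equivalently, a morphism under $\ul S$ over $\ul N_0$).

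The first step is to apply Theorem \ref{th-partial} to $\ul S \by{a} \ul N_0$. This yields the commutative diagram \eqref{eq2.1}: an elementary Nisnevich square $\ul S'$ with $\ol{S'}(11)$ proper and normal, a strict extension $b : \ul S \to \ul S'$, a minimal morphism $c : \ul S' \to \ul N_1$, a morphism $d : \ul N_1 \to \ul N_0$, and the information that $cb : \ul S \to \ul N_1$ lies in $\Comp_1(\ul S)$. Note that by construction $\ul S'$ is again an elementary Nisnevich square with normal (now also proper) lower-right corner, so it is a legitimate input for Theorem \ref{existence-MV}. The role of Theorem \ref{th-partial} is precisely to reduce to the proper case without losing the Nisnevich-square structure and without disturbing the map down to $\ul N_0$.

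The second step is to apply Theorem \ref{existence-MV} to $\ul S'$: since $\ol{S'}(11)$ is proper (and normal), $\Comp_1^{MV}(\ul S')$ is cofinal in $\Comp_1(\ul S')$. Now $\ul S' \by{c} \ul N_1$ is an object of $\Comp_1(\ul S')$ — indeed $c$ is minimal and, because $cb \in \Comp_1(\ul S)$ and $b$ is a strict extension, one checks that $c$ itself induces open immersions on total spaces and is an isomorphism on interiors, so $c \in \Comp_1(\ul S')$. By cofinality there is an MV completion $\ul S' \to \ul N$ in $\Comp_1(\ul S')$ together with a morphism $\ul N \to \ul N_1$ over $\ul S'$. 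Composing, $\ul S \by{b} \ul S' \to \ul N$ is a morphism in $\ulMCor^\Sq$; since $b$ and the second map are both built from open immersions on total spaces and identities on interiors, Lemma \ref{l4} (applied to $b$) shows $\ul S \to \ul N$ lies in $\Comp_1(\ul S)$, and $\ul N$ is MV by Theorem \ref{existence-MV}, so it is an MV completion of $\ul S$. Finally, the composite $\ul N \to \ul N_1 \by{d} \ul N_0$ is the desired morphism in $\Comp_1(\ul S)$ from our MV completion down to the given object $\ul N_0$. This establishes the cofinality condition, and hence Theorem \ref{c1}. \qed

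I expect the main obstacle to be the bookkeeping in the second step: verifying that the morphisms manufactured by Theorems \ref{th-partial} and \ref{existence-MV} actually compose to morphisms \emph{in the comma categories} $\Comp_1(-)$ — that is, that minimality, the open-immersion property on total spaces, and the identity-on-interiors condition are all preserved under the relevant base changes and compositions. In particular one must be careful that ``$cb \in \Comp_1(\ul S)$ and $b$ strict extension'' genuinely forces ``$c \in \Comp_1(\ul S')$'', which is where the strictness hypothesis \eqref{eq2.1}(iii) earns its keep. The categorical skeleton — cofinality of a full subcategory reduces to the domination statement, and the two theorems chain together in the obvious order — is straightforward; it is the compatibility of all the ``open immersion / minimal / identity on interior'' decorations that requires care.
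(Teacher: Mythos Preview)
Your overall strategy is right, but there is a genuine gap at the key step of the second paragraph: you claim that $c:\ul{S}'\to \ul{N}_1$ lies in $\Comp_1(\ul{S}')$, and in particular that $\ol{c}(ij):\ol{S}'(ij)\to \ol{N}_1(ij)$ is an open immersion. This does not follow from the hypotheses of Theorem~\ref{th-partial}. Condition (v) only asserts that $c$ is \emph{minimal}; nothing in (i)--(v) says $\ol{c}$ is an open immersion, and in fact the construction in \S\ref{section-partial} produces $\ol{c}(ij)$ by resolving a rational map via its graph, followed by blow-ups and platification --- morphisms which are proper birational, not open immersions. Knowing that $\ol{cb}$ is an open immersion and $\ol{b}$ is an open immersion only constrains $\ol{c}$ on the open subset $\ol{S}(ij)\subset \ol{S}'(ij)$; it says nothing on the complement. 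So strictness of $b$ does not ``earn its keep'' here in the way you suggest.

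The paper repairs this with one extra step. Choose any $[\ul{S}'\by{g}\ul{N}_2]\in \Comp_1(\ul{S}')$ (this exists since $\Comp_1(\ul{S}')$ is nonempty), and let $\ol{N}_3(ij)$ be the scheme-theoretic image of $\ol{S}'(ij)$ in $\ol{N}_1(ij)\times \ol{N}_2(ij)$. The projection $p_2$ to $\ol{N}_2(ij)$ is separated and the composite $\ol{S}'(ij)\to \ol{N}_3(ij)\to \ol{N}_2(ij)$ is the open immersion $g(ij)$, so Lemma~\ref{no-extra-fiber} forces $\ol{S}'(ij)\to \ol{N}_3(ij)$ itself to be an open immersion. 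Pulling the modulus back from $\ul{N}_1$ along $p_1$ (this is where minimality of $c$, condition (v), is used) yields $[\ul{S}'\to \ul{N}_3]\in \Comp_1(\ul{S}')$, and $p_1$ gives the needed map $\ul{N}_3\to \ul{N}_1$. From here your argument resumes verbatim: apply Theorem~\ref{existence-MV} to dominate $\ul{N}_3$ by an MV completion $\ul{N}_4$ of $\ul{S}'$, then use Lemma~\ref{l4} to pass back to $\Comp_1(\ul{S})$.
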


\begin{proof} We start from $a:\ul{S}\to \ul{N}_0$ in $\Comp_1(\ul{S})$, and give ourselves a commutative diagram \eqref{eq2.1} 
satisfying the conditions of Theorem \ref{th-partial}.   Choose $[\ul{S}'\by{g} \ul{N}_2] \in \Comp_1(\ul{S}')$. Define a square of schemes $\ol{N}_3$ as follows: $\ol{N}_3(ij)$ is the scheme-theoretic image of $\ol{S}'(ij)$ in $\ol{N}_1(ij)\times \ol{N}_2(ij)$ (a closed subscheme, \cite[Ch. I, Def. 6.10.1]{EGA1}). 
By \cite[Ch. I, Prop. 6.10.5]{EGA1}, it exists, and the corresponding morphism $f(ij) : \ol{S}'(ij) \to \ol{N_3}(ij)$ is scheme-theoretically dominant.
Write $p_\epsilon :\ol{N}_3\to \ol{N}_\epsilon$ ($\epsilon = 1,2$). 
for the projection. Applying Lemma \ref{no-extra-fiber} to the diagram
\[\begin{xymatrix}{
 & \ol{N}_3(ij) \ar[d]^{p_2(ij)} \\
 \ol{S}'(ij)\ar[ru]^{f(ij)} \ar[r]^{g(ij)}& \ol{N}_2(ij)
}\end{xymatrix}\]
we see that 
$f(ij)$ is an open immersion because $g(ij)$ is. 
Defining
\[N_3(ij)^\infty = p_1(ij)^* N_1(ij)^\infty\] 
we get $[\ul{S}'\to \ul{N}_3] \in \Comp_1(\ul{S}')$, thanks to (v).

By Theorem \ref{existence-MV}, choose $[\ul{S}'\to \ul{N}_4]\in \Comp_1^{MV}(\ul{S}')$ mapping to $[\ul{S}'\to \ul{N}_3]$. By Lemma \ref{l4}, $[\ul{S}\to \ul{N}_4]\in \Comp_1^{MV}(\ul{S})$ and maps to $[\ul{S}\to \ul{N}_1]$, hence to $[\ul{S}\to \ul{N}_0]$.
\end{proof}

\begin{remark}
In  \cite[Ch. I, Prop. 6.10.5]{EGA1}, there are two sufficient conditions for the existence of $\ol{N}_3(ij)$; (1) the morphism $\ol{S}'(ij) \to \ol{N}_1(ij)\times \ol{N}_2(ij)$ is quasi-compact and quasi-separated, or (2) $\ol{S}'(ij)$ is reduced.
Both are true, the second one by Remark \ref{total-reduced}.
\end{remark}

\section{Cofinality of universally minimal squares}

The aim of this section is to prove Proposition \ref{l-cofinality-minimal} below. It will be used in \S \ref{section-MV}.

\subsection{Universally minimal squares}
\begin{defn}[cf. \protect{\cite[Def. 4.3.9]{motmod}}]\label{defn-minimal-sq} 
A square $\ul{N} \in \MCor^\Sq$ whose morphisms are entire (Definition \ref{d2.1}) is called \emph{universally minimal} if the following condition holds: write
\begin{equation}\label{name}
\begin{CD}
\ol{N}(00)@>h_u>> \ol{N}(01)\\
@Vv_lVV @Vv_r VV\\
\ol{N}(10)@>h_d>> \ol{N}(11).
\end{CD}
\end{equation}
Then, for any normal $k$-scheme $Y$ and for any $k$-morphism $f : Y \to \ol{N}(00)$ such that the pullback $f^\ast N(00)^\infty$ is well-defined, we have 
\[
f^\ast N(00)^\infty = \mathrm{sup} (f^\ast h_u^\ast N(01)^\infty , f^\ast v_l^\ast N(10)^\infty ),
\]
where $\mathrm{sup}$ is taken as Weil-divisors.
Note that, since $|h_u^\ast N(01)^\infty| \cup |v_l^\ast N(10)^\infty| \subset |N(00)^\infty|$, the pullbacks appearing on the right hand side of the equality are automatically well-defined.
Write $\Comp^{\min}_1(\ul{S})$ for the full subcategory of $\Comp_1^{\mathrm{mor}}(\ul{S})$ consisting of those objects $X:\ul{S}\to \tau^\Sq\ul{N}$ such that $\ul{N}$ is universally minimal.
\end{defn}

\begin{prop}[Cofinality of minimal squares]\label{l-cofinality-minimal}
Let $\ul{S}$ be an elementary Nisnevich square, and assume that $\ol{S}(11)$ is normal.
Then, $\Comp^{\min}_1(\ul{S})$ is cofinal in $\Comp_1(\ul{S})$.
\end{prop}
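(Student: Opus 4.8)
The plan is to show that given any object $[\ul{S}\by{X}\tau^\Sq\ul{N}]\in \Comp_1(\ul{S})$, one can construct an object of $\Comp^{\min}_1(\ul{S})$ lying over it. The construction should be essentially pointwise-to-global: modify the boundary $N(00)^\infty$ so as to force the universal minimality condition, while keeping the other three corners and all the minimality/open-immersion data intact. Concretely, since $|h_u^\ast N(01)^\infty|\cup|v_l^\ast N(10)^\infty|\subset|N(00)^\infty|$, one can define a new effective Cartier divisor
\[
N'(00)^\infty := \mathrm{sup}\bigl(h_u^\ast N(01)^\infty,\ v_l^\ast N(10)^\infty\bigr)
\]
on $\ol{N}(00)$ (this sup of effective Cartier divisors exists and is again an effective Cartier divisor on the normal — by Remark \ref{r2.1} and the \'etale hypotheses, or at least reduced — scheme $\ol{N}(00)$; here one uses that $\ol{S}(11)$, hence $\ol{S}(ij)$, is normal and the compactifications inherit normality in codimension one). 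Replacing $N(00)$ by $N'(00):=(\ol{N}(00),N'(00)^\infty)$ and leaving $N(10),N(01),N(11)$ unchanged gives a new square $\ul{N}'\in\MCor^\Sq$; one checks that the horizontal and vertical maps out of $N'(00)$ are still entire and minimal (minimality of $N'(00)\to N(01)$ holds because $N'(00)^\infty\ge h_u^\ast N(01)^\infty$ and we have arranged equality after restricting along $h_u$ — more precisely one uses that $h_u,v_l$ are \'etale so that $h_u^\ast$ of the two boundaries and their sup behave well), and that the structural map $\ul{S}\to\tau^\Sq\ul{N}'$ still lies in $\Comp_1(\ul{S})$: the only nontrivial point is minimality of $S(00)\to N'(00)$, which follows because $S(00)^\infty$ already satisfies the sup-relation (it is pulled back from $S(11)^\infty$ along the \'etale maps, and the original square $\ul S$ is a genuine Nisnevich square of modulus pairs so the sup-identity holds on $S(00)$), whence $S(00)^\infty = \sup(\dots)|_{\ol S(00)} = N'(00)^\infty|_{\ol S(00)}$.

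Next I would verify that $\ul{N}'$ is actually universally minimal, i.e.\ that the sup-identity survives pullback along an arbitrary $f:Y\to\ol{N}'(00)=\ol N(00)$ with $Y$ normal and $f^\ast N'(00)^\infty$ defined. This is the crucial formal point: one needs that for effective Cartier divisors $D_1,D_2$ on a scheme $T$, and $f:Y\to T$ with $Y$ normal, whenever $f^\ast(\mathrm{sup}(D_1,D_2))$ is defined one has $f^\ast(\mathrm{sup}(D_1,D_2)) = \mathrm{sup}(f^\ast D_1, f^\ast D_2)$ as Weil divisors. On a normal scheme this is a local computation in codimension one with DVRs: $\mathrm{sup}$ of two Cartier divisors corresponds locally to the ideal generated by the two defining functions (or, in a DVR, to the minimum of the two valuations), and pullback of functions commutes with taking the ideal they generate; since $Y$ is normal, equality of Weil divisors can be checked at codimension-one points, where everything reduces to the DVR statement $\min(v(f^\ast g_1),v(f^\ast g_2)) = v(f^\ast \min\text{-ideal})$. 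I expect this lemma to be the main obstacle — not because it is deep, but because one must be careful about when the sup of Cartier divisors is itself Cartier and how pullback interacts with it on non-normal total spaces (recall $\ol N(00)$ need only be reduced, not normal, although by Remark \ref{r2.1}/Remark \ref{total-reduced} plus the \'etale hypothesis over the normal $\ol S(11)$ and the behaviour of normalization under \'etale base change one can likely reduce to the normal case); this is exactly the kind of set-theoretic/Cartier-vs-Weil bookkeeping the introduction warns about.

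Finally I would check the cofinality bookkeeping: the assignment $[\ul S\to\tau^\Sq\ul N]\mapsto [\ul S\to \tau^\Sq\ul N']$ comes with a morphism $\ul N'\to \ul N$ in $\MCor^\Sq$ (the identity on total spaces, which is a morphism since $N'(00)^\infty\le N(00)^\infty$ — because $\ul N$ itself is already minimal in the weak sense, being in $\Comp_1$, so $N(00)^\infty \ge h_u^\ast N(01)^\infty$ and $\ge v_l^\ast N(10)^\infty$, hence $\ge$ their sup), compatible with the maps from $\ul S$; so every object of $\Comp_1(\ul S)$ receives a map from an object of $\Comp^{\min}_1(\ul S)$. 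Together with the fact that $\Comp^{\min}_1(\ul S)$ is a full subcategory, this gives the cofinality statement (one also notes, as in Definition \ref{defn-minimal-sq}, that $\Comp^{\min}_1$ sits inside $\Comp_1^{\mathrm{mor}}$, and the square $\ul N'$ has entire morphisms by construction, so it is a legitimate object there). The only remaining subtlety is connectedness/filteredness of the comma categories involved, which is handled exactly as in the proof of Proposition \ref{p1} via the appendix, so I would invoke that rather than redo it.
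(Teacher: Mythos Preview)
Your construction goes in the wrong direction, and the error is not a typo but structural. You set
\[
N'(00)^\infty := \sup\bigl(h_u^\ast N(01)^\infty,\ v_l^\ast N(10)^\infty\bigr),
\]
keep the other three corners fixed, and then claim that $N'(00)^\infty \le N(00)^\infty$ yields a morphism $\ul{N}'\to \ul{N}$. But admissibility for a map $M\to N$ reads $M^\infty \ge N^\infty$ (see \eqref{eq1.3}); with your inequality you obtain $\ul{N}\to \ul{N}'$, not $\ul{N}'\to \ul{N}$. Cofinality in the cofiltered category $\Comp_1(\ul S)$ requires that every $\ul{N}$ receive a map \emph{from} an object of $\Comp_1^{\min}(\ul S)$, so your $\ul{N}'$ sits on the wrong side of $\ul{N}$ and proves nothing.

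This cannot be repaired by tinkering only at the $(00)$-corner. The whole point of the paper's argument in \S\ref{s4.4} is that one must \emph{enlarge} the boundaries at $(10)$ and $(01)$ by adding a suitable multiple $m$ of the complement divisors $D_{10}=\ol{N}_0(10)\setminus \ol{S}(10)$ and $D_{01}=\ol{N}_0(01)\setminus \ol{S}(01)$, so that
\[
N_0(00)^\infty \le \sup\bigl(v_l^\ast(N_0(10)^\infty + mD_{10}),\ h_u^\ast(N_0(01)^\infty + mD_{01})\bigr)
\]
(inequality \eqref{eq003}). Only after this enlargement does the sup dominate the original $N_0(00)^\infty$, and then one obtains the required morphism $\ul{N}_1\to \ul{N}_0$. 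Finding this $m$ uses Lemma \ref{junk-separation} to identify $|D_{00}|$ with $|\sup(h_u^\ast D_{01},v_l^\ast D_{10})|$, together with Lemma \ref{m-i-l}; none of this is visible in your sketch.

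There is also a secondary issue you correctly flag but do not resolve: the sup of two effective Cartier divisors need not be Cartier on $\ol{N}_0(00)$ (which is only reduced, not normal), and even when it is, $f^\ast\sup(D_1,D_2)=\sup(f^\ast D_1,f^\ast D_2)$ can fail. The paper handles this not by a DVR computation but by Lemma \ref{sup-lemma-pre}, whose hypothesis (that $D_1\times_X D_2$ be Cartier) is forced to hold by repeated normalized blow-ups as in Lemma \ref{l4.2} and Remark \ref{r4.1}. Your appeal to ``normality in codimension one inherited from $\ol S(11)$'' does not help: the relevant scheme is the compactification $\ol{N}_0(00)$, about which no such thing is known, and the argument genuinely requires modifying the total space.
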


\subsection{Three elementary lemmas}

\begin{lemma}\label{l4.0} Let $\ulMCor^{\Sq,\mor}$ be the full subcategory of $\ulMCor^\Sq$ formed of those squares $\ul{N}$ in which all morphisms are given by entire morphisms (Definition \ref{d2.1}). Then the full embedding 
\[\ulMCor^{\Sq,\mor}\inj\ulMCor^\Sq\] 
is an equivalence of categories. 
\end{lemma}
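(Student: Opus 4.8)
The statement asserts that the inclusion of the full subcategory $\ulMCor^{\Sq,\mor}$, consisting of squares in which all four edges are (graphs of) entire morphisms, into $\ulMCor^\Sq$ is an equivalence of categories. Since a full embedding is automatically faithful and full, the only thing to prove is essential surjectivity: every square $\ul{N}\in \ulMCor^\Sq$ is isomorphic (in $\ulMCor^\Sq$) to one whose edges are all entire. The strategy is to replace the total spaces of the four corners by suitable blow-ups (or, more economically, normalizations of graph closures) so that the four rational maps defining the edges become genuine morphisms of schemes, while keeping the interiors — hence the isomorphism class in $\ulMCor$ — unchanged.

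\textbf{Key steps.} First I would recall that for a modulus pair $M=(\ol M,M^\infty)$ and any proper birational morphism $\pi:\ol M'\to \ol M$ which is an isomorphism over $M^\circ$, setting $M'=(\ol M',\pi^*M^\infty)$ gives a modulus pair together with a minimal morphism $M'\to M$ inducing the identity on interiors; such a morphism is an isomorphism in $\ulMCor$ (it is minimal, and induces an isomorphism $M'^\circ\iso M^\circ$, cf. the argument in the proof of Lemma \ref{l-normal}). So modifying the total space by such a $\pi$ does not change the object up to isomorphism. Second, given $\ul{N}\in\ulMCor^\Sq$ with edges $N(00)\to N(01)$, $N(00)\to N(10)$, $N(10)\to N(11)$, $N(01)\to N(11)$, each edge is represented by a finite correspondence on interiors; the composites $N(00)\to N(11)$ agree, so there is a single rational map structure to resolve. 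I would proceed corner by corner, starting from $\ol N(00)$: take the closure of the graph of the correspondence $N(00)^\circ\to N(01)^\circ\times N(10)^\circ$ (equivalently the relevant component), and a proper birational modification $\ol N(00)'\to \ol N(00)$ dominating it so that both maps to $\ol N(01)$ and $\ol N(10)$ become morphisms of schemes; replace $N(00)$ by the corresponding modulus pair. Then similarly modify $\ol N(10)$ (resp. $\ol N(01)$) so that the edge to $\ol N(11)$ becomes a morphism, using the already-resolved map out of $N(00)$; one checks compatibility on the common composite to $N(11)$. After finitely many such steps all four edges are morphisms of schemes. Third, I would verify that each edge is still minimal if it was, and in any case entire, and that the resulting square maps isomorphically (in $\ulMCor^\Sq$) to the original via the tuple of minimal modifications at each corner — this uses that isomorphisms in $\ulMCor^\Sq$ are detected cornerwise and each cornerwise map is an isomorphism in $\ulMCor$ by the first step.

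\textbf{Main obstacle.} The delicate point is \emph{coherence across the square}: resolving one edge by blowing up a corner may spoil the entireness of another edge emanating from that same corner, so the modifications must be done in an order (and possibly by taking a common dominating modification of all the relevant graph closures at each corner simultaneously) that makes all edges morphisms at once, and one must check that the four resulting morphisms genuinely commute as a square — i.e. that the two composites $\ol N(00)'\to \ol N(11)'$ coincide, not merely as rational maps but as scheme morphisms. This is where one invokes that the interiors and the correspondences on them already commute, that $\ol N(00)'$ is reduced (Remark \ref{total-reduced}) with dense interior, and hence that a morphism out of $\ol N(00)'$ is determined by its restriction to the interior; so scheme-morphisms agreeing on a dense reduced open agree. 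Once the definitional bookkeeping (divisor pullbacks remain effective Cartier, minimality is preserved) is in place, essential surjectivity — and with it the equivalence — follows. \qed
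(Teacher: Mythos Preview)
Your approach is essentially the paper's: prove essential surjectivity by replacing each corner's total space with a graph closure so that the rational maps underlying the edges become genuine morphisms, and observe that each cornerwise modification is an isomorphism in $\ulMCor$. The one substantive difference is the \emph{order} of construction. You begin at the initial vertex $(00)$ and work outward, which is exactly what produces the coherence problem you flag as the main obstacle: once you modify $\ol N(10)$ so that its edge to $\ol N(11)$ becomes a morphism, the already-resolved map $\ol N(00)'\to \ol N(10)$ need not lift to the new $\ol N(10)'$, forcing a further modification at $(00)$. The paper instead works from the terminal vertex backwards: leave $\ol N'(11)=\ol N(11)$ untouched, set $\ol N'(10)=\Gamma_{h_d}$ and $\ol N'(01)=\Gamma_{v_r}$ (closures of the graphs of the rational maps into $\ol N(11)$), and finally $\ol N'(00)=\Gamma_\phi$ for $\phi:\ol N(00)\dashrightarrow \Gamma_{h_d}\times\Gamma_{v_r}$. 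With this ordering each new vertex is built \emph{after} its targets, so the square commutes on the nose by construction and no iteration or ``common dominating modification'' is needed; your main obstacle simply does not arise. Your argument can be repaired along the lines you sketch, but reversing the order makes it a one-step construction.
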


\begin{proof} 
Let $\ul{N}\in \ulMCor^\Sq$. Using Notation \eqref{name}, define a new square of schemes $\ul{\ol{N'}}$ as follows: 
\begin{align*}
\ol{N'}(11) &= \ol{N(11)};\\
\ol{N'}(10) &= \Gamma_{h_d};\\
\ol{N'}(01) &= \Gamma_{v_r};\\
\ol{N'}(00) &= \Gamma_{\phi}
\end{align*}
where $\Gamma_f$ denotes the [closure of the] graph of a rational map $f$, and $\phi$ is the rational map $\ol{N}(00)\tto \Gamma_{h_d}\times  \Gamma_{v_r}$ determined by $h_u$ and $v_l$. Then the rational maps of  \eqref{name} ``resolve'' to morphisms in $\ul{\ol{N'}}$, and we have a canonical morphism $\pi(ij):\ol{N'}(ij)\to \ol{N(ij)}$ for all $(i,j)$. Define
\[N'(ij)^\infty = \pi(ij)^*N(ij)^\infty.\]

This defines modulus pairs $N'(ij)$ and isomorphisms $\pi(ij):N'(ij)\iso N(ij)$ in $\ulMP$. The latter fact implies that the morphisms of $\ul{\ol{N'}}$ are admissible, whence an object $\ulMCor^{\Sq,\mor}$ together with an isomorphism $\pi:\ul{N'}\iso \ul{N}$ in $\Comp_1(\ul{S})$.
\end{proof}

\begin{rk} This lemma, and its proof, extend to the case where $\Sq$ is replaced by any finite category without loops.
\end{rk}

\begin{lemma}\label{l4.1} Let $\ul{S}$ be an elementary Nisnevich square, and let $\Comp_1^\mor(\ul{S})$ be the full subcategory of $\Comp_1 (\ul{S})$ formed of those squares $\ul{N}$ in which all morphisms are given by entire morphisms (Definition \ref{d2.1}). Then the full embedding $\Comp_1^\mor(\ul{S})\inj\Comp_1(\ul{S})$ is an equivalence of categories. In particular,  $\Comp_1^\mor(\ul{S})$ is cofinal in $\Comp_1(\ul{S})$.
\end{lemma}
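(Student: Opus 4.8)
The plan is to reduce Lemma~\ref{l4.1} to the previous Lemma~\ref{l4.0} by a compatibility argument, in the same spirit as the proof of Lemma~\ref{l4.0} itself. The key point is that the equivalence $\ulMCor^{\Sq,\mor}\inj\ulMCor^\Sq$ of Lemma~\ref{l4.0} is realized by an explicit construction $\ul{N}\mapsto \ul{N'}$ (taking closures of graphs), together with a canonical isomorphism $\pi:\ul{N'}\iso\ul{N}$ in $\ulMCor^\Sq$ whose components $\pi(ij)$ are \emph{minimal proper} morphisms inducing the identity on interiors; in particular each $\pi(ij)$ is (the graph of) a morphism of schemes that is proper and minimal. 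The main step is to check that this construction is compatible with the structure of an object of $\Comp_1(\ul{S})$.

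Concretely, first I would let $[\ul{S}\by{f}\tau^\Sq\ul{N}]\in\Comp_1(\ul{S})$ be given, with $f(\delta)\in\Comp_1(S(\delta))$ for each $\delta\in Ob(\Sq)$. Apply the construction of Lemma~\ref{l4.0} to $\ul{N}$, obtaining $\ul{N'}\in\ulMCor^{\Sq,\mor}$ and the isomorphism $\pi:\ul{N'}\iso\ul{N}$. I then need a compatible lift of $f$ to a morphism $f':\ul{S}\to\tau^\Sq\ul{N'}$ with $\pi\circ f'=f$. Since $S(\delta)^\circ\to N(\delta)^\circ$ is the identity and $\ol{S}(\delta)\hookrightarrow\ol{N}(\delta)$ is an open immersion for each $\delta$, and since $\ol{N'}(\delta)$ is by construction a closure of a graph over $\ol{N}(\delta)$ with $\pi(\delta)$ an isomorphism over the interior, the open set $\ol{S}(\delta)$ lifts uniquely to an open subscheme of $\ol{N'}(\delta)$ (namely $\pi(\delta)^{-1}$ of the locus where $\pi(\delta)$ is an isomorphism, restricted appropriately, or more simply the closure of the graph of the interior morphism $S(\delta)^\circ\to\ol{N'}(\delta)^\circ$, which maps isomorphically to $\ol{S}(\delta)$). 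One checks this lift $f'(\delta)$ is again an open immersion on total spaces, is minimal (because $\pi(\delta)$ is minimal and $f(\delta)$ is), and that $\ulomega(f'(\delta))$ is still the identity; hence $f'(\delta)\in\Comp_1(S(\delta))$, so $f'\in\Comp_1^\mor(\ul{S})$. Commutativity of the resulting triangles in $\Sq$ follows from that in $\ul{N'}$.

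For full faithfulness of $\Comp_1^\mor(\ul{S})\inj\Comp_1(\ul{S})$: faithfulness is automatic since it is a full subcategory, and essential surjectivity is exactly the construction of $f'$ above together with the isomorphism $\pi\circ f'\cong f$ in $\Comp_1(\ul{S})$ (morphisms in $\Comp_1(\ul{S})$ being commutative triangles, and $\pi$ being such a triangle). Cofinality then follows from the general fact that an equivalence of categories is cofinal, or one notes directly that for any object of $\Comp_1(\ul{S})$ the object $f'$ just constructed maps to it.

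The step I expect to be the main obstacle is the verification that the lifted open immersion $f'(\delta):S(\delta)\to \tau(N'(\delta))$ is genuinely \emph{minimal} and lands in $\Comp_1(S(\delta))$ — i.e., that $S(\delta)^\infty = f'(\delta)^*N'(\delta)^\infty$. This reduces to the identity $N'(\delta)^\infty=\pi(\delta)^*N(\delta)^\infty$ (true by the very definition in Lemma~\ref{l4.0}) combined with minimality of $f(\delta)$ and the compatibility $\pi(\delta)\circ f'(\delta)=f(\delta)$ on total spaces; since all these morphisms are entire here, pulling back the Cartier divisor $N(\delta)^\infty$ along the composite agrees with pulling back $N'(\delta)^\infty$ along $f'(\delta)$, giving $S(\delta)^\infty$. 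This is essentially a diagram chase with Cartier divisors, but one must be careful that the relevant pullbacks are all well-defined, which holds because everything in sight is a morphism of schemes (entireness) and $S(\delta)^\circ$ is dense in $\ol{S}(\delta)$ (Remark~\ref{total-reduced}).
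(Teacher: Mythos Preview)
Your approach is correct and is exactly what the paper's one-line proof (``This follows immediately from Lemma~\ref{l4.0}'') has in mind: apply the graph-closure construction $\ul{N}\mapsto\ul{N'}$ of Lemma~\ref{l4.0}, transport the structure map through the isomorphism $\pi$, and verify that the result still lies in $\Comp_1(\ul{S})$. One small remark: you flag the \emph{minimality} of $f'(\delta)$ as the main obstacle, but the slightly more delicate point is that $f'(\delta)$ is an \emph{open immersion} on total spaces. This requires $\pi(\delta)$ to be an isomorphism over all of $\ol{S}(\delta)$, not merely over $S(\delta)^\circ$; that holds because $\ul{S}\in\ulMP^\fin$, so the rational maps $h_d$, $v_r$, $\phi$ used to build $\ul{N'}$ are already honest morphisms when restricted to $\ol{S}(\delta)$, and hence the graph closures restrict to graphs there (cf.\ Lemma~\ref{no-extra-fiber}). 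Once this is observed, the minimality check is exactly the Cartier-divisor computation you outline.
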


\begin{proof} This follows immediately from Lemma \ref{l4.0}.
\end{proof}

\begin{lemma}\label{junk-separation}
Let $\ul{S}$ be an elementary Nisnevich square, and let $\ul{N} \in \Comp_1^\mor (\ul{S})$.
Name those morphisms as in the diagram \eqref{name} of Definition \ref{l-cofinality-minimal}.
Then, we have
\[
h_u^{-1} (\ol{S}(01)) \cap v_l^{-1} (\ol{S}(10)) = \ol{S}(00).
\]
\end{lemma}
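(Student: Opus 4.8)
We are given an elementary Nisnevich square $\ul{S}$ and an object $\ul{N}\in \Comp_1^\mor(\ul{S})$; in particular each $\ol{S}(ij)$ is an open subscheme of $\ol{N}(ij)$ (with the same interior), and the morphisms $h_u,v_l,h_d,v_r$ of the completed square are entire morphisms of schemes extending $\omega^\Sq(\ul{S})$. Since $\ol{S}(00)=\ol{N}(00)\cap (\text{preimages in }\ol{N}(00))$ in an obvious sense, the inclusion $\ol{S}(00)\subset h_u^{-1}(\ol{S}(01))\cap v_l^{-1}(\ol{S}(10))$ is immediate from the commutativity of $\ul{N}$ restricted to $\ul{S}$ (the maps $\ol{S}(00)\to\ol{S}(01)$ and $\ol{S}(00)\to\ol{S}(10)$ are the restrictions of $h_u,v_l$). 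So the content is the reverse inclusion, and the plan is to prove it by a density-plus-closedness argument, exactly as one separates two open subschemes agreeing on a dense open.

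\textbf{Key steps.} First I would set $U:=h_u^{-1}(\ol{S}(01))\cap v_l^{-1}(\ol{S}(10))$, an open subscheme of $\ol{N}(00)$ containing $\ol{S}(00)$, and note $U^\circ := U\cap N(00)^\circ = S(00)^\circ$: indeed on the smooth open dense locus $N(00)^\circ=S(00)^\circ$, the maps $h_u,v_l$ restrict to the (honest scheme) morphisms $S(00)^\circ\to S(01)^\circ\subset \ol{S}(01)$ and $S(00)^\circ\to S(10)^\circ\subset\ol{S}(10)$, so $S(00)^\circ\subset U$, while $U\subset \ol{N}(00)$ and $U\cap N(00)^\circ\subset N(00)^\circ = S(00)^\circ$ gives the reverse inclusion. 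Hence $U$ and $\ol{S}(00)$ are two open subsets of $\ol{N}(00)$ with the same intersection with the dense open $S(00)^\circ$ (dense by Remark \ref{total-reduced}). The crucial extra input is that the square on total spaces is upper distinguished, so $\ol{S}(00)=\ol{S}(01)\times_{\ol{S}(11)}\ol{S}(10)$, and the edges $\ol{S}(ij)\to\ol{S}(11)$ are étale; I would use this together with the cartesian/étale structure of $\ul{N}$ on the relevant loci to upgrade "agree on a dense open" to "equal". Concretely: $U$ maps to $\ol{S}(01)\times_{\ol{N}(11)}\ol{S}(10)$ via $(h_u|_U, v_l|_U)$, and since $\ul{S}$ is upper distinguished this fiber product (intersected appropriately over $\ol{N}(11)$) is $\ol{S}(00)$ sitting inside $\ol{N}(00)$; the point is that $\ol{S}(00)\hookrightarrow\ol{N}(00)$ is the preimage under $(h_u,v_l):\ol{N}(00)\to\ol{N}(01)\times_{\ol{N}(11)}\ol{N}(10)$-type map of $\ol{S}(01)\times_{\ol{S}(11)}\ol{S}(10)$, essentially by the (strict-extension / base-change) compatibility of the completion $\ul{N}$ with $\ul{S}$ built into $\Comp_1^\mor(\ul{S})$.

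\textbf{Main obstacle.} The delicate point is that $\ul{N}$ need not be a cartesian square of schemes, nor need its edges be étale — only $\ul{S}$ has those properties — so I cannot directly write $\ol{N}(00)=\ol{N}(01)\times_{\ol{N}(11)}\ol{N}(10)$ and intersect. The right way around this is to observe that for $x\in U$, its image in $\ol{S}(11)$ lies in $\ol{S}(11)$, and then use that $\ol{S}(00)\to \ol{S}(11)$ and the pair of maps $\ol{S}(01),\ol{S}(10)\to\ol{S}(11)$ being étale with $\ol{S}(00)$ the fiber product forces $x$ to come from $\ol{S}(00)$; one reduces to checking this on the dense open $S(00)^\circ$ where everything is already an honest morphism, and then propagates by the separatedness of $\ol{N}(01)\to\ol{N}(11)$ (so that two sections of an étale—hence separated—map to $\ol{S}(11)$ agreeing on a dense open agree). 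I would phrase the final argument as: the two locally closed immersions $\ol{S}(00)\hookrightarrow U$ and $\mathrm{id}\colon U\hookrightarrow U$, viewed as maps into $\ol{N}(00)$, become equal after restriction to $S(00)^\circ$, hence are equal by density of $S(00)^\circ$ in $U$ (which follows from density of $S(00)^\circ$ in $\ol{N}(00)$) together with the fact that $\ol{N}(00)$ is reduced and $U$ is reduced, so a closed subscheme of $U$ containing a dense open is all of $U$. I expect the write-up to be short once the reduction to "closed subscheme containing a dense open" is made; the only real care needed is to justify that $U\subset\ol{S}(00)$ as sets (not just schemes), which is where the upper-distinguished hypothesis on $\ul{S}$ and the compatibility of $\ul{N}$ with $\ul{S}$ enter.
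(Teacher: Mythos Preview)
Your approach is essentially the paper's: form the morphism $f=(h_u,v_l)\colon \ol{N}(00)\to \ol{N}(01)\times_{\ol{N}(11)}\ol{N}(10)$, note that it carries $U:=h_u^{-1}(\ol{S}(01))\cap v_l^{-1}(\ol{S}(10))$ into $\ol{S}(01)\times_{\ol{N}(11)}\ol{S}(10)\cong\ol{S}(00)$, and use separatedness together with density of $\ol{S}(00)$ in $U$. However, your final assembly has a genuine gap.

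The error is in the sentence ``a closed subscheme of $U$ containing a dense open is all of $U$'': $\ol{S}(00)$ is an \emph{open} subscheme of $U$, not a closed one, so this principle does not apply. (Two open subsets of a scheme sharing a common dense open need not coincide---take $\A^1$ and $\A^1\setminus\{0\}$ inside $\A^1$.) Relatedly, your comparison of ``the two locally closed immersions $\ol{S}(00)\hookrightarrow U$ and $\mathrm{id}\colon U\hookrightarrow U$'' does not type-check: these maps have different sources. Finally, there is no ``strict-extension/base-change compatibility built into $\Comp_1^\mor(\ul{S})$'': the definition only asks that each $S(ij)\to N(ij)$ be a minimal open immersion with the same interior, nothing about $\ol{S}(ij)=\ol{N}(ij)\times_{\ol{N}(11)}\ol{S}(11)$.

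The fix is exactly what you almost wrote but did not complete. You have constructed a retraction $g\colon U\to \ol{S}(00)$ of the inclusion $\iota\colon \ol{S}(00)\hookrightarrow U$ (namely $f|_U$ followed by the inverse of $\ol{S}(00)\iso \ol{S}(01)\times_{\ol{S}(11)}\ol{S}(10)$). Now compare the two morphisms $\iota\circ g$ and $\mathrm{id}_U$ from $U$ to $U$ (or to $\ol{N}(00)$): both have the same separated target, and they agree on the schematically dense open $\ol{S}(00)\subset U$ (density because $S(00)^\circ$ is dense in $\ol{N}(00)$, schematic because $U$ is reduced). Hence $\iota\circ g=\mathrm{id}_U$, so $\iota$ is an isomorphism and $U=\ol{S}(00)$. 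The paper packages precisely this argument as its ``no extra fiber lemma'' (Lemma~\ref{no-extra-fiber}), applied with $f$ in the role of the separated map $p$ and $\ol{S}(00)\hookrightarrow\ol{N}(00)$ in the role of the dominant map.
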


\begin{proof}
By the universality of the fiber product, we obtain a unique morphism $f : \ol{N}(00) \to \ol{N}(01) \times_{\ol{N}(11)} \ol{N}(10)$ such that $h_u$ and $v_l$ factor through $f$.
The restriction of $f$ to $\ol{S}(00)$ factors through the map
\[\ol{S}(00) \to \ol{S}(01) \times_{\ol{N}(11)} \ol{S}(10) = \ol{S}(01) \times_{\ol{S}(11)} \ol{S}(10)\]
which is an isomorphism by the hypothesis on $\ul{S}$. 
Since $f$ is separated, Lemma \ref{no-extra-fiber} then implies that 
\begin{equation}\label{eq002}
f^{-1} (\ol{S}(01) \times_{\ol{N}(11)} \ol{S}(10)) = f^{-1} (f(\ol{S}(00))) = \ol{S}(00).
\end{equation}
Since we have
\[
\ol{S}(01) \times_{\ol{N}(11)} \ol{S}(10) = (\ol{S}(01) \times_{\ol{N}(11)} \ol{N}(10)) \cap (\ol{N}(01) \times_{\ol{N}(11)} \ol{S}(10)),
\]
the left hand side of (\ref{eq002}) can be rewritten as 
\begin{align*}
&f^{-1} (\ol{S}(01) \times_{\ol{N}(11)} \ol{S}(01)) \\
&= f^{-1} ( (\ol{S}(01) \times_{\ol{N}(11)} \ol{N}(10)) ) \cap f^{-1} ( (\ol{N}(01) \times_{\ol{N}(11)} \ol{S}(10)) ) \\
&=h_u^{-1} (\ol{S}(01)) \cap v_l^{-1} (\ol{S}(10)).
\end{align*}
This finishes the proof of Lemma \ref{junk-separation}.
\end{proof}

\subsection{A useful construction}\label{s4.3}

\begin{lemma}\label{l4.2}
Let $\ul{S}$ be an elementary Nisnevich square such that $\ol{S}(11)$ is normal, and take any $\ul{N}_0 \in \Comp_1(\ul{S})$. 
Let $W \subset \ol{N}_0(00)$ be a closed subscheme whose restriction $W \times_{\ol{N}_0(00)} \ol{S}(00)$ to the open subset $\ol{S}(00)$ is an effective Cartier divisor on $\ol{S}(00)$. 
Consider the modulus pair
\[
N_0'(00)=(\mathrm{Bl}_{W} (\ol{N}_0(00))^N, \text{ the pullback of } N_0(00)^\infty).
\]
Then the projection $\pi:N_0'(00)\to N_0(00)$ is an isomorphism, and the morphism $S(00)\to N_0(00)$ lifts to a morphism $S(00)\to N$ in $\ulMP^\mor$. Moreover, $\pi^{-1}(W)$ is an effective Cartier divisor on $\ol{N}_0'(00)$, with same restriction to $\ol{S}(00)$ as $W$.
\end{lemma}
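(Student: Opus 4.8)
The plan is to reduce everything to the behaviour of blow-ups and normalizations over the open locus $\ol S(00)\subset \ol N_0(00)$, where by hypothesis $W$ is already an effective Cartier divisor. First I would recall that if $W\times_{\ol N_0(00)}\ol S(00)$ is an effective Cartier divisor on $\ol S(00)$, then the blow-up $\mathrm{Bl}_W(\ol N_0(00))$ is an isomorphism over $\ol S(00)$: blowing up a Cartier divisor does nothing. More precisely, $\mathrm{Bl}_W(\ol N_0(00))\times_{\ol N_0(00)}\ol S(00)=\mathrm{Bl}_{W\cap \ol S(00)}(\ol S(00))=\ol S(00)$. Since $\ol S(00)$ is smooth (being the interior-containing open locus of the modulus pair $S(00)$, or rather since $\ol S(00)=S(00)^\circ$ is smooth; in any case normal), it is in particular normal, so the normalization $\mathrm{Bl}_W(\ol N_0(00))^N\to \mathrm{Bl}_W(\ol N_0(00))$ is also an isomorphism over $\ol S(00)$. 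Composing, the projection $\pi\colon \ol N_0'(00)\to \ol N_0(00)$ restricts to an isomorphism over the dense open $\ol S(00)$; in particular $\pi$ is proper, birational (dominant with the stated isomorphism on a dense open), and finite (blow-up is projective, normalization is finite, so the composite is proper; combined with birationality and normality of the target's normalization one deduces it is an isomorphism away from a nowhere-dense closed set).

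Next I would verify that $\pi$ is an \emph{isomorphism of modulus pairs} in $\ulMCor$. By construction $N_0'(00)^\infty=\pi^\ast N_0(00)^\infty$, so $\pi$ is a minimal morphism $N_0'(00)\to N_0(00)$ by Definition \ref{d2.1}. It is entire since $\pi$ is an honest morphism of schemes, and it is proper since blow-ups and normalizations of $k$-schemes of finite type are proper. Finally it induces an isomorphism on interiors: the interior of $N_0'(00)$ maps isomorphically onto the interior of $N_0(00)$ because the interior is contained in $\ol S(00)$ — indeed $N_0(00)^\circ\subset S(00)^\circ=\ol S(00)$ by the factorization $\ul S\to \tau^\Sq\ul N_0$ being in $\Comp_1$, whose edges are the identity on interiors — and over $\ol S(00)$ we already saw $\pi$ is an isomorphism. (Here one also uses Remark \ref{total-reduced} to know $\ol N_0'(00)$ is reduced, so that $N_0'(00)$ is a genuine modulus pair, and that the interior is dense, matching the interior of $N_0(00)$.) Being a minimal, proper isomorphism on interiors, $\pi$ is an isomorphism in $\ulMCor$ by the same argument as in the proof of Lemma \ref{l-normal} (citing \cite[Lemma 1.11.3]{motmod}).

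Then the lifting of $S(00)\to N_0(00)$ to $S(00)\to N_0'(00)$ is immediate: the morphism $\ol S(00)\to \ol N_0(00)$ is an open immersion onto an open set over which $\pi$ is an isomorphism, so it factors uniquely through $\ol N_0'(00)$; this factorization is again an open immersion, is minimal since $N_0'(00)^\infty$ pulls back to $S(00)^\infty$ (as it does through $N_0(00)$ and $\pi$ is minimal), hence lies in $\ulMP^{\mor}$. Finally, for the last assertion: $\pi^{-1}(W)$, as a closed subscheme of $\ol N_0'(00)$, is the exceptional-type divisor — on a normalized blow-up of a blow-up center, the preimage of the center is the Cartier divisor $\mathcal O(1)$ of the $\mathrm{Proj}$ (pulled back along the normalization, hence still Cartier since pullback of Cartier along a dominant morphism to a normal variety is Cartier). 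Its restriction to $\ol S(00)$ equals $W\cap \ol S(00)$ since $\pi$ is an isomorphism there. The main obstacle I anticipate is the bookkeeping around the normalization step: one must be careful that the normalization is taken of the (possibly non-normal) blow-up and still restricts to the identity over $\ol S(00)$, for which the key input is normality of $\ol S(00)$ — which follows from Remark \ref{r2.1} together with the standing hypothesis that $\ol S(11)$ is normal — and that $\pi^{-1}(W)$ stays Cartier after normalizing, which is where one invokes that the center of a blow-up pulls back to a Cartier divisor and that Cartierness is preserved under the finite pullback to a normal scheme.
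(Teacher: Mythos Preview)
Your approach is correct and essentially the same as the paper's: both rely on the fact that the blow-up is an isomorphism over $\ol S(00)$ (since $W\cap \ol S(00)$ is already Cartier) and that the subsequent normalization is an isomorphism over $\ol S(00)$ (since $\ol S(00)$ is normal by Remark~\ref{r2.1}); the paper's proof says exactly this, only more tersely.

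One minor slip to fix: you twice write $\ol S(00)=S(00)^\circ$, which is false in general --- $S(00)^\circ=\ol S(00)\setminus |S(00)^\infty|$ is only an open subscheme of $\ol S(00)$. This does not damage your argument, since all you actually use is that $\ol S(00)$ is \emph{normal} (not smooth), and you correctly justify this via Remark~\ref{r2.1}; and for the ``isomorphism on interiors'' step you only need the containment $N_0(00)^\circ=S(00)^\circ\subset \ol S(00)$, not the (false) equality. Just clean up those two identifications.
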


\begin{proof} The projection $\mathrm{Bl}_{W} (\ol{N}_0(00))^N \to \mathrm{Bl}_{W} (\ol{N}_0(00)) \to \ol{N}_0(00)$ is an isomorphism over $\ol{S}(00)$ by the universal property of blow-up and by the assumption that $\ol{S}(00)$ is normal. The last claim is obvious.
\end{proof}

\begin{rk} \label{r4.1} Let $W_1,\dots,W_r$ be closed subschemes of $\ol{N}_0(00)$ verifying the hypothesis of Lemma \ref{l4.2}. Applying its construction repeatedly, we obtain a modulus pair $N_0^{(r)}(00)$ with $\ol{N}^{(r)}_0(00)$ normal and a morphism $\pi_r:N_0^{(r)}(00)\to N_0(00)$ in $\ulMP^\mor$ which is an isomorphism in $\ulMP$, such that $S(00)\to N_0(00)$ lifts to $N_0^{(r)}(00)$ and, for any $i$, $\pi_r^{-1}(W_i)$ is a Cartier divisor with same restriction to $\ol{S}(00)$ as $W_i$. Of course, $N_0^{(r)}(00)$ depends in general on the ordering on the $W_i$'s.
\end{rk}

In the next subsection, we will make use of Lemma \ref{l4.2} and Remark \ref{r4.1} several times for various $W$'s.

\subsection{Proof of Proposition \ref{l-cofinality-minimal}} \label{s4.4}
Take any $\ul{N}_0 \in \Comp_1 (\ul{S})$.
By Lemma \ref{l4.1}, we may assume that $\ul{N}_0 \in \Comp_1^{\mathrm{mor}} (\ul{S})$.

For each $i,j \in \{0,1\}$, define $D_{ij} := \ol{N}_0(ij) \setminus \ol{S}(ij)$, and regard $D_{ij}$ as a closed subscheme of $\ol{N}_0(ij)$ with the reduced scheme structure. 
Recall the notation in Diagram \eqref{name}, and set \[p:=v_r \circ h_u = h_d \circ v_l.\]
We set
\[
\tilde{N}_0(00)^\infty := N_0(00)^\infty - p^\ast N_0(11)^\infty .
\]
By the admissibility of $p$, the Cartier divisor $\tilde{N}_0(00)^\infty$ is effective.
Since the restrictions of $N_0(00)^\infty$ and $p^\ast N_0(11)^\infty$ to $\ol{S}(00)$ both equal $S(00)^\infty$, we have
\[\tilde{N}_0(00)^\infty \times_{\ol{N}_0(00)} \ol{S}(00) = 0,\] 
hence
\[
|\tilde{N}_0(00)^\infty | \subset \ol{N}_0(00) \setminus \ol{S}(00) = |D_{00}|.
\]
Moreover, by applying \S \ref{s4.3}, we may reduce to the case where the closed subschemes $v_l^\ast D_{10}$, $h_u^\ast D_{01}$ and $v_l^\ast D_{10} \times_{\ol{N}_0(00)} h_u^\ast D_{01}$ are effective Cartier divisors on $\ol{N}_0(00)$, because their restrictions to $\ol{S}(00)$ are empty.
Then, by Lemma \ref{junk-separation}, we have
\begin{align*}
|D_{00}| &= \ol{N}_0(00) \setminus \ol{S}(00) = \ol{N}_0(00) \setminus (h_u^{-1} (\ol{S}(01)) \cap v_l^{-1} (\ol{S}(10))) \\
&=(\ol{N}_0(00) \setminus h_u^{-1} (\ol{S}(01))) \cup (\ol{N}_0(00) \setminus v_l^{-1} (\ol{S}(10))) \\
&= |h_u^\ast D_{01}| \cup |v_l^\ast D_{10}| \\
&= |\mathrm{sup}(h_u^\ast D_{01},v_l^\ast D_{10})|,
\end{align*}
where the sup is taken as Weil divisors, but it is also the sup as Cartier divisors by Lemma \ref{sup-lemma-pre} and by the assumption that $v_l^\ast D_{10} \times_{\ol{N}_0(00)} h_u^\ast D_{01}$ is an effective Cartier divisor.
Therefore, by Lemma \ref{m-i-l}, there exists a positive integer $m>0$ such that
\[
\tilde{N}_0(00)^\infty \leq m\mathrm{sup}(v_l^\ast D_{10},h_u^\ast D_{01})=\mathrm{sup}(mv_l^\ast D_{10},mh_u^\ast D_{01}).
\]
Again by \S \ref{s4.3}, we may assume that the closed subscheme 
\[(p^\ast N_0(11)^\infty + mv_l^\ast D_{10}) \times_{\ol{N}_0(00)} (p^\ast N_0(11)^\infty +mh_u^\ast D_{01})\] 
is an effective Cartier divisor on $\ol{N}_0(00)$, because its restriction to $\ol{S}(00)$ is the effective Cartier divisor $S(00)^\infty$.
Since $\tilde{N}_0(00)^\infty = N_0(00)^\infty - p^\ast N_0(11)^\infty$ by definition, we obtain
\[
N_0(00)^\infty \leq \mathrm{sup}(p^\ast N_0(11)^\infty + mv_l^\ast D_{10},p^\ast N_0(11)^\infty +mh_u^\ast D_{01}),
\]
where the sup is taken as Weil divisors, but it is also the sup as Cartier divisors by Lemma \ref{sup-lemma-pre}, and by the assumption that $(p^\ast N_0(11)^\infty + mv_l^\ast D_{10}) \times_{\ol{N}_0(00)} (p^\ast N_0(11)^\infty +mh_u^\ast D_{01})$ is an effective Cartier divisor.
By $p=v_r \circ h_u = h_d \circ v_l$, and by the admissibility of $h_d$ and $v_r$, we have
\begin{align*}
p^\ast N_0(11)^\infty &= v_l^\ast h_d^\ast N_0(11)^\infty \leq v_l^\ast N_0(10)^\infty ,\\
p^\ast N_0(11)^\infty &= h_u^\ast v_r^\ast N_0(11)^\infty \leq h_u^\ast N_0(01)^\infty .
\end{align*}

Still by \S \ref{s4.3}, we may assume that the closed subscheme $(v_l^\ast (N_0(10)^\infty + mD_{10})) \times_{\ol{N}_0(00)} (h_u^\ast (N_0(01)^\infty + mD_{01}))$ is an effective Cartier divisor on $\ol{N}_0(00)$, because its restriction to $\ol{S}(00)$ is the effective Cartier divisor $S(00)^\infty$.
Combining the inequalities above, we obtain
\begin{equation}\label{eq003}
N_0(00)^\infty \leq \mathrm{sup}(v_l^\ast (N_0(10)^\infty + mD_{10}), h_u^\ast (N_0(01)^\infty + mD_{01})),
\end{equation}
where the sup is a sup of Cartier divisors, as previously.

\ 

Define modulus pairs $N_1(ij)$, $i,j\in \{0,1\}$, by 
\begin{align*}
N_1(11) &:= N_0(11) = (\ol{N}_0(11) , N_0(11)^\infty ), \\
N_1(10) &:= (\ol{N}_0(10) , N_0(10)^\infty + mD_{10}), \\
N_1(01) &:= (\ol{N}_0(01) , N_0(01)^\infty + mD_{01}), \text{ and }\\
N_1(00) &:= (\ol{N}_0 (00) , \sup \{v_l^\ast(N_0(10)^\infty + mD_{10}) ,h_u^\ast (N_0(01)^\infty + mD_{01}) \}).
\end{align*}

First, we check that these modulus pairs form a square.
The admissibility of the morphisms $N_0(10) \to N_0(11)$ and $N_0(01) \to N_0(11)$ imply that we have admissible morphisms $N_1(10) \to N_1(11)$ and $N_1(01) \to N_1(11)$.
Moreover, since $N_1(00)^\infty \geq h_u^\ast N_1(01)^\infty$ and $N_1(00)^\infty \geq v_l^\ast N_1 (10)^\infty$ by definition, we have admissible morphisms $N_1(00) \to N_1(10)$ and $N_1(00) \to N_1(01)$.
Therefore, we obtain a square $\ul{N}_1 \in \MCor^\Sq$.

Next, we check that $\ul{N}_1$ belongs to $\Comp_1^\mathrm{min} (\ul{S})$.
Since $N_1(11)^\infty = N_0(11)^\infty$ by definition, we have $N_1(11)^\infty |_{\ol{S}(11)} = S(11)^\infty$ by the minimality of $S(11) \to N_0(11)$.
Next, let $(i,j)$ be either $(1,0)$ or $(0,1)$.
Since $D_{ij} \cap \ol{S}(ij) = \emptyset$ by the definition of $D_{ij}$, we have $N_1(ij)^\infty |_{\ol{S}(ij)} = N_0(ij)^\infty |_{\ol{S}(ij)} = S(ij)^\infty$. 
Finally, we have
\begin{align*}
N_1(00)^\infty |_{\ol{S}(00)} &= \sup \{h_u^\ast N_1(01)^\infty ,v_l^\ast N_1 (10)^\infty \} |_{\ol{S}(00)} \\
&=\sup \{N_1(01)^\infty |_{\ol{S}(00)} , N_1 (10)^\infty |_{\ol{S}(00)} \} \\
&=\sup \{S(00)^\infty  , S(00)^\infty  \} \\
&=S(00)^\infty .
\end{align*}
Therefore, we have $\ul{N}_1 \in \Comp_1 (\ul{S})$.
The universal minimality of $\ul{N}_1$ holds by the definition of $\ul{N}_1$, by the assumption that $(v_l^\ast (N_0(10)^\infty + mD_{10})) \times_{\ol{N}_0(00)} (h_u^\ast (N_0(01)^\infty + mD_{01}))$ is an effective Cartier divisor, and by Lemma \ref{sup-lemma-pre}.

Finally, we check that $\ul{N}_1$ dominates $\ul{N}_0$.
For any $i,j$, noting that $\ol{N}_1(ij) = \ol{N}_0(ij)$, the identity morphism on total spaces induces an admissible morphism $N_1(ij) \to N_0(ij)$ since $N_1(ij)^\infty \geq N_0(ij)^\infty$ by definition if $(i,j) \neq (0,0)$ and by (\ref{eq003}) if $(i,j)=(0,0)$.
This finishes the proof of Proposition \ref{l-cofinality-minimal}.

\section{Existence of partial compactifications}\label{section-partial}

This section is devoted to the proof of Theorem \ref{th-partial}.

\subsection{A first reduction}\label{main-partial}

In this subsection, we prove the following result:

\begin{prop}\label{th-partial-prepre}
Let $\ul{S}$ be an elementary Nisnevich square, and take any $S'(11) \in \Comp_1 (S(11))$.
Then, there exists a cartesian diagram of schemes
\[\begin{xymatrix}{
\ol{S}(01) \ar[r] \ar[d]^f \ar@{}[rd]|\square& V \ar[d]^{q_V}  \\
\ol{S}(11) \ar[r]^g & \ol{S}'(11) ,
}\end{xymatrix}\]
satisfying the following properties:
\begin{enumerate}
\item The horizontal arrows are open immersions.
\item The map $f$ (resp. $g$) is the (underlying) structure map of the square $\ul{S}$ (resp. of the morphism $S'(11)\to S'(11)$).
\item Set $Z(00) := \ol{S}(01) \setminus \ol{S}(00)$. Then, the closure $\ol{Z(00)}$ of $Z(00)$ in $V$ is proper over $\ol{S}'(11)$.
\item The morphism $q_V$ is flat, and $V$ is separated.
\end{enumerate}
\end{prop}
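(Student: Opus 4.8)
\textbf{Plan of proof of Proposition \ref{th-partial-prepre}.}
The plan is to build $V$ as a suitable partial compactification of $\ol{S}(01)$ over $\ol{S}'(11)$, isolating the ``junk'' locus $Z(00)=\ol{S}(01)\setminus\ol{S}(00)$ so that its closure becomes proper, while keeping a flat structure morphism. First I would recall that $\ol{S}(01)\to\ol{S}(11)$ is \'etale (being the edge of an elementary Nisnevich square), hence flat and of finite type, and that $\ol{S}(11)\to\ol{S}'(11)$ is an open immersion with $\ol{S}'(11)$ proper over $\ol{S}(11)$ is \emph{not} the relevant properness — rather $\ol{S}'(11)\in\Comp_1(S(11))$ means the total space $\ol{S}'(11)$ is a compactification relative to nothing; but what matters is that $\ol{S}'(11)$ is a scheme containing $\ol{S}(11)$ as a dense open with $g$ an open immersion. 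So $\ol{S}(01)$ sits over $\ol{S}'(11)$ via the composite $g\circ f$, and I want a relative compactification of this composite.

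The main construction: by Nagata compactification (or by Zariski's main theorem applied to the quasi-finite \'etale map $f$), there is a scheme $\ol{V}$ proper over $\ol{S}'(11)$ together with an open immersion $\ol{S}(01)\hookrightarrow\ol{V}$ over $\ol{S}'(11)$; one may moreover arrange $\ol{V}$ to be the normalization of $\ol{S}'(11)$ in (a suitable finite union of the function fields of) the irreducible components of $\ol{S}(01)$, so that $\ol{V}\to\ol{S}'(11)$ is finite over a dense open and the generic fibre structure is controlled. Now $\ol{V}$ itself is not flat over $\ol{S}'(11)$ in general, and its whole boundary is proper — too big. The fix is to take $V$ to be an appropriate \emph{open} subscheme of $\ol{V}$: namely I would remove from $\ol{V}$ everything in the boundary $\ol{V}\setminus\ol{S}(01)$ \emph{except} the closure of $Z(00)$; more precisely set $V=\ol{V}\setminus\left(\,\ol{(\ol{S}(01)\setminus\ol{S}(00))}{}^{\,c}\cap(\ol{V}\setminus\ol{S}(01))\,\right)$, i.e. keep $\ol{S}(01)$ together with the closure of $Z(00)$ and throw away the rest of the boundary. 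Then by construction $\ol{S}(01)\hookrightarrow V$ is an open immersion (giving (1)), the closure $\ol{Z(00)}$ in $V$ equals its closure in $\ol{V}$, which is closed in the proper $\ol{V}$, hence proper over $\ol{S}'(11)$ (giving (3)). For (4), separatedness of $V$ is inherited from $\ol{V}$ (proper, hence separated); flatness of $q_V$ requires more care — I would achieve it by shrinking further, invoking generic flatness over the normal (hence integral-components) base $\ol{S}'(11)$ and the fact that the locus over which $q_V$ fails to be flat is closed and disjoint from $\ol{S}(01)$ (where $q_V$ restricts to the flat \'etale map $f$ after base change) and can be removed without touching $\ol{Z(00)}$, provided $\ol{Z(00)}\to\ol{S}'(11)$ is itself flat — this last point is the delicate one and will likely force me to enlarge $\ol{S}'(11)$ by a blow-up (as in Lemma \ref{l4.2} and the discussion in \S\ref{s4.3}) to make the relevant divisor Cartier, or to argue that flatness of a finite morphism over a normal base follows from equidimensionality of fibres (``miracle flatness''). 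Finally (2) is immediate: $f$ is the structure map of $\ul{S}$ by construction, and $g$ is the structure map of $S'(11)\to S'(11)$ tautologically, and the square is cartesian because $\ol{S}(01)=\ol{S}(11)\times_{\ol{S}'(11)}V$ holds as $V\cap(g\text{-image})$ recovers exactly the \'etale locus.

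\textbf{Main obstacle.} The hard part is (4), specifically ensuring $q_V$ is \emph{flat} while simultaneously keeping $\ol{Z(00)}$ \emph{proper} over $\ol{S}'(11)$: flatness wants us to throw away bad fibres, properness of the junk forbids us from throwing away anything in $\ol{Z(00)}$. Reconciling these is exactly where the set-theoretic bookkeeping about fibres of \'etale morphisms — flagged in the introduction as the source of technical complication — enters, and I expect to resolve it by a preliminary blow-up of $\ol{S}'(11)$ (legitimate since it does not change $\ol{S}(11)$) together with a normalization, so that $\ol{Z(00)}\to\ol{S}'(11)$ becomes finite and flat onto its image, and then flatness of $q_V$ over the rest follows from generic flatness after deleting a closed subset of the boundary disjoint from both $\ol{S}(01)$ and $\ol{Z(00)}$.
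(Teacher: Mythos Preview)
Your broad strategy---Nagata compactification followed by shrinking---matches the paper's, but the specific construction of $V$ has a genuine gap. Your formula
\[
V=\ol{V}\setminus\bigl(\ol{Z(00)}^{\,c}\cap(\ol{V}\setminus\ol{S}(01))\bigr)
\]
removes an \emph{open} subset of the closed boundary $\ol{V}\setminus\ol{S}(01)$; the set removed is only locally closed in $\ol{V}$, so $V$ is not an open subscheme in general. The obstruction is precisely that $\ol{Z(00)}$ may meet the rest of the boundary. The paper handles this by first \emph{blowing up} the intersection $\ol{Z(00)}\cap\ol{W}$ (where $W$ is the extra part of $p^{-1}(Z(10))$), and then separately the intersection $\ol{Z(00)}\cap\ol{Z(01)}$ (where $Z(01)$ is the excess fibre over $\ol{S}(11)$), so that after each blow-up the strict transforms become \emph{disjoint} and one can legitimately remove a \emph{closed} subset. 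Without these separation blow-ups your $V$ is not well-defined as a scheme, and even if it were, you have not ensured the cartesian condition $q_V^{-1}(\ol{S}(11))=\ol{S}(01)$: this is exactly what removing $\ol{Z(01)}$ achieves, and it does not follow from your description.

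On flatness: generic flatness gives you flatness only over a dense open of the base, and shrinking $V$ over the complement would in general cut into $\ol{Z(00)}$, destroying (3)---you correctly flag this tension but do not resolve it. Miracle flatness is also unavailable since neither $V$ nor the base is known to be regular. The paper's fix is Raynaud--Gruson platification: one blows up $\ol{S}'(11)$ along a centre disjoint from $\ol{S}(11)$ so that the strict transform of $V$ becomes flat over the new base. This modifies the base (which is harmless, as $\Comp_1(S(11))$ allows it) rather than $V$, and the strict transform of $\ol{Z(00)}$ remains proper over the blown-up base. Your closing paragraph gestures toward a base blow-up, but the decisive input---platification, and before that the two separation blow-ups on the total space---is missing.
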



In the following, we prove Proposition \ref{th-partial-prepre} in several steps.

\subsubsection{A compactification}\label{s5.1.1} 
Applying Nagata's theorem to the morphism $\phi:\ol{S}(01) \to \ol{S}'(11)$, we obtain a commutative diagram 

\[\begin{xymatrix}{
\ol{S}(01) \ar[r] \ar[d]^f & T \ar[d]_p \\
\ol{S}(11) \ar[r]^g & \ol{S}'(11),
}\end{xymatrix}\]
where the horizontal morphisms are open immersions, and $p$ is a proper surjective morphism. (Since  $\phi$ is \'etale, in particular, quasi-finite, we could also apply Zariski's main theorem.) 
Set
\[
Z(10) := \ol{S}(11) \setminus \ol{S}(10).
\]
Consider the closure $\ol{Z(10)}$ in $\ol{S}'(11)$, given the reduced scheme structure.
Then, in particular, the induced morphism
\[
p_Z : p^{-1} (\ol{Z(10)}) \to \ol{Z(10)}
\]
is proper.

Moreover, set
\[
Z(00) := \ol{S}(01) \setminus \ol{S}(00).
\]
By the assumption that $\ul{S}$ is an elementary Nisnevich square, we have the following canonical identifications of schemes:
\[
Z(00) \iso f^{-1} (Z(10)) := Z(10) \times_{\ol{S}(11)} \ol{S}(01) \iso Z(10).
\]

\

\subsubsection{Shrinking $T$: elimination of the fibers in $p^{-1} (Z(10))$ outside $Z(00)$}


\begin{lemma}\label{l-decomp}
The natural inclusion $Z(00) \to p^{-1} (Z(10))$ is an open and closed immersion.
In particular, there exists an open closed subscheme $W \subset p^{-1} (Z(10))$ such that
\begin{equation}\label{eq-decomp-1}
p^{-1} (Z(10)) = Z(00) \sqcup W.
\end{equation}
\end{lemma}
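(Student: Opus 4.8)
The plan is to prove Lemma \ref{l-decomp} by showing that $Z(00)$ sits inside $p^{-1}(Z(10))$ as a union of connected components, which for Noetherian schemes is equivalent to being open and closed. First I would observe that the composite $Z(00) \hookrightarrow \ol{S}(01) \hookrightarrow T$ factors through $p^{-1}(Z(10))$: indeed $p$ restricts to $\phi = f$ composed with $g$ on $\ol{S}(01)$, and the elementary Nisnevich condition gives $f^{-1}(Z(10)) = Z(00)$ (as recalled in \S\ref{s5.1.1}), so $Z(00)$ maps into $Z(10) \subset \ol{S}'(11)$. Thus $Z(00)$ is a locally closed subscheme of $p^{-1}(Z(10))$, open in its closure.

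The key point is to show $Z(00)$ is \emph{closed} in $p^{-1}(Z(10))$, equivalently that it is stable under specialization inside $p^{-1}(Z(10))$ (both schemes being Noetherian and $p^{-1}(Z(10))$ of finite type), and simultaneously that it is a union of connected components. I would argue as follows. The morphism $\phi:\ol{S}(01) \to \ol{S}'(11)$ is \'etale, hence so is its restriction $Z(00) = f^{-1}(Z(10)) \to Z(10)$; but the canonical identification $Z(00) \iso Z(10)$ from \S\ref{s5.1.1} shows this restriction is in fact an \emph{isomorphism}. On the other hand, $p^{-1}(Z(10)) \to Z(10)$ is proper (this is $p_Z$). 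So $Z(00) \hookrightarrow p^{-1}(Z(10))$ is a section of the proper morphism $p_Z$ over $Z(10)$ which is itself an isomorphism onto the base; a section of a separated morphism is a closed immersion, so $Z(00)$ is closed in $p^{-1}(Z(10))$. To get openness as well, note $Z(00) \to Z(10)$ being an isomorphism means $Z(00)$ meets every fiber of $p_Z$ in a single reduced point and is \'etale over $Z(10)$, while $p_Z$ is proper; one then invokes that an \'etale section of a separated morphism is also open (its image is open because \'etale morphisms are open, applied to $Z(00) \to p^{-1}(Z(10))$ after checking this inclusion is \'etale — which follows from $Z(00) \to Z(10)$ \'etale and $p^{-1}(Z(10)) \to Z(10)$ unramified along the section, or more simply from the section being a clopen immersion). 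Either way, $Z(00)$ is open and closed in $p^{-1}(Z(10))$.

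Once $Z(00)$ is known to be open and closed in the Noetherian scheme $p^{-1}(Z(10))$, its complement $W := p^{-1}(Z(10)) \setminus Z(00)$ is also open and closed, giving the decomposition $p^{-1}(Z(10)) = Z(00) \sqcup W$ as schemes, which is exactly \eqref{eq-decomp-1}.

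The main obstacle I anticipate is making the ``section of a proper morphism'' argument clean: one must be careful that $Z(00) \to p^{-1}(Z(10))$ really is a section of $p_Z$ in the strict sense, i.e. identify the structure morphism $Z(00) \to Z(10)$ coming from $p$ with the one coming from the elementary Nisnevich square, and confirm these agree with the stated isomorphism $Z(00) \iso Z(10)$. This is essentially bookkeeping with the commutative diagram of \S\ref{s5.1.1}, using that $p|_{\ol{S}(01)} = g \circ f$ and that $g$ is an open immersion restricting to an isomorphism $Z(10) \iso Z(10) \subset \ol{S}'(11)$ on the relevant locus. After that identification, the result is immediate from standard facts (a section of a separated morphism is a closed immersion; \'etale morphisms are open).
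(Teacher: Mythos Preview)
Your closedness argument is essentially the paper's: in the triangle
\[
Z(00)\longrightarrow p^{-1}(Z(10))\longrightarrow Z(10),
\]
the composite is an isomorphism and the second leg is separated, so the first leg is proper; being already a monomorphism, it is a closed immersion. The paper packages the same idea as ``the horizontal map is proper because $p^{-1}(Z(10))\to Z(10)$ is separated and an isomorphism is proper''.

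Your openness argument, however, has a genuine gap. You want $Z(00)\hookrightarrow p^{-1}(Z(10))$ to be \'etale (hence open), and you propose to extract this from ``$p^{-1}(Z(10))\to Z(10)$ unramified along the section''. But nothing in the setup tells you $p$ is unramified anywhere: $p$ is merely proper, and the restriction over $Z(10)$ can perfectly well be ramified or have positive-dimensional fibers away from the section. The alternative you offer (``or more simply from the section being a clopen immersion'') is precisely the statement to be proved, so the reasoning is circular. The paper's fix is one line, and you already computed the needed ingredient in your first paragraph without using it: since $\ol{S}(01)$ is open in $T$ and $p|_{\ol{S}(01)}=g\circ f$ with $g$ an open immersion,
\[
p^{-1}(Z(10))\cap \ol{S}(01)=f^{-1}\bigl(g^{-1}(Z(10))\bigr)=f^{-1}(Z(10))=Z(00),
\]
so $Z(00)$ is the trace on $p^{-1}(Z(10))$ of an open of $T$, hence open in $p^{-1}(Z(10))$. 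Replace your \'etale-section paragraph by this observation and the proof is complete.
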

\begin{proof}
Consider the commutative diagram
\[\begin{xymatrix}{
Z(00) \ar[r] \ar[d]_\wr & \ar[dl]^{\mathrm{separated}} p^{-1} (Z(10)) \\
Z(10).
}\end{xymatrix}\]
The horizontal map is open by the equation
\[
p^{-1} (Z(10)) \cap \ol{S}(01) = Z(00)
\]
and proper, since the morphism $p^{-1} (Z(10)) \to Z(10)$ is separated and an isomorphism is proper.
This finishes the proof.
\end{proof}

Since $p^{-1} (Z(10))$ is a closed subset of $p^{-1} (\ol{S}(11))$, we obtain two disjoint closed subsets $Z(00)$ and $W$ of $p^{-1} (\ol{S}(11))$.
Let $\ol{Z(00)}$ and $\ol{W}$ be their closures in $T$ (with the reduced scheme structures).
Noting that $\ol{Z(00)} \cap \ol{W} \cap  p^{-1} (\ol{S}(11)) = \emptyset$ by (\ref{eq-decomp-1}), the blow-up
\[
\pi :T' := \mathrm{Bl}_{\ol{Z(00)} \cap \ol{W}} T \to T 
\]
is an isomorphism over $p^{-1} (\ol{S}(11))$. Denote by $\widetilde{\ol{Z(00)}}$ and $\widetilde{\ol{W}}$ the strict transforms of $\ol{Z(00)}$ and $\ol{W}$ along $\pi$, respectively.
Then, we have
\[
\widetilde{\ol{Z(00)}} \cap \widetilde{\ol{W}} = \emptyset.
\]
Moreover, regard $Z(00)$ and $W$ as closed subschemes of $\pi^{-1}(p^{-1} (\ol{S}(11)))$, through the isomorphism \[\pi^{-1}(p^{-1} (\ol{S}(11))) \to p^{-1} (\ol{S}(11))\] induced by $\pi$.
Then, the closed subschemes $\widetilde{\ol{Z(00)}}$ and $\widetilde{\ol{W}}$ of $T'$ are equal to the closures of $Z(00)$ and $W$ in $T'$.
Since $\pi$ is an isomorphism over $p^{-1}(\ol{S}(11))$, it is in particular an isomorphism over $\ol{S}(01)$ because $\ol{S}(01) \subset p^{-1}(\ol{S}(11))$.
Therefore, the open immersion $\ol{S}(01) \to T$ lifts to an open immersion $\ol{S}(01) \to T'$.
Therefore, we may replace $T$ by $T'$.
After this replacement, we obtain
\begin{equation}\label{empty-001}
\ol{Z(00)} \cap \ol{W} = \emptyset. 
\end{equation}

Define an open subset $U \subset T$ by
\[
U := T \setminus \ol{W},
\]
and let 
\[
q_U :  U \to T \to \ol{S}'(11)
\]
be the natural map.

\begin{prop}\label{nice-over-z}\
\begin{enumerate}
\item We have $q_U^{-1}(Z(10)) = Z(00)$.
\item The closure of $Z(00)$ in $T$ is contained in $U$; in particular, it is proper over $\ol{S}'(11)$.
\end{enumerate}
\end{prop}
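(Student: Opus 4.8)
The plan is to unwind the setup from \S\ref{s5.1.1}--\S\ref{s-decomp} and verify the two assertions of Proposition \ref{nice-over-z} directly from the decomposition $p^{-1}(Z(10)) = Z(00) \sqcup W$ (Lemma \ref{l-decomp}) together with the disjointness of closures $\ol{Z(00)} \cap \ol{W} = \emptyset$ obtained in \eqref{empty-001}. Recall that $q_U : U = T \setminus \ol{W} \to \ol{S}'(11)$ is the restriction of $p$.

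\emph{Part (1).} First I would prove $q_U^{-1}(Z(10)) = Z(00)$. Since $q_U$ is the restriction of $p$ to the open subset $U$, we have $q_U^{-1}(Z(10)) = p^{-1}(Z(10)) \cap U = (Z(00) \sqcup W) \cap (T \setminus \ol{W})$ using \eqref{eq-decomp-1}. Now $W \subset \ol{W}$, so $W \cap (T \setminus \ol{W}) = \emptyset$; and $Z(00) \subset \ol{Z(00)} \subset T \setminus \ol{W}$ by \eqref{empty-001} (the complement of $\ol{W}$ is open and contains $\ol{Z(00)}$, hence contains $Z(00)$). Therefore $q_U^{-1}(Z(10)) = Z(00)$, as claimed.

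\emph{Part (2).} For the second assertion I would argue that the closure of $Z(00)$ in $T$ is contained in $U$. Indeed, that closure is $\ol{Z(00)}$, and by \eqref{empty-001} we have $\ol{Z(00)} \cap \ol{W} = \emptyset$, i.e.\ $\ol{Z(00)} \subset T \setminus \ol{W} = U$. So the closure of $Z(00)$ in $U$ coincides with $\ol{Z(00)}$, a closed subset of $T$. Since $T$ is proper over $\ol{S}'(11)$ (it was produced by Nagata's theorem with $p$ proper surjective), the closed subscheme $\ol{Z(00)}$ of $T$ is proper over $\ol{S}'(11)$ as well. This gives the ``in particular'' clause.

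I do not anticipate a genuine obstacle here: the content has already been front-loaded into Lemma \ref{l-decomp} and, crucially, into the blow-up construction producing \eqref{empty-001}. The only points requiring a little care are bookkeeping ones: making sure ``closure in $T$'' versus ``closure in $p^{-1}(\ol S(11))$'' are used consistently (they agree for $Z(00)$ by definition of $\ol{Z(00)}$ after the replacement $T \rightsquigarrow T'$), and that ``proper over $\ol S'(11)$'' for $\ol{Z(00)}$ follows formally from properness of $T \to \ol S'(11)$ together with $\ol{Z(00)}$ being closed in $T$ (a closed immersion composed with a proper morphism is proper).
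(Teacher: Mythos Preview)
Your proof is correct and follows essentially the same route as the paper: both parts reduce immediately to the decomposition $p^{-1}(Z(10)) = Z(00) \sqcup W$ together with the disjointness \eqref{empty-001} and the properness of $p$. The only cosmetic difference is that for part (1) the paper observes $\ol{W}\cap p^{-1}(\ol{S}(11)) = W$ (since $W$ is closed in the open set $p^{-1}(\ol{S}(11))$) rather than invoking \eqref{empty-001}, but your direct use of \eqref{empty-001} is equally valid.
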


\begin{proof}
The map $q_U$ restricted over $\ol{S}(11)$ is equal to 
\[
q_U |_{\ol{S}(11)} : p^{-1} (\ol{S}(11)) \setminus W \hookrightarrow p^{-1} (\ol{S}(11)) \to \ol{S}(11).
\]
Therefore, since $p^{-1} (\ol{S}(Z(10)) \setminus W = Z(00)$ by definition of $W$, we have 
\[q_U^{-1}(Z(10)) = (q_U |_{\ol{S}(11)} )^{-1} (Z(10)) = Z(00).\]
This proves (1), and (2) follows from  \eqref{empty-001} and the properness of $p$.
%
\end{proof}

\subsubsection{``Separation'' of two closed subsets in $U$}



Define a closed subset $Z(01) \subset q_U^{-1} (\ol{S}(11))$ by
\[
Z(01) := q_U^{-1} (\ol{S}(11)) \setminus \ol{S}(01).
\]
Denote by $\ol{Z(01)}$ and $\ol{Z(00)}$ the closures of $Z(01)$ and $Z(00)$ in $U$ with the reduced scheme structures, respectively.
Then, we have the following

\begin{lemma}\label{proper-int}
\[
\ol{Z(00)} \cap \ol{Z(01)} \cap q_U^{-1} (\ol{S}(11)) = \emptyset.
\]
\end{lemma}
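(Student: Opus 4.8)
The plan is to reduce the statement to the trivial observation that the sets $Z(00)$ and $Z(01)$ are disjoint; the only point requiring care is that $\ol{Z(00)}$ and $\ol{Z(01)}$ are closures taken in $U$, whereas the cleanest description of these sets lives on the smaller subspace $q_U^{-1}(\ol{S}(11))$. So the first step I would carry out is to check that $Z(00)$ and $Z(01)$ are \emph{already closed} as subsets of $q_U^{-1}(\ol{S}(11))$. For $Z(01)$ this is immediate: $\ol{S}(01)\hookrightarrow T$ is an open immersion by construction, hence $\ol{S}(01)\cap q_U^{-1}(\ol{S}(11))$ is open in $q_U^{-1}(\ol{S}(11))$, and $Z(01)$ is its complement. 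For $Z(00)$ I would invoke Proposition \ref{nice-over-z}(1), which identifies $Z(00)=q_U^{-1}(Z(10))$; since $\ol{S}(10)$ is an open subscheme of $\ol{S}(11)$ (being an edge of the elementary Nisnevich square $\ul{S}$, which is upper distinguished on total spaces), $Z(10)=\ol{S}(11)\setminus\ol{S}(10)$ is closed in $\ol{S}(11)$, so $Z(00)=q_U^{-1}(Z(10))$ is closed in $q_U^{-1}(\ol{S}(11))$.

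Next I would apply the elementary point-set fact that, for a subspace $B$ of a topological space $X$ and a subset $A\subseteq B$ that is closed in $B$, one has $\ol{A}^{X}\cap B=A$ (write $A=F\cap B$ with $F$ closed in $X$; then $\ol{A}^{X}\subseteq F$, hence $\ol{A}^{X}\cap B\subseteq F\cap B=A$, and the reverse inclusion is obvious). Taking $X=U$, $B=q_U^{-1}(\ol{S}(11))$ and $A=Z(00)$, resp. $A=Z(01)$, this gives
\[\ol{Z(00)}\cap q_U^{-1}(\ol{S}(11))=Z(00),\qquad \ol{Z(01)}\cap q_U^{-1}(\ol{S}(11))=Z(01),\]
and intersecting the two identities yields
\[\ol{Z(00)}\cap\ol{Z(01)}\cap q_U^{-1}(\ol{S}(11))=Z(00)\cap Z(01).\]
It then remains only to note that $Z(00)=\ol{S}(01)\setminus\ol{S}(00)\subseteq\ol{S}(01)$ while $Z(01)=q_U^{-1}(\ol{S}(11))\setminus\ol{S}(01)$ is disjoint from $\ol{S}(01)$, so $Z(00)\cap Z(01)=\emptyset$, which proves the lemma. (Only the underlying sets matter, so the reduced scheme structures on the closures are irrelevant.)

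I do not anticipate a genuine obstacle: the whole argument is formal, resting on Proposition \ref{nice-over-z}(1) together with the facts that $\ol{S}(01)\hookrightarrow T$ and $\ol{S}(10)\hookrightarrow\ol{S}(11)$ are open immersions. The single subtle point is the one flagged above — that the closures are formed in $U$ rather than in $q_U^{-1}(\ol{S}(11))$ — which is exactly why one first pushes $Z(00)$ and $Z(01)$ down to closed subsets of $q_U^{-1}(\ol{S}(11))$ before intersecting.
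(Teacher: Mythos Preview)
Your proof is correct and follows essentially the same route as the paper: both arguments first observe that $Z(00)$ and $Z(01)$ are already closed in $q_U^{-1}(\ol{S}(11))$ (the former via Proposition \ref{nice-over-z}(1), the latter by definition), reduce to $Z(00)\cap Z(01)=\emptyset$, and conclude from $Z(00)\subset\ol{S}(01)$ while $Z(01)\cap\ol{S}(01)=\emptyset$. You are just slightly more explicit than the paper about the point-set step $\ol{A}^U\cap B=A$ for $A$ closed in $B$.
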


\begin{proof}
Note that $Z(01)$ is closed in $q_U^{-1} (\ol{S}(11))$ by definition.
Moreover, $Z(00)$ is also closed in $q_U^{-1} (\ol{S}(11))$, since by Proposition \ref{nice-over-z}, we have
\[
q_U^{-1} (Z(10)) = Z(00).
\]
Therefore, it suffices to show that $Z(00) \cap Z(01) = \emptyset$.
But this is obvious by $Z(00) \subset \ol{S}(01)$, and by $Z(01) = q_U^{-1} (\ol{S}(11)) \setminus \ol{S}(01)$.
This finishes the proof.
\end{proof}

Consider now the blow-up
\[
\pi_U : U'  := \mathrm{Bl}_{\ol{Z(00)} \cap \ol{Z(01)}} U \to U.
\]
 
The strict transforms $\widetilde{\ol{Z(01)}}$ and $\widetilde{\ol{Z(00)}}$  along $\pi_U$ are equal to the closures of $Z(01)$ and $Z(00)$ in $U' $, and we have
\begin{equation}\label{eq000001}
\widetilde{\ol{Z(00)}} \cap \widetilde{\ol{Z(01)}} = \emptyset.
\end{equation}
By Lemma \ref{proper-int}, $\pi_U$ is an isomorphism over $q_U^{-1} (\ol{S}(11))$.
Regard $Z(01)$ and $Z(00)$ as closed subschemes of $\pi_U^{-1} q_U^{-1} (\ol{S}(11))$.
Then, by Proposition \ref{nice-over-z} (1), we have
\[
\pi_U^{-1} q_U^{-1} (Z(10)) = Z(00).
\]
Moreover, the strict transform $\widetilde{\ol{Z(00)}}$ is proper over $\ol{Z(00)}$, hence proper over $\ol{S}'(11)$ by Proposition \ref{nice-over-z} (2).
Therefore, Proposition \ref{nice-over-z} remains true after we replace $U$ by $U'$ and $q_U$ by $q_U \circ \pi_U$, respectively.
After this replacement, the equality (\ref{eq000001}) implies that 
\begin{equation}\label{eq-separation}
\ol{Z(00)} \cap \ol{Z(01)} = \emptyset.
\end{equation}

Define an open subset $V_0 \subset U$ by
\[
V_0 := U \setminus \ol{Z(01)},
\]
and let
\[
q_V : V_0 \to  \ol{S}'(11)
\]
be the natural map.
Then, we have the following
\begin{prop}\label{step-1}
The map $q_V$ satisfies the following conditions:
\begin{enumerate}
\item $q_V^{-1} (\ol{S}(11)) = \ol{S}(01)$. In particular, $q_V$ is \'etale over $\ol{S}(11)$.
\item  The closure of $Z(00)$ in $V_0$ is proper over $\ol{S}'(11)$.
\end{enumerate}
\end{prop}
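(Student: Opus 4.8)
The plan is to read off both assertions directly from the ``separation'' properties already in hand — Proposition \ref{nice-over-z} and the emptiness \eqref{eq-separation}, $\ol{Z(00)}\cap\ol{Z(01)}=\emptyset$ (all closures taken in $U$) — so that essentially nothing geometric remains to be done; the only thing to be disciplined about is in which ambient space each closure is formed. The one elementary fact I would isolate first is: if $A$ is closed in an open subset $V$ of a space $X$, then $\ol{A}^{\,X}\cap V=A$ (since $\ol{A}^{\,X}\cap V$ is a closed subset of $V$ containing $A$, hence contains $\ol{A}^{\,V}=A$, while any point of $V\setminus A$ has the $X$-open neighbourhood $V\setminus A$ avoiding $A$, so lies outside $\ol{A}^{\,X}$).

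For part (1), I would note that $q_U^{-1}(\ol{S}(11))$ is open in $U$ because $\ol{S}(11)\hookrightarrow\ol{S}'(11)$ is an open immersion, and that $Z(01)=q_U^{-1}(\ol{S}(11))\setminus\ol{S}(01)$ is closed in it by construction. The fact above then gives $\ol{Z(01)}\cap q_U^{-1}(\ol{S}(11))=Z(01)$, and since $q_V$ is the restriction of $q_U$ to $V_0=U\setminus\ol{Z(01)}$,
\[q_V^{-1}(\ol{S}(11))=q_U^{-1}(\ol{S}(11))\setminus\ol{Z(01)}=q_U^{-1}(\ol{S}(11))\setminus Z(01)=\ol{S}(01),\]
using $\ol{S}(01)\subset q_U^{-1}(\ol{S}(11))$ for the last equality. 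Under this identification, $q_V$ over $\ol{S}(11)$ is the structure map $f\colon\ol{S}(01)\to\ol{S}(11)$ of $\ul{S}$ followed by the open immersion $\ol{S}(11)\hookrightarrow\ol{S}'(11)$; as $f$ is étale ($\ul{S}$ being an elementary Nisnevich square), $q_V$ is étale over $\ol{S}(11)$, which is the ``in particular''.

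For part (2), I would use \eqref{eq-separation}: the closure $\ol{Z(00)}$ of $Z(00)$ in $U$ is disjoint from $\ol{Z(01)}$, so $\ol{Z(00)}\subset U\setminus\ol{Z(01)}=V_0$; being closed in $U$ and contained in $V_0$ it is closed in $V_0$, hence equals the closure of $Z(00)$ in $V_0$ (any closed subset of $V_0$ containing $Z(00)$ is cut out by a closed subset of $U$, which contains $\ol{Z(00)}$, so meets $V_0$ in a set containing $\ol{Z(00)}$). Then I would invoke Proposition \ref{nice-over-z}(2) — which, as recorded just before the statement of Proposition \ref{step-1}, remains valid after $U$ is replaced by the blow-up $U'$ — to conclude that $\ol{Z(00)}$ is proper over $\ol{S}'(11)$.

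I do not expect any genuine obstacle here: all the real content was spent in the preceding blow-up steps producing Proposition \ref{nice-over-z} and \eqref{eq-separation}, which separated $\ol{Z(00)}$ from $\ol{W}$ and from $\ol{Z(01)}$. Modulo those, Proposition \ref{step-1} is pure point-set bookkeeping, and the only thing that can go wrong is a sloppy confusion between closures taken in $U$ and closures taken in its open subsets $q_U^{-1}(\ol{S}(11))$ or $V_0$; the little lemma above is exactly what keeps that straight.
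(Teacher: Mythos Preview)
Your proof is correct and follows essentially the same approach as the paper's: part (1) via $q_V^{-1}(\ol{S}(11))=q_U^{-1}(\ol{S}(11))\setminus Z(01)=\ol{S}(01)$, and part (2) via \eqref{eq-separation} and Proposition \ref{nice-over-z}. The paper's proof is terser, leaving implicit the closure bookkeeping (your ``elementary fact'' that $\ol{Z(01)}\cap q_U^{-1}(\ol{S}(11))=Z(01)$) that you have spelled out carefully.
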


\begin{proof}
The map $q_V$ restricted over $\ol{S}(11)$ is equal to
\[
q_U^{-1} (\ol{S}(11)) \setminus Z(01) \hookrightarrow q_U^{-1} (\ol{S}(11)) \to \ol{S}(11).
\]
Therefore, by definition of $Z(01)$, we have $q_V^{-1} (\ol{S}(11)) = \ol{S}(01)$.
This proves the assertion (1).
The assertion (2) follows from Proposition \ref{nice-over-z} and the equality (\ref{eq-separation}).
This finishes the proof.
\end{proof}

\subsubsection{``Platification'' of $q_V$}

In this final step, we construct a replacement of $q_V$ so that it becomes flat.
Proposition \ref{step-1} (1) implies that $q_V$ is \'etale (in particular, flat) over $\ol{S}(11)$.
Therefore, by the theorem of ``platification'' of Raynaud-Gruson \cite[Th. 5.2.2]{RG}, we can find a blow-up 
\[\pi_S : \widetilde{\ol{S}'(11)} \to \ol{S}'(11)\]
satisfying the following properties:
\begin{enumerate}
 \item Let 
\[
\pi_V : V \to V_0
\] 
be the strict transform of $V_0$ along $\pi_S$.
Then, the morphism 
\[
\tilde{q}_V : V \to \widetilde{\ol{S}'(11)},
\]
which is induced by the universal property of blow-up, is flat.
\item 
The blow up $\pi_S$ is an isomorphism over $\ol{S}(11)$.
\end{enumerate}

Since $\pi_S$ is an isomorphism over $\ol{S}(11)$, the strict transform $\pi_V$ is an isomorphism over $q_V^{-1}(\ol{S}(11)) = \ol{S}(01)$, where the equality follows from Proposition \ref{step-1} (1).
Regard $Z(00)$ as a closed subscheme of $\pi_V^{-1}(\ol{S}(01))$ through the isomorphism
\begin{equation*}
\pi_V^{-1}(\ol{S}(01)) \iso \ol{S}(01)
\end{equation*}
induced by $\pi_V$.
Let $\ol{Z(00)}$ be the closure of $Z(00)$ in $V_0$, and let $\widetilde{\ol{Z(00)}}$ be the closure of $Z(00)$ in $V$.
Then, $\widetilde{\ol{Z(00)}}$ is a strict transform of $\ol{Z(00)}$.
Therefore, we conclude that 
\begin{equation}\label{eq00002}
\text{the strict transform of $Z(00)$ in $V$ is proper over $\widetilde{\ol{S}'(11)}$.}
\end{equation}
Moreover, we have
\begin{multline}\label{eq00003}
\tilde{q}_V^{-1} (\ol{S}(11)) =^1 \tilde{q}_V^{-1} \pi_S^{-1} (\ol{S}(11)) =^2 \pi_V^{-1} q_V^{-1} (\ol{S}(11))\\
 =^3 \pi_V^{-1} (\ol{S}(01)) =^4 \ol{S}(01),
\end{multline}
where $=^1$ follows from Property (2) above, $=^2$ follows from the equality $q_V \pi_V = \pi_S \tilde{q}_V$, $=^3$ follows from Proposition \ref{step-1} (1), and $=^4$ follows from the fact that $\pi_V$ is an isomorphism over $q_V^{-1}(\ol{S}(11))$, which contains $\ol{S}(01)$.

The assertions (\ref{eq00002}) and (\ref{eq00003}) imply that Proposition \ref{step-1} remains true after we replace $V_0$, $\ol{S}'(11)$ and $q_V$ by $V$, $\widetilde{\ol{S}'(11)}$ and $\tilde{q}_V$, respectively.
Therefore, after this replacement, we obtain the flatness of $q_V$.

Finally, $V$ is separated, since it is obtained from $T$ (proper) in \S \ref{s5.1.1} by a composition of open immersions and proper morphisms.
This completes the proof of Proposition \ref{th-partial-prepre}.

\subsection{Construction of a strict extension $\ul{S} \to \ul{S}'$}\label{pf-th-partial-pre}

In this subsection, we use Proposition \ref{th-partial-prepre} to prove

\begin{prop}\label{th-partial-pre}
Let $\ul{S}$ be an elementary Nisnevich square, and take any $N_0(11) \in \Comp_1 (S(11))$.
Then, there exists a strict extension $\ul{S} \to \ul{S}'$ such that $\ul{S}'$ is an elementary Nisnevich square, $S'(11) \in \Comp_1 (S(11))$, $\ol{S}'(11)$ is normal and $S'(11)$ dominates $N_0(11)$.
\end{prop}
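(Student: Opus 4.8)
The plan is to feed the \'etale chart produced by Proposition~\ref{th-partial-prepre} into the lower-right corner of the square and reconstruct the other three corners by base change, then normalize and equip everything with minimal modulus structures so that the result lands in $\Comp_1(S(11))$. Concretely, start from $N_0(11)\in\Comp_1(S(11))$ and apply Proposition~\ref{th-partial-prepre} with $S'(11)=N_0(11)$: this gives a flat, separated $q_V\colon V\to\ol{N}_0(11)$, a cartesian square identifying $\ol{S}(01)$ with $q_V^{-1}(\ol{S}(11))$, and the crucial properness statement that the closure $\ol{Z(00)}$ of $Z(00)=\ol{S}(01)\setminus\ol{S}(00)$ in $V$ is proper over $\ol{N}_0(11)$.

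The next step is to build the compactified square. I would set $\ol{S}'(11):=\ol{N}_0(11)$ (or a blow-up thereof, already produced inside Proposition~\ref{th-partial-prepre}), $\ol{S}'(01):=V$, and then define the remaining two corners by the strictness recipe of Definition~\ref{d2.1}: $\ol{S}'(10)$ as a compactification of $\ol{S}(10)$ inside $\ol{S}'(11)$ — here one can take the closure of $\ol{S}(10)$, or rather use that $Z(10)=\ol{S}(11)\setminus\ol{S}(10)$ has a well-understood closure from \S\ref{s5.1.1} — and $\ol{S}'(00):=\ol{S}'(01)\times_{\ol{S}'(11)}\ol{S}'(10)$. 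One must check this square is upper distinguished on total spaces: $q_V$ restricted over the complement of $\ol{S}'(10)$ should be an isomorphism, which is exactly what the properness of $\ol{Z(00)}$ over $\ol{S}'(11)$ together with Proposition~\ref{step-1}(1) buys us (the ``extra'' part of $V$ over $\ol{S}'(11)\setminus\ol{S}(11)$ is precisely $\ol{Z(00)}$, which maps isomorphically down). Then normalize all four total spaces; since the maps to $\ol{S}'(11)$ are \'etale over $\ol{S}(11)$ and we are taking fiber products, normalization commutes with base change over the normal locus, and by Remark~\ref{r2.1}/Lemma~\ref{l-normal}-type reasoning we get $\ol{S}'(11)$ normal with all corners normal, still forming an elementary Nisnevich square.

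Finally, equip the square with modulus structure. Put $S'(11)^\infty:=N_0(11)^\infty$ (pulled back to the normalization), and for the other corners pull this divisor back along the structure maps to $\ol{S}'(11)$ — this makes all four morphisms minimal by construction, and the interiors are unchanged, so $\omega^\Sq$ of the new square is the \'etale square of interiors, which agrees with that of $\ul{S}$. One then checks $f(ij)\colon S(ij)\to S'(ij)$ is an extension in the sense of Definition~\ref{d2.1}: it is an open immersion on total spaces (from the cartesian diagrams and the open immersions in Proposition~\ref{th-partial-prepre}), minimal (both divisors are pulled back from $\ol{S}'(11)$), and an isomorphism on interiors; strictness holds because we \emph{defined} the lower corners by fiber product over $\ol{S}'(11)$. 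Thus $\ul{S}\to\ul{S}'$ is a strict extension with $\ul{S}'$ an elementary Nisnevich square, $\ol{S}'(11)$ normal and proper-free (we did not claim properness here, only normality — properness of $\ol{S}'(11)$ is not asserted in Proposition~\ref{th-partial-pre}), and $S'(11)=N_0(11)\in\Comp_1(S(11))$ dominating $N_0(11)$ tautologically.

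The main obstacle I anticipate is verifying that the base-changed square $\ul{S}'$ is genuinely upper distinguished — i.e.\ that $V\to\ol{S}'(11)$ restricted over the closed complement $\ol{S}'(11)\setminus\ol{S}'(10)$ is an isomorphism onto that complement. This requires knowing that $V$ over $\ol{S}'(11)\setminus\ol{S}(11)$ consists \emph{only} of the closure of $Z(00)$ and that this closure maps isomorphically down, which in turn rests on the careful shrinking/blow-up bookkeeping of \S\ref{s5.1.1}--\S\ref{step-1} (the successive removals of $\ol{W}$ and $\ol{Z(01)}$). A secondary technical point is that normalization must be compatible with the fiber-product description of $\ol{S}'(00)$; this is fine because normalization commutes with \'etale base change, but one has to phrase it so that strictness survives normalization, perhaps by first normalizing $\ol{S}'(11)$ and then taking fiber products, exactly as in the proof of Lemma~\ref{l-normal}.
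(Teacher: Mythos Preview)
Your overall architecture is the paper's, but there are two genuine gaps that prevent the argument from going through as written.

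First, you set $\ol{S}'(01):=V$. But Proposition~\ref{th-partial-prepre} only gives that $q_V$ is \emph{flat} and separated; for $\ul{S}'$ to be an elementary Nisnevich square you need $\ol{S}'(01)\to\ol{S}'(11)$ to be \emph{\'etale}. Flat, separated, quasi-finite over the target is not enough: $q_V$ may be ramified outside $\ol{S}(11)$. The paper's remedy is to take $\ol{S}'(01)$ to be the \'etale locus of $q_V$ inside $V$ (Lemma~\ref{lemma-etale} and the definition following it). For this to work one must first prove that the \'etale locus contains $q_V^{-1}(\ol{Z(10)})$, i.e.\ that $q_V$ is unramified over $\ol{Z(10)}$.

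Second, and this is the heart of the matter, the isomorphism condition for the upper-distinguished square requires $q_V^{-1}(\ol{Z(10)})\to\ol{Z(10)}$ to be an \emph{isomorphism} of schemes (Proposition~\ref{lemma-iso}). Your assertion that ``the extra part of $V$ over $\ol{S}'(11)\setminus\ol{S}(11)$ is precisely $\ol{Z(00)}$, which maps isomorphically down'' is not what properness of $\ol{Z(00)}$ over $\ol{S}'(11)$ says: properness gives you surjectivity of $\ol{Z(00)}\to\ol{Z(10)}$, hence of $q_Z$, but not that $q_V^{-1}(\ol{Z(10)})=\ol{Z(00)}$, nor that the map down has degree one. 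The paper proves $q_Z$ is an isomorphism by invoking a nontrivial fact (Lemma~\ref{strong-lemma}): a separated, flat, quasi-finite morphism of finite presentation which is an isomorphism over a schematically dense open is an open immersion. Applied to $q_Z$ (flat by base change, an isomorphism over $Z(10)$ by the Nisnevich hypothesis), this gives that $q_Z$ is an open immersion; surjectivity then comes from the properness of $\ol{Z(00)}$. Without this lemma the argument does not close.

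A smaller point: $\ol{S}'(10)$ must be an \emph{open} subscheme of $\ol{S}'(11)$ (the horizontal maps in an elementary Nisnevich square are open immersions), so the correct definition is $\ol{S}'(10):=\ol{S}'(11)\setminus\ol{Z(10)}$, not a closure of $\ol{S}(10)$. Your parenthetical about the closure of $Z(10)$ is the right object, but the sentence as written is confused. Once these points are fixed, the normalization and modulus steps go exactly as you describe.
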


In the statement of Proposition \ref{th-partial-prepre}, choose $S'(11)$ dominating $N_0(11)$. We then have the following diagram
\begin{equation}\label{eq5.1}
\begin{gathered}
\xymatrix{
\ol{S}(01) \ar[r] \ar[d]^{\et}_f \ar@{}[rd]|\square& V \ar[d]_{q_V}^{\mathrm{flat}} \ar@{}[dr]|\square & \ar[l] q_V^{-1} (\ol{Z(10)}) \ar[d]_{q_Z} \\
\ol{S}(11) \ar[r] & \ol{S}'(11) & \ar[l] \ol{Z(10)},
}
\end{gathered}
\end{equation}
 where the left square is the one given in Proposition \ref{th-partial-prepre}, and $\ol{Z(10)}$ is the closure of $Z(10) := \ol{S}(11) \setminus \ol{S}(10)$ in $\ol{S}'(11)$ with the reduced scheme structure. The scheme $q_V^{-1} (\ol{Z(10)})$ is defined to be the fiber product $\ol{Z(10)} \times_{\ol{S}'(11)} V$.
We need the following lemma (compare \cite[Cor. 2.2]{lazard} and \cite[Th. 2.7]{oda}): 

\begin{lemma}\label{strong-lemma} 
Let $\phi : X \to S$ be a morphism of schemes and $U \subset S$ an open subset.
Assume the following conditions hold:
\begin{thlist}
\item  $\phi$ is separated, quasi-finite, of finite presentation and flat.
\item $\phi^{-1} (U) \to U$ is an isomorphism.
\item The inclusion $U \to S$ is quasi-compact and scheme-theoretically dense.
\end{thlist} 
Then, $\phi$ is an open immersion. 
\end{lemma}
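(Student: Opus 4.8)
\textbf{Proof plan for Lemma \ref{strong-lemma}.}
The plan is to reduce to the affine case and then use the structure theory of quasi-finite flat morphisms together with scheme-theoretic density. First I would observe that the statement is local on $S$: being an open immersion can be checked Zariski-locally on the target, and all three hypotheses (i)--(iii) are stable under restriction to an open $S_0\subset S$ (for (iii), note that an open immersion which is quasi-compact and scheme-theoretically dense stays so after base change to an open subset, since scheme-theoretic density of $U$ in $S$ restricts). So I may assume $S=\Spec A$ is affine. Then $\phi$ is quasi-compact and separated, hence $X$ is quasi-compact and quasi-separated; since $\phi$ is also quasi-finite and of finite presentation, I would like to reduce further to $X=\Spec B$ affine. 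One clean way: a separated quasi-finite morphism of finite presentation is, by Zariski's main theorem, an open immersion into a finite $A$-scheme $\Spec C$; covering $\Spec C$ by basic opens refines to a cover of $X$ by affine opens, and it suffices to prove each such affine open $\Spec B\hookrightarrow\Spec A$ is an open immersion (an immersion that is an isomorphism onto an open set). So WLOG $X=\Spec B$, $S=\Spec A$, with $A\to B$ flat, of finite presentation, quasi-finite and separated.

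Next I would exploit hypothesis (iii). Scheme-theoretic density of the quasi-compact open $U=\Spec A\setminus V(I)\hookrightarrow S$ means that $A\to \Gamma(U,\sO_S)$ is injective, i.e. $A$ injects into its localization-type ring of sections over $U$; concretely, since $U$ is quasi-compact we may write $U=\bigcup_{i=1}^n D(g_i)$ and the condition is that $A\to\prod_i A_{g_i}$ is injective. In particular $A$ is reduced at least along the support of $V(I)$ in the sense that no nonzero element of $A$ is annihilated by a power of $I$; more usefully, $A\hookrightarrow A_U:=\ker\big(\prod A_{g_i}\rightrightarrows\prod A_{g_ig_j}\big)=\Gamma(U,\sO_S)$. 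Now by flatness $B$ is a flat $A$-module, so $A\hookrightarrow A_U$ stays injective after $\otimes_A B$, giving $B\hookrightarrow B\otimes_A A_U$. By hypothesis (ii), $\phi^{-1}(U)\to U$ is an isomorphism, which identifies $B\otimes_A\Gamma(U,\sO_S)$ with $\Gamma(\phi^{-1}(U),\sO_X)=\Gamma(U,\sO_S)=A_U$ (using that $\phi$ is an isomorphism over $U$ and $\phi^{-1}(U)$ is quasi-compact quasi-separated so formation of global sections commutes with the flat base change $A\to B$ appropriately — here I'd be careful and instead argue directly: $\phi^{-1}(U)\cong U$, and the composite $U\hookrightarrow S$ factors as $U\cong\phi^{-1}(U)\hookrightarrow X\xrightarrow{\phi}S$). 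The upshot I am aiming for is a commutative square
\[
\begin{CD}
A @>>> B\\
@VVV @VVV\\
A_U @>\sim>> B_U
\end{CD}
\]
with both vertical maps injective and the bottom map an isomorphism. Chasing this square shows $A\to B$ is injective, hence $\phi$ is scheme-theoretically dominant; combined with quasi-finiteness and flatness, I then want to conclude $\phi$ is an open immersion.

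For the final conclusion I would use that a flat morphism of finite presentation is open, so $\phi(X)$ is open in $S$ and contains $U$; replacing $S$ by $\phi(X)$ (legitimate, as all hypotheses persist — here scheme-theoretic density of $U$ in the smaller open $\phi(X)$ is what is really needed and follows because $\phi(X)\supset U$ and $U$ was already dense in $S$), I may assume $\phi$ is surjective, flat, quasi-finite, of finite presentation and separated. Such a $\phi$ is finite locally free away from... no: rather, a surjective flat quasi-finite finitely presented \emph{separated} morphism need not be finite, but it is \emph{quasi-affine}, and more to the point it is étale-locally on $S$ a disjoint union; the clean statement I want is: a flat, separated, quasi-finite, finitely presented morphism which is moreover scheme-theoretically dominant and has an open dense locus over which it is an isomorphism must be an isomorphism onto an open — equivalently, the function $s\mapsto \dim_{\kappa(s)}\phi^{-1}(s)$ (length of the fiber, which by flatness + finite presentation is locally constant on $S$, cf. the finite-locally-free case) equals $1$ everywhere because it equals $1$ over the dense open $U$. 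That local constancy of the fiber length is the crux: by EGA IV, a finite flat morphism has locally free, hence locally constant rank, sheaf of sections; for separated quasi-finite flat finitely presented $\phi$ one passes to the finite-morphism case via ZMT (the finite part $\Spec C\to S$ is finite flat of constant rank, and $X$ is the open locus where the rank-$1$ summand lives). So the fiber degree is $1$ on all of the connected component meeting $U$; since $\phi$ is surjective this forces the degree to be $1$ everywhere, $\phi$ is an isomorphism onto its (open) image, done.

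\textbf{Main obstacle.} The genuinely delicate point is promoting ``isomorphism over the dense open $U$'' to ``degree-one fibers everywhere,'' i.e. correctly invoking the local constancy of fiber length for a separated quasi-finite flat finitely presented morphism (via Zariski's Main Theorem to reduce to a finite flat morphism, where the sheaf of sections is locally free). Getting the reductions clean — staying affine, keeping scheme-theoretic density under the base changes to $\phi(X)$ and to affine opens, and ensuring (iii)'s quasi-compactness is used exactly where needed (to get $A\hookrightarrow\prod A_{g_i}$) — is the bulk of the bookkeeping, but the conceptual heart is the fiber-degree argument. I expect the cited analogues \cite[Cor. 2.2]{lazard} and \cite[Th. 2.7]{oda} handle precisely this kind of ``flat + birational $\Rightarrow$ open immersion'' statement, so the proof can be organized around reducing to their setting.
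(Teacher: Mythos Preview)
Your overall strategy—pass to a finite flat situation and use constancy of rank—is the right instinct, and the paper's proof follows the same arc. However, your justification has a genuine gap. When you invoke Zariski's Main Theorem to factor $X \hookrightarrow \Spec C \to S$ with $\Spec C \to S$ finite, the finite hull $\Spec C$ is \emph{not} flat over $S$ in general, so you cannot conclude it is locally free of constant rank. And for a separated quasi-finite flat finitely presented morphism that is not finite, the fiber degree is \emph{not} locally constant on $S$: take $S = \A^1$ and $X = S \sqcup (S \setminus \{0\})$ mapping to $S$ in the obvious way; this is surjective, flat, separated, quasi-finite and of finite presentation, with fiber degree $2$ over $t \neq 0$ and $1$ at $t = 0$. (This example admits no dense open over which $\phi$ is an isomorphism, so the lemma itself is not contradicted—but your \emph{argument} for local constancy would apply to it and is therefore wrong.) There is also a secondary issue: your reduction to $X$ affine does not preserve hypothesis (ii), since restricting to an affine open $X_i \subset X$ replaces $U$ by the possibly smaller open $\phi(X_i \cap \phi^{-1}(U))$, which need not remain scheme-theoretically dense in $S$.

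The paper repairs the main gap with a sharper tool than ZMT: the \'etale localization of quasi-finite morphisms \cite[\S 2.3, Prop.~8]{neronmod}. For each point of $S$ this produces an \'etale neighborhood $S' \to S$ over which $X' = X \times_S S'$ splits as $V' \sqcup X'_1$ with $V' \to S'$ \emph{finite}; since $V'$ is open in the flat $X'$, the piece $V' \to S'$ is finite \emph{and} flat. The decisive step—where hypothesis (iii) earns its keep—is to show $X'_1 = \emptyset$: the preimage of $U$ in $X'$ is scheme-theoretically dense (scheme-theoretic density is preserved under flat base change \cite[\S 2.5, Prop.~2]{neronmod}, applied to the flat composite $X' \to S' \to S$), and one checks it lies entirely in $V'$. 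Hence $X' \to S'$ is finite, and finiteness descends fpqc. The finite case is then exactly your rank-$1$ argument.
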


A proof of Lemma \ref{strong-lemma} is given in the Stacks Project\footnote{Lemma 37.11.5, \url{https://stacks.math.columbia.edu/tag/081M}}.
In fact, the assumption (i) can be replaced by the following weaker condition: $\phi$ is separated, locally of finite type and flat.
However, for our purpose, Lemma \ref{strong-lemma} is enough. We reconstruct a proof in \S \ref{pf-strong-lemma} for the reader's convenience.

\begin{prop}\label{lemma-iso}
The map 
\[
q_Z : q_V^{-1} (\ol{Z(10)}) \to \ol{Z(10)}
\]
 is an isomorphism.
\end{prop}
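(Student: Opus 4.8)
The plan is to apply Lemma \ref{strong-lemma} to the morphism $q_Z : q_V^{-1}(\ol{Z(10)}) \to \ol{Z(10)}$, with the r\^ole of the open subset $U$ played by $Z(10) \subset \ol{Z(10)}$. So the task reduces to verifying the three hypotheses (i)--(iii) of that lemma for $\phi = q_Z$ and $U = Z(10)$.

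First I would verify hypothesis (ii). Recall from Proposition \ref{th-partial-prepre} (and Proposition \ref{step-1} (1)) that $q_V^{-1}(\ol{S}(11)) = \ol{S}(01)$ and, more precisely, that over $\ol{S}(11)$ the map $q_V$ restricts to the \'etale map $f : \ol{S}(01) \to \ol{S}(11)$ underlying $\ul{S}$. Since $\ul{S}$ is an elementary Nisnevich square, $f$ becomes an isomorphism over $Z(10) = \ol{S}(11) \setminus \ol{S}(10)$ (this is exactly the identification $Z(00) \iso f^{-1}(Z(10)) \iso Z(10)$ recorded in \S\ref{s5.1.1}, which is a consequence of upper-distinguishedness). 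Pulling back along the (flat, locally closed) inclusion $Z(10) \hookrightarrow \ol{Z(10)}$, we get that $q_Z^{-1}(Z(10)) = q_V^{-1}(Z(10)) = Z(00) \to Z(10)$ is an isomorphism. For hypothesis (iii): $Z(10)$ is by construction dense in $\ol{Z(10)}$ (the latter is the closure of the former with the reduced structure), so the inclusion is scheme-theoretically dense because $\ol{Z(10)}$ is reduced; and it is quasi-compact because all schemes in sight are of finite type over $k$. For hypothesis (i): $q_Z$ is the base change of $q_V$ along $\ol{Z(10)} \to \ol{S}'(11)$, hence it is separated and flat since $q_V$ is (by the ``platification'' step in \S\ref{section-partial}), and it is of finite presentation since everything is of finite type over the Noetherian base $k$. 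The one point that needs a separate argument is quasi-finiteness: I would deduce it from the properness statement Proposition \ref{step-1} (2) (equivalently \eqref{eq00002}), namely that the closure $\widetilde{\ol{Z(00)}}$ of $Z(00) = q_Z^{-1}(Z(10))$ in $V$ is proper over $\ol{S}'(11)$. Indeed $q_V^{-1}(\ol{Z(10)})$ is a closed subscheme of $V$ containing $Z(00)$ as a dense open (since $q_Z^{-1}(Z(10)) = Z(00)$ and $Z(10)$ is dense in $\ol{Z(10)}$ while $q_V$ is flat hence open, so preimages of dense opens under flat maps are dense), so $q_V^{-1}(\ol{Z(10)})$ is contained in $\widetilde{\ol{Z(00)}}$... more carefully, $q_V^{-1}(\ol{Z(10)})$ is itself the closure of $Z(00)$ in $V$ up to the distinction between $V$ and $V_0$, hence proper over $\ol{S}'(11)$, hence proper over $\ol{Z(10)}$; being also quasi-finite-fibred would be what we want, but a proper map need not be quasi-finite.

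The cleanest route, and the one I expect to be the main obstacle, is therefore to establish quasi-finiteness of $q_Z$ directly. I would argue as follows: $q_Z$ is flat and its restriction over the dense open $Z(10)$ is an isomorphism, hence has fibres of length $1$ there; by upper semicontinuity of fibre dimension/length for a flat finite-type morphism, together with the fact that $q_Z$ is proper, one gets that $q_Z$ is finite over a dense open of $\ol{Z(10)}$, and then flatness forces the degree to be constant $= 1$ on each connected component meeting that open. Alternatively — and this is probably the intended argument — one notes that $q_V^{-1}(\ol{Z(10)})$ is built inside $V$, which was obtained from a Nagata compactification $T$ by open immersions and blow-ups, and the fibres of $q_V$ over points of $\ol{Z(10)}$ were explicitly controlled in \S\ref{section-partial}: the ``extra'' components were removed in the steps ``elimination of fibers outside $Z(00)$'' and ``separation'', precisely so that $q_V^{-1}(\ol{Z(10)})$ reduces to $\ol{Z(00)}$, which maps isomorphically to $\ol{Z(10)}$ generically. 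Granting quasi-finiteness, Lemma \ref{strong-lemma} applies and shows $q_Z$ is an open immersion; but $q_Z$ is also proper (base change of the proper-over-$\ol{S}'(11)$ statement, or simply: closed immersion composed with properness), hence a closed immersion; an open and closed immersion onto a connected-components union, and since $q_Z^{-1}(Z(10)) = Z(10)$ with $Z(10)$ dense in $\ol{Z(10)}$, the image of $q_Z$ is a closed subset containing the dense subset $Z(10)$, so it is all of $\ol{Z(10)}$. Therefore $q_Z$ is a proper monomorphism that is surjective, i.e. an isomorphism.

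In writing this up I would present it as: (1) reduce to checking (i)--(iii) of Lemma \ref{strong-lemma}; (2) dispatch separatedness, flatness, finite presentation, and the isomorphism-over-$U$ condition by base change from Proposition \ref{th-partial-prepre} and Proposition \ref{step-1}; (3) prove quasi-finiteness, using that $q_V^{-1}(\ol{Z(10)})$ is proper over $\ol{S}'(11)$ (from Proposition \ref{step-1} (2)) together with flatness to pin down the fibre lengths; (4) conclude $q_Z$ is an open immersion by the lemma, then upgrade to an isomorphism using properness and density of $Z(10)$ in $\ol{Z(10)}$. The delicate point — the one I would think hardest about before committing to the write-up — is step (3), making sure no stray positive-dimensional fibre of $q_V$ survives over $\ol{Z(10)}$; the construction in \S\ref{section-partial} was engineered to kill exactly these, so the argument should amount to citing \eqref{eq-separation} and Proposition \ref{nice-over-z} (1) at the right moment rather than doing new geometry.
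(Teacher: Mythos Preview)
Your plan coincides with the paper's: verify the hypotheses of Lemma~\ref{strong-lemma} for $q_Z$ with $U = Z(10)$, deduce that $q_Z$ is an open immersion, and then show it is surjective. You correctly dispose of separatedness, flatness, finite presentation, the isomorphism over $Z(10)$, and scheme-theoretic density of $Z(10)$ in the reduced scheme $\ol{Z(10)}$.

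The gap you yourself flag, quasi-finiteness, is real: neither of your sketches closes it. Upper semicontinuity of fibre dimension goes the wrong way (it permits dimension to \emph{jump up} under specialisation, not down), and appealing to the construction of \S\ref{section-partial} as ``engineered to kill these'' is not an argument. The paper handles this in one line, writing that $q_Z$ is ``quasi-finite (since it is flat and birational)''. The content behind this is exactly the density observation you already made: since $q_Z$ is flat of finite presentation it is open, so $q_Z^{-1}(Z(10)) = Z(00)$ is dense in $q_V^{-1}(\ol{Z(10)})$; hence every irreducible component of the source has its generic point in $Z(00)$ and maps birationally to a component of $\ol{Z(10)}$ of the same dimension. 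The flat local dimension formula $\dim \mathcal{O}_{X,x} = \dim \mathcal{O}_{Y,f(x)} + \dim \mathcal{O}_{X_{f(x)},x}$ then forces all fibre dimensions to vanish. Alternatively, as the paper remarks immediately after stating Lemma~\ref{strong-lemma}, the Stacks Project version (Tag~081M) drops the quasi-finiteness hypothesis altogether, so one may simply cite that.

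For the final upgrade from open immersion to isomorphism, your argument via properness of $q_Z$ is more elaborate than necessary and the parenthetical ``closed immersion composed with properness'' is wrong ($V \to \ol{S}'(11)$ is not proper). The paper argues more directly: by Proposition~\ref{th-partial-prepre}~(3) the map $\ol{Z(00)} \to \ol{Z(10)}$ is proper, and it is dominant since $Z(00) \iso Z(10)$, hence surjective; as $\ol{Z(00)} \subset q_V^{-1}(\ol{Z(10)})$, this already gives surjectivity of $q_Z$, and a surjective open immersion is an isomorphism.
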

\begin{proof}

Since $q_V$ is flat by Proposition \ref{th-partial-prepre} (4), so is $q_Z$.
Since $\ol{S}(11) \times_{\ol{S}'(11)} V \cong \ol{S}(01)$ by \eqref{eq5.1}, we have 
\[
Z(10) \times_{\ol{S}'(11)} V \cong Z(10) \times_{\ol{S}(11)} \ol{S}(01) \cong^\dag  Z(10),
\]
where the isomorphism $\cong^\dag$ is obtained by the assumption that $\ul{S}$ is an elementary Nisnevich square.
Therefore, the morphism $q_Z$ is an isomorphism over the dense open subset $Z(10) \subset \ol{Z(10)}$.
Since $\ol{Z(10)}$ is given the reduced scheme structure, the open subset $Z(10) \subset \ol{Z(10)}$ is scheme-theoretically dense. 
Moreover, $q_Z$ is quasi-finite (since it is flat and birational), and separated since $V$ is (see \cite[Ch. I, \S 5.3]{EGA1}).
 Finally, $q_Z$ is of finite presentation since it is a morphism between schemes of finite type (hence of finite presentation) over a field.
Therefore, Lemma \ref{strong-lemma} implies that $q_Z$ is an open immersion.

By Proposition \ref{th-partial-prepre} (3), the map $q_Z$ induces a proper morphism $\ol{Z(00)} \to \ol{Z(10)}$.
Since $Z(00) \to Z(10)$ is an isomorphism, the morphism $\ol{Z(00)} \to \ol{Z(10)}$ is dominant and proper, hence surjective.
This implies that $q_Z$ is surjective, because $\ol{Z(00)} \subset  q_V^{-1} (\ol{Z(10)})$.
This concludes the proof.
\end{proof}

\begin{lemma}\label{lemma-etale}
The flat morphism $q_V$ is \'etale over an open neighborhood of $\ol{Z(10)}$.
In particular, the \'etale locus of $q_V$ contains $q_V^{-1} (\ol{Z(10)})$.
\end{lemma}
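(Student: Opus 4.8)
The plan is to exploit the fact, just proved in Proposition \ref{lemma-iso}, that $q_Z\colon q_V^{-1}(\ol{Z(10)})\to \ol{Z(10)}$ is an isomorphism, and to combine it with the flatness of $q_V$ from Proposition \ref{th-partial-prepre} (4). Recall that a flat morphism locally of finite presentation is \'etale exactly at the points where it is unramified, and that the unramified locus is open (being the locus where $\Omega_{V/\ol{S}'(11)}$ vanishes); equivalently, by the fibrewise criterion, $q_V$ is \'etale at a point $x\in V$ iff the fibre of $q_V$ through $x$ is \'etale over the residue field at $q_V(x)$ in a neighbourhood of $x$. So first I would argue that at every point $x\in q_V^{-1}(\ol{Z(10)})$ the morphism $q_V$ is unramified, and then invoke openness of the \'etale locus to promote this to an open neighbourhood of $\ol{Z(10)}$.

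For the pointwise unramifiedness, the key observation is that $q_Z$ being an isomorphism says the \emph{restriction} of $q_V$ to the preimage of $\ol{Z(10)}$ is an isomorphism, hence unramified; but a priori that only controls $\Omega_{q_V^{-1}(\ol{Z(10)})/\ol{Z(10)}}$, not $\Omega_{V/\ol{S}'(11)}$. To bridge the gap I would use the standard conormal/base-change exact sequence: writing $i\colon q_V^{-1}(\ol{Z(10)})\hookrightarrow V$ for the closed immersion obtained by pulling back $\ol{Z(10)}\hookrightarrow \ol{S}'(11)$, one has $i^\ast\Omega_{V/\ol{S}'(11)}\cong \Omega_{q_V^{-1}(\ol{Z(10)})/\ol{Z(10)}}=0$. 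Thus $\Omega_{V/\ol{S}'(11)}$, a coherent sheaf on $V$, has zero fibres at all points of the closed subset $q_V^{-1}(\ol{Z(10)})$; by Nakayama its stalk vanishes there, so these points lie in the support-complement of $\Omega_{V/\ol{S}'(11)}$, which is the open unramified locus. Since $q_V$ is flat and locally of finite presentation (being a finite-type morphism of finite-type $k$-schemes), unramified at a point is the same as \'etale at that point, so $q_V^{-1}(\ol{Z(10)})$ is contained in the \'etale locus $U_{\et}\subset V$ of $q_V$, which is open.

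It remains to pass from ``$U_{\et}$ is an open subset of $V$ containing $q_V^{-1}(\ol{Z(10)})$'' to ``$q_V$ is \'etale over an open neighbourhood of $\ol{Z(10)}$ \emph{downstairs}''. Here I would use that $q_V$ restricted to $q_V^{-1}(\ol{Z(10)})$ is the isomorphism $q_Z$, so $q_V^{-1}(\ol{Z(10)})\to\ol{Z(10)}$ is in particular a proper (indeed finite) map, hence closed; therefore $q_V(V\setminus U_{\et})$ is a closed subset of $\ol{S}'(11)$ disjoint from $\ol{Z(10)}$ — wait, more carefully: $V\setminus U_{\et}$ is closed in $V$ and disjoint from $q_V^{-1}(\ol{Z(10)})$, and I want its image to avoid $\ol{Z(10)}$. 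For this I use that $q_Z$ is an isomorphism: a point $z\in\ol{Z(10)}$ has a unique preimage $x=q_Z^{-1}(z)$, and $x\in q_V^{-1}(\ol{Z(10)})\subset U_{\et}$, so $z\notin q_V(V\setminus U_{\et})$ provided no \emph{other} point of $V\setminus U_{\et}$ maps to $z$ — but any point mapping to $z\in\ol{Z(10)}$ lies in $q_V^{-1}(\ol{Z(10)})$, which is in $U_{\et}$. Hence $q_V(V\setminus U_{\et})\cap\ol{Z(10)}=\emptyset$. If $V\setminus U_{\et}$ were closed \emph{and} $q_V$ were closed (e.g. proper) we could conclude immediately that $\Omega:=\ol{S}'(11)\setminus q_V(V\setminus U_{\et})$ is an open neighbourhood of $\ol{Z(10)}$ with $q_V^{-1}(\Omega)\subset U_{\et}$; since $q_V$ need not be proper, I instead take $\Omega'=\ol{S}'(11)\setminus \ol{q_V(V\setminus U_{\et})}$ and must check $\ol{Z(10)}\cap\ol{q_V(V\setminus U_{\et})}=\emptyset$. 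The main obstacle is exactly this point-set topology step: controlling the closure of the (possibly non-closed) image $q_V(V\setminus U_{\et})$ near $\ol{Z(10)}$. I expect this is handled by a properness argument along $\ol{Z(10)}$ — using Proposition \ref{th-partial-prepre} (3), the closure $\ol{Z(00)}$ of $Z(00)$ is proper over $\ol{S}'(11)$, and more to the point the whole preimage $q_V^{-1}(\ol{Z(10)})\xrightarrow{\ \sim\ }\ol{Z(10)}$ is proper, so shrinking to a neighbourhood where the non-\'etale locus stays away from it is legitimate — giving the open neighbourhood of $\ol{Z(10)}$ as asserted, and in particular $q_V^{-1}(\ol{Z(10)})$ lies in the \'etale locus of $q_V$. \qed
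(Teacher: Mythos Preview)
Your argument for the ``In particular'' clause --- that $q_V^{-1}(\ol{Z(10)})$ lies in the \'etale locus of $q_V$ --- is correct and is essentially the paper's proof, spelled out in more detail: from the isomorphism $q_Z$ one gets $i^*\Omega_{V/\ol{S}'(11)}=\Omega_{q_V^{-1}(\ol{Z(10)})/\ol{Z(10)}}=0$, hence by Nakayama $q_V$ is unramified along $q_V^{-1}(\ol{Z(10)})$, and flat $+$ unramified $=$ \'etale. The paper compresses this to one line (``By Proposition~\ref{lemma-iso}, $q_V$ is unramified over $\ol{Z(10)}$; the unramified locus is open; $q_V$ is flat'').

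Where you diverge from the paper is in taking the first sentence of the lemma (``\'etale over an open neighbourhood of $\ol{Z(10)}$'' \emph{in the base}) seriously and trying to descend the open \'etale locus from $V$ to $\ol{S}'(11)$. You correctly identify that this requires controlling $q_V(V\setminus U_{\et})$ near $\ol{Z(10)}$, and that without properness of $q_V$ the closure of this image could in principle meet $\ol{Z(10)}$; your final paragraph does not actually close this gap. The paper's own proof does not address this point either --- it simply asserts openness of the unramified locus without specifying in which space. In fact only the ``In particular'' statement is used afterwards: the next definition sets $\ol{S}'(01)$ to be the \'etale locus \emph{in $V$}, and the subsequent lemma only needs $q_V^{-1}(\ol{Z(10)})\subset \ol{S}'(01)$ to conclude that $\ol{S}'(01)\to \ol{S}'(11)$ is an isomorphism over $\ol{Z(10)}$. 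So your proof is complete for what is actually required, and your scruple about the stronger ``downstairs'' formulation is well-founded but moot for the application.
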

\begin{proof}
By Proposition \ref{lemma-iso}, $q_V$ is unramified over $\ol{Z(10)}$.
Since the unramified locus is open, $q_V$ is unramified over an open neighborhood of $\ol{Z(10)}$.
Since $q_V$ is flat, this finishes the proof.
\end{proof}

\begin{definition}
Define an open subset $\ol{S}'(01) \subset V$ by
\[
\ol{S}'(01) := \text{(the \'etale locus of $q_V$)} \subset V.
\]
Set
\[
\ol{S}'(10) := \ol{S}'(11) \setminus \ol{Z(10)}
\]
and
\[
\ol{S}'(00) := \ol{S}'(10) \times_{\ol{S}'(11)} \ol{S}'(01) .
\]
Define for each $(i,j) \in \{(1,0), (0,1), (0,0)\}$
\[
S'(ij) := (\ol{S}'(ij) , S(11)^\infty |_{\ol{S}'(ij)}) .
\]
\end{definition}

\begin{lemma}
The square
\[\begin{xymatrix}{
S'(00) \ar[r] \ar[d] \ar@{}[rd]|\square& S'(01) \ar[d]  \\
S'(10) \ar[r] & S'(11) 
}\end{xymatrix}\] 
is an elementary Nisnevich square extending $\ul{S}$.
Moreover, for all $i,j \in \{0,1\}$, we have 
\begin{equation}\label{l-compatibility-partial}
\ol{S}(ij) = \ol{S}'(ij) \times_{\ol{S}'(11)} \ol{S}(11).
\end{equation}
\end{lemma}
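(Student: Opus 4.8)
The plan is to verify, one piece at a time, the three assertions: that the square of modulus pairs $S'(ij)$ is well-defined and elementary Nisnevich, that it extends $\ul{S}$, and that the compatibility \eqref{l-compatibility-partial} holds. First I would check that the square of total spaces is \emph{upper distinguished} in the sense of \cite[Def. 12.5]{mvw}: the morphism $\ol{S}'(10)\to\ol{S}'(11)$ is an open immersion by construction (it is the complement of the closed set $\ol{Z(10)}$), and the morphism $\ol{S}'(01)\to\ol{S}'(11)$ is \'etale by the very definition of $\ol{S}'(01)$ as the \'etale locus of the flat map $q_V$. The distinguishedness condition then amounts to the statement that $q_V$ restricted over $\ol{Z(10)}$ is an isomorphism onto $\ol{Z(10)}$ after removing the locus outside $\ol{S}'(01)$, which is precisely Lemma \ref{lemma-etale} (the \'etale locus contains $q_V^{-1}(\ol{Z(10)})$) together with Proposition \ref{lemma-iso} (the map $q_V^{-1}(\ol{Z(10)})\to\ol{Z(10)}$ is an isomorphism). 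Thus over the closed complement $\ol{Z(10)}$ of $\ol{S}'(10)$ the map $\ol{S}'(01)\to\ol{S}'(11)$ is an isomorphism, which is exactly what upper distinguishedness requires.

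Next I would produce the modulus-pair structure and check that the edges are minimal morphisms, so that the diagram is an elementary Nisnevich square in $\ulMP^\fin$ in the sense recalled in \S 1. Since $S'(ij)^\infty$ is by definition the restriction $S(11)^\infty|_{\ol{S}'(ij)}$, each morphism $\ol{S}'(ij)\to\ol{S}'(11)$ automatically pulls back $S'(11)^\infty=S(11)^\infty$ to $S'(ij)^\infty$, so all four edges are minimal; one only needs to know that these restrictions are effective Cartier divisors and that the interiors are smooth, which holds because the structure maps to $\ol{S}'(11)$ are \'etale (resp. open immersions) and $S(11)^\circ$ is smooth, so its preimages are smooth and are dense opens (Remark \ref{total-reduced} applies to $S'(11)$; the total spaces $\ol{S}'(ij)$ are reduced as opens in, or \'etale over, reduced schemes). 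The cartesianness of the square $S'(00)=\ol{S}'(10)\times_{\ol{S}'(11)}\ol{S}'(01)$ is built into the definition.

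Then I would check that the natural morphism $\ul{S}\to\ul{S}'$ exists and is an extension, and simultaneously establish \eqref{l-compatibility-partial}. For $(i,j)=(1,1)$ this is trivial since $S'(11)=S'(11)$ was chosen in $\Comp_1(S(11))$, so $\ol{S}(11)\to\ol{S}'(11)$ is an open immersion and $\ol{S}(11)=\ol{S}'(11)\times_{\ol{S}'(11)}\ol{S}(11)$. For $(i,j)=(0,1)$ the key point is $q_V^{-1}(\ol{S}(11))=\ol{S}(01)$, which is \eqref{eq00003} in Proposition \ref{th-partial-prepre}; since $q_V$ is \'etale over $\ol{S}(11)$ (again Proposition \ref{step-1}(1)), we have $\ol{S}(01)\subset\ol{S}'(01)$, and $\ol{S}(01)=\ol{S}'(01)\times_{\ol{S}'(11)}\ol{S}(11)$ follows by intersecting $\ol{S}'(01)$ with $q_V^{-1}(\ol{S}(11))=\ol{S}(01)$, noting $\ol{S}(01)$ is already inside the \'etale locus. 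For $(i,j)=(1,0)$, $\ol{S}(10)\subset\ol{S}'(10)$ because $\ol{S}'(10)=\ol{S}'(11)\setminus\ol{Z(10)}$ and $\ol{Z(10)}\cap\ol{S}(11)=Z(10)=\ol{S}(11)\setminus\ol{S}(10)$; the fiber-product identity then reduces to $\ol{S}(10)=\ol{S}(11)\setminus Z(10)$, which is the definition of $Z(10)$. For $(i,j)=(0,0)$ one combines the cases $(1,0)$ and $(0,1)$ with the hypothesis that $\ul{S}$ is cartesian, $\ol{S}(00)=\ol{S}(10)\times_{\ol{S}(11)}\ol{S}(01)$, against the defining equation $\ol{S}'(00)=\ol{S}'(10)\times_{\ol{S}'(11)}\ol{S}'(01)$. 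Finally, minimality of $\ul{S}\to\ul{S}'$ and the fact that it induces isomorphisms on interiors hold because each $S(ij)^\infty=S(11)^\infty|_{\ol{S}(ij)}=S'(ij)^\infty|_{\ol{S}(ij)}$ and the interiors of both squares are the same, being the corners of $\omega^\Sq(\ul{S})$. The main obstacle is the bookkeeping in the $(0,1)$ case: one must be careful that the \'etale locus $\ol{S}'(01)$ genuinely contains $\ol{S}(01)$ and that no extra points of $q_V^{-1}(\ol{S}(11))$ sneak into $\ol{S}'(01)$ — this is exactly where \eqref{eq00003} and the \'etaleness over $\ol{S}(11)$ are indispensable, and it is the one spot where a careless argument would break.
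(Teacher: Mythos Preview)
Your proof is correct and follows essentially the same approach as the paper's: both verify upper distinguishedness via Lemma~\ref{lemma-etale} and Proposition~\ref{lemma-iso}, and both check the fiber-product compatibility \eqref{l-compatibility-partial} case by case, with $(0,1)$ coming from $q_V^{-1}(\ol{S}(11))=\ol{S}(01)$, $(1,0)$ from the definition of $\ol{Z(10)}$, and $(0,0)$ from combining these with the cartesian property. You are simply more explicit than the paper about the modulus structure and the minimality of the edges, which the paper leaves implicit.
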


\begin{proof}
The map $\ol{S}'(01) \to \ol{S}'(11)$ is \'etale by definition. 
By Lemma \ref{lemma-iso} and Lemma \ref{lemma-etale}, it is an isomorphism over $\ol{Z(10)} = (\ol{S}'(11) \setminus \ol{S}'(10))_\red$.
This shows that the square is an elementary Nisnevich square. 
We prove the equality $\ol{S}(ij) = \ol{S}'(ij) \times_{\ol{S}'(11)} \ol{S}(11)$.
The case $(i,j)=(0,0)$ is trivial.
The case $(i,j)=(1,0)$ follows from 
\[
\ol{S}'(10) \cap \ol{S}(11) = \ol{S}(11) \setminus \ol{Z(10)} = \ol{S}(11) \setminus Z(10) = \ol{S}(10).
\]

The case $(i,j)=(0,1)$ follows from $\ol{S}(11) \times_{\ol{S}'(11)} V \cong \ol{S}(01)$ (see \eqref{eq5.1}).  
Finally, the case $(i,j)=(0,0)$ follows from
\begin{align*}
\ol{S}'(00) \times_{\ol{S}'(11)} \ol{S}(11) &= (\ol{S}'(10) \times_{\ol{S}'(11)} \ol{S}(11)) \times_{\ol{S}(11)} (\ol{S}'(01) \times_{\ol{S}'(11)} \ol{S}(11)) \\
&= \ol{S}(10) \times_{\ol{S}(11)} \ol{S}(01) = \ol{S}(00).
\end{align*}

This finishes the proof.
\end{proof}

Thus, we have constructed $\ul{S}'$ satisfying the properties in Proposition \ref{th-partial-pre}, except for the normality of $\ol{S}'(11)$.
To obtain the latter, consider modulus pairs
\[
S^{\prime N} (ij) := (\ol{S}'(ij)^N , \text{ the pullback of $S'(ij)^\infty $}) \ i,j=0,1.
\]
By Lemma \ref{lB.1}, we have $\ol{S}'(ij)^N \cong \ol{S}'(ij) \times_{\ol{S}'(11)} \ol{S}'(11)^N$.
Therefore, $S^{\prime N} (ij)$'s form an elementary Nisnevich square $\ul{S}^{\prime N}$.
Since $\ol{S}(ij)$ are normal, the strict extension $\ul{S} \to \ul{S}'$ extends to a strict extension $\ul{S} \to \ul{S}^{\prime N}$.
Thus, $\ul{S}^{\prime N}$ satisfies the properties in Proposition \ref{th-partial-pre}.
This finishes the proof.

\subsection{Construction of a minimal morphism $\ul{S}' \to \ul{N}_0$}\label{modif-partial}

Take $N_0$ as in the statement of Theorem \ref{th-partial}, and $S'$ as given by Proposition \ref{th-partial-pre} with respect to $N_0(11)$.
We will modify $\ul{S}'$ so that it admits a minimal morphism $\ul{S}' \to \ul{N}_0$.
By Proposition \ref{th-partial-pre}, we already have an admissible morphism $S'(11) \to N_0(11)$.
However, it is not necessarily the case for the other corners of the square.
To solve this problem, we consider the following construction.

At least, for each $i,j$, we have a birational map
\[
\ol{S}'(ij) \dashrightarrow \ol{N}_0 (ij),
\]
which is defined on $\ol{S}(ij)$.
Denote by $\Gamma_{ij}$ its graph, i.e., the closure of the graph of $ \ol{S}(ij) \to \ol{N}_0(ij)$ in $\ol{S}'(ij) \times \ol{N}_0(ij)$, with the reduced scheme structure.
Then, we obtain a diagram
\[
\ol{S}'(ij) \xleftarrow{} \Gamma_{ij} \xrightarrow{} \ol{N}_0 (ij).
\]
Consider the composite morphism
\[
f_{ij} : \Gamma_{ij} \to \ol{S}'(ij) \to \ol{S}'(11).
\]

\begin{lemma}\label{l-4.11}
We have $f_{ij}^{-1} (\ol{S}(11)) = \ol{S}(ij)$.
In particular, the morphism $f_{ij}$ is \'etale over $\ol{S}(11)$.\end{lemma}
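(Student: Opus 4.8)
The plan is to reduce the statement to the corresponding equality for the "good" corner $(1,1)$, using the fact that $\ul{S}'$ extends $\ul{S}$ strictly and the identification $\ol{S}(ij) = \ol{S}'(ij)\times_{\ol{S}'(11)}\ol{S}(11)$ established in the previous lemma of \S\ref{pf-th-partial-pre}. First I would observe that the morphism $\Gamma_{ij}\to \ol{S}'(ij)$ is an isomorphism over the open locus where the birational map $\ol{S}'(ij)\dashrightarrow \ol{N}_0(ij)$ is defined; in particular it is an isomorphism over $\ol{S}(ij)$, since by construction the map is defined there (it is $\ol{S}(ij)\to \ol{N}_0(ij)$, the structure map of the compactification $\ul{S}\to\ul{N}_0$). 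Hence the restriction of $\Gamma_{ij}$ over $\ol{S}(ij)$ is canonically $\ol{S}(ij)$ itself.

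Next I would compute $f_{ij}^{-1}(\ol{S}(11))$ by factoring $f_{ij}$ as $\Gamma_{ij}\xrightarrow{g_{ij}} \ol{S}'(ij)\xrightarrow{} \ol{S}'(11)$. Since $\ol{S}(ij)=\ol{S}'(ij)\times_{\ol{S}'(11)}\ol{S}(11)$, the preimage of $\ol{S}(11)$ under the second map is exactly $\ol{S}(ij)\subset \ol{S}'(ij)$. Therefore $f_{ij}^{-1}(\ol{S}(11))=g_{ij}^{-1}(\ol{S}(ij))$. Now $g_{ij}^{-1}(\ol{S}(ij))$ is the closure of the graph of $\ol{S}(ij)\to\ol{N}_0(ij)$ intersected with $g_{ij}^{-1}(\ol{S}(ij))$, and since $\ol{S}(ij)$ is open in $\ol{S}'(ij)$ and the graph closure $\Gamma_{ij}$ already contains (and over $\ol{S}(ij)$ equals) the honest graph of this morphism, we get $g_{ij}^{-1}(\ol{S}(ij))\cong \ol{S}(ij)$ — there are no "extra" components over $\ol{S}(ij)$ because $\Gamma_{ij}$ is the closure of an irreducible-component-wise graph over the dense open $\ol{S}(ij)$, and taking preimage of an open subset of the base of a closure-of-graph recovers the graph over that open. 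This gives $f_{ij}^{-1}(\ol{S}(11))=\ol{S}(ij)$.

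For the final clause: over $\ol{S}(ij)$ the morphism $f_{ij}$ is identified with the composite $\ol{S}(ij)\to \ol{S}'(ij)\to\ol{S}'(11)$, which (again using the strict-extension cartesian identity, together with the fact that $\ol{S}'(ij)\to\ol{S}'(11)$ is étale over $\ol{S}(11)$ — this holds for the corners $(1,0),(0,1),(0,0)$ by construction in \S\ref{pf-th-partial-pre}, where $\ol{S}'(01)$ is the étale locus of $q_V$, $\ol{S}'(10)\subset\ol{S}'(11)$ is open, and $\ol{S}'(00)$ is their fiber product) restricts over $\ol{S}(11)$ to the étale map $\ol{S}(ij)\to\ol{S}(11)$. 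Hence $f_{ij}$ is étale over $\ol{S}(11)$.

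I expect the main obstacle to be the bookkeeping in the middle step: justifying cleanly that the preimage of the open set $\ol{S}(ij)$ in the graph-closure $\Gamma_{ij}$ picks out exactly the honest graph and contributes no spurious components. This is morally the statement that "closure commutes with restriction to an open over which the rational map is already a morphism," which is elementary but must be phrased so that it applies component-wise (the birational map need not be dominant on each component a priori, though here it is, each $\Gamma_{ij}\to\ol{S}'(ij)$ being birational by definition of graph of a birational map). Everything else is formal manipulation of fiber products using the identities already proved.
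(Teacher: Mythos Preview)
Your approach is exactly the paper's: factor $f_{ij}$ through $g_{ij}:\Gamma_{ij}\to\ol{S}'(ij)$, use the strict-extension identity $\ol{S}(ij)=\ol{S}'(ij)\times_{\ol{S}'(11)}\ol{S}(11)$ to reduce to computing $g_{ij}^{-1}(\ol{S}(ij))$, and then observe that $g_{ij}$ is an isomorphism over $\ol{S}(ij)$. Your worry about ``spurious components'' is unwarranted: since the graph of $\ol{S}(ij)\to\ol{N}_0(ij)$ is closed in the open subset $\ol{S}(ij)\times\ol{N}_0(ij)$ of $\ol{S}'(ij)\times\ol{N}_0(ij)$, elementary topology gives that its closure $\Gamma_{ij}$ meets this open subset in exactly the graph again (alternatively, invoke the paper's Lemma~\ref{no-extra-fiber}); the \'etaleness then follows immediately since $\ol{S}(ij)\to\ol{S}(11)$ is \'etale by the hypothesis that $\ul{S}$ is elementary Nisnevich.
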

\begin{proof}
This follows from the fact that the morphism $\Gamma_{ij} \to \ol{S}'(ij)$ is an isomorphism over $\ol{S}(ij)$, and that $\ul{S} \to \ul{S}'$ is a strict extension.
\end{proof}

Lemma \ref{l-4.11} shows that the coproduct
\[
f := \bigsqcup_{(i,j) \neq (1,1)} f_{ij} : \bigsqcup_{(i,j) \neq (1,1)} \Gamma_{ij} \to \ol{S}'(11)
\]
is \'etale (in particular, flat) over $\ol{S}(11)$.
Then the platification theorem \cite[Th. 5.2.2]{RG} applied to $f$ shows that there exists a closed subscheme $Z \hookrightarrow \ol{S}'(11) \setminus \ol{S}(11)$ such that the morphism induced between the blow-ups
\[
\bigsqcup_{(i,j) \neq (1,1)} \tilde{f}_{ij} : \bigsqcup_{(i,j) \neq (1,1)} \mathrm{Bl}_{Z \times_{\ol{S}'(11)} \Gamma_{ij}} (\Gamma_{ij})  \to  \mathrm{Bl}_{Z} (\ol{S}'(11))
\]
is flat.
Consider the following commutative diagram, induced by the universal property of blowing up:
\[\begin{xymatrix}{
\mathrm{Bl}_{Z \times_{\ol{S}'(11)} \Gamma_{ij}} (\Gamma_{ij}) \ar[rr]^{h_{ij}} \ar[rd]_{\tilde{f}_{ij}} &  & \mathrm{Bl}_{Z \times_{\ol{S}'(11)} \ol{S}'(ij)} (\ol{S}'(ij))  \ar[ld]^{g_{ij}} \\
 &\mathrm{Bl}_{Z} (\ol{S}'(11))&
}\end{xymatrix}\]
where $g_{ij}$ is \'etale since we have 
\begin{equation}\label{pullback-blowup}
\mathrm{Bl}_{Z \times_{\ol{S}'(11)} \ol{S}'(ij)} (\ol{S}'(ij)) \cong \ol{S}'(ij) \times_{\ol{S}'(11)} \mathrm{Bl}_{Z} (\ol{S}'(11))\end{equation}
by the \'etaleness of the map $\ol{S}'(ij) \to \ol{S}'(11)$ and by the (trivial) compatibility of blow-up and flat base change.
Since $f_{ij}$ is flat and $g_{ij}$ is \'etale, the horizontal morphism $h_{ij}$ is flat (see for example \cite[Prop. 18.4.9]{EGA4}). 

However, $h_{ij}$ is a proper birational morphism induced by $\Gamma_{ij} \to \ol{S}'(ij)$, hence it is an isomorphism over the open dense subset $\ol{S}(ij) \subset \mathrm{Bl}_{Z \times_{\ol{S}'(11)} \ol{S}'(ij)} (\ol{S}'(ij))$.
Therefore, Lemma \ref{strong-lemma} implies that $h_{ij}$ is an isomorphism.

Define modulus pairs $S'_1(ij) = (\ol{S}'_1(ij), S'_1(ij)^\infty )$ by
\begin{align*}
\ol{S}'_1(ij) &:= \ol{S}'(ij) \times_{\ol{S}'(11)} \mathrm{Bl}_{Z} \ol{S}'(11) \cong^\dag \mathrm{Bl}_{Z \times_{\ol{S}'(11)} \ol{S}'(ij)} (\ol{S}'(ij)),\\
S'_1(ij)^\infty &:= S'(ij)^\infty \times_{\ol{S}'(ij)} \ol{S}'_1(ij) = S'(ij)^\infty \times_{\ol{S}'(11)} \ol{S}'_1(11) ,
\end{align*}
where $\cong^\dag$ follows from (\ref{pullback-blowup}).
Then, the modulus pairs $S'_1(ij)$ form an elementary Nisnevich square $\ul{S}'_1$.
Since the blow-up $\ol{S}'_1(11) \to \ol{S}'(11)$ is an isomorphism over $\ol{S}(11)$ by the construction, there is a natural strict extension $\ul{S} \to \ul{S}'_1$.
Moreover, we have natural maps on the total spaces
\[
\ol{S}'_1(ij) \cong \mathrm{Bl}_{Z \times_{\ol{S}'(11)} \ol{S}'(ij)} (\ol{S}'(ij)) \xrightarrow{h_{ij}^{-1}} \mathrm{Bl}_{Z \times_{\ol{S}'(11)} \Gamma_{ij}} (\Gamma_{ij}) \to \Gamma_{ij} \to \ol{N}_0(ij).
\]
Moreover, by taking normalization everywhere, we can realize the condition that $\ol{S}'(11)$ is normal.
Here, note that the compatibility between normalization and \'etale base change (see Lemma \ref{lB.1}) ensures that the normalization preserves the property to be an elementary Nisnevich square.

Therefore, by replacing $\ol{S}'$ by $\ol{S}'_1$, we may assume that 
\[
\text{for each $i,j$, the total space $\ol{S}'(ij)$ maps to $\ol{N}_0(ij).$}
\]

However, it is not always the case that these maps induce minimal (or admissible) morphisms ${S}'(ij) \to N_0(ij)$.
Our task in the next subsection is to adjust the moduluses of $\ul{S}'$ and $\ul{N}_0$ to ensure that these maps on the total spaces induce minimal morphisms.

\subsection{Modification of moduluses of $\ul{S}'$ and $\ul{N}_0$}

The argument will be divided into several steps.
First, note that we may assume the following condition without loss of generality:
\[
\ol{S}'(11) = \ol{N}_0(11).
\]
Indeed, consider the modulus pairs 
\[
N'_0(ij) := (\ol{N}_0(ij) \times_{\ol{N}_0(11)} \ol{S}'(11), \text{ the pullback of $N_0(ij)^\infty$}), \ (i,j=0,1).
\]
These modulus pairs naturally form a square $\ul{N}'_0 \in \Comp_1(\ul{S})$ dominating $\ul{N}_0$, because the projection $\ol{N}'_0(ij) \to \ol{N}_0(ij)$ is an isomorphism over $\ol{S}(ij)$ for each $i,j$ since $\ol{S}'(11) \to \ol{N}_0(11)$ is an isomorphism over $\ol{S}(11)$ by Lemma \ref{no-extra-fiber}.
Moreover, the morphisms of schemes $\ol{S}'(ij) \to \ol{N}_0(ij)$ canonically lift to $\ol{S}'(ij) \to \ol{N}'_0(ij)$.
Therefore, by replacing $\ul{N}_0$ by $\ul{N}'_0$, we obtain the equality above.

\subsubsection{Enlargement of the modulus of $\ul{S}'$.}

First, we enlarge the modulus of $\ul{S}'$, preserving the minimality of $\ul{S} \to \ul{S}'$, as follows.

Define $Z:=\ol{S}' (11) \setminus \ol{S}(11)$, and regard $Z$ as a closed subscheme of $\ol{S}' (11) = \ol{N}_0 (11)$ with the reduced scheme structure.
Consider the blow-up
\begin{equation}\label{eq5.2}
\mathrm{Bl}_Z \ol{N}_0(11) \to \ol{N}_0(11) ,
\end{equation}
which is an isomorphism over $\ol{S}(11)$ since $Z \cap \ol{S}(11) = \emptyset$.
Replacing the total spaces $\ol{N}_0(ij)$ and $\ol{S}'(ij)$ by their strict transforms along this blow-up, and pulling back every modulus, we may assume that $Z$ is an effective Cartier divisor on $\ol{N}_0$.
Here, note that for each $i,j$, the strict transform $\tilde{\ol{S}'}(ij)$ of $\ol{S}'(ij)$ is given by the fiber product $\ol{S}'(ij) \times_{\ol{N}_0(11)} \mathrm{Bl}_Z \ol{N}_0(11)$, which ensures that $\tilde{\ol{S}'}(ij)$'s form an elementary Nisnevich square.

Define effective Cartier divisors on $\ol{S}' (ij)$ by
\[
Z_{ij} := Z \times_{\ol{S}'(11)} \ol{S}' (ij).
\]
Note that 
\begin{equation}\label{eq-sup-1}
|Z_{ij}|= (\ol{S}' (11) \setminus \ol{S}(11)) \times_{\ol{S}'(11)} \ol{S}' (ij) =^\dag \ol{S}' (ij) \setminus \ol{S}(ij),
\end{equation}
where the equality $=^\dag$ follows from (\ref{l-compatibility-partial}).

\begin{prop}\label{enlarging-general}
There exists a positive integer $m$ satisfying 
\[
N_0 (ij)^\infty |_{\ol{S}'(ij)} \leq S' (ij)^\infty + m Z_{ij}
\]
for each $i,j$, where $N_0 (ij)^\infty |_{\ol{S}'(ij)}$ denotes the pullback of the effective Cartier divisor $N_0 (ij)^\infty$ by the dominant morphism $\ol{S}'(ij) \to \ol{N}_0(ij)$.
\end{prop}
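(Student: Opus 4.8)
The plan is to reduce the asserted inequality to a pointwise comparison of multiplicities along prime divisors of the normal scheme $\ol{S}'(ij)$ (normal because it is \'etale over $\ol{S}'(11)$, which was made normal at the end of \S\ref{modif-partial}). Fix $(i,j)$ and set
\[
E_{ij}:=N_0(ij)^\infty|_{\ol{S}'(ij)}-S'(ij)^\infty ,
\]
a difference of effective Cartier divisors on $\ol{S}'(ij)$, which we regard as a Weil divisor. The heart of the argument is the claim
\[
|E_{ij}|\subseteq \ol{S}'(ij)\setminus\ol{S}(ij)=|Z_{ij}| ,
\]
the last equality being \eqref{eq-sup-1}.

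First I would prove this claim by showing that \emph{both} $N_0(ij)^\infty|_{\ol{S}'(ij)}$ and $S'(ij)^\infty$ restrict to $S(ij)^\infty$ on the open subset $\ol{S}(ij)$, so that $E_{ij}$ vanishes there and is therefore supported on the complement. For $S'(ij)^\infty$ this is immediate: $\ul{S}\to\ul{S}'$ is a strict extension (Definition \ref{d2.1}), hence each $S(ij)\to S'(ij)$ is minimal and $S'(ij)^\infty|_{\ol{S}(ij)}=S(ij)^\infty$. For $N_0(ij)^\infty|_{\ol{S}'(ij)}$: by the construction of \S\ref{modif-partial} (the graphs $\Gamma_{ij}$, followed by blow-ups and normalizations that are all isomorphisms over $\ol{S}(ij)$) the morphism $\ol{S}'(ij)\to\ol{N}_0(ij)$ restricts over $\ol{S}(ij)$ to the open immersion $\ol{S}(ij)\hookrightarrow\ol{N}_0(ij)$ coming from $\ul{N}_0\in\Comp_1(\ul{S})$, and the latter is minimal, so $N_0(ij)^\infty|_{\ol{S}(ij)}=S(ij)^\infty$. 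For the corner $(1,1)$ one can alternatively observe that, after the reduction $\ol{S}'(11)=\ol{N}_0(11)$, the admissibility of the morphism $S'(11)\to N_0(11)$ already gives $N_0(11)^\infty|_{\ol{S}'(11)}\le S'(11)^\infty$, so the desired inequality is trivial for that corner.

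Granting the claim, I would finish with a multiplicity count. Since $Z_{ij}$ is an \emph{effective Cartier} divisor whose support is exactly $\ol{S}'(ij)\setminus\ol{S}(ij)$, it has multiplicity $\ge 1$ along every prime divisor $P$ of that support, and $N_0(ij)^\infty|_{\ol{S}'(ij)}$ has only finitely many components. So choose $m$ strictly larger than all the finitely many numbers $\operatorname{mult}_P\!\big(N_0(ij)^\infty|_{\ol{S}'(ij)}\big)$ with $(i,j)\in\{0,1\}^2$ and $P\subseteq|Z_{ij}|$. For such $P$ one has $\operatorname{mult}_P\!\big(S'(ij)^\infty+mZ_{ij}\big)\ge m>\operatorname{mult}_P\!\big(N_0(ij)^\infty|_{\ol{S}'(ij)}\big)$, while for $P\not\subseteq|Z_{ij}|$ the two sides coincide because $E_{ij}$ vanishes along $P$ by the claim; hence $N_0(ij)^\infty|_{\ol{S}'(ij)}\le S'(ij)^\infty+mZ_{ij}$ as Weil divisors, and therefore as Cartier divisors since $\ol{S}'(ij)$ is normal. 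This last step is, of course, just Lemma \ref{m-i-l} applied to the effective part of $E_{ij}$ and to $Z_{ij}$.

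The step I expect to be the real (though modest) obstacle is precisely this final bookkeeping: one cannot simply apply Lemma \ref{m-i-l} to the pair $\big(N_0(ij)^\infty|_{\ol{S}'(ij)},\,S'(ij)^\infty+Z_{ij}\big)$, since that would produce a multiple of $S'(ij)^\infty$ as well, whereas the statement demands $S'(ij)^\infty$ with coefficient $1$; the way around this is exactly the observation that $N_0(ij)^\infty|_{\ol{S}'(ij)}$ and $S'(ij)^\infty$ agree away from $|Z_{ij}|$, so only the part of $E_{ij}$ supported on $|Z_{ij}|$ needs to be absorbed into $mZ_{ij}$. A little additional care is needed to make sure the pullback $N_0(ij)^\infty|_{\ol{S}'(ij)}$ is a genuine Cartier divisor (granted by the statement of the proposition) and that the Weil/Cartier translations are legitimate, which is where normality of the total spaces is used.
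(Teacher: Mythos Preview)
Your proof is correct and follows essentially the same approach as the paper: both arguments boil down to observing that the Cartier divisor $S'(ij)^\infty - N_0(ij)^\infty|_{\ol{S}'(ij)}$ restricts to zero on $\ol{S}(ij)=\ol{S}'(ij)\setminus|Z_{ij}|$, and then invoking Lemma~\ref{m-i-l} (with $D=S'(ij)^\infty - N_0(ij)^\infty|_{\ol{S}'(ij)}$ and $E=Z_{ij}$) to find $m_{ij}$, taking $m=\max_{i,j} m_{ij}$. Your final paragraph overstates the difficulty: there is no obstacle to applying Lemma~\ref{m-i-l} directly to this $D$ and this $E$, since the lemma allows $D$ to be an arbitrary (not necessarily effective) Cartier divisor whose restriction to $X\setminus E$ is effective---no multiple of $S'(ij)^\infty$ ever enters the picture.
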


\begin{proof}
The restrictions of the Cartier divisor $S' (ij)^\infty - N_0 (ij)^\infty |_{\ol{S}'(ij)}$ to $\ol{S}(ij)$ is zero.
Therefore, by the equality (\ref{eq-sup-1}), Lemma \ref{m-i-l} implies that we can find positive integers $m_{ij}$ such that 
\[
N_0 (ij)^\infty |_{\ol{S}'(ij)} \leq S' (ij)^\infty + m_{ij} Z_{ij}
\]
holds. Take $m:=\max_{i,j} m_{ij}$. This finishes the proof.
\end{proof}

Take $m$ as in Proposition \ref{enlarging-general}.
Define modulus pairs $S'_1 (ij)$ by
\[
S'_1 (ij) := \left(\ol{S}' (ij), S'(ij)^\infty + mZ_{ij} \right).
\]
Since by definition we have
$\ol{S}'(ij) \times_{\ol{S}'(11)} Z_{11} = Z_{ij} ,$
the modulus pairs $S'_1(ij)$'s form an elementary Nisnevich square $\ul{S}'_1$.

\begin{prop}\label{enlarging-general-2}
The map $\ul{S} \to \ul{S}'_1$ is a strict extension, and we have
\begin{equation}\label{eq-4.13}
N_0 (ij)^\infty |_{\ol{S}'_1(ij)} \leq S'_1(ij)^\infty ,
\end{equation}
where $N_0 (ij)^\infty |_{\ol{S}'_1(ij)}$ denotes the pullback of the Cartier divisor $N_0 (ij)^\infty$ by the dominant morphism $\ol{S}'_1(ij) \to \ol{N}_0(ij)$.
Hence the morphism $\ul{S}\to \ul{N}_0$ extends to a morphism $\ul{S}'_1\to \ul{N}_0$.
\end{prop}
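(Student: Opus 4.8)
The plan is to exploit the fact that passing from $\ul{S}'$ to $\ul{S}'_1$ changes nothing on the total spaces and only enlarges each modulus by $mZ_{ij}$, a divisor whose support lies in $\ol{S}'(ij)\setminus\ol{S}(ij)$ by \eqref{eq-sup-1}. Over the open parts nothing changes, so minimality and strictness of $\ul{S}\to\ul{S}'_1$ will be inherited from the strict extension $\ul{S}\to\ul{S}'$; over the boundary, Proposition \ref{enlarging-general} says the modulus has been enlarged just enough to dominate the one pulled back from $\ul{N}_0$. Essentially all the real work is already contained in Proposition \ref{enlarging-general}.

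For part (1), I would first record the single observation --- a consequence of \eqref{eq-sup-1} --- that $|Z_{ij}|=\ol{S}'(ij)\setminus\ol{S}(ij)$ is disjoint from $\ol{S}(ij)$. Restricting $S'_1(ij)^\infty=S'(ij)^\infty+mZ_{ij}$ to $\ol{S}(ij)$, the first summand restricts to $S(ij)^\infty$ because $\ul{S}\to\ul{S}'$ is minimal, and the second restricts to $0$; hence $S'_1(ij)^\infty|_{\ol{S}(ij)}=S(ij)^\infty$. The same observation yields $\ol{S}'_1(ij)\setminus|S'_1(ij)^\infty|=(\ol{S}'(ij)\setminus|Z_{ij}|)\setminus|S'(ij)^\infty|=\ol{S}(ij)\setminus|S(ij)^\infty|$ (using minimality once more), so $S(ij)$ and $S'_1(ij)$ have the same interior. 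Since moreover the underlying maps of $\ul{S}\to\ul{S}'_1$ are the open immersions $\ol{S}(ij)\hookrightarrow\ol{S}'_1(ij)=\ol{S}'(ij)$ of $\ul{S}\to\ul{S}'$, each $S(ij)\to S'_1(ij)$ is an extension, and the cartesian identities $\ol{S}(ij)=\ol{S}'_1(ij)\times_{\ol{S}'_1(11)}\ol{S}(11)$ are exactly \eqref{l-compatibility-partial}; thus $\ul{S}\to\ul{S}'_1$ is a strict extension. (That $\ul{S}'_1$ is an elementary Nisnevich square has already been recorded.)

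Part (2), the inequality \eqref{eq-4.13}, would then be immediate --- it merely restates Proposition \ref{enlarging-general}: since $\ol{S}'_1(ij)=\ol{S}'(ij)$ and the dominant map to $\ol{N}_0(ij)$ is unchanged, one has $N_0(ij)^\infty|_{\ol{S}'_1(ij)}=N_0(ij)^\infty|_{\ol{S}'(ij)}\le S'(ij)^\infty+mZ_{ij}=S'_1(ij)^\infty$. For the last assertion, I would observe that the dominant maps $\ol{S}'_1(ij)\to\ol{N}_0(ij)$ (cf. \S\ref{modif-partial} and \eqref{eq5.2}) are compatible with the two square structures and restrict over $\ol{S}(ij)$ to the given maps $\ol{S}(ij)\to\ol{N}_0(ij)$; together with \eqref{eq-4.13} they define morphisms $S'_1(ij)\to N_0(ij)$ in $\ulMCor$, since the graph of an entire morphism is isomorphic to its source (whence left properness is automatic and the normalization of $\ol{S}'_1(ij)$ is that of the graph, so that pulling \eqref{eq-4.13} back there gives the admissibility inequality \eqref{eq1.3}). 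These assemble into a morphism $\ul{S}'_1\to\ul{N}_0$ in $\ulMCor^\Sq$ whose composite with $\ul{S}\to\ul{S}'_1$ recovers the given $\ul{S}\to\ul{N}_0$, because $\ul{S}\to\ul{S}'_1$ is an isomorphism on interiors. I do not expect a genuine obstacle here: the only content is that $mZ_{ij}$ is supported away from $\ol{S}(ij)$, so it can disturb neither minimality nor the interiors, while Proposition \ref{enlarging-general} makes it large enough over the boundary; everything else is routine bookkeeping with Cartier divisors and with the square structure.
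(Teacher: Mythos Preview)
Your proof is correct and follows the same approach as the paper. The paper's own proof is three sentences: minimality from $|Z_{ij}|\cap\ol{S}(ij)=\emptyset$, strictness from $\ol{S}'_1(ij)=\ol{S}'(ij)$ together with \eqref{l-compatibility-partial}, and the inequality \eqref{eq-4.13} as a restatement of Proposition \ref{enlarging-general}; you spell out the same points with more care (in particular verifying that the interiors agree and that the entire maps to $\ol{N}_0(ij)$ yield admissible morphisms), but there is no difference in strategy.
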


\begin{proof}
The minimality of the map follows from $|Z_{ij}| \cap \ol{S}(ij) = \emptyset$ and the minimality of $\ul{S} \to \ul{S}'$.
The equality $\ol{S}'_1(ij) \times_{\ol{S}'_1(11)} \ol{S}(11) = \ol{S}(ij)$ follows from $\ol{S}'_1(ij) = \ol{S}'(ij)$.
Therefore, $\ul{S} \to \ul{S}'_1$ is a strict extension.
The last assertion is immediate by Proposition \ref{enlarging-general}.
\end{proof}

Up to pulling back $N_0$ by \eqref{eq5.2}, we may assume that the condition  $\ol{S}'_1(11) = \ol{N}_0 (11)$ continues to hold.

\subsubsection{Creation of Cartier divisors}


\begin{lemma}\label{l-harmless}
For all $i,j \in \{0,1\}$, define a closed subscheme $B_{ij}$  of $N_0(ij)^\infty$ by
\[
B_{ij} := S'_1(11)^\infty \times_{\ol{N}_0(11)} N_0(ij)^\infty. 
\]
Then, for each $i,j\in \{0,1\}$, we have
\begin{equation}\label{5.16eq1}
B_{ij} |_{\ol{S}'_1(ij)}  = N_0(ij)^\infty |_{\ol{S}'_1(ij)} .
\end{equation}
In particular, the left hand side of the equality is an effective Cartier divisor on $\ol{S}'_1(ij)$.
\end{lemma}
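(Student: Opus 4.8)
The plan is to compute the scheme-theoretic pullback $B_{ij}|_{\ol{S}'_1(ij)}$ explicitly, recognize it as the scheme-theoretic intersection of $N_0(ij)^\infty|_{\ol{S}'_1(ij)}$ with the modulus $S'_1(ij)^\infty$, and then observe that by \eqref{eq-4.13} the first divisor is already contained in the second, so that the intersection collapses to $N_0(ij)^\infty|_{\ol{S}'_1(ij)}$.

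First I would fix notation. Write $\phi_{ij}\colon\ol{N}_0(ij)\to\ol{N}_0(11)$ for the structure maps of $\ul{N}_0$, $\psi_{ij}\colon\ol{S}'_1(ij)\to\ol{N}_0(ij)$ for the morphisms on total spaces produced in \S\ref{modif-partial} and carried through the present section, and $\chi_{ij}\colon\ol{S}'_1(ij)\to\ol{S}'_1(11)$ for the structure maps of $\ul{S}'_1$. We have arranged that $\ol{S}'_1(11)=\ol{N}_0(11)$ and that the $\psi_{ij}$ form a morphism of squares of schemes which is the identity at the corner $(1,1)$; hence $\phi_{ij}\circ\psi_{ij}=\chi_{ij}$. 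By construction, $B_{ij}$, viewed as a closed subscheme of $\ol{N}_0(ij)$, is the scheme-theoretic intersection $N_0(ij)^\infty\cap\phi_{ij}^{-1}(S'_1(11)^\infty)$. Applying $\psi_{ij}^{-1}(-)$, which commutes with scheme-theoretic intersections, and using $\phi_{ij}\circ\psi_{ij}=\chi_{ij}$, I obtain
\[
B_{ij}|_{\ol{S}'_1(ij)}=N_0(ij)^\infty|_{\ol{S}'_1(ij)}\cap\chi_{ij}^{-1}(S'_1(11)^\infty),
\]
where $N_0(ij)^\infty|_{\ol{S}'_1(ij)}$ is an effective Cartier divisor, being the pullback of the effective Cartier divisor $N_0(ij)^\infty$ along the dominant morphism $\psi_{ij}$ (well-defined exactly as in Proposition~\ref{enlarging-general}).

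Next I would identify $\chi_{ij}^{-1}(S'_1(11)^\infty)$ with $S'_1(ij)^\infty$. This is precisely the minimality of the edges of the elementary Nisnevich square $\ul{S}'_1$ of modulus pairs: since each edge is minimal, so is the composite $S'_1(ij)\to S'_1(11)$, and therefore $S'_1(ij)^\infty=\chi_{ij}^{\ast}S'_1(11)^\infty$. (Alternatively, one reads this off the formulas $S'_1(ij)^\infty=S'(ij)^\infty+mZ_{ij}$, $S'(ij)^\infty=S'(11)^\infty|_{\ol{S}'(ij)}$ and $Z_{ij}=Z|_{\ol{S}'(ij)}$.) Plugging this into the last display gives
\[
B_{ij}|_{\ol{S}'_1(ij)}=N_0(ij)^\infty|_{\ol{S}'_1(ij)}\cap S'_1(ij)^\infty .
\]

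Finally I would invoke \eqref{eq-4.13}: $N_0(ij)^\infty|_{\ol{S}'_1(ij)}\le S'_1(ij)^\infty$ as effective Cartier divisors. Locally this says that a local equation of $S'_1(ij)^\infty$ is a multiple of one of $N_0(ij)^\infty|_{\ol{S}'_1(ij)}$, so the ideal sheaf of $S'_1(ij)^\infty$ is contained in that of $N_0(ij)^\infty|_{\ol{S}'_1(ij)}$, i.e. $N_0(ij)^\infty|_{\ol{S}'_1(ij)}\subseteq S'_1(ij)^\infty$ scheme-theoretically. Hence the scheme-theoretic intersection in the last display equals $N_0(ij)^\infty|_{\ol{S}'_1(ij)}$, which is \eqref{5.16eq1}; in particular the left-hand side is an effective Cartier divisor. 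The lemma is essentially bookkeeping: the only points needing a little care are the identity $\phi_{ij}\circ\psi_{ij}=\chi_{ij}$ (i.e. that the maps on total spaces built in \S\ref{modif-partial} really form a morphism of squares that is the identity at the corner $(1,1)$) and the compatibility of scheme-theoretic preimage with scheme-theoretic intersection; everything else follows formally from the minimality of $\ul{S}'_1$ together with \eqref{eq-4.13}.
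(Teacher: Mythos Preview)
Your proof is correct and follows essentially the same route as the paper's: both use the minimality of the edges of $\ul{S}'_1$ to identify $S'_1(11)^\infty|_{\ol{S}'_1(ij)}$ with $S'_1(ij)^\infty$, and then invoke the inequality \eqref{eq-4.13} from Proposition~\ref{enlarging-general-2} to collapse the scheme-theoretic intersection. The paper's proof is simply a two-line version of what you have written out in full.
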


\begin{proof}
Since $S'_1$ is an elementary Nisnevich square, we have
\[S'_1(11)^\infty|_{\ol{S}'_1(ij)} = S'_1(ij)^\infty\]
and the claim follows from Proposition \ref{enlarging-general-2}. 
\end{proof}

\begin{lemma}\label{l5.1}
There exists a morphism $\ul{\tilde{N}} _0\by{\pi} \ul{N}_0$ in $\Comp_1^\mor(\ul{S})$ such that
\begin{enumerate}
\item The morphism $\ul{S}'_1\to \ul{N}_0$ lifts to $\ul{\tilde{N}} _0$.
\item $\pi^*B_{ij}$ is an effective Cartier divisor for all $i,j$.
\item The total space of $\tilde{N}_0(ij)$ is normal for any $(i,j)$.
\end{enumerate}
\end{lemma}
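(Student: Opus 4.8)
The plan is to treat each corner of the square independently by the blow-up recipe of \S\ref{s4.3}, and then to rebuild the square structure by the graph construction from the proof of Lemma \ref{l4.0}, checking that the three asserted properties survive each step. We may and do assume $\ul N_0\in\Comp_1^\mor(\ul S)$, by Lemma \ref{l4.1}.

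The first point is that, for every $(i,j)$, the closed subscheme $B_{ij}\subset\ol N_0(ij)$ satisfies the hypothesis of Lemma \ref{l4.2} at the corner $(ij)$: restricting the identity of Lemma \ref{l-harmless} along the strict extension $\ul S\to\ul S'_1$ and using the minimality of the compactification $\ul S\to\ul N_0$ yields
\[
B_{ij}|_{\ol S(ij)}=N_0(ij)^\infty|_{\ol S(ij)}=S(ij)^\infty,
\]
an effective Cartier divisor on the normal scheme $\ol S(ij)$. Hence, by the construction of \S\ref{s4.3} (Lemma \ref{l4.2} and Remark \ref{r4.1}, applied at $(ij)$ rather than at $(00)$, which is legitimate since $\ol S(ij)$ is normal), we set
\[
\tilde N_0(ij):=\bigl(\mathrm{Bl}_{B_{ij}}(\ol N_0(ij))^N,\ \text{the pullback of }N_0(ij)^\infty\bigr).
\]
Then $\ol{\tilde N}_0(ij)$ is normal (giving (3)); the projection $\pi(ij):\tilde N_0(ij)\to N_0(ij)$ is an isomorphism in $\ulMP$ whose map of total spaces is proper birational and an isomorphism over $\ol S(ij)$; and $\pi(ij)^{-1}(B_{ij})$ is an effective Cartier divisor on $\ol{\tilde N}_0(ij)$ (giving (2)). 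Moreover, by Lemma \ref{l-harmless} the pull-back of the ideal $\sI_{B_{ij}}$ to $\ol S'_1(ij)$ equals $\sI_{N_0(ij)^\infty|_{\ol S'_1(ij)}}$, hence is invertible; so the universal properties of blow-up and of normalization (the latter applicable because $\ol S'_1(ij)$ is normal) provide a lift $\ol S'_1(ij)\to\ol{\tilde N}_0(ij)$ of $\ol S'_1(ij)\to\ol N_0(ij)$.

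At this stage $\ul{\tilde N}_0$ is only a family of modulus pairs together with the rational maps $\ol{\tilde N}_0(ij)\dashrightarrow\ol{\tilde N}_0(kl)$ induced by the entire morphisms of $\ul N_0$, which are genuine morphisms over the interiors $N_0(ij)^\circ=S(ij)^\circ$. To promote $\ul{\tilde N}_0$ to an object of $\ulMCor^{\Sq,\mor}$ we repeat the graph argument in the proof of Lemma \ref{l4.0}: replace each $\ol{\tilde N}_0(ij)$ by the closure of the appropriate (iterated) graph and pull back the moduli, then normalize once more. Every modification involved is proper birational and an isomorphism over the interior; therefore it preserves normality (after the final normalization), it preserves the effective Cartier divisor $\pi(ij)^{-1}(B_{ij})$ (the pull-back of an effective Cartier divisor along such a birational morphism is again one, since the total spaces are reduced with dense interior by Remark \ref{total-reduced}), and — again using normality of $\ol S'_1(ij)$ and the fact that $\ul S'_1$ is already a genuine square — it preserves the lift $\ol S'_1(ij)\to\ol{\tilde N}_0(ij)$. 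Uniqueness of all these lifts on the interiors makes them compatible with the square maps, whence a morphism of squares $\ul{\tilde N}_0\by{\pi}\ul N_0$ and a factorization $\ul S'_1\to\ul{\tilde N}_0\by{\pi}\ul N_0$ of $\ul S'_1\to\ul N_0$, i.e.\ (1). Finally $\ul{\tilde N}_0\in\Comp_1^\mor(\ul S)$: its internal morphisms are entire by construction, the compactification $\ul S\to\ul{\tilde N}_0$ is minimal since $\tilde N_0(ij)^\infty$ pulls back to $N_0(ij)^\infty$, hence to $S(ij)^\infty$ on $\ol S(ij)$, and it is an open immersion on total spaces because $\ol{\tilde N}_0(ij)\to\ol N_0(ij)$ is an isomorphism over $\ol S(ij)$, which exhibits $\ol S(ij)$ as an open subscheme of $\ol{\tilde N}_0(ij)$.

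I expect the main obstacle to be exactly the bookkeeping of the third paragraph: performing the corner-wise blow-ups destroys the square structure, and it must be rebuilt while simultaneously retaining normality, the Cartier property, and the lifting of $\ul S'_1$. What makes this possible is that every auxiliary modification is an isomorphism over the interiors, so all these data persist; beyond \S\ref{s4.3} and Lemma \ref{l4.0}, the only genuinely new input is the verification via Lemma \ref{l-harmless} that each $B_{ij}$ is an admissible blow-up centre and that its ideal is already invertible on $\ol S'_1(ij)$.
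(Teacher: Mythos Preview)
Your argument is correct and reaches the same conclusion, but the route differs from the paper's. The paper processes the corners sequentially in the order $(11)\to\{(10),(01)\}\to(00)$, exploiting that $B_{11}=N_0(11)^\infty$ is already Cartier (so $\tilde N_0(11)=N_0(11)$), and maintaining the square structure at each intermediate step by fibre product: when blowing up $\ol N_0(ij)$ along $B_{ij}$ for $(i,j)\in\{(1,0),(0,1)\}$, it simultaneously replaces $\ol N_0(00)$ by $\ol N_0(00)\times_{\ol N_0(ij)}\tilde{\ol N}_0(ij)$, so the square never breaks and the graph construction of Lemma~\ref{l4.0} is never invoked. Only the final normalisation is applied uniformly. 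Your approach, by contrast, performs all four blow-ups in parallel and then repairs the square via the graph trick. This is more uniform conceptually but adds a genuine bookkeeping step (checking that the lift of $\ul S'_1$, the Cartier property of $\pi^*B_{ij}$, and normality all survive the graph closure and the second normalisation); the paper's sequential fibre-product argument is shorter because it respects the partial order on $\Sq$ and keeps entire morphisms throughout, so no reconstruction is needed.
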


\begin{proof}
The equality $\ol{S}'_1(11) = \ol{N}_0 (11)$ implies the inequality $S'_1(11)^\infty\ge N_0 (11)^\infty$.
Thus, $B_{11}=N_0(11)^\infty$ is already an effective Cartier divisor on $\ol{N}_0(11)$. We set $\tilde{N} _0(11)=N_0(11)$.

Next, let $(i,j)$ be one of $(1,0)$ or $(0,1)$, and define modulus pairs $\tilde{N}_0(ij)$ and $\tilde{N}_0(00)$ by
\begin{align*}
\tilde{N}_0(ij) &:= (\mathrm{Bl}_{B_{ij}} \ol{N}_0 (ij),\text{ the pullback of $N_0(ij)^\infty$}), \\
\tilde{N}_0(00) &:= (\ol{N}_0(00) \times_{\ol{N}_0(ij)} \tilde{\ol{N}}_0(ij) ,\text{ the pullback of $N_0(00)^\infty$}).
\end{align*}
Then, by Lemma \ref{l-harmless}, the morphism $S'_1(ij) \to N_0(ij)$ lifts to $S'_1(ij) \to \tilde{N}_0(ij)$.
Therefore, the morphism $S'_1(00) \to N_0(00)$ lifts to $S'_1(00) \to \tilde{N}_0(00)$.
Moreover, we have
\[
B_{ij} \times_{\ol{N}_0 (ij)} \tilde{\ol{N}}_0(ij) = S'_1(11)^\infty \times_{\ol{N}_0(11)} \tilde{N}_0(ij)^\infty .
\]
Denote by $\tilde{\ul{N}}_0$ the square obtained from $\ul{N}_0$ by replacing $N_0(ij)$ and $N_0(00)$ by $\tilde{N}_0(ij)$ and $\tilde{N}_0(00)$, respectively.
Then, $\tilde{\ul{N}}_0$ dominates $\ul{N}_0$, and the map $\ul{S}'_1 \to \ul{N}_0$ lifts to a morphism $\ul{S}'_1 \to \tilde{\ul{N}}_0$.
Therefore, by replacing $\ul{N}_0$ by $\tilde{\ul{N}}_0$, we may assume that $B_{ij}$ is an effective Cartier divisor on $\ol{N}_0(ij)$.

After this replacement, we apply the same procedure to $(i',j') \in \{(1,0),(0,1)\} - \{(i,j)\}$.
Then, we may assume that $B_{ij}$ is an effective Cartier divisor on $\ol{N}_0(ij)$ for $(i,j)=(1,0),(0,1)$.

Now, we reset the notation, and treat the case $(i,j)=(0,0)$.
Define a modulus pair $\tilde{N}_0(00)$ by
\[
\tilde{N}_0(00) := (\mathrm{Bl}_{B_{00}} \ol{N}_0 (00),\text{ the pullback of $N_0(00)^\infty$}).
\]
Then, by Lemma \ref{l-harmless}, the morphism $S'_1(00) \to N_0(00)$ lifts to $S'_1(00) \to \tilde{N}_0(00)$.
Moreover, we have 
\[
B_{00} \times_{\ol{N}_0 (00)} \tilde{\ol{N}}_0(00) = S'_1(11)^\infty \times_{\ol{N}_0(11)} \tilde{N}_0(00)^\infty .
\]
Therefore, replacing $N_0(00)$ by $\tilde{N}_0(00)$, we may assume that $B_{00}$ is an effective Cartier divisor on $\ol{N}_0(00)$.

Finally, we may assume that $\ol{N}_0(ij)$ is normal without loss of generality, just by replacing each $N_0(ij)$ by 
\[
N_0(ij)^N := (\ol{N}_0(ij)^N , \text{ the pullback of $N_0(ij)^\infty $}).
\]
Note that $N_0(ij)^N$'s form a square $\ul{N}_0^N$ which dominates $\ul{N}_0$, and that the morphism $\ul{S}'_1 \to \ul{N}_0$ lifts to a morphism $\ul{S}'_1 \to \ul{N}_0^N$ by the normality of $\ol{S}'_1(ij)$'s.
\end{proof}

\subsubsection{Enlargement of the modulus of $\ul{N}_0$.}
In the following, we replace $\ul{N}_0$ by $ \ul{\tilde N}_0$ as in Lemma \ref{l5.1}, hence assume that the subscheme $B_{ij}$ of Lemma \ref{l-harmless} is an effective Cartier divisor on $\ol{N}_0(ij)$ and that the latter is normal, for each $i,j\in\{0,1\}$.

\ 

Define modulus pairs $N_1(ij) = (\ol{N}_1(ij),N_1(ij)^\infty )$ by
\begin{align*}
\ol{N}_1(ij) :&= \ol{N}_0(ij), \\
N_1 (ij)^\infty :&= \sup  (S'_1(11)^\infty \times_{\ol{N}_0(11)} \ol{N}_0(ij) ,N_0 (ij)^\infty ),
\end{align*}
where the sup is taken as Weil divisors, but it is also the sup as Cartier divisors by Lemma \ref{sup-lemma-pre}, the normality of $\ol{N}_0(ij)$ and the assumption that $B_{ij}$ is an effective Cartier divisor on $\ol{N}_0(ij)$.

\begin{lemma}\label{sup-minimality-general}
For each $i,j \in \{0,1\}$, we have
\[N_1 (ij)^\infty |_{\ol{S}'(ij)} = S'_1 (ij)^\infty ,\]
where $N_1 (ij)^\infty |_{\ol{S}'_1(ij)}$ denotes the pullback of the Cartier divisor $N_1 (ij)^\infty$ by the dominant morphism $\ol{S}'_1(ij) \to \ol{N}_1(ij) = \ol{N}_0(ij)$.
\end{lemma}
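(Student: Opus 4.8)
The plan is to compute $N_1(ij)^\infty|_{\ol{S}'_1(ij)}$ by rewriting the supremum defining $N_1(ij)^\infty$ as an additive (``inclusion--exclusion'') expression, since pull-back of Cartier divisors is additive but does \emph{not} commute with $\sup$ of Weil divisors in general.

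Set $A_{ij}:=S'_1(11)^\infty\times_{\ol{N}_0(11)}\ol{N}_0(ij)$, so that $N_1(ij)^\infty=\sup(A_{ij},N_0(ij)^\infty)$ by definition. The first observation is that the closed subscheme $B_{ij}$ of Lemma~\ref{l-harmless} is exactly the scheme-theoretic intersection of $A_{ij}$ and $N_0(ij)^\infty$ inside $\ol{N}_0(ij)$, because
\[
S'_1(11)^\infty\times_{\ol{N}_0(11)}N_0(ij)^\infty=\bigl(S'_1(11)^\infty\times_{\ol{N}_0(11)}\ol{N}_0(ij)\bigr)\times_{\ol{N}_0(ij)}N_0(ij)^\infty=A_{ij}\times_{\ol{N}_0(ij)}N_0(ij)^\infty.
\]
Under the standing hypotheses of this subsection ($\ol{N}_0(ij)$ normal and $B_{ij}$ an effective Cartier divisor on it --- this is precisely why we passed to $\ul{\tilde N}_0$ of Lemma~\ref{l5.1}), Lemma~\ref{sup-lemma-pre} gives that $N_1(ij)^\infty$ is an effective Cartier divisor; moreover, since $B_{ij}$ has local order $\min$ of those of $A_{ij}$ and $N_0(ij)^\infty$ at every prime divisor, the Weil-divisor identity
\[
N_1(ij)^\infty=A_{ij}+N_0(ij)^\infty-B_{ij}
\]
holds, hence (all terms being Cartier and $\ol{N}_0(ij)$ normal) is an identity of Cartier divisors. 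Pulling it back along the dominant morphism $\ol{S}'_1(ij)\to\ol{N}_0(ij)$ (recall $\ol{S}'_1(ij)=\ol{S}'(ij)$) and using additivity of pull-back,
\[
N_1(ij)^\infty|_{\ol{S}'_1(ij)}=A_{ij}|_{\ol{S}'_1(ij)}+N_0(ij)^\infty|_{\ol{S}'_1(ij)}-B_{ij}|_{\ol{S}'_1(ij)}.
\]
By Lemma~\ref{l-harmless}, equation~\eqref{5.16eq1}, we have $B_{ij}|_{\ol{S}'_1(ij)}=N_0(ij)^\infty|_{\ol{S}'_1(ij)}$, so the last two terms cancel and $N_1(ij)^\infty|_{\ol{S}'_1(ij)}=A_{ij}|_{\ol{S}'_1(ij)}$. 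Finally, by construction the morphisms $\ol{S}'_1(ij)\to\ol{N}_0(ij)$ form a morphism of squares over $\ol{N}_0(11)=\ol{S}'_1(11)$ (with the structure map the identity), so the composite $\ol{S}'_1(ij)\to\ol{N}_0(ij)\to\ol{N}_0(11)$ is the structure morphism of $\ul{S}'_1$; thus $A_{ij}|_{\ol{S}'_1(ij)}$ is the pull-back of $S'_1(11)^\infty$ along that structure morphism, and since $\ul{S}'_1$ is an elementary Nisnevich square (all edges minimal) this equals $S'_1(ij)^\infty$, exactly as in the proof of Lemma~\ref{l-harmless}. Hence $N_1(ij)^\infty|_{\ol{S}'_1(ij)}=S'_1(ij)^\infty$.

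The only non-formal point is the replacement of $\sup(A_{ij},N_0(ij)^\infty)$ by $A_{ij}+N_0(ij)^\infty-B_{ij}$: a morphism contracting a component of a support can spoil compatibility of $\sup$ with pull-back, so one genuinely cannot pull back the defining supremum termwise. The Cartierness of the intersection $B_{ij}$, which was arranged in Lemma~\ref{l5.1}, is exactly what makes the supremum additive and therefore compatible with $(-)|_{\ol{S}'_1(ij)}$. Everything else --- identifying $B_{ij}$ with the scheme-theoretic intersection, and checking that $\ol{S}'_1(ij)\to\ol{N}_0(ij)$ sits over $\ol{N}_0(11)$ --- is bookkeeping that follows immediately from the constructions of \S\ref{modif-partial} and the preceding subsection.
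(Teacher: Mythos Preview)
Your proof is correct and follows essentially the same route as the paper's: both hinge on the Cartierness of $B_{ij}$ (arranged in Lemma~\ref{l5.1}) together with Lemma~\ref{sup-lemma-pre}. The only cosmetic difference is that the paper invokes part (3) of Lemma~\ref{sup-lemma-pre} directly to commute $\sup$ with pull-back and then simplifies via Proposition~\ref{enlarging-general-2}, whereas you unpack (3) through the additive formula of part (1) and cancel via Lemma~\ref{l-harmless}.
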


\begin{proof}
By Lemma \ref{sup-lemma-pre} (3), we have
\begin{align*}
N_1 (ij)^\infty |_{\ol{S}'(ij)} &= \sup  (S_1' (11)^\infty \times_{\ol{N}_0(11)} \ol{N}_0(ij) |_{\ol{S}'(ij)} ,N_0 (ij)^\infty |_{\ol{S}'(ij)} ) \\
&=^1 \sup  (S'_1(ij)^\infty ,N_0 (ij)^\infty |_{\ol{S}'(ij)} ) \\
&=^2 S'_1(ij)^\infty
\end{align*}
where the equality $=^1$ follows from
\[
S_1' (11)^\infty \times_{\ol{N}_0(11)} \ol{N}_0(ij) |_{\ol{S}'(ij)} = S_1' (11)^\infty \times_{\ol{N}_0(11)} \ol{S}'_1(ij) ,
\]
and from the minimality of the morphism $S'_1(ij) \to S'_1(11)$.
The equality $=^2$ follows from Proposition \ref{enlarging-general-2}.
This finishes the proof.
\end{proof}

We summarize the results of this subsection in the following proposition.

\begin{prop}
The modulus pairs $N_1(ij)$'s form a square $\ul{N}_1$ in $\MCor^\Sq$ such that $\ul{N}_1 \in \Comp_1^\mor (\ul{S})$ and there exists a morphism $\ul{N}_1 \to \ul{N}_0$ in $\Comp_1 (\ul{S})$.
Moreover, the maps on the total spaces $\ol{S}'_1 (ij) \to \ol{N}_0(ij) = \ol{N}_1(ij)$ induce a minimal morphism $\ul{S}'_1 \to \ul{N}_1$.
\end{prop}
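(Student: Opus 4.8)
The plan is to read off the four assertions from the constructions carried out just above---notably Lemmas~\ref{l-harmless}--\ref{sup-minimality-general} and the normalizations of Lemma~\ref{l5.1}---since by now all the geometric work is done and only bookkeeping remains. Throughout, write $T_{ij}$ for the pullback of the effective Cartier divisor $S'_1(11)^\infty$ on $\ol{N}_0(11)=\ol{S}'_1(11)$ along the edge $\ol{N}_0(ij)\to\ol{N}_0(11)$ of $\ul{N}_0$, so that $N_1(ij)^\infty=\sup(T_{ij},N_0(ij)^\infty)$ by definition, and note that $g^\ast T_{i'j'}=T_{ij}$ for every edge map $g\colon\ol{N}_0(ij)\to\ol{N}_0(i'j')$ of $\ul{N}_0$ by transitivity of pullbacks.

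First I would check that $\ul{N}_1\in\MCor^\Sq$. The total spaces $\ol{N}_1(ij)=\ol{N}_0(ij)$ are proper, and the edge maps of $\ul{N}_0$ are entire since $\ul{N}_0\in\Comp_1^\mor(\ul{S})$, so it suffices to verify that each such edge $g$ is admissible for the enlarged moduli. As pullback commutes with $\sup$ of Cartier divisors under the hypotheses secured in Lemma~\ref{l5.1} (normality of $\ol{N}_0(ij)$, Cartier-ness of the $B_{ij}$; see Lemma~\ref{sup-lemma-pre}), and as $g^\ast N_0(i'j')^\infty\le N_0(ij)^\infty$ by admissibility in $\ul{N}_0$, one obtains $g^\ast N_1(i'j')^\infty=\sup(T_{ij},g^\ast N_0(i'j')^\infty)\le\sup(T_{ij},N_0(ij)^\infty)=N_1(ij)^\infty$. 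Moreover $N_1(ij)^\infty\ge N_0(ij)^\infty$, so the identities on total spaces define an entire, admissible morphism of squares $\ul{N}_1\to\ul{N}_0$ in $\ulMCor^\Sq$.

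Next I would produce the structure morphism $\ul{S}\to\ul{N}_1$ witnessing $\ul{N}_1\in\Comp_1^\mor(\ul{S})$: take the open immersions $\ol{S}(ij)\to\ol{N}_0(ij)=\ol{N}_1(ij)$ underlying the structure map of the object $\ul{S}\to\ul{N}_0$ of $\Comp_1(\ul{S})$, together with the identities on interiors. The only point to check is minimality: restricting to $\ol{S}(ij)$, one has $N_0(ij)^\infty|_{\ol{S}(ij)}=S(ij)^\infty$ by minimality of $\ul{S}\to\ul{N}_0$, whereas $T_{ij}|_{\ol{S}(ij)}$ is the pullback of $S'_1(11)^\infty|_{\ol{S}(11)}=S(11)^\infty$ along the edge $\ol{S}(ij)\to\ol{S}(11)$ of $\ul{S}$, hence equals $S(ij)^\infty$ by minimality of that edge; therefore $N_1(ij)^\infty|_{\ol{S}(ij)}=\sup(S(ij)^\infty,S(ij)^\infty)=S(ij)^\infty$. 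So $\ul{S}\to\ul{N}_1$ lies in $\Comp_1(\ul{S})$, has entire edges, and makes the triangle under $\ul{S}$ commute (all three maps being the same open immersions on total spaces); thus $\ul{N}_1\in\Comp_1^\mor(\ul{S})$ and $\ul{N}_1\to\ul{N}_0$ is a morphism of $\Comp_1(\ul{S})$.

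Finally, for the minimality of $\ul{S}'_1\to\ul{N}_1$, I would first note that the total-space maps $\ol{S}'_1(ij)\to\ol{N}_0(ij)=\ol{N}_1(ij)$ constructed above are compatible with the edges of both squares: for each edge of $\Sq$ the two resulting morphisms $\ol{S}'_1(ij)\to\ol{N}_0(i'j')$ agree on the dense open $\ol{S}(ij)$, where they both reduce to the corresponding maps of $\ul{S}\to\ul{N}_0$, hence they coincide since $\ol{N}_0(i'j')$ is separated and $\ol{S}'_1(ij)$ is reduced (Remark~\ref{total-reduced}). Thus these maps assemble into a morphism of squares $\ul{S}'_1\to\ul{N}_1$, and Lemma~\ref{sup-minimality-general} states precisely that $N_1(ij)^\infty|_{\ol{S}'_1(ij)}=S'_1(ij)^\infty$ for all $i,j$, that is, that this morphism is minimal. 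I do not expect a real obstacle: the only delicate step is the ``pullback commutes with $\sup$'' manipulation in the first paragraph, and this is exactly what the normality and Cartier-divisor conditions arranged in Lemma~\ref{l5.1} were put in place to ensure.
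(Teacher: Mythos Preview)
Your proof is correct and follows essentially the same route as the paper's: verifying admissibility of the edges of $\ul{N}_1$ via the identity $g^\ast N_1(i'j')^\infty=\sup(g^\ast T_{i'j'},g^\ast N_0(i'j')^\infty)\le N_1(ij)^\infty$ using Lemma~\ref{sup-lemma-pre}~(3), then invoking Lemma~\ref{sup-minimality-general} for the minimality of $\ul{S}'_1\to\ul{N}_1$. The only differences are cosmetic: the paper deduces minimality of $\ul{S}\to\ul{N}_1$ directly from that of $\ul{S}'_1\to\ul{N}_1$ (composed with the strict extension $\ul{S}\to\ul{S}'_1$) rather than computing the restriction to $\ol{S}(ij)$ as you do, and your explicit check that the maps $\ol{S}'_1(ij)\to\ol{N}_1(ij)$ assemble into a morphism of squares is a point the paper passes over in silence.
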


\begin{proof}
Let $(i,j) \to (i',j')$ be a morphism in $\mathbf{Sq}$, and let $f : \ol{N}_0 (ij) \to \ol{N}_0 (i'j')$ be the corresponding morphism on total spaces.
Then, we have 
$N_0 (ij)^\infty \geq f^\ast N_0 (i'j')^\infty$ since $\ul{N}_0 \in \mathbf{MCor}^\mathbf{Sq}$.
This implies that 
\begin{align*}
N_1 (ij)^\infty &= \sup \{ S_1' (11)^\infty \times_{\ol{N}_0(11)} \ol{N}_0(ij) , N_0 (ij)^\infty \} \\
&\geq \sup \{ f^\ast (S_1' (11)^\infty \times_{\ol{N}_0(11)} \ol{N}_0(i'j')) , f^\ast N_0 (i'j')^\infty \} = f^\ast N_1 (i'j')^\infty ,
\end{align*}
where the last equality follows from Lemma \ref{sup-lemma-pre} (3).
This proves that the square $\ul{N}_1$ is well-defined.
The existence of the map $\ul{N}_1 \to \ul{N}_0$ is obvious by the construction of $\ul{N}_1$.
Lemma \ref{sup-minimality-general} implies that the morphism $\ul{S}'_1 \to \ul{N}_1$ is minimal.
in particular,  $\ul{S} \to \ul{N}_1$ is also minimal, which implies $ \ul{N}_1 \in \Comp_1 (\ul{S})$.
This finishes the proof.
\end{proof}

Thus, we have finished the proof of Theorem \ref{th-partial}.

\section{Existence of MV-compactifications}\label{section-MV}

The aim of this section is to prove Theorem \ref{existence-MV}. Thus, throughout, $\ul{S}$ is an elementary Nisnevich square with $\ol{S}(11)$ proper.

\subsection{A remark and a lemma}

\begin{remark}\label{mv-sequence}
We recall a basic discussion from \cite[Lemma 4.3.2, Lemma 4.3.3]{motmod}.
Consider any compactification $\ul{N}\in \Comp_1(S)$. Then, the associated sequence
\begin{equation*}
0 \to \Z_\tr N(00) \to \Z_\tr N(10) \oplus \Z_\tr N(01) \to \Z_\tr N(11) \to 0
\end{equation*}
is automatically exact at $\Z_\tr N(00)$ and $\Z_\tr N(11)$ in $\mathbf{MNST}$.
Indeed, the injectivity of $\Z_\tr N(00) \to \Z_\tr N(10)$ is trivial, and the surjectivity of $\Z_\tr N(10) \oplus \Z_\tr N(01) \to \Z_\tr N(11)$ follows from the surjectivity of $\Z_\tr S(10) \oplus \Z_\tr S(01) \to \Z_\tr S(11) = \Z_\tr N(11)$, where the equality $S(11) = N(11)$ is a consequence of the properness of $\ol{S}(11)$.
\end{remark}

Let $N_0 \in \Comp_1 (\ul{S})$ be a compactification.
We must construct an MV-compactification $\ul{N}_1 \in \Comp_1 (\ul{S})$ which admits a morphism $\ul{N}_1 \to \ul{N}_0$ in $\MCor^{\mathbf{Sq}}$. By Lemma \ref{l-cofinality-minimal}, we may assume that $\ul{N}_0 \in \Comp_1^{\min} (\ul{S})$,  where we take the notation of \eqref{name} (see Definition \ref{defn-minimal-sq}).  Before beginning the proof, we prepare an elementary lemma.

\begin{lemma}\label{elementary-composition}
Let $X, Y$ and $Z$ be smooth schemes over $k$, $\alpha \in \Cor (X,Y)$ an elementary finite correspondence, and $f : Y \to Z$ be a morphism of $k$-schemes.
Denote by $\Gamma_f \in \Cor (Y,Z)$ the graph of $f$, which is regarded as an elementary finite correspondence. 
Set $\beta := (\mathrm{id}_X \times f)(\alpha ) \subset X \times Z$.
Then, $\beta$ coincides with the support of the divisor $\Gamma_f \circ \alpha \in \Cor (X,Z)$, where $\circ$ denotes the composition in the category of finite correspondences $\Cor$.
\end{lemma}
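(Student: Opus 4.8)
The plan is to unwind the definition of composition of finite correspondences and identify the cycle $\Gamma_f\circ\alpha$ with $\beta=(\mathrm{id}_X\times f)(\alpha)$ on the level of supports. First I would recall that, since $\alpha\in\Cor(X,Y)$ is elementary, it is the cycle associated to an integral closed subscheme $Z_\alpha\subset X\times Y$ that is finite and surjective over a connected component of $X$; write $X_0$ for that component, so we may assume $X=X_0$. The composition $\Gamma_f\circ\alpha$ is by definition $(p_{XZ})_\ast\big((\alpha\times Z)\cdot(X\times\Gamma_f)\big)$, where the intersection product is taken in $X\times Y\times Z$ and $p_{XZ}$ is the projection. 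Because $\Gamma_f$ is the graph of a morphism $f:Y\to Z$, the cycle $X\times\Gamma_f$ is the graph of $\mathrm{id}_X\times f:X\times Y\to X\times Z$, hence the intersection $(\alpha\times Z)\cdot(X\times\Gamma_f)$ is a proper intersection supported on $Z_\alpha\times_Y\Gamma_f$, which maps isomorphically onto $Z_\alpha$ under the projection $X\times Y\times Z\to X\times Y$.

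Next I would compute the pushforward. Under $p_{XZ}$, the subscheme $Z_\alpha\times_Y\Gamma_f\subset X\times Y\times Z$ is sent onto its image in $X\times Z$, which is exactly $(\mathrm{id}_X\times f)(Z_\alpha)$ set-theoretically; this is $\beta$ as a set. Since $Z_\alpha$ is finite over $X$, so is its image under $\mathrm{id}_X\times f$ (a morphism between schemes finite over $X$ is finite), so $(p_{XZ})_\ast$ of the relevant cycle is a nonzero integral multiple of each irreducible component of $\beta$ — the multiplicities being the intersection multiplicities along the components of $Z_\alpha\times_Y\Gamma_f$ times the degrees of the finite maps onto the image components, all of which are strictly positive. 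Therefore the support of $\Gamma_f\circ\alpha$ equals $\beta$, which is the claim. (The statement is only about supports, so we do not need to track the multiplicities precisely; it suffices that they are all positive, equivalently that no component is killed in the pushforward, which holds because finite surjective pushforward multiplies each component by its degree.)

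The only genuinely delicate point is ensuring the intersection $(\alpha\times Z)\cdot(X\times\Gamma_f)$ is \emph{defined} (proper) and is computed by the naive scheme-theoretic fiber product $Z_\alpha\times_Y\Gamma_f$ without excess-intersection corrections. This is where I would invoke that $\Gamma_f$ is a graph: intersecting with a graph is the same as taking a preimage along $\mathrm{id}_X\times f$, which is always a proper operation on cycles (the graph is a regular embedding with the right codimension, and preimages of integral subschemes along it have the expected dimension after base change because the projection from the graph is an isomorphism onto $X\times Y$). So there is no excess intersection, the multiplicity along each component of $Z_\alpha\times_Y\Gamma_f$ is $1$, and the pushforward multiplicities reduce to the degrees of the induced finite maps — all positive — giving the equality of supports. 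I expect the bulk of the write-up to consist of carefully citing the definition of composition from \cite{mvw} and the functoriality of pushforward/pullback of cycles, with this graph-vs-preimage observation being the crux.
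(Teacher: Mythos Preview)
Your approach is correct and coincides with the paper's: both unwind the definition $\Gamma_f\circ\alpha=(p_{XZ})_\ast\big((\alpha\times Z)\cdot(X\times\Gamma_f)\big)$ and observe that the support of this pushforward is the image of the set-theoretic intersection $(\alpha\times Z)\cap(X\times\Gamma_f)$ under $p_{XZ}$, which is exactly $(\mathrm{id}_X\times f)(\alpha)=\beta$. The paper's proof is a two-line version of yours; your extra care about properness of the intersection and positivity of the pushforward multiplicities is sound but more than the paper spells out.
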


\begin{proof}
The composition $\Gamma_f \circ \alpha$ is defined as the pushforward of the cycle $(\alpha \times Z) \cdot (X \times \Gamma_f) \subset X \times Y \times Z$ by the projection $X \times Y \times Z \to X \times Z$, where $\cdot $ denotes the intersection product (see for example \cite[\S 1]{mvw}).
Therefore, the support $|\Gamma_f \circ \alpha|$ is equal to the image of the set-theoretic map $(\alpha \times Z) \cap (X \times \Gamma_f) \to X \times Y \times Z \to X \times Z$, which is nothing but $\beta$. This finishes the proof.
\end{proof}

\subsection{Construction of a compactification}\label{subsec-6.2}

In this subsection, we construct an object $\ul{N}_1\in \Comp_1(\ul{S})$ dominating $\ul{N}_0$; the main result will be that it is an MV-compactification as in the statement of Theorem \ref{existence-MV}.

\subsubsection{A splitting} For the reader's convenience, we reproduce the square \eqref{name} here:
\begin{equation}\label{name0}
\begin{CD}
\ol{N}_0(00)@>h_u>> \ol{N}_0(01)\\
@Vv_lVV @Vv_r VV\\
\ol{N}_0(10)@>h_d>> \ol{N}_0(11).
\end{CD}
\end{equation}

As in Section \ref{section-partial}, define closed subschemes 
\[
Z(10) := \ol{S}(11) \setminus \ol{S}(10) \subset \ol{S}(11)
\]
and 
\[
Z(00) := \ol{S}(01) \setminus \ol{S}(00) \subset \ol{S}(01),
\]
with the reduced scheme structures.
Since $\ol{S}$ is an elementary Nisnevich square, we have the following identifications of schemes:
\begin{equation}\label{eq-identification}
Z(00) = Z(10) \times_{\ol{S}(11)} \ol{S}(01) \iso Z(10).
\end{equation}

Since $\ol{S}(11)$ is proper, we have $S(11) = N_0 (11)$.

By the same proof as that of Lemma \ref{l-decomp}, we can find a closed open subset $W \subset v_r^{-1} (Z(10))$ such that 
\begin{equation}\label{eq0005}
v_r^{-1} (Z(10)) = Z(00) \sqcup W.
\end{equation}

\begin{lemma}\label{l-w}
The set $W$ is a closed subset of $\ol{N}_0(01) \setminus \ol{S}(01)$.
\end{lemma}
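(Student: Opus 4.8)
The plan is to extract the claim directly from the decomposition $v_r^{-1}(Z(10)) = Z(00)\sqcup W$ and from the identity $Z(00) = v_r^{-1}(Z(10))\cap\ol{S}(01)$ which underlies it (exactly as in the proof of Lemma \ref{l-decomp}). So there is essentially no new content to prove; the work is bookkeeping.

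First I would recall the construction of $W$. Since $\ul{N}_0\in\Comp_1(\ul{S})$, the map on total spaces $\ol{S}(01)\to\ol{N}_0(01)$ is an open immersion, and composing it with $v_r$ recovers the structure map $\ol{S}(01)\to\ol{S}(11)$ of $\ul{S}$ (we use here that $\ol{S}(11) = \ol{N}_0(11)$, because $S(11) = N_0(11)$, $\ol{S}(11)$ being proper). As $\ul{S}$ is an elementary Nisnevich square, pulling $Z(10)$ back along this structure map gives $Z(10)\times_{\ol{S}(11)}\ol{S}(01) = Z(00)$; hence $v_r^{-1}(Z(10))\cap\ol{S}(01) = Z(00)$. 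Since $\ol{S}(01)$ is open in $\ol{N}_0(01)$, this exhibits $Z(00)$ as an open subscheme of $v_r^{-1}(Z(10))$; and since $Z(00)\to Z(10)$ is an isomorphism while $v_r^{-1}(Z(10))\to Z(10)$ is separated, the inclusion $Z(00)\hookrightarrow v_r^{-1}(Z(10))$ factors as an isomorphism followed by a section of a separated morphism, hence is also a closed immersion. So $v_r^{-1}(Z(10)) = Z(00)\sqcup W$ with $W$ open and closed in $v_r^{-1}(Z(10))$, as asserted before the lemma.

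Now the lemma follows formally. The subset $Z(10)$ is closed in $\ol{N}_0(11)$, so $v_r^{-1}(Z(10))$ is closed in $\ol{N}_0(01)$, and therefore $W$, being closed in $v_r^{-1}(Z(10))$, is closed in $\ol{N}_0(01)$. Moreover $W\cap\ol{S}(01)\subseteq v_r^{-1}(Z(10))\cap\ol{S}(01) = Z(00)$ while $W\cap Z(00) = \emptyset$ (the union $Z(00)\sqcup W$ being disjoint), so $W\cap\ol{S}(01) = \emptyset$, i.e. $W\subseteq\ol{N}_0(01)\setminus\ol{S}(01)$. Hence $W$ is a closed subset of $\ol{N}_0(01)$ contained in $\ol{N}_0(01)\setminus\ol{S}(01)$, which is precisely the statement. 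The only point requiring attention — and it is not really an obstacle — is the bookkeeping of ambient spaces: that $Z(10)$ is pulled back along $v_r\colon\ol{N}_0(01)\to\ol{N}_0(11)$ (legitimate since $\ol{N}_0(11) = \ol{S}(11)$), and that the disjointness of $Z(00)\sqcup W$ is exactly what pushes $W$ off of $\ol{S}(01)$.
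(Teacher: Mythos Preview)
Your proof is correct and follows the same approach as the paper: both show that $W$ is closed in $\ol{N}_0(01)$ because it is closed in the closed subset $v_r^{-1}(Z(10))$, and that $W\cap\ol{S}(01)=\emptyset$ via the identity $v_r^{-1}(Z(10))\cap\ol{S}(01)=Z(00)$ together with the disjointness $W\cap Z(00)=\emptyset$. Your version is more detailed in that it re-derives the decomposition \eqref{eq0005} from the argument of Lemma~\ref{l-decomp}, but this is harmless bookkeeping and the substance is identical.
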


\begin{proof}
Since $W$ is closed in $v_r^{-1} (Z(10))$, it is closed in $\ol{N}_0(01)$.
So, it suffices to prove that $W \cap \ol{S}(01) = \emptyset.$.
Since $\ul{S}$ is an elementary Nisnevich square, we have
\[
v_r^{-1}(Z(10)) \cap \ol{S}(01) = Z(10) \times_{\ol{S}(11)} \ol{S}(01) = Z(00).
\]
By \eqref{eq0005}, we obtain
\[
(Z(00) \sqcup W) \cap \ol{S}(01) = Z(00),
\]
which implies $W \cap \ol{S}(01) = \emptyset.$
This finishes the proof.
\end{proof}


\subsubsection{Creation of a Cartier divisor}
We regard $W$ as a closed subscheme of $\ol{N}_0(01)$ with the reduced scheme structure. 
We reduce to the case where $W$ is an effective Cartier divisor on $\ol{N}_0(01)$, as follows. By Lemma \ref{l-w}, the blow-up 
\[
\tilde{\ol{N}}_0(01) := \mathrm{Bl}_W (\ol{N}_0(01)) \to \ol{N}_0(01)
\]
is an isomorphism over $\ol{S}(01)$.
Define 
\[
\tilde{\ol{N}}_0(00) := \ol{N}_0(00) \times_{\ol{N}_0 (01)} \tilde{\ol{N}}_0(01).
\]
Then, the projection $\tilde{\ol{N}}_0(00) \to \ol{N}_0(00)$ is an isomorphism over $\ol{S}(00)$.
Therefore, by replacing $\ol{N}_0(01)$ and $\ol{N}_0(00)$ with $\tilde{\ol{N}}_0(01)$ and $\tilde{\ol{N}}_0(00)$, respectively, and by pulling-back the moduluses, we may, and do, assume that $W$ is an effective Cartier divisor on $\ol{N}_0(01)$.

\subsubsection{A majoration} Note that $Z(00)$ is also a closed subscheme of $\ol{N}_0(01)$ by $(\ref{eq0005})$.
Define closed subschemes $\tilde{Z}(00), \tilde{W}$ of $\ol{N}_0(00)$ by
\begin{align}
\tilde{Z}(00) &:= h_u^{-1} (Z(00)) = Z(00) \times_{\ol{N}_0(01)} \ol{N}_0(00), \label{z-tilde}\\
\tilde{W} &:= h_u^{-1} (W) = W \times_{\ol{N}_0(01)} \ol{N}_0(00) . \label{w-tilde}
\end{align}
Then, we have
\begin{equation}\label{eq0006}
p^{-1} (Z(10)) = h_u^{-1} v_r^{-1} (Z(10)) = h_u^{-1} (Z(00) \sqcup W) = \tilde{Z}(00) \sqcup \tilde{W}.
\end{equation}

\begin{lemma}\label{l-m}
There exists a positive integer $m$ such that 
\[
v_l^\ast N_0(10)^\infty |_{\ol{N}_0(00) \setminus \tilde{Z}(00)} \leq h_u^\ast (N_0(01)^\infty + mW) |_{\ol{N}_0(00) \setminus \tilde{Z}(00)} .
\]
\end{lemma}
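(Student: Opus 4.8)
The statement compares two pullbacks of Cartier divisors on $\ol{N}_0(00)$, restricted to the open set $\ol{N}_0(00) \setminus \tilde{Z}(00)$. The plan is to reduce it to Lemma \ref{m-i-l} (the ``$m$-times inequality'' lemma used repeatedly in \S\ref{s4.4} and \S\ref{modif-partial}), which produces such a multiple once one knows an inclusion of supports. So the real content is an inclusion of supports on the open set in question. First I would note that on $\ol{N}_0(00) \setminus \tilde{Z}(00)$ we have, by \eqref{eq0006}, the identity $p^{-1}(Z(10)) \cap (\ol{N}_0(00)\setminus\tilde Z(00)) = \tilde W \cap (\ol{N}_0(00)\setminus\tilde Z(00))$, since $p^{-1}(Z(10)) = \tilde Z(00) \sqcup \tilde W$ as sets. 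The key geometric point is then that the support of $v_l^\ast N_0(10)^\infty$, away from $\tilde Z(00)$, is contained in $p^{-1}(Z(10))$, hence in $|\tilde W|$ there; and $|\tilde W| = |h_u^\ast W|$ on that open set since $W$ is now a Cartier divisor on $\ol{N}_0(01)$ with $h_u^{-1}(W) = \tilde W$.

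The inclusion $|v_l^\ast N_0(10)^\infty| \setminus \tilde Z(00) \subset p^{-1}(Z(10))$ should come from the minimality hypothesis on $\ul{N}_0$ together with the structure of $\ul{S}$. Concretely: $v_l^\ast N_0(10)^\infty$ has support inside $v_l^{-1}(|N_0(10)^\infty|)$. Over the interior $\ol{S}(00)$ the modulus $N_0(00)^\infty$ restricts to $S(00)^\infty$, and by minimality of $\ul{N}_0$ and the defining equality for an MV/minimal square, $v_l^\ast N_0(10)^\infty$ and $h_u^\ast N_0(01)^\infty$ agree there with the appropriate restrictions; the discrepancy between $v_l^\ast N_0(10)^\infty$ and $h_u^\ast N_0(01)^\infty$ is therefore supported on the ``junk'' locus $\ol{N}_0(00) \setminus \ol{S}(00)$. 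By Lemma \ref{junk-separation} (applied as in \S\ref{s4.4}), $\ol{N}_0(00)\setminus\ol{S}(00) = |h_u^\ast D_{01}| \cup |v_l^\ast D_{10}|$ where $D_{ij} = \ol{N}_0(ij)\setminus\ol{S}(ij)$; intersecting the relevant pieces with the locus where $v_l^\ast N_0(10)^\infty$ actually lives and away from $\tilde Z(00)$ should land us inside $h_u^{-1}(v_r^{-1}(Z(10))) = p^{-1}(Z(10))$. In more detail, $v_l^\ast N_0(10)^\infty$ is supported where $v_l$ hits $|N_0(10)^\infty|$; minimality of $S(10)\to N_0(10)$ means $|N_0(10)^\infty| \cap \ol{S}(10) = |S(10)^\infty|$, so outside $\ol{S}(10)$ is where the extra modulus sits, and $\ol{S}(11)\setminus\ol{S}(10) = Z(10)$ — pulling this back through $h_d$ and the square relation $p = v_r h_u = h_d v_l$ gives the claimed containment.

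Once the support inclusion on $\ol{N}_0(00)\setminus\tilde Z(00)$ is established, I would invoke Lemma \ref{m-i-l}: given two effective Cartier divisors $E_1 \le E_2$ on a scheme such that $|E_1| \subseteq |E_2|$... wait, more precisely Lemma \ref{m-i-l} gives, for effective Cartier divisors with $|E_1|\subseteq|E_2|$, an integer $m$ with $E_1 \le m E_2$. Here $E_1 = v_l^\ast N_0(10)^\infty |_{\ol{N}_0(00)\setminus\tilde Z(00)}$ and $E_2 = h_u^\ast W|_{\ol{N}_0(00)\setminus\tilde Z(00)}$ (which is Cartier by the blow-up reduction making $W$ Cartier); we absorb the fixed divisor $h_u^\ast N_0(01)^\infty$ harmlessly into the right-hand side since we only need an inequality, i.e. $E_1 \le m\, h_u^\ast W \le h_u^\ast(N_0(01)^\infty + mW)$ on the open set. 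One must also check quasi-compactness (automatic here, everything is of finite type over $k$) so that a single $m$ works.

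\textbf{Main obstacle.} The delicate step is the support inclusion $|v_l^\ast N_0(10)^\infty| \cap (\ol{N}_0(00)\setminus\tilde Z(00)) \subseteq p^{-1}(Z(10))$. It requires carefully combining the minimality of $\ul{N}_0$ (to control where the modulus $N_0(10)^\infty$ can be supported relative to $\ol{S}(10)$), the commutativity relation $p = v_r\circ h_u = h_d\circ v_l$, and Lemma \ref{junk-separation} (to identify the complement of $\ol{S}(00)$ in $\ol{N}_0(00)$ as a union of two pullback divisors). The splitting $v_r^{-1}(Z(10)) = Z(00)\sqcup W$ from \eqref{eq0005}, pulled back via $h_u$ to \eqref{eq0006}, is what lets us discard the $\tilde Z(00)$ part and keep only $\tilde W$; the point of restricting to $\ol{N}_0(00)\setminus\tilde Z(00)$ is precisely to kill the $Z(00)$-component, over which $v_l^\ast N_0(10)^\infty$ and $h_u^\ast N_0(01)^\infty$ would be related by the (already understood) minimal-square identity rather than by a crude multiple.
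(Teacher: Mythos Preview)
Your support inclusion $|v_l^\ast N_0(10)^\infty| \cap (\ol{N}_0(00)\setminus\tilde Z(00)) \subseteq p^{-1}(Z(10))$ is false, and this is a genuine gap. Note that $\ol{S}(00)\subset \ol{N}_0(00)\setminus\tilde Z(00)$ (since $h_u$ carries $\ol{S}(00)$ into $\ol{S}(01)$ and $Z(00)\cap\ol{S}(00)=\emptyset$), and on $\ol{S}(00)$ the divisor $v_l^\ast N_0(10)^\infty$ restricts to $S(00)^\infty$ by minimality. This is typically nonzero, whereas $\tilde W\cap\ol{S}(00)=\emptyset$ (because $W\subset\ol{N}_0(01)\setminus\ol{S}(01)$ by Lemma~\ref{l-w}). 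So you cannot hope for $v_l^\ast N_0(10)^\infty \le m\cdot h_u^\ast W$ alone; the term $h_u^\ast N_0(01)^\infty$ is not a ``harmless absorption'' at the end but is essential to cancel the part of $v_l^\ast N_0(10)^\infty$ living over $\ol{S}(00)$.

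The paper's argument fixes exactly this: it first proves the honest inequality of divisors
\[
v_l^\ast N_0(10)^\infty\big|_{p^{-1}(\ol{S}(10))}\ \le\ h_u^\ast N_0(01)^\infty\big|_{p^{-1}(\ol{S}(10))}
\]
(no support statement, an actual $\le$). This uses admissibility of $v_r$ to get $h_u^\ast N_0(01)^\infty \ge p^\ast N_0(11)^\infty$, then the No-extra-fiber Lemma to identify $h_d^{-1}(\ol{S}(10))=\ol{S}(10)$, and finally minimality of $S(10)\to S(11)=N_0(11)$ to conclude $p^\ast N_0(11)^\infty = v_l^\ast N_0(10)^\infty$ over $p^{-1}(\ol{S}(10))$. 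Since $p^{-1}(\ol{S}(10)) = (\ol{N}_0(00)\setminus\tilde Z(00))\setminus\tilde W$, one then applies Lemma~\ref{m-i-l} on $\ol{N}_0(00)\setminus\tilde Z(00)$ with $D = h_u^\ast N_0(01)^\infty - v_l^\ast N_0(10)^\infty$ and $E = \tilde W$: the restriction $D|_{X\setminus E}\ge 0$ is exactly the displayed inequality, and the lemma produces $m$ with $D + m\tilde W\ge 0$, which is the assertion. Your instinct to reduce to Lemma~\ref{m-i-l} is right, but the input must be the \emph{difference} $h_u^\ast N_0(01)^\infty - v_l^\ast N_0(10)^\infty$, not $v_l^\ast N_0(10)^\infty$ itself.
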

\begin{proof}
First, we prove
 
\begin{claim}\label{claim76}
$v_l^\ast N_0(10)^\infty |_{p^{-1}(\ol{S}(10))}  \leq h_u^\ast N_0(01)^\infty  |_{p^{-1}(\ol{S}(10))}.$
\end{claim}

\begin{proof}[Proof of Claim]
Since the morphism $N_0(01) \to N_0(11)$ is admissible, we have $N_0(01)^\infty \geq v_r^\ast N_0(11)^\infty$.
Since $p=v_r \circ h_u$, we obtain $h_u^\ast N_0(01)^\infty\allowbreak \geq p^\ast N_0(11)^\infty$.
Therefore, it suffices to prove that 
\[
p^\ast N_0(11)^\infty |_{p^{-1} (\ol{S}(10))} = v_l^\ast N_0(10)^\infty |_{p^{-1} (\ol{S}(10))}.
\]
Since $p=h_d \circ v_l$, we are reduced to showing
\[
h_d^\ast N_0(11)^\infty |_{h_d^{-1} (\ol{S}(10))} = N_0(10)^\infty |_{h_d^{-1} (\ol{S}(10))}.
\]
By applying Lemma \ref{no-extra-fiber} to the morphism $h_d : \ol{N}_0(10) \to \ol{N}_0(11)$ and the dense open subset $\ol{S}(10) \subset \ol{N}_0(10)$, we have $h_d^{-1}(\ol{S}(10)) = \ol{S}(10)$.
Therefore, it suffices to prove 
\[
h_d^\ast N_0(11)^\infty |_{\ol{S}(10)} = N_0(10)^\infty |_{\ol{S}(10)} .
\]
Since $N_0(11) = S(11)$, both sides of the above equality coincides with $S(10)^\infty $ by the minimality of the morphism $S(10) \to S(11)$.
This finishes the proof of Claim \ref{claim76}.
\end{proof}

By (\ref{eq0006}), we have
\begin{multline*}
p^{-1}(\ol{S}(10)) = p^{-1}(\ol{S}(11) \setminus Z(10))\\ = \ol{N}_0(00) \setminus p^{-1}(Z(10)) = \ol{N}_0(00) \setminus (\tilde{Z}(00) \sqcup \tilde{W}).
\end{multline*}

Therefore, Claim \ref{claim76} says that
 \[
 (h_u^\ast N_0(01)^\infty - v_l^\ast N_0(10)^\infty ) |_{\ol{N}_0(00) \setminus (\tilde{Z}(00) \sqcup \tilde{W})} \geq 0.
 \]
Then, Lemma \ref{m-i-l} implies that there exists a positive integer $m$ such that 
\[
(h_u^\ast N_0(01)^\infty + m\tilde{W} - v_l^\ast N_0(10)^\infty ) |_{\ol{N}_0(00) \setminus \tilde{Z}(00)} \geq 0.
\]
This finishes the proof of Lemma \ref{l-m}.
\end{proof}

\subsubsection{Construction of $\ul{N}_1$} In the following, we fix $m \geq 1$ as in Lemma \ref{l-m}.

Define an effective Cartier divisor $N_1(01)^\infty $ on $\ol{N}_0(01)$ by
\begin{equation}\label{defn-n1-mod}
N_1(01)^\infty := N_0(01)^\infty + mW
\end{equation}
(see \eqref{eq0005} for the definition of $W$, and Lemma \ref{l-m} for $m$).
Define a modulus pair $N_1(01)$ by
\begin{equation}\label{defn-n1-pair}
N_1(01) := (\ol{N}_0(01), N_1(01)^\infty).
\end{equation}

\begin{lemma}\label{l-b-1}
We have the following equality of closed subschemes of $\ol{S}(01)$:
\[
v_l^\ast N_0(10)^\infty \cap \ol{S}(00) = S(00)^\infty = h_u^\ast N_1(01)^\infty \cap \ol{S}(00).
\]
In particular, we have
\[
(v_l^\ast N_0(10)^\infty \times_{\ol{N}_1(00)} h_u^\ast N_1(01)^\infty ) \cap \ol{S}(00) = S(00)^\infty .
\]
\end{lemma}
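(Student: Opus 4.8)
The plan is to reduce the lemma to the single pair of equalities $v_l^\ast N_0(10)^\infty|_{\ol{S}(00)} = S(00)^\infty = h_u^\ast N_1(01)^\infty|_{\ol{S}(00)}$ of effective Cartier divisors on the open subscheme $\ol{S}(00)\subset\ol{N}_0(00)$, and then to deduce the ``in particular'' clause formally. Indeed $\ol{N}_1(00)=\ol{N}_0(00)$ by construction, so the fibre product over $\ol{N}_1(00)$ in the last display of the statement is just the scheme-theoretic intersection of the two divisors inside $\ol{N}_0(00)$; since restriction to the open subscheme $\ol{S}(00)$ is base change along an open immersion, it commutes with that intersection, and the right-hand side then becomes $S(00)^\infty\times_{\ol{S}(00)}S(00)^\infty=S(00)^\infty$.

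To prove the pair of equalities I would first note that, since the structure morphisms of $\ul{N}_0$ are entire (we are in the case $\ul{N}_0\in\Comp_1^{\min}(\ul{S})$) and each $\ol{S}(ij)\hookrightarrow\ol{N}_0(ij)$ is an open immersion compatible with those morphisms, the map $v_l$ (resp. $h_u$) of \eqref{name0} restricts over $\ol{S}(00)$ to the edge $\ol{S}(00)\to\ol{S}(10)$ (resp. $\ol{S}(00)\to\ol{S}(01)$) of $\ul{S}$, which is minimal because $\ul{S}$ is an elementary Nisnevich square. Next I would record the restrictions of the relevant boundaries: $N_0(10)^\infty|_{\ol{S}(10)}=S(10)^\infty$ and $N_0(01)^\infty|_{\ol{S}(01)}=S(01)^\infty$, by minimality of the objects $S(10)\to N_0(10)$ and $S(01)\to N_0(01)$ of $\Comp_1$; and for $N_1(01)^\infty=N_0(01)^\infty+mW$ of \eqref{defn-n1-mod} the added term dies upon restriction to $\ol{S}(01)$, since $W$ is supported in $\ol{N}_0(01)\setminus\ol{S}(01)$ by Lemma \ref{l-w}, so $N_1(01)^\infty|_{\ol{S}(01)}=S(01)^\infty$ as well. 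Combining these with the compatibility of Cartier-divisor pullback with restriction to an open subscheme gives $v_l^\ast N_0(10)^\infty|_{\ol{S}(00)}=(v_l|_{\ol{S}(00)})^\ast S(10)^\infty$ and $h_u^\ast N_1(01)^\infty|_{\ol{S}(00)}=(h_u|_{\ol{S}(00)})^\ast S(01)^\infty$, and both equal $S(00)^\infty$ by the minimality of the two edges $\ol{S}(00)\to\ol{S}(10)$ and $\ol{S}(00)\to\ol{S}(01)$ of $\ul{S}$.

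There is no genuine obstacle here: the whole statement is bookkeeping with restrictions of effective Cartier divisors along the edges of the two squares. The only point that requires attention is not to overlook the term $mW$ in $N_1(01)^\infty$; it is exactly the disjointness $W\subset\ol{N}_0(01)\setminus\ol{S}(01)$ recorded in Lemma \ref{l-w} that makes this term harmless, and without it the second equality would fail.
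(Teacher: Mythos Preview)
Your proof is correct and follows essentially the same route as the paper: both equalities are deduced from the minimality of the composites $S(00)\to S(10)\to N_0(10)$ and $S(00)\to S(01)\to N_1(01)$, with Lemma~\ref{l-w} used to kill the $mW$ term upon restriction to $\ol{S}(01)$. You are slightly more explicit than the paper in spelling out the formal deduction of the ``in particular'' clause (which the paper leaves implicit), but the substance is the same.
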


\begin{proof}
The equality $v_l^\ast N_0(10)^\infty \cap \ol{S}(00) = S(00)^\infty$ follows from the minimality of the morphism $S(00) \to S(10) \to N_0(10)$.
By Lemma \ref{l-w}, the morphism $S(01) \to N_1(01)$ remains minimal. Therefore, the equality $h_u^\ast N_1(01)^\infty \cap \ol{S}(00) = S(00)^\infty$ follows from the minimality of the composite $S(00) \to S(01) \to N_1(01)$.
This finishes the proof.
\end{proof}

By Lemma \ref{l-b-1}, the blow-up
\[
\mathrm{Bl}_{v_l^\ast N_0(10)^\infty \times_{\ol{N}_1(00)} h_u^\ast N_1(01)^\infty} \ol{N}_0(00) \to \ol{N}_0(00)
\]
is an isomorphism over $\ol{S}(01)$.
Note that $\ol{S}(00)$ is normal by Remark \ref{r2.1}.
Therefore, the normalized blow-up of $\ol{N}_0(00)$ along the closed subscheme $v_l^\ast N_0(10)^\infty \times_{\ol{N}_1(00)} h_u^\ast N_1(01)^\infty$ is an isomorphism over $\ol{S}(00)$.
So, replacing $\ol{N}_0(00)$ by the blow-up and by pulling back the moduluses, we may assume that 
\begin{multline}\label{asumption6.6}
v_l^\ast N_0(10)^\infty \times_{\ol{N}_1(00)} h_u^\ast N_1(01)^\infty \text{ is an effective Cartier divisor}\\
\text{on } \ol{N}_0(00).
\end{multline}

Define modulus pairs $N_1(11),N_1(10)$ and $N_1(00)$ by
\begin{align*}
N_1(ij) &= N_0(ij) \text{ for $(i,j) \in \{(1,1),(1,0)\}$}, \\
N_1(00) &= (\ol{N}_0(00),\sup \{v_l^\ast N_0(10)^\infty , h_u^\ast N_1(01)^\infty \})
\end{align*}
(the modulus pair $N_1(01)$ is already defined in (\ref{defn-n1-mod}) and (\ref{defn-n1-pair})).
These modulus pairs obviously form a square $\ul{N}_1 \in \mathbf{MCor}^{\mathbf{Sq}}$, and the identity maps on the total spaces induce an admissible morphism 
\[
\ul{N}_1 \to \ul{N}_0.
\]

Indeed, the existence of this map follows from the definition of $\ul{N}_1$ and the minimality of the square $\ul{N}_0$ (see Definition \ref{defn-minimal-sq} for the definition of the minimality of squares).

Moreover, for each $(i,j)$, the minimal morphism $S(ij) \to N_0(ij)$ lifts to an admissible morphism $S(ij) \to N_1(ij)$, which is automatically minimal.
Indeed, this is trivial by definition for $(i,j)=(1,1), (1,0)$.
The minimality for $(i,j)=(0,1)$ follows from Lemma \ref{l-w}.
Finally, the minimality for $(i,j)=(0,0)$ follows from
\begin{align*}
N_1(00)^\infty |_{\ol{S}(00)} &= \sup \{v_l^\ast N_0(10)^\infty , h_u^\ast N_1(01)^\infty \}|_{\ol{S}(00)} \\
&= \sup \{v_l^\ast N_0(10)^\infty |_{\ol{S}(00)} , h_u^\ast N_1(01)^\infty |_{\ol{S}(00)} \} \\
&= S(00)^\infty ,
\end{align*}
where the last equality follows from Lemma \ref{l-b-1}.
Therefore, we conclude that
\[
\ul{N}_1 \in \Comp_1(\ul{S}).
\]

\begin{prop}\label{eq6.9} The square $\ul{N}_1$ has the following properties:
\begin{enumerate}
\item It is universally minimal.
\item We have
\[
v_l^\ast N_1(10)^\infty |_{\ol{N}_1(00) \setminus \tilde{Z}(00)} \leq h_u^\ast N_1(01)^\infty |_{\ol{N}_1(00) \setminus \tilde{Z}(00)} .
\]
(See \eqref{z-tilde} for the definition of $\tilde{Z}(00)$.)
\end{enumerate}
\end{prop}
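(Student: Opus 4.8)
The plan is to deduce both parts of Proposition~\ref{eq6.9} directly from the construction of $\ul{N}_1$ carried out above, from Lemma~\ref{l-m}, and from the elementary properties of suprema of divisors recorded in Lemmas~\ref{sup-lemma-pre} and~\ref{m-i-l}; no new geometric input is needed. I would prove part~(2) first, since it is essentially a pullback of an inequality already established.

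For part~(2), I would start from the identities $N_1(10)=N_0(10)$ and, by \eqref{defn-n1-mod}, $N_1(01)^\infty=N_0(01)^\infty+mW$, where $W$ is an effective Cartier divisor on $\ol{N}_0(01)$ (this was arranged before Lemma~\ref{l-m} was stated), so that $h_u^\ast(mW)=m\tilde{W}$ with $\tilde{W}$ as in \eqref{w-tilde}. The inequality to be proved, restricted to $\ol{N}_1(00)\setminus\tilde{Z}(00)$, is then exactly the pullback of the inequality of Lemma~\ref{l-m} along the blow-up of $\ol{N}_0(00)$ performed in arranging \eqref{asumption6.6}. The two things to check are purely routine: that pulling back along this dominant morphism preserves an inequality of Weil divisors, and that it identifies $\ol{N}_1(00)\setminus\tilde{Z}(00)$ with the preimage of $\ol{N}_0(00)\setminus\tilde{Z}(00)$ — which is immediate because $\tilde{Z}(00)=h_u^{-1}(Z(00))$ is compatible with post-composing $h_u$ by that blow-up, and likewise for $v_l$.

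For part~(1), I would first observe that the morphisms of $\ul{N}_1$ are entire, being induced by identity maps on total spaces, so that Definition~\ref{defn-minimal-sq} applies. By construction $N_1(00)^\infty=\sup\{v_l^\ast N_0(10)^\infty,\ h_u^\ast N_1(01)^\infty\}$ as Weil divisors; by hypothesis \eqref{asumption6.6} (that $v_l^\ast N_0(10)^\infty\times_{\ol{N}_1(00)}h_u^\ast N_1(01)^\infty$ is an effective Cartier divisor), together with the normality of $\ol{N}_1(00)$ and Lemma~\ref{sup-lemma-pre}, this is also the supremum as Cartier divisors. Now let $Y$ be a normal $k$-scheme and $f\colon Y\to\ol{N}_1(00)$ a $k$-morphism for which $f^\ast N_1(00)^\infty$ is well defined. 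Since $|v_l^\ast N_0(10)^\infty|\cup|h_u^\ast N_1(01)^\infty|\subset|N_1(00)^\infty|$, and the support of the fibre product lies in the intersection of these two closed sets, the pullbacks $f^\ast v_l^\ast N_0(10)^\infty$, $f^\ast h_u^\ast N_1(01)^\infty$ and $f^\ast\bigl(v_l^\ast N_0(10)^\infty\times_{\ol{N}_1(00)}h_u^\ast N_1(01)^\infty\bigr)$ are all well-defined effective Cartier divisors on $Y$. Then Lemma~\ref{sup-lemma-pre}(3) (compatibility of $\sup$ with pullback, valid because $Y$ is normal and the last of these is an effective Cartier divisor) yields
\[
f^\ast N_1(00)^\infty=\sup\bigl(f^\ast v_l^\ast N_0(10)^\infty,\ f^\ast h_u^\ast N_1(01)^\infty\bigr),
\]
which, since $N_1(10)=N_0(10)$, is precisely the equality defining universal minimality in Definition~\ref{defn-minimal-sq}.

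I do not expect a genuine obstacle here. The only delicate point is bookkeeping: keeping track of which (blown-up and normalized) total spaces the divisors $N_0(ij)^\infty$, $W$, $\tilde{W}$ and the subscheme $\tilde{Z}(00)$ live on, so that Lemma~\ref{l-m} and hypothesis \eqref{asumption6.6} can be quoted verbatim after the relevant pullbacks, together with checking the well-definedness hypotheses that licence the pullback compatibility of $\sup$. Both reduce to the inclusions of supports noted above.
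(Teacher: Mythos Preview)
Your proposal is correct and follows essentially the same approach as the paper: part~(1) via the definition of $N_1(00)^\infty$ as a $\sup$, hypothesis~\eqref{asumption6.6}, and Lemma~\ref{sup-lemma-pre}; part~(2) via Lemma~\ref{l-m} and the identity $N_1(01)^\infty=N_0(01)^\infty+mW$. The paper treats the blow-up in \eqref{asumption6.6} as a silent replacement (so that $\ol{N}_1(00)=\ol{N}_0(00)$ and Lemma~\ref{l-m} is quoted verbatim), whereas you spell out the pullback explicitly; this is bookkeeping, not a different argument, and your mention of Lemma~\ref{m-i-l} in the opening line is unnecessary since it is only used inside the proof of Lemma~\ref{l-m}.
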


\begin{proof}
(1) is obtained by combining \eqref{asumption6.6}, the definition of $N_1(00)$ and Lemma \ref{sup-lemma-pre}.
(2) is immediate by Lemma \ref{l-m} and \eqref{defn-n1-pair}.
\end{proof}

To prove Theorem \ref{existence-MV}, it suffices to show the following Theorem.

\begin{thm}\label{MV-curve}
Any square $\ul{N}_1 \in \Comp_1(\ul{S})$ having the properties of Proposition \ref{eq6.9} is an MV-square.
\end{thm}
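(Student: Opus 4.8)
By Definition \ref{d2.3} a) there are two things to check for $\ul{N}_1$. Condition (i) is automatic: $\ul{N}_1\in\Comp_1(\ul{S})$ has the same interior square as $\ul{S}$, which is an elementary Nisnevich square. For condition (ii), recall from Remark \ref{mv-sequence} that $\Tot\Z_\tr^\Sq(\ul{N}_1)$ is already exact at $\Z_\tr N_1(00)$ and at $\Z_\tr N_1(11)$ in $\MNST$; since presheaf‑exactness in the middle is inherited by the associated sheaves (sheafification being exact), it suffices to prove that for every modulus pair $M$ the sequence of abelian groups
\[
\ulMCor(M,N_1(00))\xrightarrow{\ ((v_l)_*,(h_u)_*)\ }\ulMCor(M,N_1(10))\oplus\ulMCor(M,N_1(01))\xrightarrow{\ (h_d)_*-(v_r)_*\ }\ulMCor(M,N_1(11))
\]
is exact in the middle, where $v_l,h_u,h_d,v_r$ are the edges of \eqref{name0}; by the elementary Nisnevich hypothesis on $\ul{S}$, $h_d$ is an open immersion on total spaces and $v_r$ is \'etale. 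So fix $(\alpha',\alpha)$ in the kernel, with $(h_d)_*\alpha'=(v_r)_*\alpha=:\beta$; we must produce an admissible $\gamma\in\ulMCor(M,N_1(00))$ with $(v_l)_*\gamma=\alpha'$ and $(h_u)_*\gamma=\alpha$.

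\textbf{Reduction to the ``distinct'' case.}
Write $\alpha=\sum_i m_i\alpha_i$ with $\alpha_i$ the distinct irreducible components. The plan is to first use the \emph{resurgence principle} (Proposition \ref{l-resurgence}) to dispose of the components involved in a coincidence $(v_r)_*\alpha_i=(v_r)_*\alpha_j$ ($i\ne j$): loc.\ cit.\ asserts that such $\alpha_i,\alpha_j$ automatically lie in the image of $\ulMCor(M,N_1(00))$. The underlying mechanism is that over $Z(10)^\circ$ the \'etale map $v_r$ is an \emph{isomorphism} by \eqref{eq-identification} (distinct sheets cannot have equal images there), so any coincidence of images is forced onto the ``good'' locus $S(10)^\circ$; the modulus estimate near $\tilde Z(00)$ is then recovered from Proposition \ref{eq6.9}(2) together with the splitting $p^{-1}(Z(10))=\tilde Z(00)\sqcup\tilde W$ and the choice of $m$ in $N_1(01)^\infty=N_0(01)^\infty+mW$ (Lemma \ref{l-m}). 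Granting Proposition \ref{l-resurgence}, we may subtract off these contributions (and the corresponding contributions to $\alpha'$, which they kill) and reduce to the case where the images $(v_r)_*\alpha_i$ are pairwise distinct.

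\textbf{The distinct case.}
Since $h_d$ is an open immersion, $\beta=(h_d)_*\alpha'$ is supported over $S(10)^\circ$, i.e.\ $|\beta|\subset M^\circ\times S(10)^\circ$. With the $(v_r)_*\alpha_i$ pairwise distinct, each $(v_r)_*\alpha_i$ is, up to a positive multiple, a component of $\beta$, hence contained in $M^\circ\times S(10)^\circ$; therefore $\alpha_i\subset M^\circ\times v_r^{-1}(S(10)^\circ)=M^\circ\times S(00)^\circ$ (recall $S(00)^\circ\hookrightarrow S(01)^\circ$ is the open immersion onto $v_r^{-1}(S(10)^\circ)$ coming from the square structure). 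Thus $\alpha$ is the pushforward along $S(00)^\circ\hookrightarrow S(01)^\circ$ of a unique cycle $\gamma$ on $M^\circ\times S(00)^\circ$; then $(h_u)_*\gamma=\alpha$ tautologically, and $(v_l)_*\gamma=\alpha'$ because $(h_d)_*(v_l)_*\gamma=(v_r)_*(h_u)_*\gamma=(v_r)_*\alpha=\beta=(h_d)_*\alpha'$ with $(h_d)_*$ injective. It remains to see that $\gamma$ is admissible for $N_1(00)$; as admissibility is componentwise, fix a component $\gamma_i$ with normalized closure $g\colon\overline{\gamma_i}^{\,N}\to\ol{N}_1(00)$. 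Since $\gamma_i^\circ\subset M^\circ\times N_1(00)^\circ$ is dense, $g^*N_1(00)^\infty$ is defined, and universal minimality (Proposition \ref{eq6.9}(1)) gives $g^*N_1(00)^\infty=\sup(g^*h_u^*N_1(01)^\infty,\,g^*v_l^*N_1(10)^\infty)$; so admissibility reduces to $M^\infty|_{\overline{\gamma_i}^{\,N}}\ge g^*h_u^*N_1(01)^\infty$ and $M^\infty|_{\overline{\gamma_i}^{\,N}}\ge g^*v_l^*N_1(10)^\infty$. Both follow from the fact that $g$ factors, compatibly with the projections to $\ol M$, through the normalized closure of $(h_u)_*\gamma_i$ (a component of $\alpha$, admissible for $N_1(01)$) and respectively of $(v_l)_*\gamma_i$ (which in the distinct case is, up to a positive multiple, a component of $\alpha'$, admissible for $N_1(10)$), monotonicity of pullback of Cartier divisors then giving the two inequalities. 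This finishes the proof modulo Proposition \ref{l-resurgence}.

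\textbf{Main obstacle.}
Everything outside Proposition \ref{l-resurgence} is essentially the argument of \cite[Proof of Prop.\ 4.3.10]{motmod} transported to the Nisnevich setting. The real difficulty — and the bulk of \S\ref{section-MV} — is the resurgent case itself: when components of $\alpha$ have equal images in $N_1(11)$ one loses both the cycle‑level grip (these images may cancel in $\beta$) and, worse, all information about the $N_1(10)$‑modulus near $\tilde Z(00)$, since in the extreme case $\alpha'=0$ outright; recovering the admissibility of the lift there is exactly what Proposition \ref{l-resurgence} (proved in \S\ref{subsec-resurgence}) accomplishes.
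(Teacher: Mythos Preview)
Your proposal is correct and follows essentially the same approach as the paper's proof in \S\ref{pf-mv-curve}: reduce middle exactness to Proposition \ref{l-resurgence} for the ``coinciding images'' components and, for the remaining components with pairwise distinct images in $N_1(11)$, verify admissibility of the lift to $N_1(00)$ by combining the $N_1(01)$-admissibility of $\alpha_i$ and the $N_1(10)$-admissibility of $|\rho(\alpha_i)|$ via universal minimality (Proposition \ref{eq6.9}(1)). The only cosmetic difference is that the paper keeps the partition $I=I_1\sqcup I_2$ and treats both parts in place, whereas you subtract off the $I_1$-part first; also, the paper phrases the hypothesis as ``$\rho(\alpha)\in\ulMCor(M,N_1(10))$'' rather than carrying the pair $(\alpha',\alpha)$, which slightly streamlines the check that $|\rho(\alpha_i)|$ is an admissible component for $i\in I_2$.
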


The rest of this section will be devoted to the proof of Theorem \ref{MV-curve}.


\subsection{Key Proposition}\label{subsec-resurgence}

Take any modulus pair $M=(\ol{M},M^\infty )$.
The key step is to prove the following proposition. The proof of Theorem \ref{MV-curve} will be finished in \S \ref{pf-mv-curve}.

\begin{prop}[Resurgence principle]\label{l-resurgence}
Let $\alpha_1$ and $\alpha_2$ be two \emph{distinct} elementary finite correspondences in $\ulMCor(M , N_1(01))$.
Denote by $\rho$ the morphsim $1_{M^\o}\times v_r^\o:M^\circ \times N_1(01)^\circ \xrightarrow{} M^\circ \times N_1(11)^\circ$.
Assume that the equality of sets 
\[
\rho (\alpha _1 ) = \rho (\alpha_2 ) =: \beta
\] 
holds.
Note that by Lemma \ref{elementary-composition}, $\beta$ is an elementary finite correspondence in $\Cor (M^\circ , N_1(11)^\circ )$, and coincides with the support of the image of $\alpha_i$ in $\Cor (M^\circ , N_1(11)^\circ )$ for $i=1,2$.

Then, we have
\begin{align*}
\beta &\in \Cor (M^\circ , N_1(10)^\circ ) \subset \Cor (M^\circ , N_1(11)^\circ ), \\
\alpha_1 , \alpha_2 &\in \Cor (M^\circ , N_1(00)^\circ ) \subset \Cor (M^\circ , N_1(01)^\circ ),
\end{align*}
and moroever
\begin{align*}
\beta &\in \ulMCor(M , N_1(10)), \\
\alpha_1 ,\alpha_2 &\in \ulMCor (M , N_1(00)).
\end{align*}
\end{prop}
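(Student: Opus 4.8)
The plan is to work on the normalization $T$ of the closure $\ol{\beta}$ of $\beta$ in $\ol{M}\times \ol{N}_1(11)$, and exploit that the two correspondences $\alpha_1,\alpha_2$ both sit above $\beta$ via the finite (indeed, birational over $\beta$) morphism induced by $v_r$. First I would recall that, since $\rho(\alpha_1)=\rho(\alpha_2)=\beta$ as \emph{sets}, each $\ol{\alpha_i}\subset \ol{M}\times\ol{N}_1(01)$ maps birationally onto $\ol{\beta}$; let $T_i$ be the normalization of $\ol{\alpha_i}$. There are finite birational morphisms $T_i\to T$ (where $T=\ol{\beta}^N$), and by the universal property of normalization these are in fact isomorphisms $T_i\iso T$ once we know $\ol{\alpha_i}$ and $\ol\beta$ have the same underlying reduced scheme — which they do, since $\beta$ is reduced and the map is bijective on points. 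So we get a single normal curve-like scheme $T$ carrying: (a) the pulled-back divisor $v_l^\ast N_1(10)^\infty$ and $h_u^\ast N_1(01)^\infty$ data through $\alpha_i$, and (b) the divisor $\beta^\ast N_1(11)^\infty$. The admissibility hypotheses on $\alpha_1,\alpha_2\in\ulMCor(M,N_1(01))$ say precisely $M^\infty|_T \ge (\alpha_i)^\ast N_1(01)^\infty$ for $i=1,2$.

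The key geometric input is Proposition \ref{eq6.9}(2): away from $\tilde Z(00)$ we have $v_l^\ast N_1(10)^\infty \le h_u^\ast N_1(01)^\infty$. Translating to $T$: on the locus of $T$ not lying over $\tilde Z(00)$ (equivalently, the locus where $\beta$ lands in $N_1(10)^\circ$ rather than only in $N_1(11)^\circ$), the divisor coming from $N_1(10)$ is dominated by the divisor coming from $N_1(01)$, hence by $M^\infty|_T$. The remaining task is to control what happens over $\tilde Z(00)$. Here the crucial observation is that $\tilde Z(00)=h_u^{-1}(Z(00))$ and $Z(00)$ is exactly the ``extra'' locus where $W$ — the Cartier divisor with multiplicity $m$ added into $N_1(01)^\infty$ in \eqref{defn-n1-mod} — does \emph{not} meet, by Lemma \ref{l-w}. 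Actually the point is the reverse: the component $W$ of $N_1(01)^\infty$ lies over $Z(10)$, so over $\tilde Z(00)$ the correspondence $\beta$ lies in $N_1(10)^\circ$ automatically is false; rather one uses that the two \emph{distinct} lifts $\alpha_1\ne\alpha_2$ of the same $\beta$ force the image point of $T$ in $\ol{N}_1(01)$ to lie in the ramification/branch locus of $v_r$ over $\beta$, and this branch locus is contained in (a neighborhood of) $W$. Thus over the ``bad'' locus $\tilde Z(00)$, the extra multiplicity $mW$ in $N_1(01)^\infty$ gets picked up by \emph{both} $\alpha_1^\ast$ and $\alpha_2^\ast$; comparing $\alpha_1$ and $\alpha_2$ (which agree on $\beta$) and using that each individually is admissible for $N_1(01)$, one deduces $M^\infty|_T$ dominates $v_l^\ast N_1(10)^\infty$ even over $\tilde Z(00)$. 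Combining the two loci gives $M^\infty|_T \ge v_l^\ast N_1(10)^\infty$, i.e. $\beta\in\ulMCor(M,N_1(10))$; and then, since $N_1(00)^\infty=\sup\{v_l^\ast N_1(10)^\infty, h_u^\ast N_1(01)^\infty\}$, the bound $M^\infty|_T\ge$ both $h_u^\ast N_1(01)^\infty$ (from $\alpha_i$ admissible for $N_1(01)$) and $v_l^\ast N_1(10)^\infty$ (just proved) yields $M^\infty|_T\ge \alpha_i^\ast N_1(00)^\infty$, i.e. $\alpha_1,\alpha_2\in\ulMCor(M,N_1(00))$. The inclusions at the level of plain correspondences $\Cor$ are the set-theoretic shadow of these divisor inequalities: the support of $\beta$ lands in $N_1(10)^\circ$ because its preimage divisor has support disjoint from $N_1(10)^\infty$, etc.

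I expect the main obstacle to be the analysis \emph{over $\tilde Z(00)$}: making precise the claim that two distinct lifts $\alpha_1,\alpha_2$ of the same reduced cycle $\beta$ under the étale-away-from-$Z(10)$ map $v_r$ are forced to pass through the divisor $W$, and then extracting from ``both $\alpha_i$ see the full multiplicity $mW$'' the inequality $M^\infty|_T \ge v_l^\ast N_1(10)^\infty$ there. This requires being careful about normalization (the curves $T_i$ need not literally be curves — $M$ is a general modulus pair — so one works component by component on $\ol{\alpha_i}^N$, using that each component surjects onto a component of $\ol\beta$), and about the precise meaning of ``$\rho(\alpha_1)=\rho(\alpha_2)$'' as cycles versus sets versus scheme-theoretic images. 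A secondary technical point is verifying that $\ol{\alpha_1}^N$ and $\ol{\alpha_2}^N$ are canonically identified with $\ol{\beta}^N$, so that ``$M^\infty|_{\ol{\alpha_i}^N}$'' can be compared for $i=1,2$ on the \emph{same} scheme; this follows from $v_r$ being finite and birational onto each $\ol{\alpha_i}$'s image, together with reducedness of $\beta$.
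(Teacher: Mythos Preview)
Your overall shape---work on $\ol{\beta}^N$, use Proposition \ref{eq6.9}(2) away from $\tilde Z(00)$, and finish via the universal minimality $N_1(00)^\infty=\sup\{v_l^\ast N_1(10)^\infty,h_u^\ast N_1(01)^\infty\}$---matches the paper. But two steps are genuinely wrong, and the second one is where the whole content of the proof lives.

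\textbf{First gap: the identification $T_i\iso T$ is false.} The map $v_r^\o:N_1(01)^\o\to N_1(11)^\o$ is \'etale (coming from the elementary Nisnevich square $\ul{S}$), so $\alpha_i\to\beta$ is finite \'etale between integral schemes, but of possibly degree $>1$; nothing forces it to be birational or bijective on points. The paper never claims $\ol{\alpha}_i^N\iso\ol{\beta}^N$; it only uses the proper surjective morphisms $\ol{\alpha}_i^N\to\ol{\beta}^N$ and $\tilde{\ol{\alpha}}_i^N\to\tilde{\ol{\beta}}^N$, together with the descent lemma \ref{covering-criterion} (effectivity of a Cartier divisor can be tested after a surjective pullback to a normal scheme). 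You should drop the identification and argue on each $\tilde{\ol{\alpha}}_i^N$ separately, then push down to $\tilde{\ol{\beta}}^N$.

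\textbf{Second gap: the mechanism over $\tilde Z(00)$ is wrong.} You propose that the distinct lifts $\alpha_1,\alpha_2$ are ``forced through $W$'' and hence both pick up the multiplicity $mW$. But $W$ and $Z(00)$ are the two \emph{disjoint} pieces of $v_r^{-1}(Z(10))$ by \eqref{eq0005}, and $\tilde Z(00)=h_u^{-1}(Z(00))$ by \eqref{z-tilde}; so over $\tilde Z(00)$ the $\alpha_i$ lie over $Z(00)$, not over $W$, and see none of the extra multiplicity $mW$. The actual argument is quite different and is the ``separation lemma'' (Lemma \ref{separation}): because $\ol{S}(01)\to\ol{S}(11)$ is \'etale and $\alpha_1\ne\alpha_2$, the base change $\ol{\beta}^N\times_{\ol{S}(11)}\ol{S}(01)$ contains $\ol{\alpha}_1^N$ and $\ol{\alpha}_2^N$ as \emph{distinct connected components}; restricting over the closed subscheme $Z(00)\iso Z(10)\subset \ol{S}(11)$ then gives a closed immersion
\[
(\ol{\alpha}_1^N\sqcup\ol{\alpha}_2^N)\times_{\ol{N}_1(01)} Z(00)\hookrightarrow \ol{\beta}^N.
\]
Hence the images $B_1,B_2\subset\tilde{\ol{\beta}}^N$ of $\tilde{\ol{\alpha}}_i^N\times_{\ol{N}_1(00)}\tilde Z(00)$ are \emph{disjoint closed subsets}. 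Now cover $\tilde{\ol{\beta}}^N$ by $\tilde{\ol{\beta}}^N\setminus B_1$ and $\tilde{\ol{\beta}}^N\setminus B_2$: on the first piece, the preimage in $\tilde{\ol{\alpha}}_1^N$ avoids $\tilde Z(00)$, so Proposition \ref{eq6.9}(2) combined with admissibility of $\alpha_1$ gives the desired inequality \eqref{eq-open} there; symmetrically for the second piece using $\alpha_2$. Lemma \ref{covering-criterion} then descends to the inequality on all of $\tilde{\ol{\beta}}^N$. In short: distinctness of $\alpha_1,\alpha_2$ is used not to produce ramification into $W$, but to ensure that the bad loci $B_1,B_2$ over $\tilde Z(00)$ are disjoint, so that each $\alpha_i$ can rescue the other on the complement of its own bad set.
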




The proof of Proposition \ref{l-resurgence} will be divided into several steps.

\subsubsection{The separation lemma}

Let $\ol{\beta}$ be the closure of $\beta $ in $\ol{M} \times \ol{N}_1(11)$, and $\ol{\beta}^N$ its normalization.
Similarly, let $\ol{\alpha}_i$ be the closure of $\alpha_i$ in $\ol{M} \times \ol{N}_1(01)$, and $\ol{\alpha}_i^N$ its normalization.
The proper morphism $\ol{M} \times \ol{N}_1(01) \to \ol{M} \times \ol{N}_1(11)$ induces proper surjective morphisms $\ol{\alpha}_i \to \ol{\beta}$, and the universality of normalization induces proper surjective morphisms $\ol{\alpha}_i^N \to \ol{\beta}^N$.

Consider the morphisms
\[
\ol{\alpha}_i \times_{\ol{N}_1(01)} \ol{S}(01) \xrightarrow{\iota_i} \ol{\beta} \times_{\ol{N}_1(11)} \ol{S}(01)\quad (i=1,2).\]

Note that $\iota_i$ is closed immersion, because both morphisms $\ol{\alpha}_i \times_{\ol{N}_1(01)} \ol{S}(01)\to \ol{M}\times \ol{S}(01)$ and $\ol{\beta} \times_{\ol{N}_1(11)} \ol{S}(01)\to \ol{M}\times \ol{S}(01)$ are closed immersions. We have more precisely:

\begin{lemma}\label{cl6.1}
For each $i=1,2$, $\iota_i$ is the closed immersion of an irreducible component. These two irreducible components are distinct.
\end{lemma}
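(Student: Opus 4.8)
The strategy is to analyse the fibre products over $\ol{S}(01)$ using the fact that $\ul{S}$ is an elementary Nisnevich square together with the identification $v_r^{-1}(Z(10)) = Z(00)\sqcup W$ from \eqref{eq0005}. First I would observe that, since $\alpha_i$ is a finite correspondence from $M^\circ$ to $N_1(01)^\circ$, its closure $\ol{\alpha}_i$ is irreducible; restricting to the open locus over $\ol{S}(01)$, the scheme $\ol{\alpha}_i\times_{\ol{N}_1(01)}\ol{S}(01)$ is the closure in $\ol{M}\times\ol{S}(01)$ of $\alpha_i\cap(M^\circ\times S(01)^\circ)$, hence irreducible (or empty, which I would rule out because $\alpha_i$, being admissible for $N_1(01)$ with $\ul S\to \ul N_1$ minimal, meets the interior). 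The same argument applies to $\ol\beta\times_{\ol{N}_1(11)}\ol{S}(01)$: it is irreducible. Now $\iota_i$ realizes $\ol{\alpha}_i\times_{\ol{N}_1(01)}\ol{S}(01)$ as a closed subscheme of $\ol\beta\times_{\ol{N}_1(11)}\ol{S}(01)$; to see it is a whole irreducible component rather than a proper closed subset, I would compare dimensions (both have dimension $\dim M^\circ$, since both project finitely and dominantly onto an open of $\ol M$ via the correspondence structure) — a closed irreducible subscheme of the same dimension as an irreducible scheme is that scheme, but here $\ol\beta\times_{\ol N_1(11)}\ol S(01)$ need not be irreducible, so $\iota_i$ is the inclusion of one of its top-dimensional irreducible components.

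The decomposition of $\ol\beta\times_{\ol{N}_1(11)}\ol S(01)$ into components is governed by the fibre product decomposition. Over the interior $S(01)^\circ$, the map $v_r^\circ\colon N_1(01)^\circ\to N_1(11)^\circ$ is étale (indeed an isomorphism over the relevant locus since $\ul S$ is elementary Nisnevich), so $\beta\times_{N_1(11)^\circ}S(01)^\circ$ pulls apart into the disjoint pieces $\alpha_1\cap(M^\circ\times S(01)^\circ)$ and $\alpha_2\cap(M^\circ\times S(01)^\circ)$ — and these are genuinely disjoint precisely because $\alpha_1\neq\alpha_2$ are distinct irreducible correspondences mapping to the same $\beta$, so over a point of $\beta^\circ$ the preimage in $\alpha_1$ and in $\alpha_2$ lie over different points of $S(01)^\circ$ lying over the same point of $S(11)^\circ$. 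Taking closures, the closed subschemes $\iota_1$ and $\iota_2$ of $\ol\beta\times_{\ol N_1(11)}\ol S(01)$ are the closures of these two disjoint dense-in-their-component loci, hence distinct irreducible components. Here I would invoke Lemma~\ref{no-extra-fiber} (in the form already used repeatedly) to guarantee that passing to the closure over $\ol S(01)$ does not introduce spurious components sitting over the boundary, so that $\iota_i$ really is a component and the two are distinct.

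The main obstacle I anticipate is the bookkeeping needed to show $\iota_i$ hits an entire irreducible component — that is, controlling the fibres of $\ol{\alpha}_i\to\ol\beta$ over the boundary $\ol S(01)\setminus S(01)^\circ$ and ruling out the possibility that $\ol{\alpha}_i\times_{\ol N_1(01)}\ol S(01)$ is a proper closed subset of some larger component of $\ol\beta\times_{\ol N_1(11)}\ol S(01)$. This is exactly the kind of "no extra fibre" control that Lemma~\ref{no-extra-fiber} supplies, applied to the finite (hence proper and quasi-finite) projections to $\ol M$ restricted over the scheme-theoretically dense open $M^\circ$; once that is in place, irreducibility plus the dimension count finishes the first assertion, and the étale-splitting argument over $S(01)^\circ$ together with $\alpha_1\neq\alpha_2$ finishes the distinctness.
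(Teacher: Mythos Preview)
Your core argument---irreducibility of $\ol{\alpha}_i\times_{\ol{N}_1(01)}\ol{S}(01)$ plus a dimension count---is exactly what the paper does, so the approach is right. A few remarks on the execution:

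First, you contradict yourself: you write that ``the same argument applies to $\ol\beta\times_{\ol{N}_1(11)}\ol{S}(01)$: it is irreducible,'' and a few lines later that it ``need not be irreducible.'' The second statement is the correct one; the map $\ol{S}(01)\to\ol{N}_1(11)$ is \'etale (not an open immersion), so the fibre product can split into several components. By contrast $\ol{S}(01)\to\ol{N}_1(01)$ \emph{is} an open immersion, so $\ol{\alpha}_i\times_{\ol{N}_1(01)}\ol{S}(01)$ is just an open subset of the irreducible scheme $\ol{\alpha}_i$, hence irreducible once nonempty. Nonemptiness is immediate since $\alpha_i\subset M^\o\times S(01)^\o$; you don't need to appeal to admissibility or minimality.

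Second, the paper's dimension argument is slightly cleaner than yours: rather than comparing both sides to $\dim M^\o$, it uses that \'etaleness of $\ol{S}(01)\to\ol{N}_1(11)$ forces $\ol\beta\times_{\ol{N}_1(11)}\ol{S}(01)$ to be equidimensional of dimension $\dim\beta=\dim\alpha_i$.

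Third, your distinctness argument and your invocation of Lemma~\ref{no-extra-fiber} are both unnecessary. The paper simply observes that $\alpha_i$ is dense in $\ol{\alpha}_i\times_{\ol{N}_1(01)}\ol{S}(01)$, so these two closures contain the \emph{distinct} irreducible sets $\alpha_1,\alpha_2$ as dense subsets and are therefore distinct. No \'etale-splitting analysis or boundary control is required.
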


\begin{proof} We note that $\ol{\alpha}_i \times_{\ol{N}_1(01)} \ol{S}(01)$ contains $\alpha_i$, which is contained in $M^\o\times N_1(01)^\o=M^\o\times S(01)^\o$, and $\alpha_i$ is obviously dense in $\ol{\alpha}_i \times_{\ol{N}_1(01)} \ol{S}(01)$ which is therefore irreducible. This also shows that $\ol{\alpha}_i \times_{\ol{N}_1(01)} \ol{S}(01)$ ($i=1,2$) are distinct. To see that they are irreducible components,  we use the following equalities:
\[
\dim \ol{\alpha}_i \times_{\ol{N}_1(01)} \ol{S}(01) =^1 \dim \alpha_i =^2 \dim \beta =^3 \ol{\beta} \times_{\ol{N}_1(11)} \ol{S}(01),
\]
where $=^1$ is obvious, and the equalities $=^2$ and $=^3$ follow from the \'etaleness of the morphism $\ol{S}(01) \to \ol{N}_1(11)$.
\end{proof}

\begin{lemma}[Separation Lemma]\label{separation}
The composition
\[
(\ol{\alpha}_1^N\sqcup \ol{\alpha}_2^N) \times_{\ol{N}_0(01)} Z(00) \to \ol{\alpha}_1^N\sqcup \ol{\alpha}_2^N \to \ol{\beta}^N
\]
is a closed immersion.
\end{lemma}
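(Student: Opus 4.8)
\textbf{Plan for the proof of the Separation Lemma (Lemma \ref{separation}).}

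The idea is to reduce the statement over the normalizations $\ol{\alpha}_i^N$ to the corresponding statement over the closures $\ol{\alpha}_i$ and $\ol{\beta}$, which in turn is a consequence of the splitting $v_r^{-1}(Z(10)) = Z(00)\sqcup W$ (equation \eqref{eq0005}) together with the combinatorics of Lemma \ref{cl6.1}. First I would unwind the definitions: using \eqref{eq-identification} and the fact that $\ol{S}(01)\to \ol{N}_0(11)$ factors through $Z(10)\iso Z(00)$, identify $(\ol{\alpha}_i^N\sqcup\ol{\alpha}_2^N)\times_{\ol{N}_0(01)} Z(00)$ with the preimage of $\ol{\beta}\times_{\ol{N}_1(11)}Z(10)$ (equivalently of $\ol{\beta}^N\times_{\ol{N}_1(11)}Z(10)$) under the proper surjection $\ol{\alpha}_i^N\to\ol{\beta}^N$, restricted to the $Z(00)$-locus. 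Since $\ol{\alpha}_i^N\to\ol{\beta}^N$ is a proper surjection of normal varieties which is birational (both $\ol{\alpha}_i$ and $\ol{\beta}$ are the closures of $\alpha_i$ and $\beta$, and $\rho(\alpha_i)=\beta$ with $\rho$ generically an isomorphism on account of the \'etaleness of $v_r^{\circ}$ over the image, by Lemma \ref{elementary-composition}), the two maps $\ol{\alpha}_1^N\to\ol{\beta}^N$ and $\ol{\alpha}_2^N\to\ol{\beta}^N$ are generically isomorphisms.

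The key point is then: over the open locus $Z(10)$ (which is contained in $\ol{S}(11)$, hence over which $v_r$ is \'etale because $\ol{S}(01)\to\ol{S}(11)$ is \'etale), the morphism $v_r^{-1}(Z(10))\to Z(10)$ decomposes as $Z(00)\sqcup W\to Z(10)$ with $Z(00)\iso Z(10)$. Hence, pulling back along $\ol{\alpha}_i^N\to\ol{\beta}^N$ — which, over $Z(10)$, sees only the $Z(00)$-component since $\alpha_i\subset M^\circ\times S(01)^\circ$ meets $v_r^{-1}(Z(10))$ only in $Z(00)$ by Lemma \ref{l-w} — one gets that $(\ol{\alpha}_i^N)\times_{\ol{N}_0(01)}Z(00)\to \ol{\beta}^N\times_{\ol{N}_1(11)}Z(10)$ is an isomorphism for each $i$, and the two images inside $\ol{\beta}^N$ coincide (both equal $\ol{\beta}^N\times_{\ol{N}_1(11)}Z(10)$, a locally closed subscheme of $\ol{\beta}^N$). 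The composition in question therefore factors as the two copies of this subscheme mapping isomorphically onto it; so the map $(\ol{\alpha}_1^N\sqcup\ol{\alpha}_2^N)\times_{\ol{N}_0(01)}Z(00)\to\ol{\beta}^N$ is the ``fold'' map of two copies of a closed (or at least locally closed, then one argues it is in fact closed using that $Z(00)$ is closed in the relevant fiber product by Proposition \ref{nice-over-z}-type considerations) subscheme onto itself, which is manifestly not a closed immersion unless — and here is where Lemma \ref{cl6.1} enters — we are careful about \emph{where} the fold happens.

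Here is the main obstacle, and the reason the lemma is not completely formal: a priori the fold map $Y\sqcup Y\to Y$ is \emph{not} a closed immersion. So I expect the actual content of the lemma to be that the \emph{disjointness} established in Lemma \ref{cl6.1} — namely that $\ol{\alpha}_1\times_{\ol{N}_1(01)}\ol{S}(01)$ and $\ol{\alpha}_2\times_{\ol{N}_1(01)}\ol{S}(01)$ are \emph{distinct irreducible components} of $\ol{\beta}\times_{\ol{N}_1(11)}\ol{S}(01)$ — lifts to the $Z(00)$-locus and the normalizations, so that the two copies land in \emph{disjoint} closed subsets of $\ol{\beta}^N$. Concretely: $\ol{\beta}^N\times_{\ol{N}_1(11)}\ol{S}(01)$ splits, via the \'etale pullback, as a disjoint union indexed by the sheets, and $\ol{\alpha}_i^N\times_{\ol{N}_1(01)}\ol{S}(01)$ picks out one sheet each, these being distinct by Lemma \ref{cl6.1}; restricting further to $Z(00)$ (a closed subscheme of each relevant fiber product, by the earlier propositions) keeps them in disjoint sheets of $\ol{\beta}^N\times_{\ol{N}_1(11)}Z(00)$, which maps to $\ol{\beta}^N$ (via the closed immersion $Z(00)\hookrightarrow$ its closure, then to $\ol{\beta}^N$). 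Thus the composition is a disjoint union of two closed immersions with disjoint images, hence a closed immersion. The step I would budget the most care for is verifying that ``$Z(00)$ is closed in $(\ol{\alpha}_i^N\sqcup\ol{\alpha}_2^N)\times_{\ol{N}_0(01)}(\text{ambient})$'' and that its image in $\ol{\beta}^N$ is genuinely closed — this requires invoking the properness statements from \S\ref{section-partial} (e.g. Proposition \ref{nice-over-z}(2), properness of $\ol{Z(00)}$ over $\ol{S}'(11)$) in the present context, i.e. showing $Z(00)$ is proper over $\ol{N}_1(11)$ so that its image under the proper map $\ol{\alpha}_i^N\to\ol{\beta}^N$ is closed.
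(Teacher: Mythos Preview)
Your final paragraph lands on the correct argument --- the same one the paper gives --- but the route there is cluttered with a wrong turn and a misplaced reference, and you are missing one clean technical ingredient.

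First, the wrong turn: your middle paragraph asserts that the two images ``coincide (both equal $\ol{\beta}^N\times_{\ol{N}_1(11)}Z(10)$)'' and that the map is a fold $Y\sqcup Y\to Y$. This is precisely what is \emph{false}, and you then spend a paragraph backtracking. Cut it entirely; the images are disjoint from the start, by Lemma~\ref{cl6.1}, and that is the whole content of the lemma.

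Second, the misplaced reference: you invoke Proposition~\ref{nice-over-z} from \S\ref{section-partial} to argue closedness. That proposition lives in a different construction (partial compactifications, where $\ol{S}(11)$ is not proper). Here in \S\ref{section-MV} the standing hypothesis is that $\ol{S}(11)$ is \emph{proper}, so $\ol{N}_1(11)=\ol{S}(11)$ and $Z(10)\cong Z(00)$ is a closed subscheme of $\ol{N}_1(11)$. Hence $\ol{\beta}^N\times_{\ol{N}_1(11)}Z(00)\to\ol{\beta}^N$ is a closed immersion with no further work.

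Third, the missing ingredient: you never explain why $\ol{\alpha}_i^N\times_{\ol{N}_1(01)}\ol{S}(01)$ sits inside $\ol{\beta}^N\times_{\ol{N}_1(11)}\ol{S}(01)$ as a connected component. The paper's proof does this in one line via Lemma~\ref{lB.1} (normalization commutes with \'etale base change): since $\ol{S}(01)\to\ol{N}_1(11)$ is \'etale, $(\ol{\beta}\times_{\ol{N}_1(11)}\ol{S}(01))^N=\ol{\beta}^N\times_{\ol{N}_1(11)}\ol{S}(01)$, and then Lemma~\ref{cl6.1} (distinct irreducible components) gives distinct connected components after normalization. This immediately makes $(\ol{\alpha}_1^N\sqcup\ol{\alpha}_2^N)\times_{\ol{N}_1(01)}\ol{S}(01)\to\ol{\beta}^N\times_{\ol{N}_1(11)}\ol{S}(01)$ a closed immersion; base-change along the closed immersion $Z(00)\hookrightarrow\ol{S}(01)$ and compose with $\ol{\beta}^N\times_{\ol{N}_1(11)}Z(00)\hookrightarrow\ol{\beta}^N$ to finish.
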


\begin{proof}
Consider the following commutative diagram (recall that $S(11)=N_1(11)$):
\[\begin{xymatrix}{
(\ol{\alpha}_i \times_{\ol{N}_1(01)} \ol{S}(01))^N \ar[r] \ar[d]_{\tilde{\iota}_i} & \ol{\alpha}_i \times_{\ol{N}_1(01)} \ol{S}(01) \ar[d]_{\iota_i} & \\
(\ol{\beta} \times_{\ol{N}_1(11)} \ol{S}(01))^N \ar[r] \ar[d]_{\et}  \ar@{}[rd]|\square & \ol{\beta} \times_{\ol{N}_1(11)} \ol{S}(01) \ar[r] \ar[d]_{\et} \ar@{}[rd]|\square & \ol{S}(01) \ar[d]_{\et} \\ 
\ol{\beta}^N \ar[r] & \ol{\beta} \ar[r] & \ol{N}_1(11)
}\end{xymatrix}\]
where the south-west square is cartesian by the fact that normalization is compatible with \'etale  base change (see Lemma \ref{lB.1}). 
For the existence of the north-west square, see \cite[Cor. 6.3.8]{EGA2} and Lemma \ref{cl6.1}. 

Lemma \ref{cl6.1}  implies that $(\ol{\alpha}_1 \times_{\ol{N}_1(01)} \ol{S}(01))^N$ and $(\ol{\alpha}_2 \times_{\ol{N}_1(01)} \ol{S}(01))^N$ are distinct connected components of $(\ol{\beta} \times_{\ol{S}(11)} \ol{S}(01))^N$. 
In particular, the map
\begin{align*}
(\ol{\alpha}_1^N\sqcup \ol{\alpha}_2^N) \times_{\ol{N}_1(01)} \ol{S}(01) &= (\ol{\alpha}_1 \times_{\ol{N}_1(01)} \ol{S}(01))^N\sqcup (\ol{\alpha}_2 \times_{\ol{N}_1(01)} \ol{S}(01))^N\\
&\xrightarrow{\sqcup \tilde{\iota}_i} (\ol{\beta} \times_{\ol{S}(11)} \ol{S}(01))^N \\
&= \ol{\beta}^N \times_{\ol{S}(11)} \ol{S}(01)
\end{align*}
is a closed immersion. Taking the base change by the closed immersion $Z(00) \to \ol{S}(01)$, we conclude that the morphism 
\[
(\ol{\alpha}_1^N\sqcup \ol{\alpha}_2^N) \times_{\ol{N}_1(01)} Z(00) \to \ol{\beta}^N \times_{\ol{S}(11)} Z(00)
\]
is a closed immersion. 
Since $Z(00) \iso Z(10) \subset \ol{S}(11)$ is a closed immersion, the morphism
\[
\ol{\beta}^N \times_{\ol{S}(11)} Z(00) \to \ol{\beta}^N
\]
is a closed immersion. Therefore, the composite map
\[
(\ol{\alpha}_1^N\sqcup \ol{\alpha}_2^N) \times_{\ol{N}_1(01)} Z(00) \to \ol{\beta}^N \times_{\ol{S}(11)} Z(00) \to \ol{\beta}^N
\]
is a closed immersion. This finishes the proof of Lemma \ref{separation}.
\end{proof}

\subsubsection{Lifting of the closures}

\begin{lemma}
The generic point $\eta$ of $\beta$ lies in $M^\circ \times N_1(10)^\circ$.
Moreover, for each $i=1,2$, the generic point $\xi_i$ of $\alpha_i$ lies in $M^\circ \times N_1(00)^\circ$.
\end{lemma}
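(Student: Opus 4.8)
The plan is to exploit the \'etaleness of $v_r^\circ$ together with the defining property of an upper distinguished square, tracking only generic points. First I would make the following reductions. Since $\ul N_1\in\Comp_1(\ul S)$, the structure morphisms induce the identity on interiors, so $N_1(ij)^\circ=S(ij)^\circ$ for all $(i,j)$; in particular $v_r^\circ\colon N_1(01)^\circ\to N_1(11)^\circ$ is the right vertical map of the upper distinguished square $\omega^\Sq(\ul S)$, hence \'etale, and $N_1(00)^\circ=S(00)^\circ=(v_r^\circ)^{-1}(S(10)^\circ)$ by cartesianness of the square. Setting $T:=(S(11)^\circ\setminus S(10)^\circ)_\red$, the upper distinguished condition says that $v_r^\circ$ becomes an isomorphism over $T$; consequently its base change $\rho=1_{M^\circ}\times v_r^\circ$ restricts to a universal homeomorphism over $M^\circ\times T$, so each fibre of $\rho$ over a point of $M^\circ\times T$ is a single point.

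Next I would observe that the generic points $\xi_1,\xi_2$ of $\alpha_1,\alpha_2$ are \emph{distinct}: otherwise $\alpha_1=\ol{\{\xi_1\}}=\ol{\{\xi_2\}}=\alpha_2$, contradicting the hypothesis. On the other hand $\rho(\xi_1)=\rho(\xi_2)=\eta$: the image $\rho(\alpha_i)$ of the irreducible set $\alpha_i$ under the continuous map $\rho$ is irreducible with generic point $\rho(\xi_i)$, and since $\rho(\alpha_i)=\beta$ is closed and integral (Lemma \ref{elementary-composition}), this generic point is $\eta$. Hence the fibre $\rho^{-1}(\eta)$ contains the two distinct points $\xi_1,\xi_2$, so by the first paragraph $\eta$ cannot lie in $M^\circ\times T$. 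As $\eta\in M^\circ\times N_1(11)^\circ$ and $N_1(11)^\circ=S(11)^\circ=S(10)^\circ\sqcup T$ as sets, this forces $\eta\in M^\circ\times S(10)^\circ=M^\circ\times N_1(10)^\circ$, which is the first assertion.

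For the second assertion, having $\rho(\xi_i)=\eta\in M^\circ\times S(10)^\circ$, I would simply pull back: $\xi_i\in\rho^{-1}(M^\circ\times S(10)^\circ)=M^\circ\times(v_r^\circ)^{-1}(S(10)^\circ)=M^\circ\times S(00)^\circ=M^\circ\times N_1(00)^\circ$. The only step requiring care is the use of the upper distinguished condition, which provides an isomorphism over $T$ only after passage to reduced schemes; I will therefore phrase the fibre-counting argument purely topologically (equivalently, in terms of residue fields), which is enough since only generic points are involved. Everything else is formal manipulation with the cartesian square and with base change of \'etale (resp. universally homeomorphic) morphisms.
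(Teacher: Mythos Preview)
Your argument is correct and notably cleaner than the paper's. Both proofs hinge on the same contradiction---two distinct points $\xi_1,\xi_2$ mapping to the single point $\eta$---but they establish the required injectivity differently. You invoke directly the defining property of an upper distinguished square: $v_r^\circ$ is an isomorphism over $T=(S(11)^\circ\setminus S(10)^\circ)_\red$, so $\rho$ has one-point fibres over $M^\circ\times T$, and the fibre $\rho^{-1}(\eta)$ containing $\xi_1\ne\xi_2$ forces $\eta\notin M^\circ\times T$. The paper instead passes to closures and normalizations and appeals to the Separation Lemma (Lemma~\ref{separation}), which asserts that $(\ol\alpha_1^N\sqcup\ol\alpha_2^N)\times_{\ol N_0(01)}Z(00)\to\ol\beta^N$ is a closed immersion; injectivity of this map then yields the same contradiction. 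Your route avoids normalizations entirely and is self-contained for this lemma; the paper's detour through the Separation Lemma is heavier here, but that lemma is genuinely needed later (in Lemma~\ref{separation-lift}, for the proof of Proposition~\ref{prop-eq-aim}), so from the authors' standpoint it costs nothing extra to reuse it. The second assertion is handled identically in both proofs, via the cartesian identity $N_1(00)^\circ=(v_r^\circ)^{-1}(N_1(10)^\circ)$.
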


\begin{proof}
Note that $N_1(ij)^\circ = S(ij)^\circ$ for each $i,j \in \{0,1\}$.
First, we prove that $\eta \in M^\circ \times N_1(10)^\circ$.
For this, it suffices to prove that $\eta \notin M^\circ \times Z(10)$ since $N_1(11)^\circ \setminus N_1(10)^\circ \subset Z(10)$.
Suppose that $\eta$ lies in $M^\circ \times Z(10)$. 
Then, for each $i=1,2$, the generic point $\xi_i$ lies in $\alpha_i^N \times_{\ol{S}(11)} Z(10) \cong \alpha_i^N \times_{\ol{S}(01)} Z(00)$, and its image in $\beta^N$ is $\eta$.
However, this contradicts Lemma \ref{separation}.
Therefore, we conclude that $\eta \in M^\circ \times N_1(10)^\circ$.
Finally, since $N_1(00)^\circ \cong N_1(10)^\circ \times_{N_1(11)^\circ } N_1(01)^\circ$, and since $\xi_i$ is over $\eta$ for each $i=1,2$, we have $\xi_i \in M^\circ \times N_1(00)^\circ$.
This finishes the proof.
\end{proof}

Define closed subsets 
\begin{align*}
\tilde{\ol{\beta}} &:= \text{ the closure of $\{\eta\}$ in $\ol{M} \times \ol{N}_1(10)$}, \\
\tilde{\ol{\alpha}_i} &:= \text{ the closure of $\{\xi_i\}$ in $\ol{M} \times \ol{N}_1(00)$},
\end{align*}
and regard them as integral closed subschemes with the reduced scheme structures.
Then, the proper morphism $\ol{M} \times \ol{N}_1(10) \to \ol{M} \times \ol{N}_1(11)$ (resp. $\ol{M} \times \ol{N}_1(00) \to \ol{M} \times \ol{N}_1(01)$) induces a proper surjective morphism $\tilde{\ol{\beta}} \to \ol{\beta}$ (resp. proper surjective morphisms $\tilde{\ol{\alpha}}_i \to \ol{\alpha}_i$). 
Then, the universality of normalization induces a commutative diagram
\[\begin{xymatrix}{
\tilde{\ol{\alpha}}_i^N \ar[r] \ar[d] & \ol{\alpha}_i^N \ar[d] \\
\tilde{\ol{\beta}}^N \ar[r] & \ol{\beta}^N
}\end{xymatrix}\]
where all the arrows are proper surjective morphisms.

\subsubsection{Modulus condition on $\tilde{\ol{\beta}}^N$}

Now, we prove the following crucial result.

\begin{prop}\label{prop-eq-aim}
The following inequality holds:
\begin{equation}\label{eq-aim}
\ol{M} \times N_1 (10)^\infty |_{\tilde{\ol{\beta}}^N} \leq M^\infty \times \ol{N}_1(10) |_{\tilde{\ol{\beta}}^N} .
\end{equation}
In particular, $ \tilde{\ol{\beta}}\cap M^\o \times N_1 (10)^\infty=\emptyset$.
\end{prop}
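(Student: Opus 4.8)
\emph{Reformulation and the ``in particular''.} Inequality \eqref{eq-aim} is precisely the statement that $\beta$, regarded as an element of $\Cor(M^\circ, N_1(10)^\circ)$, is admissible as a correspondence $M\to N_1(10)$, i.e. that its closure $\tilde{\ol{\beta}}$ in $\ol{M}\times\ol{N}_1(10)$ carries a modulus dominated by $M^\infty$. Granting \eqref{eq-aim}, the last assertion is formal: the support of the left-hand divisor of \eqref{eq-aim} is contained in that of the right-hand one, which maps into $M^\infty\times\ol{N}_1(10)$ and is therefore disjoint from $M^\circ\times\ol{N}_1(10)$; since the normalization $\tilde{\ol{\beta}}^N\to\tilde{\ol{\beta}}$ is surjective, the same disjointness holds downstairs, so $\tilde{\ol{\beta}}\cap M^\circ\times N_1(10)^\infty=\emptyset$. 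Thus the plan is to prove \eqref{eq-aim}.

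\emph{Reduction to $\tilde{\ol{\alpha}}_i^N$.} Fix $i\in\{1,2\}$. The morphism $\tilde{\ol{\alpha}}_i^N\to\tilde{\ol{\beta}}^N$ is proper and surjective, and the two schemes are normal, integral and of the same dimension (this is Lemma \ref{cl6.1} and the dimension computations used there). Hence every prime divisor of $\tilde{\ol{\beta}}^N$ is dominated by a prime divisor of $\tilde{\ol{\alpha}}_i^N$, and along such a dominance the orders of the pullback of a Cartier divisor from $\tilde{\ol{\beta}}^N$ differ only by the (positive) ramification index. Both divisors in \eqref{eq-aim} are such pullbacks: the composite $\tilde{\ol{\alpha}}_i^N\to\tilde{\ol{\beta}}^N\to\ol{N}_1(10)$ equals $\tilde{\ol{\alpha}}_i^N\to\ol{N}_1(00)\xrightarrow{v_l}\ol{N}_1(10)$, and the two maps to $\ol M$ agree. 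Therefore it suffices to prove $v_l^\ast N_1(10)^\infty|_{\tilde{\ol{\alpha}}_i^N}\le M^\infty\times\ol{N}_1(00)|_{\tilde{\ol{\alpha}}_i^N}$ at every prime divisor $u$ of $\tilde{\ol{\alpha}}_i^N$. By hypothesis $\alpha_i$ is admissible as $M\to N_1(01)$, so $N_1(01)^\infty|_{\ol{\alpha}_i^N}\le M^\infty|_{\ol{\alpha}_i^N}$; pulling this back along the proper surjective morphism $\tilde{\ol{\alpha}}_i^N\to\ol{\alpha}_i^N$ induced by $h_u$ gives $h_u^\ast N_1(01)^\infty|_{\tilde{\ol{\alpha}}_i^N}\le M^\infty\times\ol{N}_1(00)|_{\tilde{\ol{\alpha}}_i^N}$. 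So it is enough to show $\operatorname{ord}_u(v_l^\ast N_1(10)^\infty)\le\operatorname{ord}_u(h_u^\ast N_1(01)^\infty)$, and by Proposition \ref{eq6.9}(2) this holds whenever the image of $u$ in $\ol{N}_1(00)$ does not lie in $\tilde{Z}(00)$.

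\emph{The prime divisors over $\tilde{Z}(00)$: the crux.} The whole difficulty is thus concentrated on those $u$ dominating a component of $\tilde{Z}(00)$, and this is where the standing hypotheses $\alpha_1\neq\alpha_2$ and $\rho(\alpha_1)=\rho(\alpha_2)$ are used; I expect this to be the main obstacle. Over $\tilde{Z}(00)=h_u^{-1}(Z(00))$ the extra term of $N_1(01)^\infty=N_0(01)^\infty+mW$ is harmless, since $h_u^{-1}(W)=\tilde{W}$ is disjoint from $\tilde{Z}(00)$ by \eqref{eq0006}, so there $h_u^\ast N_1(01)^\infty=h_u^\ast N_0(01)^\infty$ and Proposition \ref{eq6.9}(2) genuinely fails. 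The idea is that over $Z(00)$ everything is pulled back from $\ol{\beta}^N$: the étale isomorphisms $Z(00)\cong Z(10)$ and $\ol{S}(01)\cong\ol{S}(11)$ over $Z(10)$ of the elementary Nisnevich square identify the relevant localizations of $\tilde{\ol{\alpha}}_i$ (inside $\ol M\times\ol{N}_1(00)$), of $\ol{\alpha}_i$ (inside $\ol M\times\ol{N}_1(01)$) and of $\ol\beta$ (inside $\ol M\times\ol{N}_1(11)$). The Separation Lemma (Lemma \ref{separation}) says precisely that $(\ol{\alpha}_1^N\sqcup\ol{\alpha}_2^N)\times_{\ol{N}_0(01)}Z(00)\to\ol{\beta}^N$ is a closed immersion — i.e. the two cycles do not collide over $Z(00)$ — and together with Lemma \ref{cl6.1} this lets one transport the known modulus inequality for $\alpha_i\colon M\to N_1(01)$ along these identifications to the prime divisors of $\tilde{\ol{\alpha}}_i^N$ over $\tilde{Z}(00)$, yielding $\operatorname{ord}_u(v_l^\ast N_1(10)^\infty)\le\operatorname{ord}_u(M^\infty)$ directly. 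The delicate point is the bookkeeping of base changes and normalizations making this transport precise (matching prime divisors of $\tilde{\ol{\alpha}}_i^N$ over $\tilde{Z}(00)$ with those of $\ol{\alpha}_i^N$ over $Z(00)$ and tracking how orders transform); everything upstream of it is routine.

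\emph{Deducing the rest of Proposition \ref{l-resurgence}.} Once \eqref{eq-aim} is established it is exactly the admissibility of $\beta\colon M\to N_1(10)$, and since $\ol{M}\times\ol{N}_1(10)\to\ol M$ is the map making $\beta$ left proper, this gives $\beta\in\ulMCor(M,N_1(10))$. Then, using $N_1(00)^\infty=\sup\{v_l^\ast N_1(10)^\infty,h_u^\ast N_1(01)^\infty\}$ and pulling this identity back to the normal scheme $\tilde{\ol{\alpha}}_i^N$, the two admissibility inequalities (the one for $\alpha_i\colon M\to N_1(01)$ and the pullback of \eqref{eq-aim}) combine via Lemma \ref{sup-lemma-pre} to give $\alpha_i\in\ulMCor(M,N_1(00))$ for $i=1,2$, together with the asserted inclusions on the level of $\Cor$.
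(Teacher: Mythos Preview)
Your overall architecture is right and you correctly locate the difficulty over $\tilde Z(00)$. But the ``crux'' paragraph contains a genuine gap: having fixed a single $i$, you try to obtain $\operatorname{ord}_u(v_l^\ast N_1(10)^\infty)\le\operatorname{ord}_u(M^\infty)$ at prime divisors $u$ of $\tilde{\ol\alpha}_i^N$ lying over $\tilde Z(00)$ by ``transporting'' the admissibility of $\alpha_i\colon M\to N_1(01)$ through the identification $Z(00)\cong Z(10)$. This transport does not do what you want. The admissibility of $\alpha_i$ only controls $h_u^\ast N_1(01)^\infty$, and over $\tilde Z(00)$ there is no comparison between $h_u^\ast N_1(01)^\infty$ and $v_l^\ast N_1(10)^\infty$ (this is exactly where Proposition~\ref{eq6.9}(2) fails). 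The \'etale isomorphism $Z(00)\cong Z(10)$ lives inside $\ol S(01)$ and $\ol S(11)=\ol N_1(11)$; it says nothing about the divisor $N_1(10)^\infty$, which sits on the compactification $\ol N_1(10)$ and has no reason to be the pullback of anything you already control at $u$. So for a fixed $i$ you cannot handle these prime divisors.

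The paper's argument uses the Separation Lemma in a different way --- namely, to \emph{swap indices}. Lift the Separation Lemma to $\tilde{\ol\beta}^N$: the images $B_1,B_2\subset\tilde{\ol\beta}^N$ of $\tilde{\ol\alpha}_1^N\times_{\ol N_1(00)}\tilde Z(00)$ and $\tilde{\ol\alpha}_2^N\times_{\ol N_1(00)}\tilde Z(00)$ are disjoint (this is Lemma~\ref{separation-lift}). Hence $\{\tilde{\ol\beta}^N\setminus B_1,\;\tilde{\ol\beta}^N\setminus B_2\}$ is an open cover. Letting $A_i\subset\tilde{\ol\alpha}_i^N$ be the preimage of $B_i$, the map $\tilde{\ol\alpha}_i^N\setminus A_i\to\tilde{\ol\beta}^N\setminus B_i$ is surjective between normal integral schemes, so by the covering criterion (Lemma~\ref{covering-criterion}) it suffices to check the pullback of \eqref{eq-aim} on $\tilde{\ol\alpha}_i^N\setminus A_i$. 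But $\tilde{\ol\alpha}_i^N\setminus A_i\subset\tilde{\ol\alpha}_i^{\circ N}$ by construction, and there your ``easy'' inequality \eqref{eq-open} already holds. In short: the bad locus for $\alpha_1$ in $\tilde{\ol\beta}^N$ is contained in the good locus for $\alpha_2$, and vice versa. This is the precise mechanism by which the hypothesis $\alpha_1\neq\alpha_2$ is used; your proposal invokes the Separation Lemma but never exploits the second cycle.
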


\begin{proof}
Note that the last assertion follows readily from \eqref{eq-aim}.

By the assumption that $\alpha_i \in \ulMCor (M,N_1(01))$, we have
\[
\ol{M} \times N_1(01)^\infty |_{\ol{\alpha}^{N}_i} \leq M^\infty \times \ol{N}_1(01) |_{\ol{\alpha}^{N}_i} .
\]
Pulling back by the morphism $\tilde{\ol{\alpha}}_i^N \to \ol{\alpha}^{N}_i$, we obtain
\begin{equation}\label{eq-assumption}
\ol{M} \times h_u^\ast N_1(01)^\infty |_{\tilde{\ol{\alpha}}_i^N} \leq M^\infty \times \ol{N}_1(00) |_{\tilde{\ol{\alpha}}_i^N}
\end{equation}
for each $i=1,2$. 

Set 
\begin{equation}\label{eq6.1}
\tilde{\ol{\alpha}}_i^\circ := \tilde{\ol{\alpha}}_i \setminus (\ol{M} \times \tilde{Z}(00))
\end{equation}
(see (\ref{z-tilde}) for the definition of $\tilde{Z}(00)$).
Then, we have
\begin{equation}\label{eq-open}
\ol{M} \times v_l^\ast N_1(10)^\infty  |_{\tilde{\ol{\alpha}}_i^{\circ N}} 
\leq^1 \ol{M} \times h_u^\ast N_1(01)^\infty  |_{\tilde{\ol{\alpha}}_i^{\circ N}}  
\leq^2 M^\infty \times \ol{N}_1(00) |_{\tilde{\ol{\alpha}}_i^{\circ N}} ,
\end{equation}
where $\tilde{\ol{\alpha}}_i^{\circ N}$ denotes the normalization of $\tilde{\ol{\alpha}}_i^{\circ}$, which is an open subscheme of $\tilde{\ol{\alpha}}_i^N$.
Here, the inequality $\leq^1$ follows from Proposition \ref{eq6.9}, and the inequality $\leq^2$ follows from (\ref{eq-assumption}).

\begin{lemma}\label{separation-lift}
The images of the proper morphisms 
\[
\tilde{\ol{\alpha}}_i^{N} \times_{\ol{N}_1(00)} \tilde{Z}(00) \to \tilde{\ol{\beta}}^N \text{ for $i=1,2$}
\]
do not intersect with each other. Here, recall that $\ol{N}_1(00) = \ol{N}_0(00)$ by definition. For the definition of $\tilde{Z}(00)$, see \eqref{z-tilde}.
\end{lemma}
\begin{proof}
Consider the commutative diagram
\[\begin{xymatrix}{
\tilde{\ol{\alpha}}_i^{N} \times_{\ol{N}_0(00)} \tilde{Z}(00) \ar[r] \ar[d]_{\tilde{\pi}_i} & \ol{\alpha}_i^{N} \times_{\ol{N}_0(01)} Z(00)  \ar[d]^{\pi_i} \\
\tilde{\ol{\beta}}^N \ar[r] & \ol{\beta}^N .
}\end{xymatrix}\]
By Lemma \ref{separation}, the images of the maps $\pi_1$ and $\pi_2$ do not intersect with each other.
Therefore, the commutativity of the diagram shows that the images of the maps $\tilde{\pi}_1$ and $\tilde{\pi}_2$ do not intersect with each other.
This finishes the proof of Lemma \ref{separation-lift}.
\end{proof}

We recall the following lemma from \cite[Lemma 2.2]{Krishna-Park}:
\begin{lemma}\label{covering-criterion}
Let $f : Y \to X$ be a surjective morphism between normal integral $k$-schemes. 
Let $D$ be a Cartier divisor on $X$. Then, $D$ is effective if and only if $f^\ast (D)$ is effective.\qed
\end{lemma}


For each $i=1,2$, define 
\[
B_i := \text{ the image of the map $\tilde{\ol{\alpha}}_i^{N} \times_{\ol{N}_1(00)} \tilde{Z}(00) \to \tilde{\ol{\beta}}^N$}.
\]
Then, by Lemma \ref{separation-lift}, $B_1$ and $B_2$ are disjoint closed subsets of $\tilde{\ol{\beta}}^N$.
Therefore, it suffices to prove the inequality (\ref{eq-aim}) on each open subset $\tilde{\ol{\beta}}^N \setminus B_1$ and $\tilde{\ol{\beta}}^N \setminus B_2$.
For each $i=1,2$, define a closed subset $A_i \subset \tilde{\ol{\alpha}}_i^N$ as the fiber of $B_i$ by the map $\tilde{\ol{\alpha}}_i^N \to \tilde{\ol{\beta}}^N$.
Since the induced morphisms
\[
\tilde{\ol{\alpha}}_i^N \setminus A_i \to \tilde{\ol{\beta}}^N \setminus B_i
\]
are surjective morphisms between normal integral $k$-schemes, Lemma \ref{covering-criterion} reduces the proof of the inequality (\ref{eq-aim}) to proving the following inequalities for $i=1,2$:
\begin{equation}\label{eq-reduced}
\ol{M} \times v_l^\ast N_1(10)^\infty  |_{\tilde{\ol{\alpha}}_i^N \setminus A_i} \leq M^\infty \times \ol{N}_1(00) |_{\tilde{\ol{\alpha}}_i^N \setminus A_i} .
\end{equation}
Since by definition we have 
\[
\tilde{\ol{\alpha}}_i^{N} \times_{\ol{N}_1(00)} \tilde{Z}(00) \subset A_i ,
\]
we obtain
\[
\tilde{\ol{\alpha}}_i^N \setminus A_i \subset \tilde{\ol{\alpha}}_i^{\circ N}
\]
where $\tilde{\ol{\alpha}}_i^\o$ we defined in \eqref{eq6.1}. Therefore, the inequality (\ref{eq-reduced}) immediately follows from (\ref{eq-open}).
This finishes the proof of Proposition \ref{prop-eq-aim}.
\end{proof}
 
\subsubsection{End of Proof of Proposition \ref{l-resurgence}}

We are reduced to proving the following Claim \ref{claim-beta} and Claim \ref{claim-alpha}.

\begin{claim}\label{claim-beta}
We have \[\beta \in \ulMCor (M,N_1(10)).\]
\end{claim}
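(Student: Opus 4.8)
The goal is to show $\beta\in\ulMCor(M,N_1(10))$, i.e.\ that $\beta$, already known to lie in $\Cor(M^\circ,N_1(10)^\circ)$ as an elementary finite correspondence, is \emph{admissible} as a correspondence from $M$ to $N_1(10)$. By definition of admissibility this means verifying the inequality $M^\infty|_{\tilde{\ol\beta}^N}\ge N_1(10)^\infty|_{\tilde{\ol\beta}^N}$ on the normalization $\tilde{\ol\beta}^N$ of the closure $\tilde{\ol\beta}$ of $\beta$ in $\ol M\times\ol N_1(10)$. But this is \emph{exactly} the content of the inequality \eqref{eq-aim} in Proposition \ref{prop-eq-aim}: reading the two sides, $M^\infty\times\ol N_1(10)|_{\tilde{\ol\beta}^N}$ is the pullback of the boundary $M^\infty$ of $M$, and $\ol M\times N_1(10)^\infty|_{\tilde{\ol\beta}^N}$ is the pullback of the boundary of $N_1(10)$. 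So the plan is essentially to quote Proposition \ref{prop-eq-aim} and unwind the notation.

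The only genuine point to address is that admissibility as defined in \eqref{eq1.3} is phrased in terms of the closure $\ol\beta$ of $\beta$ \emph{inside} $\ol M\times\ol N_1(10)$ and its normalization, and one must make sure this closure coincides with the scheme $\tilde{\ol\beta}$ introduced just before Proposition \ref{prop-eq-aim}. This is immediate from the preceding lemma: $\tilde{\ol\beta}$ was \emph{defined} as the closure of the generic point $\eta$ of $\beta$ in $\ol M\times\ol N_1(10)$, with its reduced structure, and $\eta$ was shown there to lie in $M^\circ\times N_1(10)^\circ$, so that $\beta$ is dense in $\tilde{\ol\beta}$ and $\tilde{\ol\beta}=\ol\beta$ as closed subschemes; hence $\tilde{\ol\beta}^N=\ol\beta^N$. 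I would state this identification in one sentence.

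Thus the proof I would write is: since $\beta\in\Cor(M^\circ,N_1(10)^\circ)$ and its closure in $\ol M\times\ol N_1(10)$ is the integral scheme $\tilde{\ol\beta}$ with normalization $\tilde{\ol\beta}^N$, admissibility of $\beta$ as a morphism $M\to N_1(10)$ in $\ulMCor$ amounts, by \eqref{eq1.3}, to the inequality
\[
\ol M\times N_1(10)^\infty\big|_{\tilde{\ol\beta}^N}\ \le\ M^\infty\times\ol N_1(10)\big|_{\tilde{\ol\beta}^N},
\]
which is precisely Proposition \ref{prop-eq-aim}. Finally, $\beta$ is left proper since $\ol N_1(10)=\ol N_0(10)$ is proper (as $\ul S$ is assumed to have $\ol S(11)$ proper, and all $\ol N_0(ij)$ are then proper), so $\beta\in\ulMCor(M,N_1(10))$ as claimed. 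No step here is a real obstacle — the hard work was all done in Proposition \ref{prop-eq-aim}; the claim is the harvest.
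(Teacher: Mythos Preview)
There is a genuine gap. You assert that $\beta$ is ``already known to lie in $\Cor(M^\circ,N_1(10)^\circ)$'', but this has \emph{not} been established prior to the claim: the preceding lemma only shows that the \emph{generic point} $\eta$ of $\beta$ lies in $M^\circ\times N_1(10)^\circ$. Since $N_1(10)^\circ$ is merely open in $N_1(11)^\circ$, the closed subset $\beta\subset M^\circ\times N_1(11)^\circ$ could a priori meet $M^\circ\times(N_1(11)^\circ\setminus N_1(10)^\circ)$. Until you know $\beta\subset M^\circ\times N_1(10)^\circ$, it does not even make sense to say ``$\beta$ is dense in $\tilde{\ol\beta}$'' or to identify $\tilde{\ol\beta}$ with the closure of $\beta$ in $\ol M\times\ol N_1(10)$.

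The paper supplies exactly the missing step, and it is not automatic. One uses the ``in particular'' assertion of Proposition~\ref{prop-eq-aim}, namely $\tilde{\ol\beta}\cap(M^\circ\times N_1(10)^\infty)=\emptyset$, together with two facts: (i) $\beta=\ol\beta\times_{\ol M}M^\circ$ because $\beta$ is proper over $M^\circ$, and (ii) the map $\tilde{\ol\beta}\to\ol\beta$ is surjective. From these one deduces $\beta\subset M^\circ\times N_1(10)^\circ$, hence $\beta\in\Cor(M^\circ,N_1(10)^\circ)$. Only then does your identification $\tilde{\ol\beta}=\text{closure of }\beta$ in $\ol M\times\ol N_1(10)$ become legitimate, and the admissibility follows from \eqref{eq-aim} as you say. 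So the overall strategy is right, but the first half of the argument is missing.
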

\begin{proof}
First, we have to check that $\beta \in \Cor (M^\circ , N_1(10)^\circ )$.
Since $\beta$ is proper over $M^\circ$, we have $\beta = \ol{\beta} \times_{\ol{M}} M^\circ$.
Since $\tilde{\ol{\beta}} \to \ol{\beta}$ is surjective, so is the induced map $\tilde{\ol{\beta}} \times_{\ol{M}} M^\circ \to \ol{\beta} \times_{\ol{M}} M^\circ$.
By Proposition \ref{prop-eq-aim}, we have
\[
\tilde{\ol{\beta}} \times_{\ol{M}} M^\circ = \tilde{\ol{\beta}}\cap (M^\o\times \ol{N}_1(10)) \subset M^\circ \times N_1(10)^\circ ,
\]
which implies that
\[
\beta = \ol{\beta} \times_{\ol{M}} M^\circ \subset M^\circ \times N_1(10)^\circ .
\]
Therefore, we have $\beta \in \Cor (M^\circ , N_1(10)^\circ )$.
Then, the closure of $\beta$ in $\ol{M} \times \ol{N}_1(10)$ is (by definition) equal to $\tilde{\ol{\beta}}$, and the inequality (\ref{eq-aim}) shows that $\beta \in \ulMCor (M,N_1(10))$.
This finishes the proof of Claim \ref{claim-beta}.
\end{proof}

\begin{claim}\label{claim-alpha}
We have
\[\alpha_1 ,\alpha_2 \in \ulMCor (M , N_1(00)).\]
\end{claim}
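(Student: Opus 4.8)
The essential content of Proposition \ref{l-resurgence} --- the modulus estimate (\ref{eq-aim}) on $\tilde{\ol{\beta}}^N$ --- has already been extracted in Claim \ref{claim-beta} via the Separation Lemma, so the plan for Claim \ref{claim-alpha} is a short pullback argument. Recall that $N_1(10)=N_0(10)$, so by construction $N_1(00)^\infty=\sup\{v_l^\ast N_1(10)^\infty,\ h_u^\ast N_1(01)^\infty\}$ on $\ol{N}_1(00)$, and that $\tilde{\ol{\alpha}}_i$ is, by definition, the closure of $\alpha_i$ in $\ol{M}\times\ol{N}_1(00)$ (its generic point $\xi_i$ lying in $M^\circ\times N_1(00)^\circ$ by the lemma preceding Proposition \ref{prop-eq-aim}). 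To prove $\alpha_i\in\ulMCor(M,N_1(00))$ I must check the admissibility inequality
\[
\ol{M}\times N_1(00)^\infty\,|_{\tilde{\ol{\alpha}}_i^N}\ \le\ M^\infty\times\ol{N}_1(00)\,|_{\tilde{\ol{\alpha}}_i^N};
\]
left-properness of $\alpha_i$ for $N_1(00)$ is automatic since $\ol{N}_1(00)=\ol{N}_0(00)$ is proper, and the membership $\alpha_i\in\Cor(M^\circ,N_1(00)^\circ)$ follows formally from Claim \ref{claim-beta}, the fibre-product description $N_1(00)^\circ=N_1(10)^\circ\times_{N_1(11)^\circ}N_1(01)^\circ$, and the location of $\xi_i$ just recalled.

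First I would apply Lemma \ref{sup-lemma-pre} to move the $\sup$ inside the restriction to the normal scheme $\tilde{\ol{\alpha}}_i^N$, giving
\[
\ol{M}\times N_1(00)^\infty\,|_{\tilde{\ol{\alpha}}_i^N}=\sup\{\ol{M}\times v_l^\ast N_1(10)^\infty\,|_{\tilde{\ol{\alpha}}_i^N},\ \ol{M}\times h_u^\ast N_1(01)^\infty\,|_{\tilde{\ol{\alpha}}_i^N}\},
\]
so that it suffices to bound each of the two summands by $M^\infty\times\ol{N}_1(00)\,|_{\tilde{\ol{\alpha}}_i^N}$. The bound for $\ol{M}\times h_u^\ast N_1(01)^\infty\,|_{\tilde{\ol{\alpha}}_i^N}$ is precisely (\ref{eq-assumption}), which holds on all of $\tilde{\ol{\alpha}}_i^N$ (it was obtained by pulling back the admissibility of $\alpha_i\in\ulMCor(M,N_1(01))$). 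For $\ol{M}\times v_l^\ast N_1(10)^\infty\,|_{\tilde{\ol{\alpha}}_i^N}$, the point is that both $\ol{M}\times v_l^\ast N_1(10)^\infty$ and $M^\infty\times\ol{N}_1(00)$ are the pullbacks along $1_{\ol{M}}\times v_l$ of $\ol{M}\times N_1(10)^\infty$ and $M^\infty\times\ol{N}_1(10)$ respectively; since the left vertical map $\tilde{\ol{\alpha}}_i^N\to\tilde{\ol{\beta}}^N$ of the commutative diagram of normalizations constructed before Proposition \ref{prop-eq-aim} is the one induced by $1_{\ol{M}}\times v_l$, restricting to $\tilde{\ol{\alpha}}_i^N$ identifies these divisors with the pullbacks along $\tilde{\ol{\alpha}}_i^N\to\tilde{\ol{\beta}}^N$ of $\ol{M}\times N_1(10)^\infty\,|_{\tilde{\ol{\beta}}^N}$ and $M^\infty\times\ol{N}_1(10)\,|_{\tilde{\ol{\beta}}^N}$. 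By Claim \ref{claim-beta}, i.e. (\ref{eq-aim}), the difference $M^\infty\times\ol{N}_1(10)\,|_{\tilde{\ol{\beta}}^N}-\ol{M}\times N_1(10)^\infty\,|_{\tilde{\ol{\beta}}^N}$ is an effective Cartier divisor, and $\tilde{\ol{\alpha}}_i^N\to\tilde{\ol{\beta}}^N$ is a surjective morphism of normal integral $k$-schemes; hence Lemma \ref{covering-criterion} (or simply: the pullback of an effective Cartier divisor is effective, the generic point of $\tilde{\ol{\alpha}}_i^N$ avoiding the relevant support because $\beta\subset M^\circ\times N_1(10)^\circ$) gives the desired bound.

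Combining the two bounds via the displayed $\sup$-formula yields the admissibility inequality, whence $\alpha_1,\alpha_2\in\ulMCor(M,N_1(00))$, completing Claim \ref{claim-alpha} and with it Proposition \ref{l-resurgence}. I do not expect a serious obstacle here: the genuinely hard part --- extending the estimate across $\tilde{Z}(00)$, where $\alpha_1$ and $\alpha_2$ fail to be distinct and then ``resurge'' --- is exactly what Claim \ref{claim-beta} accomplished. What remains is bookkeeping, and the only points that need care are the identification of the two divisors after pullback along $\tilde{\ol{\alpha}}_i^N\to\tilde{\ol{\beta}}^N$, the commutation of $\sup$ with restriction to the normal scheme $\tilde{\ol{\alpha}}_i^N$ (Lemma \ref{sup-lemma-pre}), and the routine verification that the Cartier divisor pullbacks in play are well defined, which holds because $\alpha_i$ and $\beta$ lie in the respective interiors.
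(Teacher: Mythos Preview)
Your proof is correct and follows essentially the same route as the paper: pull back the inequality (\ref{eq-aim}) along $\tilde{\ol{\alpha}}_i^N\to\tilde{\ol{\beta}}^N$ to bound the $v_l^\ast N_1(10)^\infty$ contribution, use (\ref{eq-assumption}) for the $h_u^\ast N_1(01)^\infty$ contribution, and combine them via the $\sup$-description of $N_1(00)^\infty$. The only cosmetic difference is that the paper phrases the last step as ``$\ul{N}_1$ is universally minimal'' (Proposition \ref{eq6.9} (1)) rather than invoking Lemma \ref{sup-lemma-pre} directly, and the paper deduces $\alpha_i\in\Cor(M^\circ,N_1(00)^\circ)$ from the modulus inequality itself rather than from Claim \ref{claim-beta} and the fibre-product description; both routes are equivalent. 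One small remark: the inequality (\ref{eq-aim}) is Proposition \ref{prop-eq-aim}, not Claim \ref{claim-beta}, so your parenthetical ``Claim \ref{claim-beta}, i.e.\ (\ref{eq-aim})'' slightly conflates the two.
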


\begin{proof}
Since we have canonical morphisms $\tilde{\ol{\alpha}}_i^N \to \tilde{\ol{\beta}}^N$, the inequality (\ref{eq-aim}) implies 
\begin{equation}\label{eq6.17}
\ol{M} \times v_l^\ast N_1(10)^\infty |_{\tilde{\ol{\alpha}}_i^N} \leq M^\infty \times \ol{N}_1(00) |_{\tilde{\ol{\alpha}}_i^N} 
\end{equation}
for each $i=1,2$.
By Proposition \ref{eq6.9} (1), the square $\ul{N}_1$ is universally minimal. Therefore, by combining \eqref{eq-assumption} and \eqref{eq6.17}, we obtain
\begin{equation}\label{eq-alpha}
\ol{M} \times N_1(00)^\infty |_{\tilde{\ol{\alpha}}_i^N} \leq M^\infty \times \ol{N}_1(00) |_{\tilde{\ol{\alpha}}_i^N} .
\end{equation}

In particular, $\tilde{\ol{\alpha}}_i \times_{\ol{M}} M^\circ = \tilde{\ol{\alpha}}_i \cap M^\circ \times \ol{N}_1(00)$ is contained in $M^\circ \times N_1(00)^\circ$.
Since the natural projection $\tilde{\ol{\alpha}}_i \to \ol{\alpha}_i$ is surjective, we have
\[
\alpha_i = \ol{\alpha}_i \times_{\ol{M}} M^\circ \subset M^\circ \times N_1(00)^\circ ,
\]
where the equality follows from the properness of $\alpha_i$ over $M^\circ$.
Therefore, we have $\alpha_i \in \Cor (M^\circ , N_1(00)^\circ )$.
The closure of $\alpha_i$ in $\ol{M} \times \ol{N}_1(00)$ is (by definition) equal to $\tilde{\ol{\alpha}}_i$.
Therefore, the inequality (\ref{eq-alpha}) shows that $\alpha_i \in \ulMCor (M,N(00))$ for each $i=1,2$.
This finishes the proof of Claim \ref{claim-alpha}.
\end{proof}

Thus, we have finished the proof of Proposition \ref{l-resurgence}.

\subsection{End of Proof of Theorem \ref{MV-curve}}\label{pf-mv-curve}

Let $M \in \ulMCor$ be a modulus pair.
By remark \ref{mv-sequence}, it suffices to prove that the sequence
\begin{equation*}
\Z_\tr N_1(00)(M) \to \Z_\tr N_1(10)(M) \oplus \Z_\tr N_1(01)(M) \to \Z_\tr N_1(11)(M)
\end{equation*}
is exact at the middle term.
Let $\alpha$ be an element of $\ulMCor(M ,N_1(01))$, and write 
\[
\alpha = \sum_{i\in I} m_i \alpha_i ,
\]
where $I$ is a finite set, $\alpha_i$ are prime cycles and $m_i$ are non-zero integers.
Denote by $\rho$ the map $\ulMCor(M ,N_1(01)) \to \ulMCor(M ,N_1(11))$.
Define 
\begin{align*}
I_1 &:= \{i \in I | \exists j \in I \setminus \{i\}, |\rho (\alpha_i )| = |\rho (\alpha_j )| \}, \\
I_2 &:= I \setminus I_1 .
\end{align*}
Then,  Proposition \ref{l-resurgence} implies that 
\begin{equation}\label{eq-I2}
\alpha_i  \in  \ulMCor(M,N_1(00)) \text{ for each $i \in I_1$.}
\end{equation}

Assume that 
\[
\rho (\alpha) \in \ulMCor(M ,N_1(10)) \subset \ulMCor(M ,N_1(11)).
\]
By (\ref{eq-I2}), we are reduced to showing the following claim.
\begin{claim}\label{claim-final}
For any $i \in I_2$, we have
\[
\alpha_i \in \ulMCor(M,N_1(00)).
\]
\end{claim}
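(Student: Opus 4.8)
The plan is to verify directly the conditions defining membership $\alpha_i\in\ulMCor(M,N_1(00))$: that the cycle $\alpha_i$ is supported on $M^\circ\times N_1(00)^\circ$, and that it then satisfies the modulus inequality over $N_1(00)$ (left-properness is automatic since $\ol{N}_1(00)$ is proper). Write $\beta_i:=|\rho(\alpha_i)|$. The feature of $i\in I_2$ that I would exploit from the outset is that $\beta_i\neq\beta_j$ for every $j\in I\setminus\{i\}$, so $\beta_i$ occurs as a genuine prime component of the cycle $\rho(\alpha)$, with nonzero coefficient. Since we are assuming $\rho(\alpha)\in\ulMCor(M,N_1(10))$, regarded inside $\ulMCor(M,N_1(11))$ via $h_d$, and since that map identifies $\ulMCor(M,N_1(10))$ with the subgroup of correspondences supported on $M^\circ\times N_1(10)^\circ$ that are admissible over $N_1(10)$ — here one uses that $h_d$ is an open immersion on interiors, together with Lemma \ref{covering-criterion} to transport the modulus condition across the proper birational map of closures — I would read off two facts at once: $\beta_i\subset M^\circ\times N_1(10)^\circ$, and, being a prime component of an admissible cycle, $\beta_i\in\ulMCor(M,N_1(10))$.

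For the support statement: since $N_1(ij)^\circ=S(ij)^\circ$ and $\ul{S}$ is an elementary Nisnevich square, $N_1(00)^\circ=N_1(10)^\circ\times_{N_1(11)^\circ}N_1(01)^\circ$, so the preimage of $M^\circ\times N_1(10)^\circ$ under $1\times v_r^\circ$ is exactly $M^\circ\times N_1(00)^\circ$. As $\alpha_i$ is proper over $M^\circ$, its image under $1\times v_r^\circ$ is the closed set $\beta_i$, which lies in $M^\circ\times N_1(10)^\circ$ by the previous paragraph; hence $\alpha_i\subset M^\circ\times N_1(00)^\circ$. Then $h_u^\circ$, an open immersion on interiors, relocates $\alpha_i$ onto its original embedding in $M^\circ\times N_1(01)^\circ$; and using the square identity $h_d\circ v_l=v_r\circ h_u$ one sees that the pushforward of $\alpha_i$ along $1\times v_l^\circ$ is a positive multiple of $[\beta_i]$, so $1\times v_l^\circ$ too maps $\alpha_i$ onto $\beta_i$.

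The heart of the argument is the modulus inequality. Let $\ol{\alpha}_i$ be the closure of $\alpha_i$ in $\ol{M}\times\ol{N}_1(00)$, let $\ol{\alpha}_i^N$ be its normalization, and let $f\colon\ol{\alpha}_i^N\to\ol{N}_1(00)$ be the projection; the goal is $M^\infty|_{\ol{\alpha}_i^N}\geq f^\ast N_1(00)^\infty$. The morphisms $1\times h_u$ and $1\times v_l$ are proper (all total spaces are proper), and by the previous paragraph they carry $\ol{\alpha}_i$ onto the closure of $\alpha_i$ in $\ol{M}\times\ol{N}_1(01)$ and onto the closure of $\beta_i$ in $\ol{M}\times\ol{N}_1(10)$ respectively, hence induce surjective morphisms of the normalizations. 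Pulling back the admissibility of $\alpha_i\in\ulMCor(M,N_1(01))$ along the first, and that of $\beta_i\in\ulMCor(M,N_1(10))$ along the second — noting that the pullback of $M^\infty$ to $\ol{\alpha}_i^N$ is the same in either computation, since the projections $\ol{\alpha}_i^N\to\ol{M}$ coincide — I obtain $M^\infty|_{\ol{\alpha}_i^N}\geq f^\ast h_u^\ast N_1(01)^\infty$ and $M^\infty|_{\ol{\alpha}_i^N}\geq f^\ast v_l^\ast N_1(10)^\infty$. Hence $M^\infty|_{\ol{\alpha}_i^N}\geq\mathrm{sup}(f^\ast h_u^\ast N_1(01)^\infty,f^\ast v_l^\ast N_1(10)^\infty)$ as Weil divisors, and the universal minimality of $\ul{N}_1$ (Proposition \ref{eq6.9}(1), applied with $Y=\ol{\alpha}_i^N$) identifies the right-hand side with $f^\ast N_1(00)^\infty$. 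This is the required inequality, so $\alpha_i\in\ulMCor(M,N_1(00))$, which proves Claim \ref{claim-final} and completes the proof of Theorem \ref{MV-curve}.

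The main obstacle will be conceptual rather than computational. The modulus of $\alpha_i$ over $N_1(00)$ is governed by $\mathrm{sup}(v_l^\ast N_1(10)^\infty,h_u^\ast N_1(01)^\infty)$, whereas the only estimate immediately available bounds $\alpha_i$ over $N_1(01)$, i.e. over $h_u^\ast N_1(01)^\infty$ alone; an independent source of control over $N_1(10)$ is needed. For $i\in I_2$ the non-cancellation of $\beta_i$ is precisely what extracts such control from the hypothesis $\rho(\alpha)\in\ulMCor(M,N_1(10))$, and universal minimality is then the exact mechanism that recombines the two partial bounds into the one we want. When $i\in I_1$ the component $\beta_i$ may cancel in $\rho(\alpha)$ and this route collapses; that case is handled instead by the resurgence principle of Proposition \ref{l-resurgence}.
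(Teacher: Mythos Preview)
Your proof is correct and follows essentially the same route as the paper: identify $\beta_i=|\rho(\alpha_i)|$ as a nonzero prime component of $\rho(\alpha)\in\ulMCor(M,N_1(10))$, hence itself admissible over $N_1(10)$; deduce $\alpha_i\in\Cor(M^\circ,N_1(00)^\circ)$ from the cartesian square on interiors; then pull back the two modulus bounds (from $N_1(01)$ and from $N_1(10)$) to $\ol{\alpha}_i^N$ and combine them via the $\sup$-description of $N_1(00)^\infty$. The only cosmetic difference is that you invoke universal minimality (Proposition \ref{eq6.9}(1)) to pass the sup through $f^*$, while the paper uses the explicit equality $N_1(00)^\infty=\sup\{v_l^*N_1(10)^\infty,h_u^*N_1(01)^\infty\}$ directly; these are the same statement. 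Your appeal to Lemma \ref{covering-criterion} in the first paragraph is unnecessary: since $h_d^\circ$ is an open immersion, $h_{d*}$ simply relocates prime cycles without change of multiplicity, so $\beta_i$ is already a prime component of the preimage $\alpha'\in\ulMCor(M,N_1(10))$ and inherits admissibility there directly.
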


\begin{proof}[Proof of Claim]
Take any $i \in I_2$.
By definition of $I_2$, the coefficient of the integral cycle $|h(\alpha_i)|$ in the cycle $\rho (\alpha )$ is non-zero.
Therefore, we have
\begin{equation}\label{eq-h-1}
\beta := |\rho(\alpha_i)| \in \ulMCor(M,N_1(10)),
\end{equation}
where $|\rho(\alpha_i)|$ denotes the irreducible support of the divisor.
In particular, we have
\begin{align*}
\beta &\in \Cor (M^\circ , N_1(10)^\circ), \\
\alpha_i &\in \Cor (M^\circ , N_1(00)^\circ ),
\end{align*}
where the second claim follows from $N_1(00)^\circ = N_1(10)^\circ  \times_{N_1(11)^\circ} N_1(01)^\circ$.

Let $\ol{\alpha}_i$ be the closure in $\ol{M} \times \ol{N}_1(00) $ and $\ol{\alpha}_i^N$ its normalization.
Similarly, let $\ol{\beta}$ be the closure of $\beta$ in $\ol{M} \times \ol{N}_1(10) $ and $\ol{\beta}^N$ its normalization.
Then, (\ref{eq-h-1}) implies that
\begin{equation}\label{eq-h-1a}
\ol{M} \times N_1(10)^\infty |_{\ol{\beta}^N} \leq M^\infty \times \ol{N}_1(10) |_{\ol{\beta}^N} .
\end{equation}
Since the maps $\ol{\alpha}_i \to \ol{\beta}$ induced by $\mathrm{id}_{\ol{M}} \times v_l : \ol{M} \times \ol{N}_1(00) \to \ol{M} \times \ol{N}_1(10)$ are dominant, the universality of normalization induces a morphism $\ol{\alpha}_i^N \to \ol{\beta}^N$.
And (\ref{eq-h-1a}) implies 
\begin{equation}\label{eq-h-1b}
\ol{M} \times v_l^\ast N_1(10)^\infty |_{\ol{\alpha}_i^N} \leq M^\infty \times \ol{N}_1(00) |_{\ol{\alpha}_i^N} .
\end{equation}
On the other hand, the assumption that $\alpha_i \in \ulMCor(M ,N_1(01))$ implies that
\begin{equation*}
\ol{M} \times N_1(01)^\infty |_{\ol{\alpha}_i^{\prime N}} \leq M^\infty \times \ol{N}_1(01) |_{\ol{\alpha}_i^{\prime N}} ,
\end{equation*}
where $\ol{\alpha}_i^{\prime }$ denotes the closure of $\alpha_i$ in $\ol{M} \times \ol{N}_1(01)$, and $\ol{\alpha}_i^{\prime N}$ its normalization. 
Since $\mathrm{id}_{\ol{M}} \times h_u : \ol{M} \times \ol{N}_1(00) \to \ol{M} \times \ol{N}_1(01)$ and the universality of normalization induce a canonical morphism $\ol{\alpha}_i^N \to \ol{\alpha}_i^{\prime N}$, we obtain
\begin{equation}\label{eq-h-2a}
\ol{M} \times h_u^\ast N_1(01)^\infty |_{\ol{\alpha}_i^{N}} \leq M^\infty \times \ol{N}_1(00) |_{\ol{\alpha}_i^{N}} .
\end{equation}
Since $N_1(00)^\infty = \sup \{v_l^\ast N_1(10)^\infty ,h_u^\ast N_1(01)^\infty \}$ by definition, the inequalities (\ref{eq-h-1b}) and (\ref{eq-h-2a}) imply
\[
\ol{M} \times N_1(00)^\infty |_{\ol{\alpha}_i^N} \leq M^\infty \times \ol{N}_1(00) |_{\ol{\alpha}_i^N} .
\]
Therefore, we have $\alpha_i \in \ulMCor(M,N_1(00))$.
This finishes the proof of Claim \ref{claim-final}.
\end{proof}

Thus, we finished the proof of Theorem \ref{MV-curve}. Therefore, we have proven Theorem \ref{existence-MV}.

\appendix

\section{Some remarks on the sup of Cartier divisors}

\subsection{Preliminary}


\begin{lemma}\label{key-lem}
Let $X$ be a scheme.
Suppose given three effective Cartier divisors $D_1, D_2$ and $E$ on $X$ such that $E \leq D_i$ for each $i=1,2$.

Then, we have:
\[
\text{$E = D_1 \times_X D_2$ \ \ iff. \ \ $|D_1 - E| \cap |D_2 - E| = \emptyset $.}
\]
\end{lemma}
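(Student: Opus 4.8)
The plan is to verify both directions by working locally. The statement is Zariski-local on $X$, so I would cover $X$ by affine opens $\Spec A$ on which $D_1, D_2, E$ are principal, say $D_i = \divi(f_i)$ and $E = \divi(e)$ with $f_i, e$ nonzerodivisors in $A$. The hypothesis $E \le D_i$ means $f_i = e g_i$ for some $g_i \in A$ (again a nonzerodivisor, since $f_i$ is and $A$ need not be a domain, but $f_i = e g_i$ nonzerodivisor forces $g_i$ nonzerodivisor). Thus $D_i - E = \divi(g_i)$ locally. The fibre product $D_1 \times_X D_2$ is the closed subscheme $V(f_1, f_2) = V(e g_1, e g_2)$, while $E = V(e)$; and $|D_i - E| = V(g_i)$.

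First I would prove the direction "$E = D_1 \times_X D_2 \Rightarrow |D_1 - E| \cap |D_2 - E| = \emptyset$" by contraposition: suppose there is a point $\fp$ in $V(g_1) \cap V(g_2)$, i.e. a prime ideal with $g_1, g_2 \in \fp$. I claim $e \notin (eg_1, eg_2)$ after localising appropriately, so that $V(e) \neq V(eg_1, eg_2)$ as schemes near $\fp$; concretely, in $A_\fp$ one has $(eg_1, eg_2) = e(g_1, g_2) \subseteq e\fp A_\fp \subsetneq eA_\fp$ because $e$ is a nonzerodivisor and $g_1, g_2 \in \fp A_\fp$ — if $e \in e(g_1,g_2)$ then $e(1 - h) = 0$ for some $h \in (g_1,g_2)\subseteq \fp A_\fp$, forcing $1 - h$ to be a zerodivisor, impossible since $1-h$ is a unit in $A_\fp$. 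Hence the ideal $(eg_1, eg_2)$ differs from $(e)$, contradicting $E = D_1 \times_X D_2$.

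For the converse "$|D_1 - E| \cap |D_2 - E| = \emptyset \Rightarrow E = D_1 \times_X D_2$", the disjointness of $V(g_1)$ and $V(g_2)$ means $(g_1, g_2) = A$ locally, so we may write $1 = a_1 g_1 + a_2 g_2$. Then $e = a_1 (e g_1) + a_2 (e g_2) \in (f_1, f_2)$, giving $(e) \supseteq (f_1, f_2)$; and $(e) \subseteq (f_i)$ from $E \le D_i$ gives $(e) \subseteq (f_1, f_2)$ trivially — wait, more directly $(e) \supseteq (f_1,f_2)$ together with $f_i = e g_i \in (e)$ yields $(f_1, f_2) = (e)$, i.e. $D_1 \times_X D_2 = E$ on this affine, and these local identifications glue. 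The main (mild) obstacle is bookkeeping the nonzerodivisor hypotheses so that all the "principal ideal equals principal ideal" manipulations are legitimate over a possibly non-reduced, non-integral base; once that is set up, both implications are a two-line ideal-theoretic computation. I would also note, if convenient, that this lemma is exactly what is invoked (in the guise of Lemma \ref{sup-lemma-pre}) throughout the paper to pass from sups of Weil divisors to sups of Cartier divisors.
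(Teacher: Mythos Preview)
Your proof is correct and is essentially the same argument as the paper's, just written in explicit affine-local coordinates rather than the sheaf-theoretic language the paper uses: your local identity $(f_1,f_2) = e\,(g_1,g_2)$ is exactly the paper's relation $I_{D_1\times_X D_2} = I_E \cdot I_Z$ with $Z = (D_1-E)\times_X(D_2-E)$, and both implications follow from this in the same way (invertibility of $I_E$, resp.\ of $e$, reduces the question to whether $(g_1,g_2)$ is the unit ideal).
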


\begin{remark}
The ``inf'' of two effective Cartier divisors might be zero even if $|D_1| \cap |D_2| \neq \emptyset$: for example, consider the case $X = \mathbb{A}^2 = \Spec (k[x_1,x_2])$ and $D_i = \{x_i = 0\}$.
\end{remark}

\begin{proof}[Proof of Lemma \ref{key-lem}]
Regard the effective Cartier divisors $D_i - E$ as closed subschemes on $X$, and set
\[
Z := (D_1 - E) \times_X (D_2 - E).
\]
For a closed subscheme $i : V \to X$, we set \[I_V := \mathrm{Ker} (\mathcal{O}_X \to i_\ast \mathcal{O}_V ) . \]
Then, we have
\[
I_{D_1 \times_X D_2} = I_{D_1} + I_{D_2}.
\]
Since $I_{Z} = I_{D_1 - E} + I_{D_2 - E} = I_{D_1} \cdot I_E^{-1} + I_{D_2} \cdot I_E^{-1}$, we have
\[
I_{Z} \cdot I_E = (I_{D_1} \cdot I_E^{-1} + I_{D_2} \cdot I_E^{-1}) \cdot I_E = I_{D_1} + I_{D_2} ,
\]
where $I_{E}^{-1}$ denotes the inverse of the invertible ideal sheaf $I_E$.
Combining the above equalities, we obtain
\begin{equation}\label{eq-interpret}
I_{Z} \cdot I_E = I_{D_1 \times_X D_2} .
\end{equation}
Therefore, we have
\begin{align*}
|D_1 - E| \cap |D_2 - E| = \emptyset  &\Leftrightarrow Z = \emptyset \Leftrightarrow I_Z = \mathcal{O}_X \Leftrightarrow^\dag I_{D_1 \times_X D_2}  = I_E \\
&\Leftrightarrow  D_1 \times_X D_2 = E,
\end{align*}
where $\Leftrightarrow^\dag$ follows from (\ref{eq-interpret}) and the fact that $I_E$ is invertible.
This finishes the proof of Lemma \ref{key-lem}.
\end{proof}

\subsection{Compatibility between pullback and $\sup$, after normalized blow-up}

For two Weil divisors $Z_1 $ and $Z_2$ on a scheme $X$, denote by \[\sup\nolimits^{\Wei} (Z_1,Z_2)\] the smallest Weil divisor on $X$ which is larger than of equal to $Z_1$ and $Z_2$.
For two Cartier divisors $D_1$ and $D_2$ on a scheme $X$, denote by \[\sup\nolimits^{\Car} (D_1,D_2)\] the smallest Cartier divisor on $X$ which is larger than of equal to $D_1$ and $D_2$ (if it exists).

\begin{lemma}\label{sup-lemma-pre}
Let $X$ be a normal scheme, and let $D_1, D_2$ and $E$ be effective Cartier divisors on $X$ such that $E \leq D_i$ for each $i=1,2$.
Assume one of the following equivalent conditions:
\begin{itemize}
\item[(a)] $E = D_1 \times_X D_2$.
\item[(b)] $|D_1 - E| \cap |D_2 - E| = \emptyset$.
\end{itemize}
(The equivalence of these conditions follow from Lemma \ref{key-lem}.)

Then, the following assertions hold:
\begin{itemize}
\item[(1)] Regard $D_1$ and $D_2$ as Weil divisors on $X$ (since $X$ is normal, any Cartier divisor can be naturally regarded as a Weil divisor).
Then, we have 
\[
\sup\nolimits^{\Wei} (D_1,D_2) = D_1 + D_2 - E.
\]
\item[(2)] $\sup\nolimits^{\Wei} (D_1,D_2)$ is an effective Cartier divisor, and is equal to the sup as Cartier divisor:
\[
\sup\nolimits^{\Wei} (D_1,D_2) = \sup\nolimits^{\Car} (D_1,D_2) =: \sup (D_1,D_2) .
\]
\item[(3)] The square 
\[\begin{xymatrix}{
(X,\sup(D_1 ,D_2 )) \ar[r] \ar[d]& \ar[d](X,  D_1) \\
(X, D_2 ) \ar[r]& (X, E )
}\end{xymatrix}\]
is universally minimal in the sense of \cite[Def. 4.3.9]{motmod}. In other words:
let $f : Y \to X$ be a morphism such that $Y$ is normal, and such that the pullback of Cartier divisors $D_1, D_2$ and $E$ are well-defined.
Then, we have
\[
f^\ast \sup (D_1,D_2) = \sup (f^\ast D_1,f^\ast D_2).
\]
(See the assertion (2) for the definition of $\sup$.)
\end{itemize}
\end{lemma}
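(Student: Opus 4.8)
The three assertions are of increasing strength, and I would prove them in order, with (1) as the computational core, (2) a formal consequence, and (3) the substantive statement requiring the normality hypothesis on $Y$. Throughout, the standing hypothesis is that $X$ is normal and $E=D_1\times_X D_2$, equivalently (by Lemma \ref{key-lem}) that $|D_1-E|\cap|D_2-E|=\emptyset$.

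\emph{Step 1 (assertion (1)).} Since $X$ is normal, Weil divisors can be tested locally at codimension-one points, so it suffices to check the equality $\sup\nolimits^{\Wei}(D_1,D_2)=D_1+D_2-E$ at each height-one prime $\mathfrak p$. Write $a_i=\mathrm{ord}_{\mathfrak p}(D_i)$ and $e=\mathrm{ord}_{\mathfrak p}(E)$; by hypothesis $e\le \min(a_1,a_2)$. The disjointness condition (b) says that $\mathfrak p$ cannot simultaneously satisfy $a_1>e$ and $a_2>e$, i.e. $e=\min(a_1,a_2)$. Hence $a_1+a_2-e=\max(a_1,a_2)=\mathrm{ord}_{\mathfrak p}\sup\nolimits^{\Wei}(D_1,D_2)$, which is what we want. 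This is the only genuinely arithmetic computation in the lemma.

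\emph{Step 2 (assertion (2)).} By Step 1, $\sup\nolimits^{\Wei}(D_1,D_2)=D_1+D_2-E$ as Weil divisors. The right-hand side is manifestly a Cartier divisor, being a sum and difference of Cartier divisors (here $E\le D_i$ guarantees $D_i-E$ is an \emph{effective} Cartier divisor, and everything is effective because $a_1+a_2-e\ge 0$). A Cartier divisor on a normal scheme is $\ge$ two given Cartier divisors as a Weil divisor iff it is $\ge$ them as a Cartier divisor (again because order of vanishing detects both partial orders), so $\sup\nolimits^{\Car}$ exists and coincides with $\sup\nolimits^{\Wei}$; denote the common value $\sup(D_1,D_2)$. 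One may also note the identity $\sup(D_1,D_2)=D_1+(D_2-E)=D_2+(D_1-E)$, which will be convenient in Step 3.

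\emph{Step 3 (assertion (3)).} Let $f:Y\to X$ with $Y$ normal and $f^\ast D_1,f^\ast D_2,f^\ast E$ well-defined (equivalently, $f(Y)$ not contained in any of $|D_1|,|D_2|,|E|$, which forces $f^\ast(D_i-E)$ to be well-defined too). Pulling back the identity $\sup(D_1,D_2)=D_1+D_2-E$ of Cartier divisors from Step 2 gives $f^\ast\sup(D_1,D_2)=f^\ast D_1+f^\ast D_2-f^\ast E$. It now suffices to verify that the triple $(f^\ast D_1,f^\ast D_2,f^\ast E)$ on the normal scheme $Y$ again satisfies the hypotheses of the lemma, for then Step 1 applied on $Y$ yields $\sup(f^\ast D_1,f^\ast D_2)=f^\ast D_1+f^\ast D_2-f^\ast E$, and we are done. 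The inequalities $f^\ast E\le f^\ast D_i$ are clear since pullback of Cartier divisors is order-preserving, and the condition $f^\ast E=f^\ast D_1\times_Y f^\ast D_2$ follows by flat (indeed arbitrary, since it is a statement about ideal sheaves) base change from $E=D_1\times_X D_2$: one has $I_{f^\ast D_1\times_Y f^\ast D_2}=I_{f^\ast D_1}+I_{f^\ast D_2}=f^{-1}I_{D_1}\cdot\mathcal O_Y+f^{-1}I_{D_2}\cdot\mathcal O_Y=f^{-1}(I_{D_1}+I_{D_2})\cdot\mathcal O_Y=f^{-1}I_E\cdot\mathcal O_Y=I_{f^\ast E}$, using that forming the sum of ideal sheaves commutes with pullback. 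Alternatively one can argue with the support condition (b): $|f^\ast D_i-f^\ast E|=|f^\ast(D_i-E)|\subset f^{-1}|D_i-E|$, and these preimages are disjoint because $|D_1-E|\cap|D_2-E|=\emptyset$.

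\emph{Main obstacle.} There is no deep obstacle: the one place demanding care is making sure that ``well-definedness of the pullbacks'' in the statement is exactly the hypothesis needed to run the base-change argument for condition (a)/(b) on $Y$ — in particular that it licenses pulling back $D_i-E$ — and that the normality of $Y$ (not just of $X$) is genuinely used, namely in re-invoking Step 1 over $Y$ to identify $\sup\nolimits^{\Wei}$ with the explicit formula. The reduction of (2) and (3) to the codimension-one computation of (1) is the organizing idea.
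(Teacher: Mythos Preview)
Your proof is correct and follows essentially the same approach as the paper. The only cosmetic difference is in assertion (1): the paper argues globally via the identity $\sup^{\Wei}(D_1,D_2)=\sup^{\Wei}(D_1-E,D_2-E)+E$ together with the observation that two effective Weil divisors with disjoint supports have sup equal to their sum, whereas you unwind this pointwise at height-one primes; for (3) the paper verifies condition (a) on $Y$ via the fiber-product computation $f^\ast D_1\times_Y f^\ast D_2=f^\ast(D_1\times_X D_2)$, which is exactly your ideal-sheaf argument.
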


\begin{proof}
We may calculate $\sup\nolimits^\Wei (D_1,D_2)$ as follows:
\begin{align*}
\sup\nolimits^\Wei (D_1,D_2) &=^1 \sup\nolimits^\Wei (D_1 - E,D_2 - E) + E \\
&=^2 (D_1 - E) + (D_2 - E) + E \\
&= D_1 + D_2 - E,
\end{align*}
where $=^1$ is obvious, and $=^2$ follows from the assumption.
This proves the assetion (1).

Since the right hand side of $\sup\nolimits^\Wei (D_1,D_2) = D_1 + D_2 - E$ is Cartier, the assertion (2) is obvious.

Finally, we prove the assertion (3). Let $f : Y \to X$ be a morphism as in the statement. 
Note that the scheme
\begin{equation}\label{eq-cartier-01}
f^\ast D_1 \times_Y f^\ast D_2 = (D_1 \times_X Y) \times_Y (D_2 \times_X Y) = D_1 \times_X D_2 \times_X Y = f^\ast (D_1 \times_X D_2).
\end{equation}
is an effective Cartier divisor on $Y$.
Then, we have
\begin{align*}
\sup\nolimits^{\Wei} (f^\ast D_1,f^\ast D_2) &=^1 f^\ast D_1 + f^\ast D_2 - (f^\ast D_1 \times_Y f^\ast D_2) \\
&=^2 f^\ast D_1 + f^\ast D_2 - f^\ast (D_1 \times_X D_2) \\
&=^3 f^\ast (D_1 + D_2 - D_1 \times_X D_2) \\
&=^4 f^\ast \sup\nolimits^{\Wei} (D_1,D_2),
\end{align*}
where $=^1$ and $=^4$ follow from the assertion (1), $=^2$ follows from (\ref{eq-cartier-01}), and $=^3$ follows from the compatibility between the addition and the pullback of Cartier divisors.
This finishes the proof of Lemma \ref{sup-lemma-pre}.
\end{proof}

\begin{prop} \label{pA.1} Let $X\in \Sch$, and let $D_1, D_2$ be effective Cartier divisors on $X$. Assume that $X \setminus |D_i|$ are smooth. Set $E := D_1 \times_X D_2$. Let
$\pi:X' :=(\Bl_EX)^N \to \Bl_EX\to X$ be the normalized blow-up along $E$. Set
\[D_i' =D_i \times_X X',\quad E' :=E\times_X X'.\]
Then, the pair $(X',E')$ forms a modulus pair, $S:=\sup^\Car(D'_1,D'_2)$ exists, we have
\begin{align*}
( X' , D'_ i ) &\iso ( X  , D_i  ) ,\\
(X', S)^\o &= X \setminus (|D_1| \cup |D_2|), \\
(X', E')^\o &= X \setminus (|D_1| \cap |D_2|).
\end{align*}
and the square
\begin{equation}\label{eqA.1}
\begin{CD}
(X', S) @>>>   (X', D_1' )\\
@VVV @VVV\\
( X ' , D_2' ) @>>>   ( X ' , E ' )
\end{CD}
\end{equation}
is universally minimal and extends the elementary Zariski square
\[\begin{CD}
(X' \setminus  (|\tilde D_1' | \cup |\tilde D_2' |, E'|_{X' \setminus (|\tilde D_1' | \cup |\tilde D_2' |})@>>> (X' \setminus |\tilde D_1' |, E'|_{X' \setminus |\tilde D_1' }|)\\
@VVV @VVV\\
(X' \setminus |\tilde D_2' |, E'|_{X' \setminus |\tilde D_2' |})@>>> (X', E'),
\end{CD}\]
where we set $\tilde D_i' := D_i' \setminus E'$.
\end{prop}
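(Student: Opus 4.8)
The plan is to reduce the whole proposition to Lemma~\ref{sup-lemma-pre}, applied to the normal scheme $X'$ together with the effective Cartier divisors $D_1', D_2', E'$ on it; everything else is then a matter of transporting statements along $\pi$. First I would pin down the geometry of $\pi$. Since $E = D_1\times_X D_2$ has underlying set $|D_1|\cap|D_2|$, its open complement is $X\setminus|E| = (X\setminus|D_1|)\cup(X\setminus|D_2|)$, which is smooth, hence normal, and on which $E$ is the empty subscheme; so, exactly as in the proof of Lemma~\ref{l4.2}, the blow-up $\Bl_E X\to X$ and the normalization $X' = (\Bl_E X)^N\to\Bl_E X$ are isomorphisms over $X\setminus|E|$, and consequently $\pi\colon X'\to X$ is an isomorphism over $X\setminus|E|$; moreover $X'$ is normal, and $\pi$ is proper, being a blow-up followed by a (finite) normalization of a $k$-scheme of finite type. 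Next I would observe that $E'$ is an effective Cartier divisor on $X'$: by the universal property of blowing up, the inverse image ideal sheaf $I_E\cdot\mathcal O_{\Bl_E X}$ is invertible, and invertibility persists after pulling back to $X'$, so $I_{E'} = I_E\cdot\mathcal O_{X'}$ is invertible. Writing $I_E = I_{D_1}+I_{D_2}$ gives $I_{E'} = I_{D_1'}+I_{D_2'}$, i.e.\ $E' = D_1'\times_{X'}D_2'$ and $E'\le D_i'$ for $i=1,2$; the divisors $D_i' = D_i\times_X X'$ are effective Cartier, being pullbacks of such. This is the one point where blowing up $E$ (rather than merely normalizing) matters: it turns the a priori non-Cartier intersection $E$ into a Cartier divisor.

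With $X'$ normal and $E' = D_1'\times_{X'}D_2'\le D_i'$, Lemma~\ref{sup-lemma-pre} applies to $(X';D_1',D_2',E')$. Part~(2) yields that $S := \sup\nolimits^{\Car}(D_1',D_2')$ exists and equals $D_1'+D_2'-E' = \sup\nolimits^{\Wei}(D_1',D_2')$, and part~(3) yields that the square \eqref{eqA.1} is universally minimal. To compute the interiors I would use $|D_i'| = \pi^{-1}(|D_i|)$, $|E'| = \pi^{-1}(|E|) = \pi^{-1}(|D_1|\cap|D_2|)$, and $|S| = |D_1'|\cup|D_2'| = \pi^{-1}(|D_1|\cup|D_2|)$ (the last using $E'\le D_i'$). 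All three complements $X'\setminus|S|$, $X'\setminus|D_i'|$, $X'\setminus|E'|$ lie over $X\setminus|E|$, where $\pi$ is an isomorphism, so they are identified via $\pi$ with $X\setminus(|D_1|\cup|D_2|)$, $X\setminus|D_i|$, and $X\setminus(|D_1|\cap|D_2|)$ respectively, all smooth by hypothesis. Hence $(X',S)$, $(X',E')$ and $(X',D_i')$ are modulus pairs with the asserted interiors, and $\pi$ restricts to a proper, minimal morphism $(X',D_i')\to(X,D_i)$ that is an isomorphism on interiors, hence an isomorphism in $\ulMCor$ by \cite[Lemma~1.11.3]{motmod}.

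Finally I would compare \eqref{eqA.1} with the stated elementary Zariski square. Put $\tilde D_i' := D_i'-E'$; since $E'\le D_i'$ and $E'$ is invertible this is an effective Cartier divisor, with $|D_i'| = |E'|\cup|\tilde D_i'|$ and $|S| = |E'|\cup|\tilde D_1'|\cup|\tilde D_2'|$. Condition~(b) of Lemma~\ref{sup-lemma-pre} gives $|\tilde D_1'|\cap|\tilde D_2'| = \emptyset$, so $\{X'\setminus|\tilde D_1'|,\,X'\setminus|\tilde D_2'|\}$ is a Zariski cover of $X'$, and the displayed square with modulus $E'$ is its elementary (Zariski, a fortiori Nisnevich) square. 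On $X'\setminus|\tilde D_i'|$ one has $D_i' = E'$, and on $X'\setminus(|\tilde D_1'|\cup|\tilde D_2'|)$ one has $D_1' = D_2' = E'$ and hence $S = E'$; the interiors also match, since $X'\setminus|D_i'| = (X'\setminus|\tilde D_i'|)\setminus|E'|$ and $X'\setminus|S| = (X'\setminus(|\tilde D_1'|\cup|\tilde D_2'|))\setminus|E'|$. Therefore the open inclusions furnish, corner by corner, extensions of modulus pairs in the sense of Definition~\ref{d2.1}, exhibiting \eqref{eqA.1} as an extension of that elementary Zariski square.

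The argument is largely bookkeeping once the structural observation of the first paragraph — that $E'$ becomes a Cartier divisor equal to $D_1'\times_{X'}D_2'$ after the normalized blow-up — is in place, since Lemma~\ref{sup-lemma-pre} then delivers existence of $S$ and universal minimality directly. The only mildly delicate points are checking that the three candidate modulus pairs have smooth interiors (done by carrying the smoothness of the relevant opens of $X$ across the isomorphism $\pi$ over $X\setminus|E|$) and verifying, at each corner, the interior condition in Definition~\ref{d2.1}'s notion of extension.
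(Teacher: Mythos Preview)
Your proof is correct and follows essentially the same approach as the paper's: both reduce to Lemma~\ref{sup-lemma-pre} after observing that $E'$ is an effective Cartier divisor on the normal scheme $X'$ satisfying $E' = D_1'\times_{X'}D_2'$, and both use that $\pi$ is an isomorphism over the smooth open $X\setminus|E|$ to identify the interiors. Your argument is considerably more detailed than the paper's (which dispatches several claims as ``easy''), but the strategy is identical.
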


\begin{proof} Note that $X \setminus E = (X \setminus |D_1|) \cup (X \setminus |D_2|)$ is smooth. Therefore, $(X,E)$ is a modulus pair. The existence of $\sup^\Car(D'_1,D'_2)$ follows from Lemma \ref{sup-lemma-pre} (2). The next claims are easy. By construction, we have
\[E ' = D_1' \times_{X '} D_2',\] 
and $E'$ is an effective Cartier divisor. Therefore the statement about \eqref{eqA.1} follows from Lemma \ref{sup-lemma-pre} (3).  Note that the second square is indeed elementary Zariski, since $|\tilde D_1' | \cap |\tilde D_2' | = \emptyset$. 
The normality of $X \setminus E$ implies that the birational morphism $\pi$ is an isomorphism over $X\setminus E$. Therefore we obtain the second assertion. This finishes the proof.
\end{proof}

\section{Elementary Lemmas}

\subsection{Modulus increasing lemma}

We recall the following lemma from \cite[Lemma 3.16]{cubeinv}.

\begin{lemma}\label{m-i-l}
Let $X$ be a quasi-compact scheme and let $D, E$ be Cartier divisors on $X$ with $E \geq 0$.
Assume that the restriction of $D$ to the open subset $X \setminus E \subset X$ is effective.
Then, there exists a natural number $n_0 \geq 1$ such that $D + n \cdot E$ is effective for any $n \geq n_0$. \qed
\end{lemma}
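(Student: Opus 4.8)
The plan is to reduce the assertion to a finite affine-local computation, the point being that quasi-compactness lets one replace ``for each point of $X$'' by ``for each member of a \emph{finite} open cover'', and hence take a finite maximum of local exponents.

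Concretely, I would first choose, using quasi-compactness, a finite affine open cover $X = \bigcup_{i=1}^{r} U_i$ with $U_i = \Spec A_i$. On each $U_i$ the effective Cartier divisor $E$ is given by a non-zero-divisor $e_i \in A_i$, so that $U_i \setminus |E|$ is the principal open subscheme $\Spec (A_i)_{e_i}$, and the Cartier divisor $D$ is given by a non-zero-divisor $d_i$ in the total ring of fractions of $A_i$ (note that $D$ itself need not be effective, which is the whole interest of the statement). The hypothesis that $D$ is effective on $X \setminus |E|$ says precisely that, over each $\Spec (A_i)_{e_i}$, the fraction $d_i$ becomes regular; equivalently $d_i$ has poles only along $V(e_i)$, so there is an integer $m_i \geq 0$ with $d_i e_i^{m_i} \in A_i$. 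Then for every $n \geq m_i$ the element $d_i e_i^{n} = (d_i e_i^{m_i}) e_i^{n - m_i}$ lies in $A_i$ and, being a product of non-zero-divisors in the total ring of fractions, is itself a non-zero-divisor; hence the Cartier divisor $D + n E$, which on $U_i$ is represented by $d_i e_i^{n}$, is effective on $U_i$.

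Finally I would set $n_0 := \max(1, m_1, \dots, m_r)$ --- a finite maximum thanks to the finiteness of the cover --- and observe that for $n \geq n_0$ the divisor $D + n E$ is effective on every $U_i$; since effectivity of a Cartier divisor is a local condition on $X$, it follows that $D + n E$ is effective on $X$, as desired. The only delicate point, and the closest thing to an obstacle, is justifying the affine-local description of Cartier divisors and of the sheaf of total quotient rings in full generality (sheafification phenomena can intervene for pathological schemes); in the setting where the lemma is actually used --- schemes of finite type over a field and blow-ups thereof, all Noetherian --- there is nothing to worry about, and in general one can instead argue with the invertible subsheaf $\mathcal{O}_X(-D-nE)$ of $\mathcal{K}_X$ and check locally that it is contained in $\mathcal{O}_X$, which reduces to exactly the same computation.
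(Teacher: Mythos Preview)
Your proof is correct and follows essentially the same route as the paper's (commented-out) argument: reduce to a finite affine cover on which $D$ and $E$ are represented by elements $d_i$ and $e_i$, use the hypothesis to get $d_i e_i^{m_i}\in A_i$, and take $n_0=\max_i m_i$. Your closing remark about the need to refine the cover so that both Cartier divisors are principal on each piece is exactly the one precaution the paper's version also takes.
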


\subsection{No extra fiber lemma}\label{section-no-extra-fiber}

\begin{lemma}[No extra fiber lemma]\label{no-extra-fiber}
Let 
\[\begin{xymatrix}{
 & Y \ar[d]^p \\
  U\ar[ru]^f \ar[r]_j & X
}\end{xymatrix}\]
be a commutative triangle of schemes. We assume:
\begin{itemize}
\item $p$ is separated;
\item $f$ is scheme-theoretically dominant \cite[Ch. 1, Def. 5.4.2]{EGA1}.
\end{itemize}
Let $V=U \times_{X} Y$. Then $p': V \to U$ is an isomorphism.
If $j$ is an open immersion, so is $f$.
\end{lemma}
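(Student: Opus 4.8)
\textbf{Proof proposal for Lemma \ref{no-extra-fiber}.}

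The plan is to first establish the assertion about $p'\colon V\to U$ by a graph argument, and then deduce the statement about open immersions as a formal consequence. Since $p$ is separated, the diagonal $\Delta\colon Y\to Y\times_X Y$ is a closed immersion. Consider the graph morphism $\Gamma_f\colon U\to U\times_X Y=V$ attached to $f$, characterised by $p'\circ\Gamma_f=1_U$ and $\mathrm{pr}_Y\circ\Gamma_f=f$; it is a section of the separated morphism $p'$, hence itself a closed immersion. Thus $\Gamma_f(U)$ is a closed subscheme of $V$ isomorphic to $U$ via $p'$. I would then show that this closed immersion is also scheme-theoretically dominant, so that it is an isomorphism; combined with $p'\circ\Gamma_f=1_U$ this forces $p'$ to be an isomorphism with inverse $\Gamma_f$. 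Concretely, I claim $\Gamma_f$ is scheme-theoretically dominant because $\mathrm{pr}_Y\circ\Gamma_f=f$ is scheme-theoretically dominant onto $Y$ and the image of $\Gamma_f$ in $V$ dominates $U$ (being a section); more directly, over the base $U$ the morphism $p'$ admits the section $\Gamma_f$, so $V\cong\Gamma_f(U)\sqcup(\text{complement})$ scheme-theoretically only if there is extra stuff, and scheme-theoretic dominance of $f$ through $V$ rules this out. The clean way: factor $f=\mathrm{pr}_Y\circ\Gamma_f$; since $f$ is scheme-theoretically dominant and $\mathrm{pr}_Y$ is separated, $\Gamma_f$ is scheme-theoretically dominant (a morphism whose composition with a separated — indeed any — morphism is scheme-theoretically dominant is itself scheme-theoretically dominant onto its target when that target maps to the final one appropriately); being a closed immersion that is scheme-theoretically dominant, $\Gamma_f$ is an isomorphism, hence so is $p'$.

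For the second assertion, suppose $j$ is an open immersion. Then $p'\colon V\to U$ is obtained from $p\colon Y\to X$ by the base change along $j$, so $V=p^{-1}(j(U))$ is an open subscheme of $Y$, and $p'$ is the restriction of $p$. Since we have just shown $p'$ is an isomorphism, the composite $f=(\text{inclusion }V\hookrightarrow Y)\circ (p')^{-1}$ identifies $f$ with the open immersion $V\hookrightarrow Y$ followed by an isomorphism; hence $f$ is an open immersion. (Equivalently: $\Gamma_f\colon U\iso V\subset Y$ and $f=\mathrm{pr}_Y\circ\Gamma_f$ equals this composite.)

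The main obstacle I expect is pinning down precisely why $\Gamma_f$ is scheme-theoretically dominant, i.e.\ transferring the scheme-theoretic dominance hypothesis on $f$ through the projection $\mathrm{pr}_Y\colon V\to Y$. This is where one must be careful: the relevant fact is that if $g\circ h$ is scheme-theoretically dominant then $h$ is scheme-theoretically dominant onto the target of $h$ (the scheme-theoretic image of $h$ surjects, after applying $g$, onto everything, and since the scheme-theoretic image is a closed subscheme through which $g\circ h$ factors, it must be all of the target of $h$ provided $g$ is separated so that the factorization is unique). I would cite \cite[Ch. I, Def. 5.4.2]{EGA1} and the elementary properties of scheme-theoretic images/dominance there, rather than re-proving them. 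Everything else — sections of separated morphisms are closed immersions, closed immersions that are scheme-theoretically dominant are isomorphisms, stability of open immersions under base change — is standard and I would invoke it without comment.
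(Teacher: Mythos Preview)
Your approach is essentially identical to the paper's: form the section $s=\Gamma_f\colon U\to V$ of $p'$, argue that $s$ is scheme-theoretically dominant because $f=\mathrm{pr}_Y\circ s$ is, and conclude. The only cosmetic difference is the final step --- you observe that $s$ is a closed immersion (section of the separated $p'$) and scheme-theoretically dominant, hence an isomorphism, whereas the paper shows $sp'=1_V$ by checking that $sp'$ and $1_V$ agree after precomposition with the scheme-theoretically dominant $s$ (using that $p'$ is separated so the relevant equalizer is closed); these are two phrasings of the same fact.
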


\begin{remark}
The morphism $f : U \to Y$ is scheme-theoretically dominant if $Y$ is reduced and $f$ is dominant \cite[Ch. I, Prop. 5.4.3]{EGA1}.
\end{remark}

\begin{proof}
The diagram yields a section $s : U \to V$ of $p'$, and we need to show that $sp'$ is the identity of $V$. Since $f$ is scheme-theoretically dominant, so is $s$; since $p$ is separated, so is $p'$. Then it suffices to show that $sp'$ and $1_V$ agree after composition with $s$, which is obvious.
The last assertion follows from the fact that $V \to Y$ is an open immersion if $j$ is.
\end{proof}



\subsection{Normalization and smooth base change}

\begin{lemma}\label{lB.1} Let $f:Y\to X$ be a smooth morphism, where $X,Y\in \Sch(k)$, and let $X^N\to X$ be the normalization of $X$. Then, $f^N:Y\times_X X^N\to Y$ is the normalization of $Y$.
\end{lemma}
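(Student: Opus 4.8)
The statement is that normalization commutes with smooth base change, i.e. for $f:Y\to X$ smooth with $X,Y\in\Sch(k)$ and $\nu:X^N\to X$ the normalization, the morphism $f^N:Y\times_X X^N\to Y$ is the normalization of $Y$. Since the assertion is local on $Y$ and on $X$ (normalization of a disjoint union of integral schemes is the disjoint union of the normalizations, and a smooth morphism sends each connected component of $Y$ into a single connected component of $X$ when $X$ is irreducible; more carefully, $Y$ decomposes into a disjoint union indexed by pairs consisting of a component of $X$ and a component of $f^{-1}$ of that component), the plan is first to reduce to the case where $X = \Spec A$ and $Y = \Spec B$ are affine and integral, with $A\hookrightarrow B$ flat. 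Here $X^N = \Spec \tilde A$ where $\tilde A$ is the integral closure of $A$ in its fraction field $K = \operatorname{Frac}(A)$, and $Y\times_X X^N = \Spec(B\otimes_A \tilde A)$. I must show $B\otimes_A\tilde A$ is the integral closure of $B$ in $\operatorname{Frac}(B)$ — equivalently, that $B\otimes_A\tilde A$ is a normal domain, is module-finite over $B$ (so that it sits inside the normalization), and has the same fraction field as $B$.

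\textbf{Key steps.} (1) \emph{Reduction to the affine integral case}, as above, using that smoothness is stable under base change and that smooth morphisms are open and flat; also recording that $Y^N$ is a disjoint union of integral normal schemes matching the component decomposition of $Y$. (2) \emph{Module-finiteness}: since $\tilde A$ is a finite $A$-module (finiteness of integral closure for schemes of finite type over a field, using excellence of $k$-algebras of finite type — this is where the hypothesis $X\in\Sch(k)$ is essential), $B\otimes_A\tilde A$ is a finite $B$-module. (3) \emph{Same total ring of fractions}: $\tilde A\otimes_A K = K$, and since $B$ is flat over $A$, tensoring the inclusion $\tilde A\hookrightarrow K$ with $B$ over $A$ keeps it injective, so $B\hookrightarrow B\otimes_A\tilde A$ is injective and becomes an isomorphism after inverting the nonzero elements of $A$ (hence is birational; one checks the generic fibre argument to see the fraction fields agree). (4) \emph{Normality of $B\otimes_A\tilde A$}: this is the crux. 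Since $\tilde A$ is normal and $B$ is \emph{smooth} — in particular geometrically regular — over $A$, the base change $\tilde A\to B\otimes_A\tilde A$ is again smooth, and a smooth algebra over a normal ring is normal. Concretely: smoothness is preserved by the base change $A\to\tilde A$, so $\tilde A\to B\otimes_A\tilde A$ is smooth; then invoke that a smooth (or even regular) morphism whose source is normal has normal total space — Serre's criterion $(R_1)+(S_2)$ passes through regular morphisms (EGA IV, 6.5.4, or the standard fact that smooth morphisms preserve normality of the base). Combining (2)–(4): $B\otimes_A\tilde A$ is a normal domain, finite and birational over $B$, hence equals the integral closure of $B$ in $\operatorname{Frac}(B)$, i.e. $Y^N$.

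\textbf{Main obstacle.} The only genuinely substantive point is step (4): identifying that $B\otimes_A\tilde A$ is normal. Everything else (finiteness of integral closure over a field, flatness and birationality bookkeeping, the reduction to the integral affine case) is routine. For step (4) one should cite the stability of normality under smooth base change directly (e.g. \cite[IV, 6.5.4, 6.14.1]{EGA4} or the Stacks Project tag on smooth morphisms and normality), rather than re-proving it; the key input is that $A\to B$ smooth implies $\tilde A\to B\otimes_A\tilde A$ smooth, and a smooth scheme over a normal base is normal. One should also take brief care that all the schemes involved are of finite type over $k$ so that integral closures are module-finite; this is why the lemma is stated within $\Sch(k)$ rather than for arbitrary schemes.
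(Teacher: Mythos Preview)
Your proof is correct and follows essentially the same approach as the paper: both arguments hinge on the fact that $Y\times_X X^N\to X^N$ is smooth and $X^N$ is normal, so $Y\times_X X^N$ is normal, together with finiteness of normalization in $\Sch(k)$ and birationality of $f^N$ to conclude. The paper's version is simply the global, two-line form of your affine argument---it notes that $Y\times_X X^N$ is normal, that $f^N$ is dominant (hence factors through $Y^N$) and finite, and that therefore $Y\times_X X^N\to Y^N$ is an isomorphism---without spelling out the reduction to the integral affine case or the fraction-field bookkeeping.
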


\begin{proof} Since $Y\times_X X^N\to X^N$ is smooth, $Y\times_X X^N$ is normal; since $f^N$ is dominant, it factors through the normalization $Y^N\to Y$. But $f^N$ is finite since $f$ is, hence the morphism $Y\times_X X^N\to Y^N$ is an isomorphism.
\end{proof}

\subsection{Proof of Lemma \ref{strong-lemma}}\label{pf-strong-lemma}


First, consider the case that $f$ is finite. Then, the proof is easy. Indeed, since $f$ is finite flat of finite presentation, it is finite locally free.
Since $f$ is an isomorphism over a dense open subset, the rank of $f$ is equal to $1$, which implies that $f$ is an isomorphism.

Next, consider the case that $f$ is quasi-finite, flat, separated of finite presentation.
Since $f$ is flat, the image $f(X) \subset S$ is open. Therefore, we may assume that $f$ is surjective.
It suffices to prove that $f$ is finite.

\ 

We need the following propositions from \cite[\S 2.3 Prop. 8 (a), \S 2.5 Prop. 2]{neronmod}:
\begin{prop}[\'etale localization of quasi-finite morphisms]\label{et-loc-of-qfin}
Let $f : X \to Y$ be locally of finite type. Let $x$ be a point of $X$, and set $y:=f(x)$.

If $f$ is quasi-finite at $x$, then there exists an \'etale neighborhood $Y' \to Y$ of $y$ such that the morphism $f' : X' \to Y'$, obtained from $f$ by the base change $Y' \to Y$ induces a finite morphism $f' |_{U'} : U' \to Y'$, where $U'$ is an open neighborhood of the fiber of $X' \to X$ above $x$.
In addition, if $f$ is separated, $U'$ is a connected component of $X'$. \qed
\end{prop}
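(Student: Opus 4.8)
The plan is to deduce this from Zariski's Main Theorem together with the splitting of a finite algebra over a henselian local ring. First I would apply the local form of Zariski's Main Theorem (\cite[Ch.~2]{neronmod}) at the point $x$, where $f$ is quasi-finite: after replacing $X$ by an open neighbourhood $V$ of $x$ and $Y$ by an affine open $\Spec A$ containing $y$, the morphism $f|_V$ factors as an open immersion $j:V\hookrightarrow \bar X=\Spec B$ followed by a finite morphism $\bar f:\bar X\to Y$, where $B$ is a finite $A$-algebra. The fibre $\bar f^{-1}(y)=\{x=x_1,\dots,x_n\}$ is then finite, and $x$ is one of its points.

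Next I would split off the point $x$. Over the henselization $A^h=\mathcal{O}_{Y,y}^h$ the finite algebra $B\otimes_A A^h$ decomposes as a finite product $\prod_{i=1}^n B_i^h$ of local henselian rings, the factor $B_1^h$ being the one whose closed fibre is supported at $x=x_1$ (\cite[\S 2.3]{neronmod}). Since $A^h$ is the filtered colimit of $\mathcal{O}_{Y',y'}$ over the \'etale neighbourhoods $(Y',y')\to(Y,y)$ of $y$, this product decomposition descends to $B\otimes_A A'$ for some such neighbourhood $Y'=\Spec A'$, giving $B\otimes_A A'=\prod_i B_i'$ with each $\Spec B_i'$ open and closed in $\bar X':=\bar X\times_Y Y'$ and finite over $Y'$. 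Then comes the cleanup, in two shrinkings of $Y'$, both preserving the \'etale-neighbourhood structure. Because $Y'\to Y$ is \'etale its fibre over $y$ is discrete, so after shrinking I may assume that $y'$ is the only point of $Y'$ over $y$ and $\kappa(y')=\kappa(y)$; the fibre of the projection $X'=X\times_Y Y'\to X$ over $x$ is then the single point $x'=(x,y')$, which lies in $\Spec B_1'$ and in $V':=V\times_Y Y'$. Finally, $\Spec B_1'\setminus V'$ is closed in $\Spec B_1'$, hence finite over $Y'$, and its image $W$ is a closed subset of $Y'$ missing $y'$ (the fibre of $\Spec B_1'$ over $y'$ is $\{x'\}\subset V'$); replacing $Y'$ by $Y'\setminus W$ I achieve $\Spec B_1'\subseteq V'$. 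Now $U':=\Spec B_1'$ is finite over $Y'$, is open and closed in $\bar X'$ and contained in the open $V'\subseteq X'$, hence is an open neighbourhood of the fibre $\{x'\}$ of $X'\to X$ over $x$, which is exactly the required conclusion.

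For the separated addendum, if $f$ is separated then so is $X'\to Y'$; since $U'\to Y'$ is finite and hence proper, the open immersion $U'\hookrightarrow X'$ becomes proper, so its image is also closed and $U'$ is open and closed in $X'$, that is, a union of connected components, as asserted. The main obstacle is the descent step: producing a single \'etale neighbourhood over which the henselian splitting of $B$ is already defined, and then trapping the distinguished factor $\Spec B_1'$ inside $V'$ after removing the closed set $W$. It is precisely here that the isolatedness of $x$ in its fibre and the \'etale-local product decomposition interact, and keeping track of the fibre of $X'\to X$ over $x$ (rather than merely over the image point $y$) is what forces the first shrinking of $Y'$.
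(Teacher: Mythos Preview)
The paper does not prove this proposition: it is stated with a \qed\ and attributed to \cite[\S 2.3 Prop.~8~(a)]{neronmod}. Your argument is correct and is essentially the standard proof given in that reference: factor via Zariski's Main Theorem, split the finite algebra over the henselization, descend the idempotent to an \'etale neighbourhood, then shrink twice (once to make the fibre over $x$ a single point, once to force $\Spec B_1'\subseteq V'$).

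One small remark on the separated addendum: you conclude that $U'$ is open and closed in $X'$, hence a \emph{union} of connected components, and note that this is ``as asserted''. The statement as quoted says ``a connected component'', which is slightly stronger. Your argument does not yield connectedness of $U'$ as stated, but this is easily repaired: replace $U'$ by the connected component of $U'$ containing $x'$; this is still open and closed in $X'$ and, being closed in the finite $Y'$-scheme $U'$, is still finite over $Y'$. (In \cite{neronmod} the conclusion is phrased in terms of a decomposition $X'=U'\sqcup Z'$ with $U'$ finite over $Y'$, which is exactly what your argument gives.)
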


\begin{prop}[compatibility between schematic images and flat base changes]\label{sch-im-flat-bc}
Let $f : X \to Y$ be an $S$-morphism which is quasi-compact and quasi-separated.
Let $g : S' \to S$ be a flat morphism, and denote by $f':X'\to Y'$ be the $S'$-morphism obtained from $f$ by base change.
Let $Z$ (resp. $Z'$) be the schematic image of $f$ (resp. $f'$).
Then, $Z \times_S S'$ is canonically isomorphic to $Z'$. \qed
\end{prop}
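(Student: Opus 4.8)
The plan is to reduce the statement to a computation with the quasi-coherent ideal sheaf that cuts out the schematic image, and then to exploit the fact that, for a quasi-compact quasi-separated morphism, the formation of this ideal commutes with flat base change. Recall that since $f$ is qcqs, the pushforward $f_*\sO_X$ is a quasi-coherent $\sO_Y$-module, so the schematic image $Z$ is the closed subscheme cut out by the quasi-coherent ideal $\sI := \Ker(\sO_Y \to f_*\sO_X)$; likewise $Z'$ is cut out by $\sI' := \Ker(\sO_{Y'} \to f'_*\sO_{X'})$. Hence it suffices to identify $\sI'$ with the pullback of $\sI$ to $Y'$, which in turn identifies $Z'$ with $Z\times_S S'$.

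First I would set up the base change square. Writing $Y' = Y\times_S S'$, $X' = X\times_S S'$, and letting $g_Y : Y'\to Y$ and $g_X : X'\to X$ be the projections, the fact that $f$ is an $S$-morphism gives a canonical isomorphism $X' \cong X\times_Y Y'$, so that the square
\[\begin{CD}
X' @>{g_X}>> X\\
@V{f'}VV @VV{f}V\\
Y' @>{g_Y}>> Y
\end{CD}\]
is cartesian. Since flatness is stable under base change, $g_Y$ is flat. The essential input is then the flat base change theorem for $R^0f_* = f_*$ applied to quasi-coherent sheaves (valid precisely because $f$ is qcqs): the canonical comparison map is an isomorphism $g_Y^* f_*\sO_X \iso f'_*\sO_{X'}$, and moreover it is compatible with the unit maps $\sO_Y \to f_*\sO_X$ and $\sO_{Y'}\to f'_*\sO_{X'}$ (equivalently, the square obtained by applying $g_Y^*$ to the unit of $f$ and comparing with the unit of $f'$ commutes).

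Next I would apply $g_Y^*$, which is exact because $g_Y$ is flat, to the defining exact sequence $0\to \sI \to \sO_Y \to f_*\sO_X$. Exactness of $g_Y^*$ together with the compatibility of the base-change isomorphism with the unit maps yields an exact sequence $0\to g_Y^*\sI \to \sO_{Y'} \to f'_*\sO_{X'}$, whence $g_Y^*\sI = \Ker(\sO_{Y'}\to f'_*\sO_{X'}) = \sI'$. Finally, because $g_Y$ is flat, $g_Y^*\sI$ is exactly the ideal sheaf of the base-changed closed subscheme $Z\times_Y Y'$, and associativity of fibre products gives $Z\times_Y Y' = Z\times_Y(Y\times_S S') = Z\times_S S'$; this produces the desired canonical isomorphism $Z' \cong Z\times_S S'$. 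The only substantive point, and the one I expect to require care, is the flat base change isomorphism for $f_*$ together with its compatibility with the canonical maps out of $\sO$ — this is where the qcqs hypothesis is genuinely used, both to keep $f_*\sO_X$ quasi-coherent and to guarantee the base-change isomorphism; everything else is the formal exactness of $g_Y^*$ and manipulation of fibre products.
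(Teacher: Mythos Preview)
Your proof is correct and is essentially the standard argument: reduce to showing that the quasi-coherent ideal $\sI=\Ker(\sO_Y\to f_*\sO_X)$ commutes with flat base change, which follows from flat base change for $f_*$ (using the qcqs hypothesis) together with exactness of $g_Y^*$. The paper itself does not give a proof; the proposition is stated with a \qed and attributed to \cite[\S 2.5, Prop.~2]{neronmod}, so there is nothing to compare against beyond noting that your argument is the expected one and matches the proof in that reference.
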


Since the finiteness of $f$ is Zariski local on $S$, it suffices to check over an open neighborhood of a fixed point $s \in S$.
Take a point $x \in X$ above $s$.
Take an \'etale neighborhood $g : S' \to S$ of $s$ as in Proposition \ref{et-loc-of-qfin}, and set $X' := X \times_S S'$.
Denote by $f'$ the induced morphism $X' \to S'$.
Since $f$ is separated, quasi-finite and locally of finite type, there exists a connected component $V' \subset X'$ such that 
\[
f'|_{V'} : V' \to X' \to S'
\]
is finite, and $V'$ is an open neighborhood of the fiber of $x$. 
Since $f$ is flat, so is $f'$, hence the image $f'(V') \subset S'$ is an open subset.
By shrinking $S'$, we may assume that $V' \to S'$ is surjective.
Since $f$ is an isomorphism over $U \subset S$, $f'$ is an isomorphism over $g^{-1} (U) \subset S'$.
Therefore, combining with the surjectivity of $f'|_{V'}$, we have 
\begin{equation}\label{eq-a}
(f')^{-1} (g^{-1} (U)) \subset V' .
\end{equation}
On the other hand, since the map $g \circ f'$ is a flat morphism, Proposition \ref{sch-im-flat-bc} implies that the open subset
\begin{equation}\label{eq-b}
(f')^{-1} (g^{-1} (U)) \subset X' = V' \sqcup X'_1 ,
\end{equation}
is schematically dense, where $X'_1$ is an open and closed subset of $X'$.
Therefore,  (\ref{eq-a}) shows that $X'_1 = \emptyset$ and $X' = V'$.
Therefore, we have $f' = f' |_{V'}$, hence $f'$ is finite.

By replacing $S$ by the image of $S' \to S$, we may assume that $S' \to S$ is an fpqc-covering.
Since finiteness is an fpqc-local property, we conclude that $f$ is finite. This finishes the proof of Lemma \ref{strong-lemma}.

\section{Complements on pro-adjoints}\label{sB} We keep the notation of \cite[A.2]{motmod}.

\subsection{Canonical representation}

Let $u:\sC\to \sD$ have a pro-adjoint $v$: it is unique up to unique isomorphism of functors. For $d\in \sD$, we may write $v(d)= ``\lim"_{i\in I} c_i$ for some suitable inverse system $I\to \sC$. By \cite[Exp. I, Prop. 8.1.6]{SGA4}, we may choose $I$ to be a cofiltering ordered set. More specifically, let us pick once and for all, for each $d\in \sD$, a cofiltering ordered set $I(d)$ and a functor $\ul{c}:I(d)\to \sC$ such that $``\lim"_{i\in I(d)} c_i\in \pro{}\sC$ corepresents the functor $c'\mapsto \sD(d,u(c'))$. Then the pairs $(I(d),\ul{c})$ assemble to a functor isomorphic to $v$. The unit of the adjunction yields a system of compatible morphisms
\[(d\by{f_i} u(c_i))_{i\in I(d)}\]
which defines a subcategory $I'(d)$ of $d\downarrow u$, with an obvious functor $\phi:I(d)\to I'(d)$, and we have the following tautology:

\begin{lemma}\label{l0} $\phi$ is an isomorphism of categories.\qed
\end{lemma}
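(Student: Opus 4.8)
The plan is simply to unwind the definitions and verify directly that $\phi$ is bijective on objects and bijective on each Hom-set; these two facts together are exactly what it means for a functor to be an isomorphism of categories. No limiting or adjunction-theoretic argument is needed beyond recalling how the unit was produced, so I expect the only care required is bookkeeping of the conventions under which $I'(d)$ was introduced.

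First I would make $\phi$ explicit. An object of $I(d)$ is an index $i$; the functor $\ul{c}$ attaches to it an object $c_i\in\sC$, and the component $f_i\colon d\by{} u(c_i)$ of the unit attaches to it a canonical object $(c_i,f_i)$ of the comma category $d\downarrow u$. A morphism $\alpha\colon i\to j$ of $I(d)$ gives $\ul{c}(\alpha)\colon c_i\to c_j$, and the statement that the family $(f_i)$ is the unit of the pro-adjunction is precisely the compatibility $u(\ul{c}(\alpha))\circ f_i=f_j$, i.e. that $\ul{c}(\alpha)$ is a morphism $(c_i,f_i)\to(c_j,f_j)$ in $d\downarrow u$. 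Hence $\phi$ is a well-defined functor into $d\downarrow u$, and $I'(d)$ is by construction its image: the objects of $I'(d)$ are the $(c_i,f_i)$ indexed by $\mathrm{Ob}(I(d))$, and the morphisms of $I'(d)$ are the $\ul{c}(\alpha)$ indexed by $\mathrm{Mor}(I(d))$. In particular $\phi$ is, tautologically, surjective on objects and on morphisms, and it is bijective on objects because the objects of $I'(d)$ were declared to be labelled by those of $I(d)$.

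It then remains to check injectivity on Hom-sets, and here the point I would use is that $I(d)$ is an \emph{ordered set}: between any two of its objects there is at most one morphism, so $\phi$ cannot identify two distinct morphisms. Combining this with the surjectivity on Hom-sets already noted, $\phi$ induces, for every pair $i,j$, a bijection $\mathrm{Hom}_{I(d)}(i,j)\isom\mathrm{Hom}_{I'(d)}(\phi i,\phi j)$. Together with bijectivity on objects this shows $\phi$ is an isomorphism of categories. The one place to be mildly careful — and the reason this is a tautology rather than a theorem — is that ``subcategory'' here must be read with its objects and morphisms indexed by those of $I(d)$, so that $\phi$ is genuinely bijective and not merely essentially surjective and full; but this is exactly how $I'(d)$ was set up from the unit maps $(f_i)_{i\in I(d)}$.
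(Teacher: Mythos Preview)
Your proof is correct and matches the paper's approach exactly: the paper marks this lemma as a tautology (with a bare \qed and no argument), and what you have done is simply spell out that tautology---$I'(d)$ is by definition the image of $\phi$, so surjectivity is automatic, and injectivity follows because $I(d)$ was chosen to be an ordered set. The only thing worth emphasizing, which you already note, is that the reading of $I'(d)$ as indexed by $I(d)$ is what makes $\phi$ a genuine isomorphism rather than merely an equivalence.
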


Thus we may identify $I'(d)$ with $I(d)$, thus view $I(d)$ as a subcategory of $d\downarrow u$, the functor $\ul{c}$ being given by the second projection.

\subsection{Categories of diagrams}\label{s1} Let $\Cat$ be the $2$-category of categories, and let $\Delta$ be a small category. 
We have a $2$-functor 
\begin{align*}
\Delta_*:\Cat&\to \Cat\\
\sC&\mapsto \sC^\Delta:=\Funct(\Delta,\sC).
\end{align*}

In particular, if $v:\sD\leftrightarrows \sC:u$ is an adjunction, then $v^\Delta:\sD^\Delta\leftrightarrows \sC^\Delta:u^\Delta$ is also an adjunction, because the adjunction identities for $(v,u)$ yield adjunction identities for $(v^\Delta,u^\Delta)$.

This also applies to pro-adjoints as follows: let $u:\sC\to \sD$ have a pro-left adjoint $v:\sD\to \pro{}\sC$. Equivalently \cite[Prop. A.2.1]{motmod}, $\pro{}u$ has a left adjoint $\tilde v$. Hence $(\pro{}u)^\Delta:(\pro{}\sC)^\Delta\to (\pro{}\sD)^\Delta$ has the left adjoint $\tilde v^\Delta$, which by composition yields a functor
\begin{equation}\label{eq1.0}
v^\Delta:\sD^\Delta\to (\pro{}\sC)^\Delta
\end{equation}
which verifies the same identity as in \cite[Prop. A.2.1 (ii)]{motmod}, and is actually the image of $v$ under the $2$-functor $\Delta_*$. In particular, $v^\Delta$ is computed by simply applying $v$ to the relevant diagrams.

\subsection{Diagrams of pro-objects} Keep the above notation. There is an obvious functor
\begin{equation}\label{eq1.1}
P:\pro{}(\sC^\Delta)\to(\pro{}\sC)^\Delta
\end{equation}
which sends  a pro-object $(\phi_i)_{i\in I}$ in $\sC^\Delta$ to the diagram $\delta\mapsto (\phi_i(\delta))_{i\in I}$.

\begin{prop}\label{p1.1} a) If $Ob(\Delta)$ is finite, \eqref{eq1.1} is faithful.\\
b) If $\Delta$ is finite, \eqref{eq1.1} is full.\\
c) If moreover $\Delta$ has no loops, \eqref{eq1.1} is an equivalence of categories.
\end{prop}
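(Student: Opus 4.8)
\textbf{Proof plan for Proposition \ref{p1.1}.}

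The plan is to analyze the functor $P$ of \eqref{eq1.1} separately in each part, working directly from the definition of morphisms in a category of pro-objects, namely $\pro{}(\sC^\Delta)((\phi_i)_I,(\psi_j)_J) = \lim_j \colim_i \sC^\Delta(\phi_i,\psi_j)$, and comparing it with the corresponding description in $(\pro{}\sC)^\Delta$. For part a), given two pro-objects $(\phi_i)_{i\in I}$ and $(\psi_j)_{j\in J}$ in $\sC^\Delta$ and a morphism $f$ between them that maps to zero (i.e. to the "identically trivial" map) in $(\pro{}\sC)^\Delta$, I would show $f$ is already trivial in $\pro{}(\sC^\Delta)$. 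The point is that a morphism of pro-objects of $\sC^\Delta$ is, for each $j\in J$, represented by some $i$ and an actual map $\phi_i\to\psi_j$ in $\sC^\Delta$; two such representatives give the same morphism iff they agree after passing to a smaller index. Since $Ob(\Delta)$ is finite, the condition "$P(f)$ determines the same morphism" gives, for each object $\delta$ of $\Delta$ and each $j$, an index $i_\delta\geq i$ after which the two representatives of $P(f)(\delta)$ agree; taking a common refinement over the finitely many $\delta$ (possible since $I$ is cofiltering) produces a single index witnessing equality in $\sC^\Delta$. This gives faithfulness.

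For part b), fullness, I would start from a morphism $g:(\phi_i(\cdot))_I\to(\psi_j(\cdot))_J$ in $(\pro{}\sC)^\Delta$, i.e. a compatible family of morphisms of pro-objects $g(\delta):(\phi_i(\delta))_I\to(\psi_j(\delta))_J$ that is natural in $\delta$, and produce a morphism in $\pro{}(\sC^\Delta)$ mapping to it. Fix $j\in J$. For each object $\delta$, $g(\delta)$ is represented by some index $i(\delta)$ and a map $\phi_{i(\delta)}(\delta)\to\psi_j(\delta)$; using finiteness of $Ob(\Delta)$ and cofiltering $I$, choose a common index $i$ dominating all the $i(\delta)$, so that we obtain a map $\phi_i(\delta)\to\psi_j(\delta)$ for every $\delta$. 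The issue is naturality: these maps need not commute with the morphisms of $\Delta$ on the nose at index $i$. Here finiteness of $\Delta$ (finitely many morphisms, not just objects) is used: for each morphism $u$ of $\Delta$ the two composites $\phi_i(\mathrm{source})\to\psi_j(\mathrm{target})$ agree as morphisms of pro-objects, hence agree after passing to some smaller index depending on $u$; refining over the finitely many morphisms of $\Delta$ yields an index at which all naturality squares commute, i.e. a genuine morphism $\phi_{i}\to\psi_j$ in $\sC^\Delta$. Then checking compatibility in $j$ (using faithfulness from part a) to reduce the coherence to a property at the level of $(\pro{}\sC)^\Delta$, where $g$ is already given as a morphism) assembles these into a morphism of pro-objects $(\phi_i)_I\to(\psi_j)_J$, and by construction it maps to $g$.

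For part c), essential surjectivity, I would take an arbitrary diagram $\Phi:\Delta\to\pro{}\sC$ and produce a pro-object of $\sC^\Delta$ mapping to it, using that $\Delta$ is finite with no loops. The idea mirrors the proof of Lemma \ref{l4.0}: one resolves the incompatible index choices by a "graph" construction along the (finitely many, acyclic) arrows of $\Delta$. Concretely, writing each $\Phi(\delta)$ as a cofiltered pro-object, the absence of loops lets one build, by induction along a topological ordering of the objects of $\Delta$, a single cofiltering index category $I$ together with a functor $I\to\sC^\Delta$ whose image under $P$ is pro-isomorphic to $\Phi$: at each step one enlarges the index category to accommodate the structure maps out of the object being added, and acyclicity guarantees this process terminates without circular constraints. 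Combined with fullness and faithfulness from a) and b), this shows $P$ is an equivalence. I expect this last step — making the inductive "resolution along $\Delta$" construction precise and checking it yields a cofiltering index category with the right universal property — to be the main obstacle; the faithfulness and fullness arguments are essentially diagram-chases exploiting finiteness of $Ob(\Delta)$ and of the morphism set of $\Delta$ respectively.
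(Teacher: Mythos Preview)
Your arguments for a) and b) are essentially identical to the paper's: both exploit that a morphism of pro-objects is represented at each target index $j$ by a map at some source index, and then use finiteness of $Ob(\Delta)$ (for a)) and of the full morphism set of $\Delta$ (for b)) to pass to a common refinement in the cofiltering index category $I$. Your extra remark in b) about using faithfulness to check compatibility in $j$ is a reasonable elaboration; the paper simply asserts that the resulting representatives assemble.

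For c), the paper does not give an argument at all: it cites \cite[Appendix, Prop.~3.3]{am} (Artin--Mazur) for essential surjectivity. Your proposed inductive construction along a topological ordering of $Ob(\Delta)$ is in fact the right idea, and it is exactly the kind of induction the paper carries out later in the proof of Lemma~\ref{l2} (removing a minimal object $\delta_0$ and reducing to $\Delta'=\Delta\setminus\{\delta_0\}$). So your approach is more self-contained than the paper's citation, at the cost of having to make precise the step you flag as the main obstacle: building a single cofiltering $I$ indexing all vertices simultaneously. The reference to Lemma~\ref{l4.0} is a bit misleading, since that lemma concerns resolving rational maps between total spaces of modulus pairs rather than pro-objects; the genuine analogue in the paper is the induction in Lemma~\ref{l2}~a) and c).
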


\begin{proof} Let $\phi=(\phi_i)_{i\in I}, \psi=(\psi_j)_{j\in J}$ be two objects of $\pro{}(\sC^\Delta)$. A morphism $\theta:\phi\to \psi$ is represented by a collection $(\theta_{j,i(j)})_{j\in J}$ where, for each $j$, $\theta_{j,i(j)}\in \sC^\Delta(\phi_{i(j)},\psi_j)$ for a suitable $i(j)$. Then $P(\theta)$ is the collection of diagrams of morphisms corresponding tautologically to these morphisms of diagrams. Consider another $\theta':\phi\to \psi$ and take a corresponding collection $(\theta'_{j,i'(j)})_{j\in J}$. Suppose that $P(\theta)=P(\theta')$. Then, for all $j\in J$ and all $\delta\in Ob(\Delta)$, there exists $i(j,\delta)\ge i(j), i'(j)$ such that $\theta_{j,i(j,\delta)}(\delta)=\theta'_{j,i(j,\delta)}(\delta)$, where $\theta_{j,i(j,\delta)}$ is the composition
\[\phi_{i(j,\delta)}\to \phi_{i(j)}\by{\theta_{j,i(j)}}\psi_j\]
and similarly for $\theta'_{j,i(j,\delta)}$. If $Ob(\Delta)$ is finite, we may choose $i(j,\delta)$ independent of $\delta$. This shows that $\theta=\theta'$, hence a).

Let $\rho:P(\phi)\to P(\psi)$ be a morphism. By definition, $\rho$ is a morphism between the functors $P(\phi), P(\psi):\Delta\to \pro{}\sC$, given by its components $\rho(\delta)$ subject to commutation with the morphisms of $\Delta$. Let $j\in J$; for each $\delta$, we have a morphism $\rho_{i(j,\delta),j}:\phi_{i(j,\delta)}(\delta)\to \psi_j(\delta)$ for a suitable $i(j,\delta)\in I$; as above, if $Ob(\Delta)$ is finite we may choose $i(j,\delta)$ independent of $\delta$, i.e. $i(j,\delta)=:i(j)$. If $\lambda:\delta_1\to \delta_2$ is a morphism, the naturality of $\rho$ with respect to $\lambda$ may be expressed in terms of the $\rho_{i(j),j}$ at the cost of replacing $i(j)$ by a larger, suitable $i'(j,\lambda)$; if $\Delta$ is finite, we may choose $i'(j,\lambda)$ independent of $\lambda$. Then $\rho$ is of the form $P(\tilde \rho)$, hence b).

Finally, c) (essential surjectivity) follows from \cite[Appendix, Prop. 3.3]{am}.
\end{proof}

\subsection{A pro-adjoint with parameters} We continue to suppose that $\Delta$ is finite and without loops. By Proposition \ref{p1.1}, the functor of \eqref{eq1.0} refines to a functor:
\begin{equation}\label{eq1.2}
w:\sD^\Delta\to \pro{}(\sC^\Delta).
\end{equation}

\begin{lemma}\label{l1} The functor \eqref{eq1.2} is pro-adjoint to $u^\Delta$.\qed
\end{lemma}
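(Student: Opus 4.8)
The plan is to verify directly the characterising property of a pro-left adjoint (\cite[Prop.~A.2.1]{motmod}): namely, to produce for $d\in\sD^\Delta$ and $c\in\sC^\Delta$ a bijection, natural in $d$ and $c$,
\[
\pro{}(\sC^\Delta)(w(d),c)\;\cong\;\sD^\Delta\bigl(d,u^\Delta(c)\bigr),
\]
where on the left $c$ is viewed as a constant pro-object. Three ingredients, all already at hand, will be combined. First, Proposition~\ref{p1.1}~c): since $\Delta$ is finite and without loops, the functor $P$ of \eqref{eq1.1} is an equivalence $\pro{}(\sC^\Delta)\iso(\pro{}\sC)^\Delta$ (and likewise with $\sD$ in place of $\sC$). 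Second, the observation at the beginning of \S\ref{s1} that $\Delta_*$ preserves adjunctions: the adjunction $\tilde v\dashv\pro{}u$ yields an adjunction $\tilde v^\Delta\dashv(\pro{}u)^\Delta$ between $(\pro{}\sD)^\Delta$ and $(\pro{}\sC)^\Delta$. Third, the canonical embeddings $\sC\hookrightarrow\pro{}\sC$ and $\sD\hookrightarrow\pro{}\sD$ are fully faithful, hence so are the induced embeddings $\iota_\sC^\Delta$, $\iota_\sD^\Delta$ on $\Delta$-diagrams (applied componentwise).

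Then I would chain natural isomorphisms as follows. By the first ingredient, $\pro{}(\sC^\Delta)(w(d),c)\cong(\pro{}\sC)^\Delta(Pw(d),Pc)$; here $Pw\cong v^\Delta$ by the very definition of $w$ in \eqref{eq1.2}, and $Pc$ is $c$ regarded as a diagram $\Delta\to\sC\hookrightarrow\pro{}\sC$, i.e.\ $\iota_\sC^\Delta(c)$. Using the factorisation $v^\Delta=\tilde v^\Delta\circ\iota_\sD^\Delta$ (part of the construction of $v^\Delta$ in \S\ref{s1}) together with the second ingredient, the right-hand side becomes $(\pro{}\sD)^\Delta\bigl(\iota_\sD^\Delta(d),(\pro{}u)^\Delta(\iota_\sC^\Delta(c))\bigr)$. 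Since $\pro{}u$ restricts to $u$ on $\sC$, one has $(\pro{}u)^\Delta(\iota_\sC^\Delta(c))=\iota_\sD^\Delta\bigl(u^\Delta(c)\bigr)$ componentwise, and the third ingredient identifies $(\pro{}\sD)^\Delta\bigl(\iota_\sD^\Delta(d),\iota_\sD^\Delta(u^\Delta c)\bigr)$ with $\sD^\Delta(d,u^\Delta c)$. All steps are natural in $d$ and $c$, so the composite is the desired bijection.

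Equivalently, and perhaps more cleanly, one can first note that the pointwise description of $P$ makes the square comparing $P$ (on the $\sC$- and $\sD$-sides) with $\pro{}(u^\Delta)$ and $(\pro{}u)^\Delta$ commute up to natural isomorphism; hence $\pro{}(u^\Delta)$ is, through the equivalences $P$, the functor $(\pro{}u)^\Delta$, which has the left adjoint $\tilde v^\Delta$ by the second ingredient, so $\pro{}(u^\Delta)$ has a left adjoint, i.e.\ $u^\Delta$ has a pro-left adjoint; unwinding definitions identifies this pro-left adjoint with $w$. I do not anticipate a genuine obstacle: the statement is purely formal once Proposition~\ref{p1.1}~c) is available, and the only real work is bookkeeping --- keeping the embeddings $\iota_{(-)}$ and the constant-pro-object conventions straight, and checking that each isomorphism above is natural in both variables so that the composite genuinely exhibits a pro-adjunction rather than merely an objectwise bijection.
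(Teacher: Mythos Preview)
Your proposal is correct and is precisely the formal unpacking the paper leaves implicit: the lemma is marked \qed\ with no proof, since it follows immediately from the preceding material (Proposition~\ref{p1.1}~c) and the observation in \S\ref{s1} that $\Delta_*$ preserves adjunctions). Your chain of natural bijections, and the alternative phrasing via the commutation of $P$ with $\pro{}(u^\Delta)$ and $(\pro{}u)^\Delta$, are both valid ways to spell this out.
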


We shall now give an explicit and direct construction of this pro-adjoint, making Proposition  \ref{p1.1} possibly unnecessary. We give ourselves a system of subcategories $(I(d)\subset d\downarrow u)_{d\in \sD}$  representing $v$, as after Lemma \ref{l0}. Let $\ul{d}\in \sD^\Delta$.  

\begin{lemma}\label{l2} Define a subcategory $I(\ul{d})$ of $\ul{d}\downarrow u^\Delta$ as follows: an object $X$ (resp. morphism $f$) of $\ul{d}\downarrow u^\Delta$ is in $I(\ul{d})$ if and only if $X(\delta)$ (resp. $f(\delta)$) is in $I(d(\delta))$ for all $\delta\in \Delta$. Then:\\
a) The category $I(\ul{d})$ is ordered and cofiltering for all $\ul{d}\in \sD^\Delta$.\\ 
b) If $I(d)$ is full in $d\downarrow u$ for all $d\in \sD$, then $I(\ul{d})$ is full in $\ul{d}\downarrow u^\Delta$ for all $\ul{d}\in \sD^\Delta$.\\
c) The family $(I(\ul{d}))_{\ul{d}\in \sD^\Delta}$ corepresents  \eqref{eq1.2}.
\end{lemma}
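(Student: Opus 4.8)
Parts (b) and the ``ordered'' half of (a) reduce at once to the componentwise situation. Part (b) is immediate: a morphism $\theta$ between two objects $X,Y$ of $I(\ul{d})$, read in $\ul{d}\downarrow u^\Delta$, is a natural transformation whose component at $\delta$ is a morphism $X(\delta)\to Y(\delta)$ of $d(\delta)\downarrow u$; if each $I(d(\delta))$ is full in $d(\delta)\downarrow u$, then each component lies in $I(d(\delta))$, so $\theta$ lies in $I(\ul{d})$. For the ``ordered'' half of (a): a morphism $X\to Y$ of $I(\ul{d})$ is a natural transformation, hence determined by its components, and each component is the unique morphism $X(\delta)\to Y(\delta)$ of the poset $I(d(\delta))$ (it lies in $I(d(\delta))$ by the very definition of the morphisms of $I(\ul{d})$); so there is at most one morphism $X\to Y$. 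If moreover $X\le Y\le X$, then $X(\delta)=Y(\delta)$ and the two connecting components are identities for every $\delta$; a natural transformation whose components are all identity morphisms is the identity transformation of a single functor, so $X=Y$. Hence $I(\ul{d})$ is a poset.

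The substance lies in the cofilteredness of $I(\ul{d})$ (the remaining part of (a)) and in (c), which I would establish together. Since $\Delta$ is finite without loops, the relation ``there is a morphism $\delta\to\delta'$'' is a partial order on $Ob(\Delta)$; I fix a linear extension $\delta_1<\dots<\delta_n$, so that every non-identity morphism of $\Delta$ goes from some $\delta_j$ to some $\delta_i$ with $j<i$. Given a finite family of objects of $I(\ul{d})$ --- the empty family for non-emptiness, a pair for binary lower bounds, and the families of factorizations that occur when one computes $\colim_{X\in I(\ul{d})}\sC^\Delta(\ul{c}_X,\ul{c}')$, $\ul{c}_X$ being the underlying diagram of $X$ (the second projection) --- I would construct an object $Z$ of $I(\ul{d})$ dominating all of them, by induction on $i$. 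Having built the truncation of $Z$ to $\{\delta_1,\dots,\delta_{i-1}\}$ (objects $Z(\delta_j)\in I(d(\delta_j))$, their structure maps $d(\delta_j)\to u(c_{Z(\delta_j)})$, the connecting morphisms of $\ul{c}_Z$ among those vertices, and the morphisms to the given family), one chooses $Z(\delta_i)\in I(d(\delta_i))$ together with a compatible morphism $c_{Z(\delta_j)}\to c_{Z(\delta_i)}$ for each morphism $\delta_j\to\delta_i$ of $\Delta$ with $j<i$; the absence of loops guarantees that these, together with the given family at $\delta_i$, are the only constraints on $Z(\delta_i)$. Existence of such a choice is where the pro-adjunction is used: the finitely many compatibility data at $\delta_i$ assemble into a morphism involving the pro-object $v(d(\delta_i))=``\lim"_{I(d(\delta_i))}c_\bullet$, and the universal property of this pro-object together with the cofilteredness of $I(d(\delta_i))$ realize it at a single index, which can moreover be chosen below the finitely many indices of the given family. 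One then checks that the resulting $Z$ is an object of $\ul{d}\downarrow u^\Delta$ lying in $I(\ul{d})$ and mapping to each member of the family; taking the empty family and pairs gives the cofilteredness of $I(\ul{d})$, and taking an element of $\sD^\Delta(\ul{d},u^\Delta\ul{c}')$ and pairs of its factorizations gives the bijectivity of the canonical map $\colim_{X\in I(\ul{d})}\sC^\Delta(\ul{c}_X,\ul{c}')\to\sD^\Delta(\ul{d},u^\Delta\ul{c}')$, whence (c); naturality in $\ul{c}'$ is formal.

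The main obstacle is precisely this inductive gluing step: organizing the finitely many compatibility constraints at the vertex $\delta_i$ into a single descent problem for a morphism of pro-objects over $I(d(\delta_i))$, and then verifying that the output --- object, structure map, and all connecting morphisms --- is a genuine object of $\ul{d}\downarrow u^\Delta$ belonging to $I(\ul{d})$. This is also where the finiteness of $\Delta$ and the absence of loops are essential. The componentwise arguments for (a) and (b), and the naturality assertion in (c), are routine.
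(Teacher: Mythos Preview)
Your treatment of (b) and of the ``ordered'' half of (a) is fine and matches the paper. The problem is the direction of your induction for cofilteredness and for (c).

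You order $\delta_1<\dots<\delta_n$ so that arrows go from smaller to larger index, and at step $i$ you try to choose $Z(\delta_i)\in I(d(\delta_i))$ together with morphisms $c_{Z(\delta_j)}\to c_{Z(\delta_i)}$ \emph{into} the new object from the already-fixed $c_{Z(\delta_j)}$. The pro-left-adjoint property does not give you this. What it gives is: any morphism $d(\delta)\to u(c')$ factors through some $u(c_k)$ with $c_k\in I(d(\delta))$, i.e.\ it produces morphisms \emph{out of} objects of $I(d(\delta))$ to prescribed targets. Concretely, with $\Delta=\{0\to 1\}$: having fixed $c_0=c_{Z(0)}$ dominating $c_{X_1(0)},c_{X_2(0)}$, you can certainly reach $c_{X_1(1)}$ or $c_{X_2(1)}$ from $c_0$ via the diagrams $X_1,X_2$, but to dominate both you need some $c_1\in I(d(1))$ with $c_1\to c_{X_1(1)}$ and $c_1\to c_{X_2(1)}$, and there is no mechanism producing a compatible map $c_0\to c_1$ --- you would need a lift of $c_0\to c_{X_r(1)}$ through $c_1$, which nothing guarantees. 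Your sentence ``the compatibility data assemble into a morphism involving $v(d(\delta_i))$'' does not name an actual morphism that would do the job.

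The paper runs the induction the other way: it removes a \emph{minimal} object $\delta_0$ (a source), handles $\Delta'=\Delta\setminus\{\delta_0\}$ by induction, and then chooses $Z(\delta_0)$ last. At that point all targets $c_{Z(\delta)}$ for arrows $\delta_0\to\delta$ are fixed; one applies the pro-adjunction to each composite $d(\delta_0)\to d(\delta)\to u(c_{Z(\delta)})$ to factor it through some $c(f)\in I(d(\delta_0))$, and then uses cofilteredness of $I(d(\delta_0))$ to pick a single object dominating all the $c(f)$'s and the $X_r(\delta_0)$'s. Reversing your linear order (so that sinks come first and sources are added last) makes your sketch work and brings it in line with the paper's argument.
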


\begin{proof} a) Ordered is obvious. For cofiltering, induction on $\# Ob(\Delta)$. We may assume $\Delta$ nonempty. The finiteness and ``no loop'' hypotheses imply that $\Delta$ has an object $\delta_0$ such that no arrow leads to $\delta_0$: we call such on object \emph{minimal}. Let $\Delta'$ be the subcategory of $\Delta$ obtained by removing $\delta_0$ and all the arrows leaving from $\delta_0$.  Let $X_1:\ul{d}\to u^\Delta(\ul{c_1})$, $X_2:\ul{d}\to u^\Delta(\ul{c_2})$ be two objects of $I(\ul{d})$. By induction, we may find $Y_3:\ul{d}\mid \Delta'\to u^{\Delta'}(\ul{c_3}')\in I(\ul{d}\mid \Delta')$ sitting above $X_1\mid \Delta'$ and $X_2\mid \Delta'$.  Let $f:\delta_0\to \delta$ be an arrow, with $\delta\in \Delta'$: by the functoriality of $v$, there exists a commutative diagram in $\sD$
\[\begin{CD}
d(\delta_0)@>\phi(f)>> u(c(f))\\
@V{d(f)}VV @Vu(\psi(f))VV\\
d(\delta)@>Y_3(\delta)>> u(c'_3(\delta))
\end{CD}\]
with $\phi(f)\in I(d(\delta_0))$. Since $I(d(\delta))$ is cofiltering, we may find an object $d(\delta_0)\by{g}u(c)\in I(d(\delta))$ sitting above all $\phi(f)$'s as well as  $X_1(\delta_0)$ and $X_2(\delta_0)$. Then, together with $Y_3$,  $X_3(\delta_0)=:g$ completes the construction of $X_3$ dominating $X_1$ and $X_2$.

b) is obvious. c) Let $\psi:\ul{d}\to u^\Delta(\ul{c})$ be a morphism in $\sD^\Delta$, with $\ul{c}\in \sC^\Delta$. To construct a morphism $\phi:(I(\ul{d}),pr_2)\to \ul{c}$ in $\pro{}(\sC^\Delta)$, we proceed as in a). Suppose $\phi\mid \Delta'$ has already been constructed. By definition, it means that an object $\ul{d}\mid \Delta'\to u^{\Delta'}(\ul{c_1})$ in $I(\ul{d}\mid \Delta')$ and a compatible morphism $\ul{c_1}\to \ul{c}\mid \Delta'$ have been given. Exactly as in a), we can complete this to a compatible pair $(\ul{d}\to u^{\Delta}(\ul{c_2}),\ul{c_2}\to \ul{c})$ yielding $\phi$. 
\end{proof}

\begin{rk}\label{r1}  Suppose that $\sC$ is essentially small and has finite limits. Then a functor $u:\sC\to \sD$ has a pro-adjoint if and only if it commutes with finite limits \cite[App., Cor. 2.6]{am}. If this is the case, then $\sC^\Delta$ and $u^\Delta$ verify the same hypotheses as soon as $\Delta$ is essentially small, and the existence of the pro-adjoint \eqref{eq1.2} is automatic. However,  in the case of Proposition \ref{p1} below, $\sC=\MCor$ does not have finite limits, for example no kernels (equalisers of two arrows) in general. Indeed, if it had kernels, so would $\Cor$, since $\omega:\MCor\to \Cor$ has a pro-adjoint. But it is easy to give examples of nonrepresentable kernels in $\Cor$, for example the equaliser of the two morphisms $f,g:\A^2\to \A^1$ given by $f(x,y)=y^2$ and $g(x,y)=x^3$.  So it seems that the passage through Proposition \ref{p1.1} (and in particular the condition on $\Delta$) is necessary in this case. The hypothesis on finite limits in $\sC$ is dropped from \cite[I.8.11.4]{SGA4}.
\end{rk}

\subsection{Cofinality}

Keep the situation of Lemma \ref{l2} a), and let $\Delta_1$ be a full subcategory of $\Delta$. We have an obvious functor
\[\phi:I(\ul{d})\to I(\ul{d}\mid \Delta_1).\]

The following lemma gives an abstract version of \cite[Lemma 4.3.1]{motmod}:

\begin{lemma}\label{l3} Suppose that $\Delta(\delta,\delta_1)=\emptyset$ for all $(\delta,\delta_1)$ such that $\delta\in \Delta-\Delta_1$ and $\delta_1\in \Delta_1$ (in the terminology of \cite[Def. 2.3.2]{quillen-dedekind}, the inclusion $\Delta'\subseteq \Delta$ is cellular). Then $\phi$ is cofinal.
\end{lemma}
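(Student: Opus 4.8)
The claim is that if $\Delta_1\subseteq\Delta$ is a full subcategory such that no arrow of $\Delta$ goes from an object of $\Delta-\Delta_1$ to an object of $\Delta_1$ (``cellular'' inclusion), then the restriction functor $\phi:I(\ul{d})\to I(\ul{d}\mid\Delta_1)$ is cofinal. Recall (Lemma \ref{l2} a)) that both source and target are cofiltering ordered sets, so cofinality of $\phi$ amounts to the single condition: for every object $Y:\ul{d}\mid\Delta_1\to u^{\Delta_1}(\ul{c}_1)$ of $I(\ul{d}\mid\Delta_1)$, there exists an object $X$ of $I(\ul{d})$ together with a morphism $\phi(X)\to Y$ in $I(\ul{d}\mid\Delta_1)$ --- equivalently, $X\mid\Delta_1$ dominates $Y$. (In the ordered-set setting the ``uniqueness up to the category being filtered'' half of cofinality is automatic.)

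The strategy is exactly the inductive extension argument already used in the proof of Lemma \ref{l2} a) and c): I would build $X$ by adjoining to $Y$ the data at the objects of $\Delta-\Delta_1$ one at a time, in an order compatible with the arrows of $\Delta$. Formally, induct on $\#(Ob(\Delta)-Ob(\Delta_1))$. If this is $0$ there is nothing to prove. Otherwise, since $\Delta$ is finite without loops, the full subcategory on $Ob(\Delta)-Ob(\Delta_1)$ has a minimal object $\delta_0$ (one with no incoming arrow from within $\Delta-\Delta_1$); by the cellularity hypothesis $\delta_0$ also receives no arrow from $\Delta_1$, hence $\delta_0$ is minimal in all of $\Delta$. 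Let $\Delta'$ be $\Delta$ with $\delta_0$ and its outgoing arrows removed; then $\Delta_1\subseteq\Delta'$ is still cellular and $\#(Ob(\Delta')-Ob(\Delta_1))$ has dropped by one, so by induction we may first find $X':\ul{d}\mid\Delta'\to u^{\Delta'}(\ul{c}')$ in $I(\ul{d}\mid\Delta')$ whose restriction to $\Delta_1$ dominates $Y$. It remains to define the component $X(\delta_0)$. For each arrow $f:\delta_0\to\delta$ with $\delta\in\Delta'$, functoriality of the pro-adjoint $v$ (equivalently, the construction of $I(d(\delta_0))$ as a subcategory of $d(\delta_0)\downarrow u$ representing $v$) yields a commutative square as in the proof of Lemma \ref{l2} a), producing an object $\phi(f)\in I(d(\delta_0))$ lying over $X'(\delta)\circ d(f)$. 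Since $I(d(\delta_0))$ is cofiltering, choose $X(\delta_0)\in I(d(\delta_0))$ dominating all these finitely many $\phi(f)$'s. Together with $X'$, this defines an object $X\in I(\ul{d})$; its restriction to $\Delta'$ is $X'$, hence its restriction to $\Delta_1$ dominates $Y$, as required. (Note that no constraint is imposed at $\delta_0$ by arrows \emph{into} $\delta_0$, since there are none; this is where minimality of $\delta_0$ in $\Delta$ --- itself a consequence of cellularity --- is used.)

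The only genuinely delicate point is bookkeeping: one must check that the object $X$ so produced really lies in $I(\ul{d})$, i.e. that all structure arrows $X(\delta_0)\to X(\delta)$ of the diagram in $u^\Delta(\ul{c})$ exist and commute. This is handled precisely as in Lemma \ref{l2} c): the squares furnished by functoriality of $v$ for each $f:\delta_0\to\delta$, once $X(\delta_0)$ dominates all the $\phi(f)$, assemble into the needed compatible morphism of diagrams $\ul{c}\to$ (the constant-looking extension), because $\delta_0$ is a source and the $\delta$'s already carry the diagram structure from $X'$. Since $\Delta-\Delta_1$ is finite and each inductive step adjoins a single minimal vertex, the process terminates and yields the desired $X$. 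I expect this induction-on-vertices argument, modeled verbatim on the proof of Lemma \ref{l2}, to be the entire content; no further ideas are needed.
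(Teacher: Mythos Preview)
Your proposal is correct and follows exactly the same approach as the paper: induction on $\#(Ob(\Delta)-Ob(\Delta_1))$, choosing a minimal object $\delta_0$ in $\Delta-\Delta_1$, using cellularity to see it is minimal in $\Delta$, setting $\Delta'=\Delta-\{\delta_0\}$, and then extending via the construction from Lemma~\ref{l2}~a). The paper's proof is in fact terser than yours---it simply says ``we conclude with the same reasoning as in the proof of Lemma~\ref{l2}~a)''---so your spelling out of the extension step is a faithful expansion of what the authors intend.
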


\begin{proof} Consider $\Delta-\Delta_1$ as a full subcategory of $\Delta$. Let $\delta_0\in \Delta-\Delta_1$ be a minimal object (with respect to $\Delta-\Delta_1$) in the sense of the proof of Lemma \ref{l2} a): the hypothesis on $\Delta_1$ shows that  $\delta_0$ is also minimal with respect to $\Delta$. Setting $\Delta'=\Delta-\{\delta_0\}$ as in this proof, we are reduced by induction to the case where $\Delta_1=\Delta'$. We conclude with the same reasoning as in the proof of Lemma \ref{l2} a).
\end{proof}

\end{document}